\let\oldFootnote\footnote
\newcommand\nextToken\relax
\renewcommand\footnote[1]{%
    \oldFootnote{#1}\futurelet\nextToken\isFootnote}
\newcommand\isFootnote{%
    \ifx\footnote\nextToken\textsuperscript{,}\fi}
\theoremstyle{plain}
\newtheorem{thm}{Theorem}[section]
\newtheorem{cor}[thm]{Corollary}
\newtheorem{lem}[thm]{Lemma}
\newtheorem{prop}[thm]{Proposition}
\theoremstyle{definition}
\newtheorem{defn}[thm]{Definition}
\newtheorem{ex}[thm]{Example}
\newtheorem{Setup}[thm]{Setup}
\newtheorem{Notation}[thm]{Notation}
\theoremstyle{remark}
\newtheorem{rem}[thm]{Remark}
\newtheorem{Claim}[thm]{Claim} %
\Crefname{thm}{Theorem}{Theorems}
\crefname{thm}{Theorem}{Theorems}
\Crefname{defn}{Definition}{Definitions}
\crefname{defn}{Definition}{Definitions}
\Crefname{Notation}{Notation}{Notations}
\crefname{Notation}{notation}{notations}
\Crefname{cor}{Corollary}{Corollaries}
\crefname{cor}{corollary}{corollaries}
\Crefname{lem}{Lemma}{Lemmas}
\crefname{lem}{lemma}{lemmas}
\Crefname{rem}{Remark}{Remarks}
\crefname{rem}{remark}{remarks}
\newtheorem*{Key Statement}{Key Statement}
\newcommand{\ul}[1]{\underline{#1}}
\DeclareMathAlphabet{\mathpzc}{OT1}{pzc}{m}{it}
\DeclareMathOperator{\Perf}{\mathsf{Perf}}
\DeclareMathOperator{\Sing}{\mathsf{Sing}}
\newcommand{\GP}{\opname{GP}}
\DeclareMathOperator{\rad}{\mathsf{rad}}
\DeclareMathOperator{\tors}{\mathsf{tors}}
\DeclareMathOperator{\coker}{\mathsf{coker}}
\renewcommand{\ker}{\mathsf{ker}}
\newcommand{\im}{\mathsf{im}}
\renewcommand{\dim}{\mathsf{dim}}
\DeclareMathOperator{\Coh}{\mathsf{coh}}
\DeclareMathOperator{\MCM}{\mathsf{MCM}}
\newcommand{\SMAL}{\mathfrak{S}}
\DeclareMathOperator{\prdim}{\mathsf{pr.dim}}
\DeclareMathOperator{\injdim}{\mathsf{inj.dim}}
\DeclareMathOperator{\gldim}{\mathsf{gl.dim}}
\DeclareMathOperator{\add}{\mathsf{add}}
\DeclareMathOperator{\Mod}{\mathsf{Mod}}
\newcommand{\ind}{\opname{ind}}
\DeclareMathOperator{\Hom}{\mathsf{Hom}}
\DeclareMathOperator{\Ext}{\mathsf{Ext}}
\DeclareMathOperator{\End}{\mathsf{End}}
\DeclareMathOperator{\Spec}{\mathsf{Spec}}
\newcommand{\opname}[1]{\operatorname{\mathsf{#1}}}
\newcommand{\cok}{\opname{cok}\nolimits}
\renewcommand{\ker}{\opname{ker}\nolimits}
\renewcommand{\mod}{\opname{mod}\nolimits}
\newcommand{\proj}{\opname{proj}\nolimits}
\newcommand{\Proj}{\opname{Proj}\nolimits}
\renewcommand{\Mod}{\opname{Mod}\nolimits}
\renewcommand{\add}{\opname{add}\nolimits}
\newcommand{\op}{^{\mathrm{op}}}
\newcommand{\ca}{{\mathcal A}}
\newcommand{\cb}{{\mathcal B}}
\newcommand{\cc}{{\mathcal C}}
\newcommand{\ce}{{\mathcal E}}
\newcommand{\cf}{{\mathcal F}}
\newcommand{\cg}{{\mathcal G}}
\newcommand{\ci}{{\mathcal I}}
\newcommand{\ck}{{\mathcal K}}
\newcommand{\cl}{{\mathcal L}}
\newcommand{\cm}{{\mathcal M}}
\newcommand{\co}{{\mathcal O}}
\newcommand{\cp}{{\mathcal P}}
\newcommand{\cs}{{\mathcal S}}
\newcommand{\ct}{{\mathcal T}}
\newcommand{\cu}{{\mathcal U}}
\newcommand{\cx}{{\mathcal X}}
\newcommand{\Z}{\mathbb{Z}}
\newcommand{\N}{\mathbb{N}}
\newcommand{\Q}{\mathbb{Q}}
\newcommand{\C}{\mathbb{C}}
\renewcommand{\P}{\mathbb{P}}
\newcommand{\id}{\mathbf{1}}
\newcommand{\kk}{\Bbbk}
\newcommand{\bsm}{\begin{smallmatrix}}
\newcommand{\esm}{\end{smallmatrix}}
\newcommand{\sD}{D}
\newcommand{\GG}{\mathbb{G}}
\newcommand{\ZZ}{\mathbb{Z}}
\newcommand{\cA}{\mathcal{A}}
\newcommand{\set}[2]{\left\{\,{#1}\,\mid\,{#2}\,\right\}}
\renewcommand{\epsilon}{\varepsilon}
\newcommand{\KSOD}{KSOD\xspace}
\newcommand{\KSODs}{KSODs\xspace}
\newcommand{\SOD}{SOD\xspace}
\newcommand{\SODs}{SODs\xspace}  %
\newcommand{\cSODs}{SODs\xspace} %
\newcommand{\absorption}{algebraic categorical absorption\xspace}
\newcommand{\absorptions}{algebraic categorical absorptions\xspace}
\renewcommand{\subset}{\subseteq}
\renewcommand{\supset}{\supseteq}
\newcommand{\sg}{\mathrm{sg}}
\newcommand{\hf}{\mathrm{hf}}
\newcommand{\Dsg}{\opname{D}^{\mathrm{sg}}}
\newcommand{\Dfd}{\opname{D}^{\mathrm{fd}}}
\newcommand{\Db}{\opname{D}^{\mathrm{b}}}
\newcommand{\Kb}{\opname{K}^{\mathrm{b}}}
\title[Obstructions to semiorthogonal decompositions II]{Obstructions to semiorthogonal decompositions for singular projective varieties II: Representation theory}
\author{Martin Kalck}
\address{
    Institut für Mathematik und Wissenschaftliches Rechnen\\
    Universität Graz\\
	8010 Graz\\
	Austria
 }
\email{martin.kalck@uni-graz.at}
\author{Carlo Klapproth}
\address{
    Institut for Matematik\\
	Aarhus Universitet\\
	8000 Aarhus C\\
	\mbox{Denmark}
 }
\curraddr{
	Institut für Algebra und Zahlentheorie (IAZ)\\
	Universität Stuttgart\\
 	70569 Stuttgart\\
 	Germany
}
\email{carlo.klapproth@mathematik.uni-stuttgart.de}
\author{Nebojsa Pavic}
\address{
	Institut für Mathematik und Wissenschaftliches Rechnen\\
    Universität Graz\\
	8010 Graz\\
	Austria
 }
\email{nebojsa.pavic@uni-graz.at}
\begin{document}

\subjclass[2020]{14F08, 14B05, 18G80, 13C14, 16G70} 

\keywords{ADE-hypersurface singularities, cDV-singularities, derived categories of coherent sheaves of singular varieties, singularity categories, tilting objects, cluster-tilting objects, categorical absorption of singularities, categorical no-loop theorem, Auslander-Reiten theory}

\begin{abstract}
    We show that odd-dimensional projective varieties with \emph{tilting objects} and only ADE-hypersurface singularities are nodal, i.e.\ they only have $A_1$-singularities. This is a very special case of more general obstructions to the existence of semiorthogonal decompositions for projective Gorenstein varieties. More precisely, for many isolated \emph{hypersurface} singularities, we show that 
    Kuznetsov--Shinder's \emph{categorical absorptions of singularities} cannot contain tilting objects.

    The key idea is to compare singularity categories of projective varieties to singularity categories of finite-dimensional associative Gorenstein algebras. The former often contain special generators, called \emph{cluster-tilting objects}, which typically have loops and $2$-cycles in their quivers. In contrast, quivers of cluster-tilting objects in the latter categories, can never have loops or $2$-cycles.
\end{abstract}

\maketitle
\tableofcontents

\section{Introduction}
Derived categories of coherent sheaves on algebraic varieties are rich structures, that serve as bridges to different mathematical areas -- for example, to symplectic geometry, representation theory and mathematical physics. Applications include the construction of new hyperkähler varieties via Bridgeland stability \cite{BL+}, a new approach to the study of the Yang--Baxter equation \cite{Polishchuk, BurbanKreussler},   
and a deeper understanding of curve counting invariants \cite{BrownWemyss} and threefold flops \cite{WemyssFlops}.

A key tool to gain insights into the complicated structures of derived categories are \emph{semiorthogonal decompositions (\SODs)}.  
An important special case are full exceptional sequences, which severely restrict the possible Hodge structures  \cite{MT} and are also expected to imply generic semi-simplicity of big quantum cohomology (Dubrovin's conjecture).  

However, for singular projective (Gorenstein) varieties $X$, full exceptional sequences cannot exist, cf.\ \cite{KS22, KPS19}! Together with recent works of Kawamata \cite{Kaw19} and Karmazyn--Kuznetsov--Shinder \cite{KKS} this motivates the study of \emph{Kawamata type semiorthogonal decompositions (\KSODs)}. These are admissible \SODs that naturally generalize exceptional sequences to the singular case \cite{ KPS19}. More precisely, we consider admissible \SODs of $\Db(X) \coloneqq \Db(\Coh X)$ of the form
\begin{align*}
\Db(X) = \langle \mathcal{P}, \Db(R_1), \ldots, \Db(R_n) \rangle, \text{ where  $\mathcal{P} \subseteq \Perf(X)$ and $\Db(R_i) \coloneqq \Db(\mod R_i)$}
\end{align*}
for some finite-dimensional $\C$-algebras $R_i$, `capturing the singular information of $X$'.
Here, $\Perf(X) \subset \Db(X)$ is the subcategory of perfect complexes on $X$.
For $n=1$ and $\mathcal{P}=0$, \KSODs specialize to \emph{tilting equivalences}
\begin{align}\label{def:tilting}
    \Db(X) \cong \Db(\End_{\Db(X)}(\ct)),
\end{align}
where $\ct \in \Perf(X)$ is a \emph{tilting object}, that is $\smash{\Hom_{\Db(X)}(\ct, \ct[i])=0}$ for all $i \neq 0$ and for all $\cf \in \Db(X)$, if $\smash{\Hom_{\Db(X)}(\ct, \cf[i])=0}$ for all $i \in \ZZ$ then $\cf=0$, see for example \cite[Theorem 7.6(2)]{HilleVandenBergh}.

Examples of \KSODs for nodal varieties and varieties with quotient singularities are studied in \cite{KKS, Kaw19, KPS19,xieNdp, ps, Kaw21, KKP}. It is natural to ask for further examples of singular varieties admitting \KSODs, in particular, for hypersurface singularities. The goal of this article is to show that there are strong obstructions to the existence of such decompositions. 

We also introduce the more general concept of an \emph{\absorption of closed a subspace $\mathsf{S} \subset \Sing(X)$}. These are \KSODs of admissible subcategories $\cA \subset \Db(X)$ which `capture the singular information of $\mathsf{S}$'. 
In \Cref{subsec:roadmap} we give an overview how this concept relates to \KSODs and tilting and singular equivalences. A roadmap to this article can also be found there.

\subsection{Obstructions to \KSODs}
In \cite{KPS19}, using (negative) K-theory, `global' obstructions to the existence of \KSODs are studied with main focus on nodal singularities.

In this article, we focus on `complete local' obstructions, i.e.\ obstructions that come from the analytic type of the singularities.
%
%
Surprisingly, our first result (Theorem \ref{T:GoalNEW}) rules out \KSODs for odd-dimensional varieties with the `mildest' \emph{non-nodal} isolated singularities: 
Arnol'd's \emph{simple hypersurface singularities} also known as \emph{ADE-hypersurface singularities}, whose definition is recalled in \Cref{rem:explicit}. This is a very special case of our main result \Cref{T:Main}.%

\begin{thm}\label{T:GoalNEW}
  Let $X$ be an odd-dimensional projective Gorenstein variety over $\C$ such that $\Db(X)$ admits a \KSOD.\nopagebreak

  If $s \in \Sing(X)$ is an ADE-hypersurface singularity,
  then $s$ is already nodal, that is an $A_1$-singularity $\smash{\widehat{\co}_{X,s}} \cong \C\llbracket z_0, \ldots, z_{d}\rrbracket/(z_0^2 + \cdots + z_{d}^2 )$ with $d = \dim X$.
  \end{thm}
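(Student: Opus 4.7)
The plan is to play two opposing features of singularity categories against each other. For an odd-dimensional ADE-hypersurface singularity $s$ of type different from $A_1$, the local singularity category $\Dsg(\widehat{\co}_s)$ admits a cluster-tilting object whose Gabriel quiver contains a loop or a $2$-cycle; by contrast, for a finite-dimensional Gorenstein $\C$-algebra $R$, the Gabriel quiver of any cluster-tilting object in $\Dsg(R)$ is loop- and $2$-cycle-free (the \emph{categorical no-loop theorem}). An assumed \KSOD would force the first scenario to embed into the second, yielding the contradiction.

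The first step is to reduce the \KSOD to a decomposition of singularity categories. Since $\mathcal{P}\subseteq\Perf(X)$, the Verdier quotient $\Db(-)/\Perf(-)$ annihilates $\mathcal{P}$ and turns each admissible piece $\Db(R_i)$ into $\Dsg(R_i)$, yielding $\Dsg(X)\cong\Dsg(R_1)\oplus\cdots\oplus\Dsg(R_m)$. Combined with Orlov's local-global principle for singularity categories (after idempotent completion), this exhibits a Karoubi envelope of $\Dsg(\widehat{\co}_s)$ as a triangulated direct summand of some $\Dsg(R_i)$, where $R_i$ is a finite-dimensional Gorenstein $\C$-algebra.

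The second step is representation-theoretic: for each odd-dimensional non-$A_1$ ADE-hypersurface singularity $s$, I would construct a cluster-tilting object $T\in\Dsg(\widehat{\co}_s)$ whose Gabriel quiver carries a loop or a $2$-cycle. Kn\"orrer periodicity reduces this to odd dimension $1$ or $3$ of the same ADE-type. In the $3$-fold cDV case, the existence of such a $T$ follows from Van den Bergh's NCCR construction and Iyama's $2$-Calabi--Yau cluster-tilting theory, and its endomorphism algebra is the contraction algebra studied by Donovan--Wemyss; the explicit classification of Iyama--Wemyss confirms that the corresponding quiver has a loop or a $2$-cycle unless the type is $A_1$. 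The $1$-dimensional case is handled analogously via the Auslander algebra of $\MCM(\widehat{\co}_s)$. Transporting $T$ through the summand embedding of Step~1, and completing it to a cluster-tilting object of $\Dsg(R_i)$ by adjoining generators of the complementary summand, produces a cluster-tilting object in $\Dsg(R_i)$ whose Gabriel quiver still carries a loop or a $2$-cycle, contradicting the categorical no-loop theorem.

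The delicate part is the representation-theoretic Step~2: verifying, uniformly across all odd dimensions and all non-$A_1$ ADE-types, the existence of a cluster-tilting object with the required quiver, and its compatibility with Kn\"orrer periodicity. The passages through the Verdier quotient, idempotent completion and summand transport in Steps~1 and~3 are formal, but one must check that the cluster-tilting property and the shape of the Gabriel quiver are genuinely preserved by the direct-summand embedding into $\Dsg(R_i)$.
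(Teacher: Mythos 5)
The overall strategy you propose---comparing the geometric singularity category with the stable category of a finite-dimensional Gorenstein algebra via a no-loop/no-$2$-cycle obstruction for cluster-tilting quivers---is indeed the one the paper pursues, but Step~2 contains a gap you rightly flag as delicate, and it is fatal rather than technical. For type $A_{2n}$, and likewise for $D_{2m+1}$ ($m\geq 2$), $E_6$, $E_7$ and $E_8$, the one-dimensional (hence, by Kn\"orrer periodicity, every odd-dimensional) singularity category simply contains \emph{no} cluster-tilting object at all: the classification in \cite{BIKR} shows that among ADE-types only $A_{2n-1}$ and $D_{2m}$ do. So the object $T$ your argument hinges on cannot be constructed for most of the types in question. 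The paper copes with this by branching: for $A_{2n}$ it uses the loop visibly present in the AR-quiver of $\Dsg(A_{2n})$ together with a categorical \emph{no-loop-in-the-AR-quiver} theorem for $\ul{\GP}_\cp(R)$ (\Cref{T:PropertiesDsgR}\ref{item:noloops})---no cluster-tilting object needed; for $D_{2m+1}$ and $E_n$ it first performs an Auslander--Solberg reduction (\Cref{L:ReduceType}) replacing $\Dsg(\widehat{\co}_s)$ by $\Dsg(A_3)$ or $\Dsg(D_4)$, both of which do admit cluster-tilting objects via NCCRs, before invoking \Cref{T:PropertiesDsgR}\ref{item:noloops2cycles}.

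There is a second, independent gap in your Step~3. Even when a cluster-tilting object $T\in\Dsg(\widehat{\co}_s)$ exists, the claim that one can ``complete it to a cluster-tilting object of $\Dsg(R_i)$ by adjoining generators of the complementary summand $\cc$'' does not go through: the summand $\cc$ need not contain a cluster-tilting object, and more fundamentally $\Dsg(R_i)$ itself is \emph{not} $2$-Calabi--Yau in general, so the no-$2$-cycle theorem you wish to contradict does not even apply to it. The paper avoids both problems by changing the Frobenius \emph{exact structure} on $\GP(R_i)$ rather than transporting the object (\Cref{L:AuslanderSolbergSummand}): enlarging the class of projective-injectives kills the summand $\cc$ and yields a new Frobenius category $\GP_\cp(R_i)$ whose stable category literally \emph{is} $\Dsg(\widehat{\co}_s)$, in particular $2$-CY; the no-loop/no-$2$-cycle obstruction is then applied inside $\ul{\GP}_{\cp}(R_i)$ rather than inside $\Dsg(R_i)$. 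That manoeuvre is the key missing idea in your outline.
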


Theorem \ref{T:GoalNEW} 
allows us to prove non-existence of \KSODs in new cases.
We illustrate this in the following example: we 
construct two explicit cubic threefolds having $A_3$ respectively $A_{11}$-singularities (hence `complete local' obstructions), and explain that they have no `global' obstructions as studied in \cite{KPS19}.
%
%
%
%
%
%

%
%

%
%
\begin{ex}~\label{E:A_11}
\begin{itemize}
    \item[(a)] Let $X$ be a threefold with only $A_{2m}$-singularities, then Theorem \ref{T:GoalNEW} shows that $X$ does not admit \KSODs. Because $\mathrm{K}_{-1}(X)$ vanishes, see for example \cite[Corollary 3.8]{KPS19}, the global obstructions do not apply in this case. 

      An explicit example is the cubic threefold $X$ in $\mathbb{P}^4$ given by
    \[
    z_{4} ( z_0^2 + z_1^2 + z_{2}^2 ) + z_0^3 + z_1^3  + z_{2}^3 + z_{3}^3 = 0.
    \]
    It has a singularity of type $A_2$ at $p=[0:0:0:0:1]$ and is smooth away from $p$. 

    \item[(b)] Let us consider the cubic threefold $X \subset \mathbb{P}^4$ given by the equation
\[
z_4 ( z_3^2 - z_0 z_2 ) + z_2^3 + z_1^2 z_0 - 2z_1 z_2 z_3 + z_0^3 = 0 .
\]
This cubic  has an $A_{11}$-singularity at $q= [0:0:0:0:1]$ and  is smooth away from $q$ (see e.g.\ \cite[Section 2]{All03}).
By Theorem \ref{T:GoalNEW} we have that $X$ does not admit a \KSOD.

Also in this example, we do not have global obstructions to the existence of \KSODs. We show that $\mathrm{K}_{-1}(X) \otimes \Q$ vanishes -- with a more detailed computation, which we might include in another paper, one can prove that $\mathrm{K}_{-1}(X)=0$.
Indeed, since $X$ contains the 
rational cubic scroll\footnote{By a change of coordinates given by $z_0' = z_1 - i z_0$ and $z_1' = z_1 + i z_0$, we see that $Z$ is given by the equations $z_2^2 = z_3 z_0'$, $z_2 z_4 = z_0' z_1'$ and $z_4 z_3 = z_2 z_1'$ in $\mathbb{P}^4$, describing an embedding of a rational cubic scroll inside $\mathbb{P}^4$.}
\[
Z = \{ z_2^2 = z_3 ( z_1 - i z_0  ) , \, z_2 z_4 = z_1^2 + z_0^2 , \, z_4 z_3 = z_2 ( z_1 + i z_0 ) \} \subset \mathbb{P}^4,
\]
we see by \cite[Theorem 1.2]{MV23} that $X$ has non-vanishing defect, see for example \cite[Definition 3.9]{KPS19}, and by \cite[Corollary 3.8]{KPS19} we further conclude that the defect of $X$ is maximal.
Hence, by \cite[Remark 3.11]{KPS19} we see that $\mathrm{K}_{-1}(X) \otimes \mathbb{Q}$ vanishes. 
\end{itemize}

\end{ex}
Our main result \Cref{T:Main}, generalizes \Cref{T:GoalNEW} in two ways.

\begin{enumerate}[label={(\alph*)}]
    \item\label{intro:a} By allowing more general \SODs, e.g.\ certain `categorical absorptions of singularities' in the sense of Kuznetsov--Shinder. 
    \item\label{intro:b} By extending the class of isolated hypersurface singularities beyond ADE-hypersurface singularities.
\end{enumerate}

We start by describing the class of hypersurface singularities in \ref{intro:b}. To do this, recall the definition of the \emph{(Buchweitz--Orlov) singularity category} of a variety $X$ and of a two-sided Noetherian ring $R$:
\begin{align}\label{eq:singularitycategory}
     \Dsg(X)=\Db(X)/\Perf(X) \quad \text{  and  } \quad \Dsg(R)=\Db(R)/\Kb(R).
\end{align}
Here, $\Kb(R) \coloneqq \Kb(\proj R)$ is the homotopy category of bounded complexes of finitely generated projective $R$-modules.
\begin{defn} \label{D:frakS}
    {Let $(S, \mathfrak{m})$ be a complete local $\C$-algebra with an isolated singularity and residue field $\C \cong S/\mathfrak{m}$. 
    A \emph{resolution of singularities} $\pi : Y \to \Spec(S)$ of $S$ is a projective morphism from a regular scheme $Y$ to $\Spec (S)$ such that $\pi$ is an isomorphism away from the singular point $\mathfrak{m}$.
    For a $3$-dimensional Gorenstein $\mathbb{C}$-algebra $(S , \mathfrak{m})$ we say that a resolution of singularities $Y \to \Spec(S)$ is \emph{small}, if all fibres have dimension at most one.}

 We denote the class of singular $3$-dimensional %
 complete local 
 Gorenstein $\mathbb{C}$-alge\-bras $(S , \mathfrak{m})$ with a small resolution by $\SMAL^{(3)}$ and define $\SMAL$ as the class of complete local singularities $S'$, such that there is an $S \in \SMAL^{(3)}$ and a $\C$-linear triangle equivalence of singularity categories $\Dsg(S') \cong \Dsg(S)$.

\end{defn}

The following remark shows that there are many singularities in $\SMAL$.

\begin{rem}
    By work of Reid \cite{Reid83}, the singularities of type $\SMAL^{(3)}$ are hypersurface singularities, cf.\ \Cref{Reid}. They are the bases of flops between smooth minimal models of threefolds. Knörrer periodicity \cite{Knoerrer} yields triangle equivalences between singularity categories of  hypersurface singularities\footnote{This also allows us to reduce the case of odd-dimensional ADE-hypersurface singularities to the case of ADE-threefold singularities in our proofs.} defined by $f \in \C\llbracket z_0, \ldots, z_d \rrbracket$ and $f + z_{d+1}^2 + z_{d+2}^2 \in \C\llbracket z_0, \ldots, z_{d+2} \rrbracket$, respectively. 
    In particular, for $\C\llbracket z_0, \ldots, z_3 \rrbracket/(f)$ of type $\SMAL^{(3)}$, the singularities $\C\llbracket z_0, \ldots, z_{2l+1} \rrbracket/(f+z_4^2+\cdots+z_{2l+1}^2)$ are of type $\SMAL$. 
    
    Conversely, using work of Gulliksen \cite{Gulliksen} all singularities of type $\SMAL$ are hypersurface singularities. We give a more explicit description in \Cref{S:AppendixDetailsOnSing}.
\end{rem}

  \subsection{Obstructions to tilting for categorical absorption of singularities}
  Let $X$ be a projective variety over $\C$. We explain a refinement of Kuznetsov--Shinder's recently introduced concept of a `categorical absorption of singularities' \cite{KS22a} and the notion of \KSODs in \cite{KPS19}. This needs the following preparation.

\begin{defn}[{Cf.\ \cite[Definition 1.6]{orl6}}]\label{defn:homfiniteobj}
    Let $\ca$ be a triangulated category.
    We say an object $M \in \ca$ is \emph{homologically finite} if for any $N \in \ca$ we have $\ca(M,N[i]) = 0$ for all but finitely many $i \in \Z$.
     We denote the triangulated subcategory of homological finite objects of $\ca$ by $\ca^{\hf}$.
\end{defn}

\begin{defn} \label{D:CatAbs} 
   Let $\mathsf{S}$ be a closed subspace of $\Sing(X)$ and $\ca \subseteq \Db(X)$ be an admissible triangulated subcategory. In particular, there is an induced embedding
   \begin{align*}
    \ca/\ca^{\hf} \subseteq \Dsg(X) \subseteq \overline{\Dsg(X)}
    \end{align*}
    into the idempotent completion of the singularity category of $X$, see \eqref{eq:singularitycategory} as well as \Cref{lem/def:asg}.
Let $\ca^{\sg}$ be the \emph{essential} image of this embedding. 
Then we call $\ca \subseteq \Db(X)$ a \emph{categorical absorption of the singularities of $\mathsf{S}$} if
\begin{align*}
   \Dsg_{\mathsf{S}}(X) \subseteq \ca^{\sg} \subseteq \overline{\Dsg(X)}.
\end{align*}
Here, $\Dsg_{\mathsf{S}}(X) \coloneqq \Db_{\mathsf{S}}(X) / \Perf_{\mathsf{S}}(X)$ is the Verdier quotient of the triangulated subcategories $\Db_{\mathsf{S}}(X) \subset \Db(X)$ and $\Perf_{\mathsf{S}}(X) \subset \Perf(X)$ of complexes with cohomology supported at $\mathsf{S}$, which is a triangulated subcategory of $\Dsg(X)$ by \cite[Lemma 2.6]{Orlov11}.
\end{defn}

  \begin{rem}
     The inclusion $\smash{\Dsg_{\Sing(X)}(X)} \subseteq \Dsg(X)$ can be strict. 
    However, the idempotent completions of both categories coincide, cf.\ \cite{Orlov11}. %
  \end{rem}

\begin{ex}[Kuznetsov--Shinder]\label{E:KuzShi}
Let $X$ be a projective Gorenstein variety. A \emph{categorical absorption of singularities} is an admissible subcategory $\ca \subseteq \Db(X)$ such that 
$\ca^\perp \subseteq \Perf(X)$, cf.\ \cite[Definition 1.1]{KS22a}.

Then $\ca^{\sg} = \Dsg(X)$. Hence, $\ca$ is a categorical absorption of $\mathsf{S}=\Sing(X)$ in the sense of \Cref{D:CatAbs}. 
\end{ex}

\begin{defn}
An \emph{\absorption of the singularities of $\mathsf{S}$} is a categorical absorption of the singularities of $\mathsf{S}$, where the subcategory $\mathcal{A}$ admits an admissible \SOD of the form
\begin{align*}
\ca = \langle \Db(R_1), \ldots, \Db(R_n) \rangle,
\end{align*}
where the $R_i$ are associative $\C$-algebras. If $n=1$, then $\ca$ has a tilting object, cf.\ the tilting equivalence in \eqref{def:tilting}.
\end{defn}

\begin{ex}[\KSOD vs.\ \absorption] \label{E:KSOD}
By the same reasoning as in \Cref{E:KuzShi}, every \KSOD gives rise to an \absorption of $\Sing(X)$.
But the following example shows the concept of \absorptions of singularities is more general. 

Consider the Gorenstein weighted projective space 
\[Y=\P(1^3, 3)=\Proj(\C[z_0, \ldots, z_3]),\] 
where $\deg z_3=3$ and $\deg z_i=1$ else.
It has a tilting bundle $\ct$ by work of Kawamata \cite{Kaw21}. 
Let $C \subseteq Y$ be a curve with an $A_3$-singularity that does not meet the singular point $[0:0:0:1]$ of $Y$, e.g.\ $C=V(z_0^4 - z_1^2z_2(z_2+z_1), z_3)$.
Let $X$ be the blow-up of $Y$ in $C$.
By Orlov's blow-up formula (cf.\ e.g.\ \cite[Theorem 5.1]{KPS19} for a formulation in the singular setting) and Kawamata's tilting result, we have an admissible \SOD
\begin{align*}
    \Db(X)=\langle \Db(Y), \Db(C) \rangle \cong \langle \Db(R), \Db(C) \rangle
\end{align*}
for a finite-dimensional $\C$-algebra $R=\End_Y(\ct)$. These equivalences induce equivalences of singularity categories
\begin{align*}
\Dsg(\widehat{\co}_{X, s}) \oplus  \Dsg(\widehat{\co}_{X, s'})   \cong \overline{\Dsg(X)} \cong \Dsg(R) \oplus \Dsg(C), 
\end{align*}
where $s$ is the strict transform of $[0:0:0:1] \in Y$ and $s'$ is the $A_3$-singularity of $X$. The essential image of $\Dsg(R)$ in $\smash{\overline{\Dsg(X)}}$ is equivalent to $\smash{\Dsg(\widehat{\co}_{X, s})}$. This shows that $s$ admits an \absorption of singularities.
 On the other hand, \Cref{T:Main} will show that odd-dimensional $A_3$-singularities do \emph{not} admit an \absorption of singularities. In particular, $s'$ does not admit an \absorption of singularities and therefore $X$ cannot have a \KSOD.
\end{ex}

\begin{thm}\label{T:Main}
   Let $X$ be a projective Gorenstein variety over $\C$.
   Assume that $d = \dim(X)$ is odd and that $X$ contains an \emph{isolated} singularity $s$, which is an ADE-hypersurface singularity or satisfies $\smash{\widehat{\co}_{X,s} \in \SMAL}$.

  If there is an \absorption $\ca = \langle \Db(R_1), \dots, \Db(R_n) \rangle$ of $s$, then $s$ is a nodal singularity, i.e.\ an $A_1$-singularity $\smash{\widehat{\co}_{X,s}} \cong \C\llbracket z_0, \ldots, z_{d} \rrbracket/(z_0^2 + \cdots + z_{d}^2 )$.
\end{thm}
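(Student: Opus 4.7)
The plan is to transfer the problem to a comparison between cluster-tilting objects in the singularity categories on the two sides of the \absorption. The key dichotomy, indicated in the abstract, is that the singularity category of a non-nodal $S \in \SMAL^{(3)}$ admits a cluster-tilting object whose stable endomorphism quiver necessarily contains a loop or a $2$-cycle, whereas cluster-tilting objects in singularity categories of finite-dimensional Gorenstein $\C$-algebras are subject to a no-loop, no-$2$-cycle principle. First, I would reduce to the situation $\Dsg(\widehat{\co}_s) \simeq \Dsg(S)$ with $S \in \SMAL^{(3)}$: odd-dimensional ADE-hyper\-surface singularities reduce to three-dimensional ADE-hypersurface singularities by Knörrer periodicity, and such threefolds lie in $\SMAL^{(3)}$; the general $\SMAL$-case reduces to $\SMAL^{(3)}$ by definition.

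Second, I would translate the \absorption hypothesis into a categorical embedding. Combining \Cref{D:CatAbs} with $\Dsg(\widehat{\co}_{X,s})$ appearing as a complete-local factor of $\overline{\Dsg_{\{s\}}(X)}$, and the \SOD of $\ca$ inducing $\ca^{\sg} \subseteq \bigoplus_{i=1}^n \overline{\Dsg(R_i)}$, yields a fully faithful triangle functor
\[
    \Phi : \Dsg(S) \hookrightarrow \bigoplus_{i=1}^n \overline{\Dsg(R_i)}
\]
whose essential image is a triangulated direct summand. A categorical Krull--Schmidt argument of the type already used in \Cref{E:KSOD} then identifies this image with a summand of $\overline{\Dsg(R)}$ for a single finite-dimensional $\C$-algebra $R$, which a further modification (passing to a suitable stable endomorphism algebra of a generator) renders Gorenstein so that the no-loop machinery on the target side applies in its standard form.

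The heart of the argument is to produce the promised cluster-tilting obstruction. For any non-nodal $S \in \SMAL^{(3)}$, derived pushforward from a small resolution of the direct sum of the structure sheaves of the exceptional curves yields a cluster-tilting object $M \in \Dsg(S)$ whose endomorphism ring is an explicit contraction-type algebra (à la Iyama--Wemyss/Donovan--Wemyss); a case-by-case check on the ADE-type of the generic hyperplane section shows that the Gabriel quiver of $\underline{\End}(M)$ contains a loop or a $2$-cycle precisely when $S$ is not of conifold type, that is, not an ordinary $A_1$-node. Via $\Phi$, the object $M$ transports to a cluster-tilting object in $\overline{\Dsg(R)}$, contradicting the categorical no-loop and no-$2$-cycle theorem for cluster-tilting objects in singularity categories of finite-dimensional Gorenstein $\C$-algebras. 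The main technical obstacle, I expect, is the second step: performing the Krull--Schmidt isolation so that $\Phi$ genuinely lands in a summand of the singularity category of a single finite-dimensional Gorenstein algebra, and verifying that the defining conditions of a cluster-tilting object (Ext-vanishing and generation, including after idempotent completion) are preserved by $\Phi$ so that the no-loop theorem on the target side can indeed be invoked.
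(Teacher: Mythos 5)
Your central idea --- transport a cluster-tilting object from the geometric side to the algebraic side and invoke a no-loop/no-$2$-cycle obstruction for cluster-tilting objects over finite-dimensional Gorenstein algebras --- is correct and is exactly what the paper does for the subcase $\widehat{\co}_s \in \SMAL$. However, your proposal has a serious gap at the very first step, and a further soft spot in the second step.

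The opening reduction to $\SMAL^{(3)}$ fails. It is \emph{not} true that every $3$-dimensional ADE-hypersurface singularity admits a small resolution. By Reid's criterion (cf.\ Proposition P:BIKR in the paper), the threefold $cA$-singularity $P_3/(g(z_0,z_1) + z_2^2 + z_3^2)$ lies in $\SMAL^{(3)}$ only when $g$ factors into pairwise coprime irreducible factors of order one. For the threefold $A_{2n}$-singularity one has $g = z_0^{2n+1}+z_1^2$, which is irreducible of multiplicity $2$, so this fails; likewise the threefold $D_{2m+1}$ and $E_n$ singularities do not admit small resolutions. In particular, for $A_{2n}$ the singularity category $\Dsg(S)$ does not even contain a cluster-tilting object (its AR-quiver has a loop, which cluster-tilting theory would forbid), so the strategy of transporting a cluster-tilting object from $\Dsg(S)$ to $\overline{\Dsg(R)}$ simply cannot get started. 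The paper handles these two classes of ADE types by genuinely different arguments: for $A_{2n}$ the contradiction is already at the level of AR-quivers --- $\Dsg(S)$ has a loop in its AR-quiver while the AR-quiver of $\ul{\GP}_{\cp}(R_i)$ cannot have loops (the categorical no-loop theorem, Theorem T:PropertiesDsgR(a)); for $D_{2m+1}$, $E_6$, $E_7$, $E_8$ it performs an extra Auslander--Solberg reduction simultaneously on the geometric side (dropping $\tau$-orbits from the ADE quiver to land at $A_3$ or $D_4$) and on the algebraic side (changing the Frobenius structure on $\GP(R_i)$) before invoking the cluster-tilting machinery.

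The second step is also off target. You do not need to "render $R$ Gorenstein" by passing to a stable endomorphism algebra: each $R_i$ is automatically a finite-dimensional Gorenstein algebra by the Kuznetsov--Shinder criterion (Proposition P:KuznetsovShinder). The actual difficulty --- which your sketch does not resolve --- is that $\Dsg(\widehat{\co}_s)$ appears only as a triangulated \emph{summand} $\Dsg(\widehat{\co}_s) \oplus \cc$ of some $\Dsg(R_i)$ (itself a consequence of connectedness plus the Calabi--Yau property of $\Dsg(\widehat{\co}_s)$, cf.\ Proposition prop:reduceabsorption, not a bare Krull--Schmidt decomposition of an SOD). To kill $\cc$ while retaining exactly the homological properties the categorical no-loop theorem needs ($\Hom$-finiteness, Krull--Schmidt, finite global dimension of $\mod\GP(R_i)$), the paper keeps the \emph{underlying additive category} $\GP(R_i)$ fixed and only changes the \emph{exact} structure via Auslander--Solberg reduction (Lemma L:AuslanderSolbergSummand); this is the step your "stable endomorphism algebra of a generator" proposal does not supply, and indeed a generator with finite additive closure need not exist in $\Dsg(\widehat{\co}_s)$.
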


We have the following more explicit description of the singularities appearing in \Cref{T:Main}, cf.\ also \Cref{S:AppendixDetailsOnSing}.

\begin{rem}\label{rem:explicit}%
An \emph{ADE-hypersurface singularity} %
is a singularity $S$ isomorphic to $\C\llbracket z_{0}, \ldots, z_{d}\rrbracket/(f),$ where $f$ is one of the following polynomials
\begin{itemize}
\item[$(A_{n})$] \quad  $z_0^{n+1}+ z_1^2 + z^2_{2} + \cdots + z_{d}^2 \quad  \,\, \, \, \, ( n \geq 1 )$,
\item[$(D_{n})$] \quad $z_0^2z_1 + z_1^{n-1} + z^2_{2} + \cdots + z_{d}^2 \quad ( n \geq 4 )$,
\item[$(E_{6})$] \quad $z_0^3 + z_1^4 + z^2_{2} + \cdots + z_{d}^2$,
\item[$(E_{7})$] \quad $z_0^3 + z_0z_1^3 + z^2_{2} + \cdots + z_{d}^2$,
\item[$(E_{8})$] \quad $z_0^3 + z_1^5 + z^2_{2} + \cdots + z_{d}^2$.
\end{itemize}
The $A_1$-singularities are also known as \emph{nodal singularities}.
\Cref{T:Main} applies to all isolated singularities $s \in \Sing(X)$ such that $\smash{\widehat{\co}_{X,s}}$ is an odd-dimensional ADE-hypersurface singularity and also to singularities whose singularity categories are equivalent to those of $3$-dimensional singularities $s$ where $\smash{\widehat{\co}_{X,s}}$ admits a small resolution of singularities. The latter $3$-dimensional singularities are a subclass of the \emph{compound Du Val-singularities (cDV-singularities)}, see Theorem \ref{Reid},
that is singularities of the form $\smash{\widehat{\co}_{X,s}} \cong  \C\llbracket z_{0}, \ldots, z_{3}\rrbracket/(f)$ where $f$ is a polynomial 
\begin{itemize}
\item[(cDV)] \quad $g + z_3 h$, \quad $(g \in \C\llbracket z_0, z_1, z_2 \rrbracket$ of type ADE and $h \in \C\llbracket z_0, z_1, z_2, z_3 \rrbracket)$.
\end{itemize}
\end{rem}

\begin{rem}
    For an irreducible Gorenstein variety $X$, such that its base locus (i.e.\ the base locus of the canonical bundle $\omega_X$) is empty or consists of a finite set of points, it is known that there are no non-trivial \SODs (see e.g.\ \cite[Corollary 3.3]{Spence} together with \cite[Corollary 6.6]{KS22}).
If the base locus is empty, we know that there are no tilting objects (see \cite[Example 4.6]{KPS19}). If the base locus is non-empty, this seems to be 
less clear. 
Note that in this case, our results still obstruct the existence of tilting objects.
\end{rem}

As a direct consequence of \Cref{T:Main}, we have the following more general statement, which implies \Cref{T:GoalNEW}, by \Cref{E:KSOD}.

\begin{cor}\label{cor:main}
   Let $X$ be a projective Gorenstein variety over $\C$ and let $\mathsf{S}$ be a closed subspace of $\Sing(X)$.
   Suppose $d = \dim(X)$ is odd and $\mathsf{S}$ contains an \emph{isolated} singularity $s$, which is an ADE-hypersurface singularity or satisfies $\smash{\widehat{\co}_{X,s} \in \SMAL}$.

   If there is an \absorption of the singularities of $\mathsf{S}$, then $s$ is a nodal singularity, i.e.\ an $A_1$-singularity $S \cong \C\llbracket z_0, \ldots, z_d \rrbracket/(z_0^2 + \cdots + z_d^2 )$.
\end{cor}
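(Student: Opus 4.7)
The plan is to reduce the statement directly to \Cref{T:Main} by observing that any \absorption of a closed subspace $\mathsf{S}$ of $\Sing(X)$ is automatically an \absorption of every smaller closed subset — in particular of the singleton $\{s\}$ — so that the main theorem applies without further work.

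First I would note that, since $s$ is an isolated singularity and $\mathsf{S}$ is closed in $X$, the singleton $\{s\}$ is itself a closed subspace of $X$. The tautological inclusions of supported subcategories $\Db_{\{s\}}(X) \subseteq \Db_{\mathsf{S}}(X)$ and $\Perf_{\{s\}}(X) \subseteq \Perf_{\mathsf{S}}(X)$ descend under the Verdier quotient to an inclusion $\Dsg_{\{s\}}(X) \subseteq \Dsg_{\mathsf{S}}(X)$. Concatenating with the defining chain
\begin{align}
\Dsg_{\mathsf{S}}(X) \subseteq \ca^{\sg} \subseteq \overline{\Dsg(X)}
\end{align}
of the hypothesised \absorption of $\mathsf{S}$, we conclude that the same $\ca = \langle \Db(R_1), \ldots, \Db(R_n) \rangle$ satisfies $\Dsg_{\{s\}}(X) \subseteq \ca^{\sg} \subseteq \overline{\Dsg(X)}$, i.e.\ it is an \absorption of the singularity $\{s\}$ in the sense of \Cref{D:CatAbs}.

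It then remains to invoke \Cref{T:Main}: since $X$ is a projective Gorenstein variety of odd dimension $2m+1$ containing the isolated singularity $s$ of the required analytic type (ADE-hypersurface, or with $\widehat{\co}_s \in \SMAL$), and since $\ca$ is an \absorption of $s$, the theorem forces $s$ to be nodal. The proof of the corollary is therefore purely formal; the real content lies entirely in \Cref{T:Main}, which is where the serious representation-theoretic obstruction arguments are carried out.
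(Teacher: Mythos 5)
Your proof is correct and is precisely the reduction the paper intends: the paper states the corollary as a "direct consequence of Theorem~\ref{T:Main}" without spelling out the argument, and your observation that $\Dsg_{\{s\}}(X)\subseteq\Dsg_{\mathsf{S}}(X)$ (both viewed inside $\Dsg(X)$ via \cite[Lemma 2.6]{Orlov11}), so that an \absorption of $\mathsf{S}$ is automatically an \absorption of $\{s\}$, is exactly the elementary point that makes this a direct consequence.
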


\subsection{Obstructions to singular equivalences}
The following result lies at the heart of our proof of  \Cref{T:Main}. It suggests that singular equivalences between odd-dimensional isolated hypersurface singularities and finite-dimensional algebras might only exist for nodal singularities.
  \begin{thm}\label{T:localmain}
  Let $S = \C\llbracket z_0, \ldots, z_d \rrbracket/(f)$ be a complete local $d$-dimensional ADE-hypersurface singularity or $S \in \SMAL$. 
  
  If $d$ is odd and there is a $\C$-linear triangle equivalence $\Dsg(S) \cong \Dsg(R)$
  for a finite-dimensional Gorenstein $\C$-algebra $R$, then $S$ is nodal, i.e.\ an $A_1$-singularity $S \cong \C\llbracket z_0, \ldots, z_d \rrbracket/(z_0^2 + \cdots + z_d^2 )$.
  \end{thm}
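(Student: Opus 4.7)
The plan is to exploit the categorical no-loop theorem for cluster-tilting objects in singularity categories of finite-dimensional Gorenstein algebras, by first exhibiting a cluster-tilting object in $\Dsg(S)$ whose Gabriel quiver has a loop or a $2$-cycle whenever $S$ is not of type $A_1$, as foreshadowed in the abstract.

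First I reduce to the three-dimensional case. If $S \in \SMAL$, the defining equivalence already supplies a $3$-dimensional $S^{(3)} \in \SMAL^{(3)}$ with $\Dsg(S^{(3)}) \cong \Dsg(S) \cong \Dsg(R)$. In the ADE case, Kn\"orrer periodicity $\Dsg(A/(g)) \cong \Dsg(A\llbracket u,v \rrbracket/(g+uv))$ shifts the Krull dimension by $2$ while preserving the ADE type; iterating, any odd-dimensional ADE hypersurface has singularity category $\C$-linearly triangle-equivalent to the $3$-dimensional ADE hypersurface of the same type. Thus I may assume $d=3$, and work inside the Hom-finite, Krull--Schmidt, $2$-Calabi--Yau category $\Dsg(S) \cong \underline{\MCM}(S)$.

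Next I produce a basic $2$-cluster-tilting object $T \in \Dsg(S)$ and compute its Gabriel quiver $Q_T$. For $S \in \SMAL^{(3)}$, a small resolution $Y \to \Spec S$ yields, through Van den Bergh's noncommutative crepant resolution and the Iyama--Wemyss stabilisation procedure, a cluster-tilting $T$ whose endomorphism algebra is the corresponding contraction algebra; for a $3$-dimensional ADE hypersurface one uses MCM-finiteness (Kn\"orrer, Yoshino) together with the fact that the sum of all indecomposable MCMs is cluster-tilting. In both families, whenever $S$ is not of type $A_1$, AR-quiver computations (respectively the semi-simplification of the contraction algebra) force $Q_T$ to contain a loop or a $2$-cycle. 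To conclude, the hypothesised triangle equivalence $\Dsg(S) \cong \Dsg(R)$ sends $T$ to a cluster-tilting object $T' \in \Dsg(R)$ with $\End_{\Dsg(R)}(T') \cong \End_{\Dsg(S)}(T)$, hence with the same Gabriel quiver $Q_T$. The categorical no-loop theorem for cluster-tilting objects in $\Dsg(R)$ with $R$ a finite-dimensional Gorenstein $\C$-algebra -- the representation-theoretic core of the paper -- forbids loops and $2$-cycles in $Q_{T'}$, contradicting the previous step unless $S$ is of type $A_1$, i.e.\ nodal.

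The main obstacle I expect is two-fold. On the singularity side, one must verify that the chosen cluster-tilting object has a quiver with a loop or a $2$-cycle in \emph{every} non-nodal case: this is cleanest for $\SMAL^{(3)}$ via contraction-algebra calculus but needs genuine checking for the ADE hypersurfaces that are not covered by a small resolution, and must be performed uniformly so that the obstruction really vanishes only in type $A_1$. On the algebraic side, the no-loop/no-$2$-cycle statement for $\Dsg(R)$ is delicate because $\Dsg(R)$ is not a priori $2$-Calabi--Yau and Gorenstein-projective modules over $R$ carry less structure than MCM modules over a commutative Gorenstein ring; the $2$-CY property is only inherited through the equivalence, and its interaction with Auslander--Reiten theory in $\Dsg(R)$ is the technical heart of the argument.
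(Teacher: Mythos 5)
Your high-level strategy matches the paper's for the case $S\in\SMAL$: produce a cluster-tilting object from a small resolution via Van den Bergh's NCCR (\Cref{T:VdB2}, \Cref{T:Iyama}), compute its quiver via the intersection geometry of exceptional curves (\Cref{P:QuiverClusterT}, \Cref{C:SingleVertex}), and contradict \Cref{T:PropertiesDsgR}\ref{item:noloops2cycles}. The gap is in your treatment of the ADE hypersurfaces that are \emph{not} in $\SMAL$. First, your central tool there is wrong: the sum of all indecomposable MCM modules is \emph{not} a cluster-tilting object of $\Dsg(S)$. Cluster-tilting in the sense of \Cref{defn:ctsmall} requires $\Hom(T,\Sigma T)=0$, i.e.\ $\Ext^1_S(M,N)=0$ for all indecomposable MCM $M,N$, and this fails as soon as there is a nonsplit AR sequence (already for $A_2$ the unique indecomposable $M$ has $\Omega M\cong M$, so $\Hom(M,\Sigma M)=\End(M)\ne 0$). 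Worse, for $3$-dimensional $A_{2k}$ the category $\Dsg(S)$ has \emph{no} cluster-tilting object at all (cf.\ \cite[Theorem 1.5]{BIKR}: over $\C\llbracket x,y\rrbracket/(x^{n+1}+y^2)$ one exists iff $n$ is odd), so no argument routed through cluster-tilting objects can settle that case. The paper instead uses the loop $\gamma$ in the AR-quiver \eqref{E:A2n} against the categorical no-loop theorem \Cref{T:PropertiesDsgR}\ref{item:noloops}, which is a statement about the AR-quiver of $\ul{\GP}_{\cp}(R)$, not about quivers of cluster-tilting objects.

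Second, for $D_{2m+1}$, $E_6$, $E_7$, $E_8$ your claim that ``AR-quiver computations force a loop or $2$-cycle'' is not substantiated and, as far as the paper's own figures indicate, is false: those stable AR-quivers are bipartite-like and contain neither loops nor $2$-cycles, which is precisely why the paper cannot conclude directly and instead performs Auslander--Solberg reduction (\Cref{C:Frob}, \Cref{C:ReduceType}) to pass to a \emph{smaller} Frobenius exact structure $\GP_{\cm}(R)$ with $\ul{\GP}_\cm(R)\cong\Dsg(S')$, $S'$ of type $A_3$ or $D_4\in\SMAL^{(3)}$, and only then invokes the small-resolution / cluster-tilting machinery. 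Your proposal never modifies the Frobenius structure and so has no way to import those types into the $\SMAL$ branch. In short: the $\SMAL$ case of your argument is sound and is the paper's argument, but the non-$\SMAL$ ADE cases need the two separate devices (AR-quiver loops for $A_{2k}$; Auslander--Solberg reduction for $D_{2m+1}$, $E_n$) that your write-up replaces with an incorrect uniform claim.
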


  We prove this theorem as part of the proof of \Cref{T:Main} in \Cref{SubS:ProofofTheoremMain} -- in particular, the part starting with equation \eqref{E:FirstReduction}.

\subsection{Notations and conventions}
All our commutative $\C$-algebras are of the form $\C\llbracket z_0, \ldots, z_n \rrbracket/I$. 
By a variety, we mean a separated noetherian reduced, but not necessarily irreducible, scheme of finite type over $\C$. 
In particular, the local completion of a $\mathbb{C}$-variety at a closed point is again a commutative $\mathbb{C}$-algebra of the form $\C\llbracket z_0, \ldots, z_n \rrbracket/I$, by Cohen's structure theorem.
We denote the Krull dimension of such a commutative $\C$-algebra $S$ by $\dim \, S$.
If the variety $X$ is clear from context we may write $\co_s$ instead of $\co_{X,s}$.
For an isolated singularity $s \in \Sing(X)$ we may write $\Dsg_s(X)$ instead of $\smash{\Dsg_{\{s\}}(X)}$.

We assume all subcategories to be closed under isomorphism, unless stated otherwise. We also assume all functors to be covariant unless stated otherwise.
The shift functor of all our triangulated categories is denoted by $\Sigma$, unless explicitly stated otherwise.
For an abelian category $\cA$ with enough projective (or injective) objects we define its \emph{global dimension} $\gldim \cA$ as the smallest $n \in \mathbb{N}$ such that $\Ext^{n+1}_{\cA}(-,-) \cong 0$ as a functor if it exists and otherwise $\infty$. We call $\gldim \Mod \Lambda$ the \emph{global dimension} of a ring $\Lambda$ and denote it by $\gldim \Lambda$.

\subsection{Roadmap}\label{subsec:roadmap}
The left side of \Cref{fig:roadmap}
\begin{figure}[h]\scalebox{.89}{\begin{tikzpicture}
\pgfdeclarelayer{b1}
\pgfdeclarelayer{b2}
\pgfdeclarelayer{b3}
\pgfdeclarelayer{b4}
\pgfsetlayers{b1, b2, b3, b4, background, main}

\tikzstyle{blue} = [
   rectangle, align=center, rounded corners=2pt, text width=16em, text opacity = 1, draw=blue!75!black, fill=blue!20
];
\tikzstyle{thm} = [
   rectangle, align=center, rounded corners=2pt, text width=10em, draw=blue!75!black, fill=blue!20, inner sep = .5em
]

\tikzstyle{thmenum} = [
    rectangle, align=center, rounded corners=2pt, text width=7em, draw=red!75!black, fill=red!20
]

\node[blue, text width= 16em] (SE) {\small Singular equivalences:\\$\Dsg(\widehat{\mathcal{O}}_{X,s})\cong\Dsg(R)$\\\tiny (with $R$ finite dimensional and Gorenstein)};
\node[blue, text width= 16em, above=4.5em of SE] (TE) {\small Tilting equivalences:\\$\Db(X) \cong \Db(R)$};
\node[text width= 16em, above=2em of TE, align=center] (KSOD) {\small \KSODs of $\Db(X)$:\\$\Db(X) = \langle \cp, \Db(R_1), \dots, \Db(R_n)\rangle$};
\begin{pgfonlayer}{b4}
    \node[blue, fit=(KSOD) (TE)] (KSODTE){};
\end{pgfonlayer}
\node[text width= 16em, above=3em of KSOD, align=center] (ACA) {\small Algebraic categorical absorptions of $\mathsf{S}$:\\$\cA = \langle \Db(R_1), \dots, \Db(R_n)\rangle \subset \Db(X)$};
\begin{pgfonlayer}{b3}
    \node[blue, fit=(ACA) (KSODTE)] (ACAKSODTE) {};
\end{pgfonlayer}

\node[above of=ACA, anchor=south] (SODsDesc) {\SODs (of subcategories) of $\Db(X)$:};
\begin{pgfonlayer}{b2}
    \node[fill=red!20, rounded corners=2pt, draw=red!75!black, fit=(SODsDesc) (ACAKSODTE)] (SODs)     {};
\end{pgfonlayer}

\node[right=10.5em of ACA, anchor=center] (RightOfACA) {};
\node[thm, above=.3em of RightOfACA] (corB) {\small \Cref{cor:main}};
\node[thm, below=.3em of RightOfACA] (thmB) {\small \Cref{T:Main}\\\tiny (Cf.\ also \Cref{fig:flowchart})};

\node[thm, right=10.5em of KSOD, anchor=center, yshift=-1em] (thmA) {\small \Cref{T:GoalNEW}};

\node[thm, right=10.5em of TE, anchor=center] (clm) {\small\Cref{clm:special}};

\node[right=10.5em of SE, anchor=center] (RightOfSE) {};

\node[thm, above=.5em of RightOfSE] (thmC) {\Cref{T:localmain}\\\tiny (Cf.\ also \Cref{fig:flowchart})};

\node[thm, below=.5em of RightOfSE] (thmD) {\Cref{P:ClusterTiltingObject}};

\node at ([yshift=.5em]KSODTE.north) {\tiny Special case, where $\mathsf{S} = \Sing(X)$ and $\cp \coloneqq {}^\perp \ca$:};

\node at ([yshift=.5em]TE.north) {\tiny Special case, where $n = 1$ and $\cp = 0$:} ;

\draw[->, shorten >=1pt, shorten <=1pt] (KSODTE.east) -| ++(+1em,-4em) |- node[above right, align=center, yshift=-2.3em] {\rotatebox{90}{\tiny For $n=1$ and $\Sing(X) = \{s\}$, cf.\ \Cref{P:RinKSOD}}}(SE.east);

\draw[->,  shorten >=1pt,shorten <=1pt] (thmA.south) -- (clm.north);
\draw[->,  shorten >=1pt,shorten <=1pt] (thmB.south) -- (thmA.north);
\draw[<->, shorten >=1pt,shorten <=1pt] (thmB.north) -- (corB.south);

\draw[->,  dashed, shorten >=1pt,shorten <=1pt] (thmC.west) -- ([xshift=-1em] thmC.west) -- node[left] {\rotatebox{90}{\tiny Generalisation: Using results listed in Sections \ref{subsec:prep}--\ref{subsec:AS} and \ref{subsec:homresults}}}([xshift=-1em]thmB.west) -- (thmB.west);

\draw[->, shorten >=1pt,shorten <=1pt] (thmD.north) -- node[right] {\tiny \Cref{T:PropertiesDsgR}}(thmC.south);

\draw [decorate,decoration={calligraphic brace,amplitude=5pt,raise=.5em}] (SODs.south west) -- (SODs.north west) node (desc1) [midway, xshift=-2.1em, rotate=90, text width = 12em, align=center] {\tiny Projective varieties $X$\\ and their derived categories};

\draw [decorate,decoration={calligraphic brace,amplitude=5pt,raise=.5em}] (SODs.west |- thmD.south) -- ($(SODs.west |- SODs.south west) - (0,0.5em)$) node [midway, xshift=-2.3em, rotate=90, text width = 12em, align=center] {\tiny Complete local singularities $\widehat{\co}_{X,s}$ and their singularity categories};

\node (A) at ([yshift=1.3em]SODs.north) [align =center] {\bf (Hierarchy of) structures};

\node at ($ (thmA |- A.north) $) [align =center, anchor=north] {\bf Our obstructions\\ \bf for non-nodal singularities};

\end{tikzpicture}}
\caption{Roadmap}
\label{fig:roadmap}
\end{figure}%
gives an overview how \absorptions, \KSODs and tilting and singular equivalences relate to each other.
On the right side is an overview of our main results, which show that for singularities of type ADE or $\SMAL$ the corresponding structure on the same height on the left can \emph{only} exist for nodal singularities, that is for type $A_1$.

\Cref{fig:flowchart} explains how the singularities appearing in \Cref{T:Main,T:localmain} relate%
\begin{figure}[h]\scalebox{.89}{\begin{tikzpicture}
\tikzstyle{blue} = [
   rectangle, align=center, rounded corners=2pt, text opacity = 1, draw=blue!75!black, fill=blue!20
];
\tikzstyle{thm} = [
   rectangle, align=center, rounded corners=2pt, draw=green!75!black, fill=green!40!black!20, inner sep = .5em
]

\tikzstyle{thmenum} = [
    rectangle, align=center, rounded corners=2pt, draw=red!75!black, fill=red!20, minimum height = 2.5em, minimum width = 4.1em
]
\tikzstyle{thmenumii} = [
    rectangle, align=center, rounded corners=2pt, draw=red!75!black, fill=red!20, minimum height = 5.9em, minimum width = 4.1em
]
\pgfdeclarelayer{b1}
\pgfdeclarelayer{b2}
\pgfdeclarelayer{b3}
\pgfdeclarelayer{b4}
\pgfsetlayers{b1, b2, b3, b4, background, main}

      \node[thm] at (0,0) (Aone) {$A_1$};
      \node[left = 1em of Aone] (Atwomplusone) {$A_{2m+1}$};
      \begin{pgfonlayer}{b4}
         \node[thmenum, fit = (Aone) (Atwomplusone)] (allAtwomplusone) {};
      \end{pgfonlayer}
      \node[thmenum, left = 1em of allAtwomplusone] (Dtwomplusone) {$D_{2m+1}$};
      \node[thmenum, left = 1em of Dtwomplusone] (Esix) {$E_{6}$};
      \node[thmenum, below = 1.3em of Atwomplusone] (Dtwom) {$D_{2m}$};
      \node[thmenum, left = 1em of Dtwom] (Eseven) {$E_7$};
      \node[thmenum, left = 1em of Eseven] (Eeight) {$E_8$};
      \node[thmenumii] at ($(Esix.center)!.5!(Eeight.center) - (5.3em,0)$) (Atwom) {$A_{2m}$};

      \draw[dashed] ([xshift=.5em, yshift=-.5em]Eseven.south east) -- ([xshift=.5em, yshift=-.5em]Dtwom.south east) -- ([xshift=.5em, yshift=+.3em]Dtwom.north east) -- ([xshift=.5em, yshift=-.5em]allAtwomplusone.south east) -- ([xshift=.5em, yshift=+.5em]allAtwomplusone.north east) -- ([xshift=-.5em, yshift=+.5em]Atwom.north west) -- ([xshift=-.5em, yshift=-.5em]Atwom.south west) -- ([xshift=.5em, yshift=-.5em]Atwom.south east) -- cycle;
      \draw[dashed]([xshift=.5em, yshift=.5em]Dtwomplusone.north east) -- ([xshift=.5em, yshift=-5em]Eseven.south east) -- ([xshift=.5em, yshift=-5em]Dtwom.south east) -- ([xshift=.5em, yshift=-.5em]Dtwom.south east);
      \draw[dashed]([xshift=.5em, yshift=.5em]Atwom.north east) -- ([xshift=.5em, yshift=-.5em]Atwom.south east);
      \begin{pgfonlayer}{b3}
         \fill[pattern=north west lines, pattern color=blue!20] ([xshift=.5em, yshift=-.5em]Eseven.south east) -- ([xshift=.5em, yshift=-.5em]Dtwom.south east) -- ([xshift=.5em, yshift=+.3em]Dtwom.north east) -- ([xshift=.5em, yshift=-.5em]allAtwomplusone.south east) -- ([xshift=.5em, yshift=+.5em]allAtwomplusone.north east) -- ([xshift=-.5em, yshift=+.5em]Atwom.north west) -- ([xshift=-.5em, yshift=-.5em]Atwom.south west) -- ([xshift=.5em, yshift=-.5em]Atwom.south east) -- cycle;
         \fill[pattern=north east lines, pattern color=blue!20] ([xshift=.5em, yshift=.5em]Dtwomplusone.north east) -- ([xshift=.5em, yshift=-5em]Eseven.south east) -- ([xshift=.5em, yshift=-5em]Dtwom.south east) -- ([xshift=.5em, yshift=+.3em]Dtwom.north east) -- ([xshift=.5em, yshift=-.5em]allAtwomplusone.south east) -- ([xshift=.5em, yshift=+.5em]allAtwomplusone.north east) -- cycle;
      \end{pgfonlayer}

      \draw [decorate,decoration={calligraphic brace,amplitude=5pt,raise=.5em}] ([xshift=-.4em, yshift=.5em]Atwom.north west) -- ([xshift=.4em, yshift=.5em]Atwom.north east); 
      \draw [decorate,decoration={calligraphic brace,amplitude=5pt,raise=.5em}] ([xshift=-.4em,yshift=.5em]Dtwom.west |- Esix.north) -- ([xshift=+.4em,yshift=.5em]Dtwom.east |- Esix.north); 
      \draw [decorate,decoration={calligraphic brace,amplitude=5pt,raise=.5em}] ([xshift=-.4em, yshift=.5em]Esix.north west) -- ([xshift=.4em, yshift=.5em]Dtwomplusone.north east); 
      \draw [decorate,decoration={calligraphic brace,amplitude=5pt,raise=.5em}] ([xshift=.5em, yshift=.4em]allAtwomplusone.north east) -- ([xshift=.5em, yshift=-.4em]Dtwom.south east -| allAtwomplusone.north east); 
      \draw [decorate,decoration={calligraphic brace,amplitude=5pt,raise=.5em}] ([xshift=.4em, yshift=-5em]allAtwomplusone.south east |- Dtwom.south) -- ([xshift=-.4em, yshift=-5em]allAtwomplusone.south west |- Dtwom.south); 

      \draw[->,  shorten >=1pt,shorten <=1pt] (Esix.east) -- (Dtwomplusone.west);
      \draw[->,  shorten >=1pt,shorten <=1pt] (Eseven.east) -- (Dtwom.west);
      \draw[->,  shorten >=1pt,shorten <=1pt] (Eeight.east) -- (Eseven.west);
      \draw[->,  shorten >=1pt,shorten <=1pt] (Dtwomplusone.east) -- (allAtwomplusone.west);

      \node[text width=5em, align=center, anchor= south] at ([yshift=1.35em]Atwom.north) {\tiny Loop(s) in the AR-quiver};
      \node[text width=5em, align=center, anchor= south] at ([yshift=1.35em]Dtwom |- Esix.north) {\tiny 2-Cycle(s) in the AR-quiver};
      \node[text width=12em, align=center, anchor= south] at ([yshift=1.35em, xshift=.5em]Esix.north east) {\tiny Reduction to the case(s) of \\ type $\SMAL$ using \Cref{L:ReduceType}};
      \node[text width=12em, align=center, anchor= north] at ([yshift=-5.85em] allAtwomplusone.center |- Dtwom.south) {\tiny Singularities of type $\SMAL$};
      \node[text width=12em, align=center, anchor= north, rotate=90] at ([xshift= 1.35em]allAtwomplusone.east |- Atwom.center) {\tiny Singularities of type ADE};
\end{tikzpicture}}
\caption{Singularities and obstructions in \Cref{T:Main,T:localmain}}
\label{fig:flowchart}
\end{figure}
and shows what the obstructions for \absorptions or singular equivalences to finite dimensional Gorenstein $\C$-algebras for each non-nodal singularity are. Recall that in dimension $3$ all singularities considered in \Cref{fig:flowchart} are also cDV-singularities, see \Cref{rem:explicit,Reid}. 

The Rest of this paper is organized as follows: in \Cref{sec:main} we first give an idea how to prove \Cref{clm:special} and \Cref{T:Main}.
We then give a summary of the results which are shown in \Cref{sec:prep,sec:homresults,sec:aussolred,sec:fromsmallres,sec:gor+sing} and give a proof our main result \Cref{T:Main} under assumption of these. In \Cref{sec:aussolred,sec:prep,sec:gor+sing} we show the results needed to reduce the proof of \Cref{T:Main} to a slightly stronger version of \Cref{T:localmain} which is implied by our main geometric result \Cref{P:ClusterTiltingObject} and our main homological result \Cref{T:PropertiesDsgR}.
\Cref{sec:homresults,sec:fromsmallres} are dedicated to proving the latter two results, respectively.

%

%
%
%
%
%
%
%
%
%

\section{Summary of the main tools and a proof of \Cref{T:Main}}\label{sec:main}
In this section, we prove our main Theorem \ref{T:Main}, after collecting the results that are needed in its proof. We refer to the later sections for details and proofs. 

First however, we give an overview by outlining the proof of a special case for tilting equivalences, cf.\ \Cref{clm:special}, and the general case of \Cref{T:Main}.

\subsection*{The proof in the special case of one singular point}
Let $X$ be a projective threefold with a unique singular point $s $ of type $ \SMAL^{(3)}$, that is $\Spec(\widehat{\mathcal{O}}_s)$ admits a small resolution of singularities. For example, $s$ could be an $A_{2m+1}$-hypersurface singularity. We first explain the steps of the proof of a special case of Theorem \ref{T:Main}.

\begin{Claim}\label{clm:special}
 If $\Db(X)$ admits a tilting object, then $s$ is nodal (i.e.\ an $A_1$-singularity).   
\end{Claim}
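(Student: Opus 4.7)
The plan is to upgrade the tilting equivalence into a singular equivalence between the complete local singularity category of $s$ and the singularity category of a finite-dimensional Gorenstein $\C$-algebra, and then exploit incompatible features of cluster-tilting objects on the two sides.

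First, the tilting object $T \in \Perf(X)$ yields $\Db(X) \cong \Db(R)$ with $R \coloneqq \End_X(T)$ a finite-dimensional $\C$-algebra. Since $T$ is perfect and $X$ is Gorenstein projective, $R$ is Iwanaga--Gorenstein and the equivalence restricts to $\Perf(X) \cong \Kb(\proj R)$. Passing to Verdier quotients gives $\Dsg(X) \cong \Dsg(R)$. Because $s$ is the only singular point, $\Dsg_s(X) = \Dsg(X)$, and Orlov's local-to-global comparison identifies the idempotent completion with $\overline{\Dsg(\widehat{\co}_s)}$. Combining the two equivalences yields a $\C$-linear triangle equivalence
\[
\overline{\Dsg(\widehat{\co}_s)} \cong \overline{\Dsg(R)},
\]
which is the special case of \Cref{T:localmain} that I need.

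Next I transfer a cluster-tilting object across this equivalence. For $\widehat{\co}_s \in \SMAL^{(3)}$, the small resolution $\pi\colon Y \to \Spec(\widehat{\co}_s)$ produces, via Van den Bergh's NCCR construction, a maximal Cohen--Macaulay module $M = \widehat{\co}_s \oplus M_1 \oplus \cdots \oplus M_n$ whose image in $\underline{\MCM}(\widehat{\co}_s) \cong \Dsg(\widehat{\co}_s)$ is cluster-tilting; in particular this category is Hom-finite and $2$-Calabi--Yau (Auslander--Reiten duality in the isolated Gorenstein threefold case). Transporting $M$ through the equivalence produces a cluster-tilting object $M' \in \overline{\Dsg(R)}$, and its endomorphism algebra is isomorphic to the Donovan--Wemyss contraction algebra $\Lambda_{\mathrm{con}}$ of $\widehat{\co}_s$.

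The final step confronts the Gabriel quiver of $\End_{\overline{\Dsg(R)}}(M')$ with two incompatible constraints. On one hand, for any non-nodal $\widehat{\co}_s \in \SMAL^{(3)}$ the contraction algebra is a Jacobi algebra of a quiver with potential whose underlying quiver carries a loop or a $2$-cycle, by work of Donovan--Wemyss and August characterising cDV singularities via their contraction algebras. On the other hand, for cluster-tilting objects in singularity categories of \emph{finite-dimensional} Iwanaga--Gorenstein $\C$-algebras, a categorical no-loop and no-$2$-cycle theorem should hold --- this is the key representation-theoretic ingredient advertised by the paper. The two statements force the quiver to have neither loops nor $2$-cycles, which in the cDV list happens only for the nodal case.

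The main obstacle is the no-loop/no-$2$-cycle theorem in the finite-dimensional Gorenstein setting. The classical no-loop theorem needs $\gldim < \infty$, which fails for $R$ here; one has to work inside $\overline{\Dsg(R)}$ itself and exploit the $2$-Calabi--Yau structure that is only visible through the transferred equivalence, presumably via Auslander--Reiten theory and a careful analysis of simples in singularity categories of finite-dimensional Gorenstein algebras. Making this rigidity robust enough to rule out every non-nodal cDV type is where the technical heart of the argument lies.
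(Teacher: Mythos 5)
Your overall strategy matches the paper's: tilting equivalence $\Db(X)\cong\Db(R)$ with $R$ Gorenstein, passing to singularity categories and using Orlov's completion result to get $\Dsg(\widehat{\co}_s)\cong\Dsg(R)$, transferring a cluster-tilting object from the NCCR side, and confronting the quiver with a no-loop/no-$2$-cycle obstruction. Where you diverge on the geometric side is in invoking the Donovan--Wemyss contraction-algebra theory to assert loops or $2$-cycles for non-nodal cDV singularities; the paper instead works directly with the exceptional curve of the small resolution, producing a $2$-cycle subquiver whenever two components $C_i\neq C_j$ intersect (\Cref{P:QuiverClusterT}, \Cref{P:Neb}) and then using Keller--Reiten's recognition theorem once the quiver collapses to a point. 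Both routes are viable; yours imports more machinery from the flops literature, whereas the paper's is self-contained via a spectral-sequence Ext computation and the Keller--Reiten recognition theorem.

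The genuine gap is in the representation-theoretic half, and you flag it yourself but then steer in a direction that would not work. You write that one should ``work inside $\overline{\Dsg(R)}$ itself and exploit the $2$-Calabi--Yau structure''. That is not where the argument lives: the triangulated category $\Dsg(R)$ carries no useful finiteness for a no-loop theorem, and $\gldim R$ is infinite. The paper's resolution is to descend to the \emph{Frobenius model} $\GP(R)$ (Gorenstein-projective modules), where Auslander's results give $\gldim\mod\GP(R)<\infty$ (\Cref{T:Auslander}\ref{item:GPglfin}); one then restricts to a finitely generated additive subcategory (\Cref{lem:localise}) and applies Igusa--Liu--Paquette's strong no-loop theorem, with a further Iyama--Yoshino mutation argument killing $\Ext^2(S_T,S_T)$ for the $2$-cycle case (\Cref{thm:noloopsand2cycles}). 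For the special case you are asked to prove, the paper in fact just cites Buan--Iyama--Reiten--Scott for the no-loop/no-$2$-cycle property of cluster-tilting objects in $\ul{\GP}_{\proj R}(R)$; knowing that reference, or knowing to pass to the Frobenius category rather than the Verdier quotient, is precisely the missing ingredient.
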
 
\begin{proof}
A tilting object $\ct$ in 
$\Db(X)$ induces a triangle equivalence
\begin{align}\label{E:tiltingEq}
    \Db(X) \cong \Db(R),
\end{align}
where $R=\End_{\Db(X)}(\ct)$ is a finite-dimensional associative Gorenstein $\C$-algebra, cf.\ \Cref{P:KuznetsovShinder}.

The equivalence \eqref{E:tiltingEq} induces an equivalence of singularity categories
\begin{align}\label{E:IndSingEq}
    \Dsg(X) \cong \Dsg(R).
\end{align}
Since the latter category is idempotent complete \cite[Corollary 2.4]{XWChen}, there is a triangle equivalence by work of Orlov, cf.\ \Cref{prop:decomposition}
\begin{align}\label{E:SingEq2}
    \Dsg(X) \cong \Dsg(\widehat{\mathcal{O}}_s).
\end{align}
The combination of \eqref{E:IndSingEq} and \eqref{E:SingEq2} is Step \ref{I:2.1} in the general outline below.

By Buchweitz's \Cref{T:BuchweitzIntro}, there is a triangle equivalence
\begin{align*}
     \Dsg(R) \cong \ul{\GP}_{\proj R}(R):= \frac{\GP(R)}{\proj R},
\end{align*}
where the Frobenius exact category $\GP_{\proj R}(R)$ is a noncommutative generalization of maximal Cohen--Macaulay modules. 
This is Step \ref{I:2.2} below. Summing up, we have an equivalence of triangulated categories
\begin{align}\label{E:Summary}
     \Dsg(\widehat{\mathcal{O}}_s) \cong \ul{\GP}_{\proj R}(R).
\end{align}
Since $\Spec(\widehat{\mathcal{O}}_s)$ admits a small resolution of singularities, $\widehat{\mathcal{O}}_s$ admits a noncommutative crepant resolution in the sense of Van den Bergh, cf.\ \Cref{T:VdB2}. 
By works of Iyama--Reiten, cf.\ \Cref{T:Iyama}, this implies that the triangulated category $\Dsg(\smash{\widehat{\mathcal{O}}_s})$ admits a special generator, which is a \emph{cluster-tilting object}, see \Cref{defn:ctsmall}. 
The endomorphism algebra of cluster-tilting objects $T$ is finite-dimensional and can thus be described by a quiver $Q_T$ with relations. 
We show that if the quivers $Q_T$ of all cluster-tilting objects $T$ contain neither loops nor $2$-cycles, then $s$ is nodal, cf.\ \Cref{P:ClusterTiltingObject} and Step \ref{I:2.4}.

By work of Buan--Iyama--Reiten--Scott \cite{BIRSc}, the quiver of a cluster-tilting object in $\ul{\GP}_{\proj R}(R)$ never contains loops or $2$-cycles. This corresponds to Step \ref{I:2.5}, for which we need to work considerably harder in the general setting below.

The triangle equivalence \eqref{E:Summary} sends cluster-tilting objects to cluster-tilting objects. In combination with the discussion above, this shows that $s$ is indeed nodal and finishes the proof of the claimed special case of Theorem \ref{T:Main}, cf.\ Step \ref{I:2.6}.
\end{proof}

\subsection*{Outline of the proof in general}
Here is a brief outline of the subsections below, which provide a guideline for the proof of Theorem \ref{T:Main}.

\begin{enumerate}[label={(2.\theenumi)},leftmargin=*]
   \item \label{I:2.1}We show that there exists $1 \leq i \leq n$ and a triangulated category $\cc$ such that 
   \begin{align} \label{E:DirSummand}
       \Dsg(\widehat{\co}_s) \oplus \cc = \Dsg(R_i).
   \end{align}
 We will use the equivalence \eqref{E:DirSummand} to produce a new\footnote{We can skip Step \ref{I:2.3}\ref{item:2.3a} if $\cc = 0$ and Step \ref{I:2.3}\ref{item:2.3b} if $s$ is of type $A_{2m}$ or $\widehat{\co}_s \in \SMAL$. In particular, if both $\cc = 0$ and $s$ is of type $A_{2m}$ or $\widehat{\co}_s$ in $\SMAL$, the equivalence  \eqref{E:desiredEq} is just \eqref{E:DirSummand}, with $S':=\widehat{\co}_s$.} equivalence %
    \begin{align}\label{E:desiredEq}
       \Dsg(S') \cong \ul{\GP}_{{\cm}}(R_i).
    \end{align}
    in Steps \ref{I:2.2} -- \ref{I:2.3}. This will be our analogue of the key equivalence \eqref{E:Summary} in the proof of the special case above -- here, $\GP_{{\cm}}(R_i)$ is a new $\Hom$-finite Frobenius exact category with a larger class of projective-injective objects ${\cm}$ and 
    $\Spec(S')$ admits a small resolution of singularities or is of type  $A_{2m}$. %
   \item \label{I:2.2}We show that $R_i$ is a Gorenstein algebra and thus there is a $\Hom$-finite Frobenius exact category $\GP_{\proj R_i}(R_i)$ with stable category $\Dsg(R_i) \cong \underline{\GP}_{\proj R}(R_i)$
   \item \label{I:2.3}
   \begin{enumerate}[label={(\alph*)},leftmargin=*]
   \item\label{item:2.3a} In order to remove the summand $\cc$ in \eqref{E:DirSummand}, we change the exact structure on the additive category
   $\GP_{\proj R_i}(R_i)$ to obtain a new Frobenius exact category $\GP_{\cp}(R_i)$ with larger class of projective-injective objects ${\cp}$  and stable category 
   \begin{equation} 
        \smash{\Dsg(\widehat{\co}_s)} \cong \underline{\GP}_{\cp}(R_i) := \frac{\GP(R_i)}{\cp} \label{E:SingToStabFrob}
   \end{equation}
   using Auslander--Solberg reduction, cf.\ \Cref{subsec:AS}.
   \item\label{item:2.3b} If $\widehat{\co}_{s} \in \SMAL$ or $s$ is of type $A_{2m}$ (with $m >0$), then \eqref{E:SingToStabFrob} is the desired equivalence \eqref{E:desiredEq} with $S' \coloneqq \smash{\widehat{\co}_s}$ and ${\cm} \coloneqq \cp$.
   Otherwise, by our assumptions, $s$ is an ADE-hypersurface singularity of type $D_{2m+1}$ with $m >1$ or $E_m$ with $m \in \{6,7,8\}$.
   To obtain \eqref{E:desiredEq} from \eqref{E:SingToStabFrob}, we apply Auslander--Solberg reduction on both sides of \eqref{E:SingToStabFrob}: on the left side we change $\Dsg(\smash{\widehat{\co}_s})$ into $\Dsg(S')$ with $S' \in \SMAL$ a singularity of type $A_3$ or $D_4$ and on the right side we change $\underline{\GP}_{\cp}(R_i)$ into $\underline{\GP}_{{\cm}}(R_i)$ with $\cp \subsetneq {\cm}$. 
   \end{enumerate}
   \item \label{I:2.4} \label{I:Sing3fold} We show that for a 
   threefold singularity $S' $ of type $ \SMAL^{(3)}$, which is \emph{not} an $A_{1}$-singularity, $\Dsg(S')$ has a cluster--tilting object $T$ with loops or $2$-cycles, in its quiver.
   \item\label{I:2.5}\label{I:NoLoopsand2cycles} We show that the triangulated category $\underline{\GP}_{{\cm}}(R_i)$ in \eqref{E:desiredEq} can neither
   \begin{enumerate}[label={(\alph*)},leftmargin=*]
        \item have loops in its AR-quiver, nor \label{item:loops}
        \item contain a cluster-tilting object $T$ with loops or $2$-cycles in its quiver.\label{item:2cycles}
   \end{enumerate}
   \item \label{I:2.6} In view of the triangle equivalence \eqref{E:desiredEq}, the existence of loops in the AR-quiver \eqref{E:A2n} of $\Dsg(A_{2m})$ and \ref{I:NoLoopsand2cycles}\ref{item:loops} imply \Cref{T:Main} for singularities of type $A_{2m}$ for $m \geq 1$.
   Items \ref{I:Sing3fold} and \ref{I:NoLoopsand2cycles}\ref{item:2cycles} imply Theorem \ref{T:Main} for threefold singularities that are not of type $A_{2m}$. The general case reduces to the case of threefolds, since the singularity categories of all singularities that we consider are equivalent to singularity categories of these threefold singularities (for ADE-hypersurface singularities this follows from Knörrer's periodicity \cite{Knoerrer} and for $\SMAL$ this holds by \Cref{D:frakS}).
   \end{enumerate}

\subsection{Main tools}\label{sec:auxiliary}  We continue with the collection of the results which are needed in the proof of \Cref{T:Main}. These are organized by the remaining sections of this paper, where their proofs, more general versions and additional information can be found.

\subsubsection{\nameref{sec:prep} (See \Cref{sec:prep})}\label{subsec:prep}

The following result reduces an \absorption of an isolated singularity with connected singularity category to one component.
\begin{prop}[{See \Cref{prop:reduceabsorption2}}]\label{prop:reduceabsorption}
   Let $X$ be a projective Gorenstein variety. 
   Suppose $s \in \Sing(X)$ is an isolated singularity such that $\Dsg(\smash{\widehat{\co}_s})$ is connected. 
   If 
   \[\ca = \langle \Db(R_1), \dots, \Db(R_n) \rangle \subseteq \Db(X)\] 
   is an \absorption of $s$ then $\Dsg(\smash{\widehat{\co}_s}) \subset \Dsg(R_i)$ is a triangulated direct summand for some $1 \leq i \leq n$.
   In particular, $\Db(R_i)$ is already an \absorption of $s$.
\end{prop}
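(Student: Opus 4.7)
The strategy is to combine the semiorthogonal structure of $\ca$ with the fact that isolated singularities produce direct summands of the (idempotent completed) singularity category of $X$, and then to use connectedness of $\Dsg(\widehat{\co}_s)$ to collapse the resulting decomposition onto a single factor.

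First, I would descend the admissible \SOD of $\ca$ to the level of singularity categories. Because $\ca \subseteq \Db(X)$ is admissible, the subcategories $\Db(R_i) \subseteq \ca$ are admissible in $\Db(X)$ as well, and the homologically finite objects respect this: $\ca^{\hf} = \langle \Db(R_1)^{\hf}, \ldots, \Db(R_n)^{\hf} \rangle$, the projection functors of the \SOD preserving homological finiteness. Passing to the Verdier quotient and then to its essential image in $\overline{\Dsg(X)}$, this induces an \SOD
\begin{align*}
  \ca^{\sg} = \langle \Dsg(R_1), \ldots, \Dsg(R_n) \rangle
\end{align*}
of triangulated subcategories of $\overline{\Dsg(X)}$, where I have written $\Dsg(R_i)$ for the essential image of $\Db(R_i)/\Db(R_i)^{\hf}$ under the embedding of \Cref{lem/def:asg}.

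Second, I would isolate the contribution of $s$ on the geometric side. Since $s \in \Sing(X)$ is isolated, Orlov's localization theorem for singularity categories (after idempotent completion) exhibits $\Dsg(\widehat{\co}_s)$ as a triangulated direct summand of $\overline{\Dsg(X)}$, complementary to the contributions from $\Sing(X) \setminus \{s\}$. Combined with the hypothesis that $\ca$ absorbs $s$, the inclusion $\Dsg_s(X) \subseteq \ca^{\sg}$ extends, by thickness of $\ca^{\sg}$ in $\overline{\Dsg(X)}$, to an inclusion of the idempotent completion $\Dsg(\widehat{\co}_s) \subseteq \ca^{\sg}$, and this inclusion is split: $\Dsg(\widehat{\co}_s)$ is a triangulated direct summand of $\ca^{\sg}$.

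Third, I would use connectedness to show that this direct summand lies entirely in one component of the \SOD. Given an object $F \in \Dsg(\widehat{\co}_s)$, the \SOD yields an iterated triangle decomposing $F$ with pieces $F_i \in \Dsg(R_i)$. Because $\Dsg(\widehat{\co}_s) \subseteq \overline{\Dsg(X)}$ is a direct summand with complement orthogonal in both directions (it is cut out by the idempotent coming from localization at the maximal ideal of $s$), the projection onto $\Dsg(\widehat{\co}_s)$ commutes with the mutation functors, so each piece $F_i$ already lies in $\Dsg(\widehat{\co}_s) \cap \Dsg(R_i)$. This makes $\Dsg(\widehat{\co}_s)$ itself the orthogonal sum of the thick subcategories $\Dsg(\widehat{\co}_s) \cap \Dsg(R_i)$. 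Connectedness of $\Dsg(\widehat{\co}_s)$ (together with its Krull--Schmidt property, which is built into the hypothesis) then forces all indecomposables to lie in a single component, giving $\Dsg(\widehat{\co}_s) \subseteq \Dsg(R_i)$ as a direct summand for some $1 \leq i \leq n$. The final statement that $\Db(R_i)$ is itself an \absorption of $s$ is then immediate from the definition, since $\Dsg_s(X) \subseteq \Dsg(\widehat{\co}_s) \subseteq \Dsg(R_i) = \Db(R_i)^{\sg}$.

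The main obstacle is the third step: an \SOD is in general not a direct sum, so a direct summand of an \SOD need not decompose componentwise in the naive way. The crucial input making the argument work is that the direct summand $\Dsg(\widehat{\co}_s) \subseteq \overline{\Dsg(X)}$ is cut out by an \emph{orthogonal} idempotent (coming from Orlov's localization at the isolated point $s$), so the ambient splitting interacts compatibly with the \SOD mutations inside $\ca^{\sg}$. Verifying this compatibility -- ideally by showing that the projection onto $\Dsg(\widehat{\co}_s)$ kills the Hom groups witnessing the semiorthogonality between distinct $\Dsg(R_j)$ -- is the technical heart of the proof and is what \Cref{prop:reduceabsorption2} will need to establish in full generality.
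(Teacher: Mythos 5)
Your overall architecture — descend the \SOD to singularity categories via \Cref{lem/def:asg}, use Orlov's decomposition (\Cref{prop:decomposition}) to exhibit $\Dsg(\smash{\widehat{\co}_s})$ as a direct summand of $\smash{\overline{\Dsg(X)}}$, and then collapse onto one component using connectedness — matches the paper's. But there is a genuine gap at the collapse step, and you have misidentified where the technical work lies.

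Your third step deduces that the \SOD restricts: every $F\in\Dsg(\smash{\widehat{\co}_s})$ has pieces $F_i\in\Dsg(\smash{\widehat{\co}_s})\cap\Dsg(R_i)$, giving
$\Dsg(\smash{\widehat{\co}_s})=\langle\,\Dsg(\smash{\widehat{\co}_s})\cap\Dsg(R_1),\ldots,\Dsg(\smash{\widehat{\co}_s})\cap\Dsg(R_n)\,\rangle$.
This is fine (it is essentially \Cref{lem:SODintersection}\ref{item:SODintersectionSum},\ref{item:SODintersection}, which uses idempotent completeness of the $\Dsg(R_i)$). What is \emph{not} justified is your next sentence claiming this is an \emph{orthogonal} sum. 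A semiorthogonal decomposition is one-sided, and connectedness (no non-trivial orthogonal splitting) is entirely compatible with non-trivial \SODs — think of $\Db(\mathbb{P}^1)=\langle\co,\co(1)\rangle$, which is connected. So connectedness alone cannot force the collapse; you would be invoking it for a hypothesis you haven't established. The crucial input you are missing is that $\Dsg(\smash{\widehat{\co}_s})$ is $d$-Calabi--Yau (Auslander--Buchweitz), which by Serre duality upgrades the semiorthogonality $\Hom(X,Y)=0$ for $X\in\ca_{i,1}$, $Y\in\ca_{j,1}$, $i<j$, to full orthogonality $\Hom(Y,X)\cong D\Hom(X,\Sigma^{-d}Y)=0$. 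This is exactly \Cref{lem:SODintersection}\ref{item:SODintersectionCY}, and it is the step your closing paragraph mislabels: the ``technical heart'' is not the compatibility of the ambient idempotent with mutations (that only gives you the semiorthogonal refinement), but the Calabi--Yau/Serre-duality argument that turns it orthogonal. Without that, the proof does not close.
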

By the following and \Cref{S:AppendixDetailsOnSing} the requirements of \Cref{prop:reduceabsorption} are satisfied for isolated singularities of type $\SMAL$ and type ADE.

\begin{prop}[See \Cref{P:Takahashi}]\label{Pi:Takahashi}
Let $S=\C\llbracket z_0, \ldots, z_d\rrbracket/(f)$ be an isolated singularity. Then the singularity category $\Dsg(S)$ is connected.
\end{prop}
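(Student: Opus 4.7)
The plan is to invoke Buchweitz's equivalence together with Auslander--Reiten theory, reducing the question to connectedness of the AR-quiver of the stable category of maximal Cohen--Macaulay modules, and then to apply arguments from Takahashi's work on singularity categories. Since $S = \C\llbracket z_0, \ldots, z_d \rrbracket/(f)$ is a complete local hypersurface, it is Gorenstein, so Buchweitz's theorem gives a triangle equivalence $\Dsg(S) \simeq \underline{\MCM}(S)$. Because $S$ has an isolated singularity and residue field $\C$, the category $\underline{\MCM}(S)$ is $\Hom$-finite over $\C$ and Krull--Schmidt. For such a triangulated category, a non-trivial direct sum decomposition into two triangulated subcategories would force a partition of the set of indecomposable objects into two nonempty classes with no nonzero morphisms between them, which contradicts connectedness of the Auslander--Reiten quiver. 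Existence of AR-triangles in $\underline{\MCM}(S)$ is classical (Auslander for the MCM setting, or Reiten--Van den Bergh via the Auslander--Reiten--Serre duality available under the Gorenstein isolated singularity hypothesis).

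The second step is to show that the AR-quiver of $\underline{\MCM}(S)$ has a single component. Following Takahashi, I would argue by contradiction: suppose $\Dsg(S) = \ca_1 \oplus \ca_2$ is a non-trivial orthogonal decomposition, and pick indecomposables $M_i \in \ca_i$, both nonzero. Then the Tate extensions $\hat{\Ext}^n_S(M_1, M_2) = \Hom_{\Dsg(S)}(M_1, M_2[n])$ would vanish in every degree $n \in \Z$, and the goal is to derive a contradiction. The natural tool is the residue field $k$, or more precisely its high syzygy $\Omega^d k$, which is MCM. For any MCM module $M$ of infinite projective dimension, the Betti numbers $\beta_i(M)$ are nonzero in every degree; this should force a nontrivial Tate coupling between $M$ and some indecomposable summand of $\Omega^d k$, ultimately linking any pair $M_1, M_2$ through $k$.

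The main obstacle I expect is controlling which component the indecomposable summands of $\Omega^d k$ lie in, since $\Omega^d k$ may itself decompose (for the nodal curve $\C\llbracket x, y \rrbracket/(xy)$ one has $\Omega k \cong (x) \oplus (y)$, both summands nonzero in $\Dsg$). The argument must therefore establish that every indecomposable MCM is AR-linked to some fixed reference object rather than to $k$ itself. The $2$-periodicity of syzygies for hypersurface singularities (Eisenbud's matrix factorizations) reduces the relevant Tate-Ext computation to finitely many degrees, and the isolated singularity hypothesis guarantees that $\underline{\Hom}$-spaces are finite-dimensional; together these make the contradiction attainable. Once connectedness of the AR-quiver is established, the proposition follows.
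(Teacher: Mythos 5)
Your proposal takes a genuinely different route from the paper, but it has a gap that you partially acknowledge and do not close. The paper's proof is a one-liner: it cites Takahashi's classification (his Corollary 6.9) showing that $\Dsg(S)$ has \emph{no non-trivial thick subcategories} when $S$ is a complete local hypersurface with an isolated singularity, and then observes that a non-trivial orthogonal decomposition would produce non-trivial thick subcategories. This sidesteps all Auslander--Reiten theory.

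Your first detour is through connectedness of the AR-quiver of $\underline{\MCM}(S)$. That step is both stronger than what you need and false in general: for isolated hypersurfaces of infinite CM-representation type, the AR-quiver typically has many (often infinitely many) components, even though the triangulated category $\Dsg(S)$ is connected, because components can be linked by morphisms that are not irreducible. So the implication you want to use -- ``AR-quiver connected $\Rightarrow$ category connected'' -- has a premise that usually fails. This means steps (3)--(4) of your proposal do not contribute; you should attack connectedness of the triangulated category directly.

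Your step (5) is the right idea in spirit -- it is essentially trying to re-derive Takahashi's theorem on thick subcategories via Tate cohomology and the residue field -- but it is not an argument yet. You correctly note that $\Omega^d k$ may split (your $xy=0$ example), and that one must control which summand of $\Omega^d k$ links to a given $M$; you then say the ``contradiction is attainable'' without producing it. That is precisely the hard part, and it is what Takahashi's support-variety machinery handles: for an isolated hypersurface singularity the stable support of any non-zero object is the whole singular locus (a single point), so every non-zero object classically generates $\Dsg(S)$ as a thick subcategory, and in particular $\Dsg(S)$ cannot split. Without either citing that result (as the paper does) or carrying the support/Betti argument through to the end, the proposal remains incomplete.
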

We also need the following recognition result for nodal singularities.

\begin{lem}[See \Cref{L:Yoshino}]\label{L:nodal}
Let $S$ be a complete local Gorenstein $\C$-algebra of Krull 
dimension $d$. If $\Dsg(S) \cong  \Dsg(\C\llbracket z_0,\dots,z_3 \rrbracket/(z_0 z_1-z_2 z_3))$, then $d$ is odd and the singularity $S \cong \C\llbracket z_0, \dots, z_d \rrbracket/(z_0^2 + \ldots + z_d^2)$ is nodal.
\end{lem}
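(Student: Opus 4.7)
The plan is to recognize the analytic type of $S$ from the rigid triangulated structure of $\Dsg(S)$ alone. The argument proceeds in three stages: upgrade the hypothesis to a finiteness statement about MCM-modules, reduce to the ADE classification of hypersurfaces, then isolate the correct type and dimension by further triangulated invariants.

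First, I would transport the finiteness. Over $\C$, the non-degenerate quadratic forms $z_0z_1 - z_2z_3$ and $z_0^2 + z_1^2 + z_2^2 + z_3^2$ agree after a linear change of coordinates, so by Kn\"orrer periodicity
\begin{align}
\Dsg\bigl(\C\llbracket z_0, \ldots, z_3 \rrbracket/(z_0z_1 - z_2z_3)\bigr) \cong \Dsg\bigl(\C\llbracket u,v \rrbracket/(uv)\bigr).
\end{align}
The right-hand side is the stable MCM-category of the nodal curve, which has exactly two indecomposable objects (corresponding to the two branch ideals). Transporting along the hypothesized equivalence, $\Dsg(S)$ has exactly two indecomposables; in particular $S$ has finite Cohen-Macaulay type.

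Second, I would reduce to the ADE classification. By Herzog's theorem, a Gorenstein complete local $\C$-algebra of finite CM type is a hypersurface, so $S \cong \C\llbracket z_0, \ldots, z_d \rrbracket/(f)$ for some $d$ and some $f$. The Buchweitz--Greuel--Schreyer classification together with Kn\"orrer's criterion then forces $f$ to be, up to analytic change of coordinates, one of the ADE polynomials listed in \Cref{rem:explicit}.

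Finally, I would pinpoint the ADE type and the parity of $d$ using the further invariants available in $\Dsg(S)$: the suspension $\Sigma$, the AR-translate $\tau$, and the AR-quiver. For any isolated Gorenstein singularity of dimension $d$ one has $\tau \cong \Sigma^{d-2}$ on $\Dsg(S)$, and for a hypersurface $\Sigma^{2} \cong \mathrm{id}$ by Eisenbud matrix factorizations. In the 3-dimensional nodal singularity these combine to $\tau \cong \Sigma$. Going through the AR-quivers of ADE hypersurfaces in both parities of dimension (tabulated e.g.\ in Yoshino's book), the only ADE-case with exactly two indecomposables and $\tau \cong \Sigma$ is the $A_1$-singularity in odd dimension. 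Hence $d$ is odd and $S \cong \C\llbracket z_0, \ldots, z_d \rrbracket/(z_0^2 + \cdots + z_d^2)$. The hard part will be this last step: counting indecomposables is not enough on its own (for instance, the $A_2$ curve singularity in dimension one also has two), so singling out $A_1$ genuinely requires combining the AR-translate action and the AR-quiver shape with Yoshino's classification tables.
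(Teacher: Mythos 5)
Your proposal follows essentially the same route as the paper's proof of \Cref{L:Yoshino}: deduce finite MCM representation type from the finitely many indecomposables on the right side, invoke Herzog (= \cite[Corollary~8.16]{YoshinoBook}) and Buchweitz--Greuel--Schreyer/Kn\"orrer (= \cite[Theorem~8.8]{YoshinoBook}) to land in the ADE list, and then read off the type and parity from the known classification. The paper leaves the last step to a citation, while you make explicit the useful criterion $\tau \cong \Sigma^{d-2}$ together with $\Sigma^2 \cong \mathrm{id}$ (so that $\tau \cong \Sigma$ forces $d$ odd) combined with the count of two indecomposables, and that criterion does in fact single out the odd-dimensional $A_1$. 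One small factual slip: the $A_2$ \emph{curve} singularity $\C\llbracket x,y\rrbracket/(x^3+y^2)$ has only \emph{one} nonzero indecomposable in its singularity category (its AR-quiver is a single vertex with a loop, as the paper itself uses for $A_{2k}$), so it is already ruled out by counting alone; the example you are reaching for is the $A_2$ \emph{surface} singularity (even dimension), which does have two indecomposables and is the case that genuinely requires the $\tau$ versus $\Sigma$ comparison, since there $\tau\cong\mathrm{id}\neq\Sigma$.
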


\subsubsection{\nameref{sec:gor+sing} (See \Cref{sec:gor+sing})}
In this section we consider the following important class of (noncommutative) rings.

\begin{defn}\label{defn:Goring}
A two-sided Noetherian ring $\Lambda$ satisfying $\injdim _{\Lambda}\!\Lambda < \infty$ and 
$\injdim \Lambda_{\Lambda} < \infty$ is called \emph{Gorenstein ring}\footnote{In the noncommutative setting these rings are also often called \emph{Iwanaga--Gorenstein rings}.}. 
\end{defn}

We use the following generalization of \cite[Theorem 4.4]{KPS19} which follows from \cite{KS22}.

\begin{prop}[{See \Cref{P:KuznetsovShinder}}]\label{P:RinKSOD}
   Let $X$ be a projective Gorenstein variety, %
   $\ca  \subseteq \Db(X)$ be an admissible triangulated subcategory and $\ca = \langle \Db(R_1), \dots, \Db(R_n) \rangle$ be an admissible \SOD for some $\C$-algebras $R_1, \dots, R_n$.
   
   Then $R_1, \dots, R_n$ are finite-dimensional Gorenstein $\C$-algebras.
\end{prop}

The singularity category of Gorenstein rings has a nice Frobenius model by the following definition and the following theorem.
\begin{defn}\label{D:GProj}
Let $\Lambda$ be an Gorenstein ring. We define the subcategory
\[
\GP(\Lambda):=\{X\in\mod\Lambda\mid \Ext^i_\Lambda(X,\Lambda)=0\mbox{ for any }i>0\}
\]
of $\mod\Lambda$. This category is called the category of \emph{Gorenstein projective} $\Lambda$-modules\footnote{If $\Lambda$ is a commutative local Gorenstein ring, then the Gorenstein projective $\Lambda$-modules are precisely the maximal Cohen--Macaulay $\Lambda$-modules.}. 
The \emph{stable category} $\underline{\GP}_{\proj \Lambda}(\Lambda)$ of $\GP_{\proj \Lambda}(\Lambda)$ is defined as the additive quotient
\[
   \underline{\GP}_{\proj \Lambda}(\Lambda):=\frac{\GP(\Lambda)}{\proj \Lambda},
\]
where $\proj \Lambda$ is the category of finitely generated projective $\Lambda$-modules, see also \Cref{Not:frobenius}. 
The stable category $\underline{\GP}_{\proj \Lambda}(\Lambda)$ naturally carries a triangulated structure by work of Happel, see \cite{Happel} and \Cref{Not:frobenius}.
\end{defn}

The next result is due to Buchweitz \cite{{Buchweitz87}}.
\begin{thm}[{See \cite[Theorem 4.4.1]{Buchweitz87}}]\label{T:BuchweitzIntro}
Let $\Lambda$ be a Gorenstein ring. 

Then there is a triangle equivalence $ \underline{\GP}_{\proj \Lambda}(\Lambda) \to \Dsg(\Lambda)$ which is induced by the canonical inclusion $\GP(\Lambda)\subseteq \mod(\Lambda) \rightarrow \Db(\Lambda)$.
\end{thm}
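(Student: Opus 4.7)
The plan is to construct a triangle functor $F \colon \underline{\GP}_{\proj\Lambda}(\Lambda) \to \Dsg(\Lambda)$ induced by the inclusion $\GP(\Lambda) \subseteq \mod \Lambda$, and then establish that it is fully faithful and essentially surjective. First I would verify that $\GP(\Lambda)$ with the exact structure inherited from $\mod \Lambda$ is Frobenius with projective-injectives $\proj\Lambda$: the Gorenstein hypothesis $\injdim{}_\Lambda\Lambda<\infty$ and $\injdim\Lambda_\Lambda<\infty$ ensures that any $M \in \mod\Lambda$ admits both a projective cover and a monomorphism $M \hookrightarrow P$ into a finitely generated projective after finitely many syzygies when $M\in\GP(\Lambda)$; the usual dimension-shifting argument shows $\Ext^i(M,\Lambda)=0$ for all $i>0$ is inherited by kernels and cokernels of conflations in $\GP(\Lambda)$, and Happel's theorem then equips $\underline{\GP}_{\proj\Lambda}(\Lambda)$ with a canonical triangulated structure.

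Next, since any finitely generated projective $P$ lies in $\Perf(\Lambda)$ and hence becomes zero in $\Dsg(\Lambda)$, and since any morphism in $\GP(\Lambda)$ factoring through a projective also becomes zero, the composite $\GP(\Lambda) \hookrightarrow \Db(\mod\Lambda) \twoheadrightarrow \Dsg(\Lambda)$ descends to the desired additive functor $F$. To see that $F$ is triangulated, I would take a standard conflation $0 \to L \to M \to N \to 0$ in $\GP(\Lambda)$: it gives a distinguished triangle in $\Db(\mod\Lambda)$, and a routine comparison identifies Happel's connecting morphism (obtained via an ambient projective envelope $L \hookrightarrow P$ and cosyzygy $\Sigma L$) with the image of this triangle in $\Dsg(\Lambda)$, up to sign.

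To prove $F$ is fully faithful, I would show the identity
\begin{equation*}
\Hom_{\Dsg(\Lambda)}(M, N[i]) \;\cong\; \underline{\Hom}_\Lambda(M,\Sigma^i N) \qquad (M,N \in \GP(\Lambda),\ i \in \ZZ)
\end{equation*}
by the standard calculus of fractions: any roof $M \xleftarrow{s} X \to N[i]$ with $\mathrm{cone}(s) \in \Perf(\Lambda)$ can be replaced, using $\Ext^{>0}_\Lambda(M, \Perf) = 0$ (a consequence of $M \in \GP$ and the finite injective dimension of $\Lambda$), by an honest chain map from a Gorenstein projective resolution, and two such maps coincide in $\Dsg(\Lambda)$ iff their difference factors through a projective. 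For essential surjectivity, I would invoke the Gorenstein condition in its crucial form: every $X \in \mod\Lambda$ admits a finite resolution
\begin{equation*}
0 \to G_n \to G_{n-1} \to \cdots \to G_0 \to X \to 0
\end{equation*}
by Gorenstein projectives $G_j$, or dually a short exact sequence $0 \to K \to G \to X \to 0$ with $G \in \GP(\Lambda)$ and $\prdim K < \infty$, so that $X \cong G$ in $\Dsg(\Lambda)$. Iterating truncations then shows every object of $\Db(\mod\Lambda)$ is isomorphic in $\Dsg(\Lambda)$ to a shift of an object of $\GP(\Lambda)$.

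The main obstacle is the essential surjectivity step, more precisely the construction of finite Gorenstein projective resolutions: this is where the symmetric finiteness of $\injdim_\Lambda\!\Lambda$ and $\injdim\Lambda_\Lambda$ is essential (for one-sided Noetherian or non-Gorenstein rings the statement genuinely fails). I would handle this by first bounding the Gorenstein projective dimension of $M$ by $\max(\injdim{}_\Lambda\Lambda, \injdim\Lambda_\Lambda)$ via a cosyzygy argument: after sufficiently many steps in a projective resolution, the corresponding syzygy has no higher $\Ext$ into $\Lambda$ on either side, hence lies in $\GP(\Lambda)$. Once this is in place the comparison of triangles becomes formal, and combining fully faithfulness with essential surjectivity yields the claimed equivalence.
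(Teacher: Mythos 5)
Your plan reproduces the standard Buchweitz argument (which is precisely what the paper cites, without reproving it): show $\GP(\Lambda)$ is Frobenius with projective-injectives $\proj\Lambda$, build the induced functor $\underline{\GP}_{\proj\Lambda}(\Lambda)\to\Dsg(\Lambda)$, establish fully faithfulness via the fraction calculus together with $\Ext^{>0}_\Lambda(M,\proj\Lambda)=0$, and prove essential surjectivity by syzygy-shifting to bound Gorenstein projective dimension by $\injdim\Lambda$. This is the same route as \cite[Theorem 4.4.1]{Buchweitz87}; your outline is correct.
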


The following statement is well-known.

\begin{prop}[{See \Cref{cor:Serre}}]\label{P:SerreFunctor}
    Let $R$ be a finite-dimensional Gorenstein $\C$-algebra and $\GP_\cp(R)$ be a Frobenius exact structure on $\GP(R)$.
    
    Then the triangulated category $\underline{\GP}_{\cp}(R)$ has a Serre functor.
\end{prop}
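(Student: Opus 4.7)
The plan is to invoke the Reiten--Van den Bergh theorem, which characterizes $\Hom$-finite Krull--Schmidt $\C$-linear triangulated categories admitting a Serre functor as precisely those with Auslander--Reiten triangles. It then suffices to produce AR-triangles in $\underline{\GP}_{\cp}(R)$ and verify the $\Hom$-finite Krull--Schmidt hypothesis.

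The $\Hom$-finite and Krull--Schmidt properties are straightforward: since $R$ is a finite-dimensional $\C$-algebra, $\mod R$ is $\Hom$-finite and Krull--Schmidt, and both properties descend to the full subcategory $\GP(R)$; passing to the additive quotient by the additively closed class $\cp$ of projective-injectives preserves $\Hom$-finiteness, while Krull--Schmidt is inherited because idempotents still split in the quotient of a $\Hom$-finite $\C$-category.

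For AR-triangles, I would use Happel's correspondence between AR-sequences in a Frobenius exact category and AR-triangles in its stable category. This reduces the task to constructing, for every indecomposable $M \in \GP(R)$ not lying in $\cp$, an AR-sequence in the exact category $\GP_{\cp}(R)$ terminating at $M$. For the standard exact structure with $\cp = \proj R$ this is Auslander's classical existence theorem for AR-sequences in $\GP(R) = \MCM(R)$, applicable since $R$ is a finite-dimensional Gorenstein $\C$-algebra. For a general enlarged class $\cp \supseteq \proj R$ arising from Auslander--Solberg reduction, the existence of AR-sequences in the modified exact structure is exactly the content of Auslander--Solberg's theory of relative AR-sequences; the required functorial finiteness of $\cp$ in $\GP(R)$ is automatic, as $\cp$ will be the additive closure of finitely many modules.

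The main technical point is checking that the Auslander--Solberg framework applies in the precise generality needed here---i.e., that every indecomposable outside $\cp$ still appears as the end of an AR-sequence in the modified exact structure, and that the resulting sequence is almost split in the new sense. Once this is verified, Reiten--Van den Bergh delivers the Serre functor on $\underline{\GP}_{\cp}(R)$ immediately. An alternative strategy would start from the Serre functor on $\underline{\GP}_{\proj R}(R) \simeq \Dsg(R)$ provided by the derived Nakayama functor and argue that it descends along the quotient by $\cp/\proj R$; this route replaces the AR-theoretic input by a compatibility check between Nakayama duality and the enlarged projective-injective class.
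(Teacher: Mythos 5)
Your proposal is correct in outline and follows essentially the same AR-theoretic strategy as the paper: verify that $\ul{\GP}_\cp(R)$ is a $\Hom$-finite Krull--Schmidt $\C$-linear triangulated category with AR-triangles and invoke Reiten--Van den Bergh. The difference is in how you propose to produce the AR-triangles. The paper routes everything through the Iyama--Nakaoka--Palu extriangulated framework: \Cref{T:Auslander}\ref{item:GPrest} gives that the additive category $\GP(R)$ has enough source and sink morphisms, and then \cite[Lemma 3.2]{INP} converts sink/source morphisms into almost split extensions valid for \emph{any} Frobenius exact structure $\GP_\cp(R)$ on this underlying additive category, after which \cite[Proposition 5.11 and Theorem 3.6]{INP} pass to the stable category and deliver the Serre functor. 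This sidesteps the technical concern you yourself flag: whether Auslander--Solberg's relative AR-sequence theory applies inside $\GP(R)$ (rather than in $\mod R$) for a general projective class $\cp$. The key observation that makes INP applicable is that the existence of sink/source morphisms in $\GP(R)$ depends only on the additive structure, not on the exact structure, so changing the conflations does not require re-establishing AR-theory from scratch. Your last suggested alternative, descending the Serre functor from $\ul{\GP}_{\proj R}(R) \cong \Dsg(R)$, is substantially harder to make precise since $\ul{\GP}_\cp(R)$ is not a Verdier quotient of $\ul{\GP}_{\proj R}(R)$, so compatibility would need genuine work.

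One small imprecision to correct: functorial finiteness of $\cp$ in $\GP(R)$ is indeed automatic, but not because $\cp$ is additively generated by finitely many modules (nothing in the hypotheses forces that). Rather, it is automatic because $\GP_\cp(R)$ being a Frobenius exact structure means $\cp$ has enough projectives and enough injectives, so every object admits a deflation from, and an inflation into, an object of $\cp$; these supply right and left $\cp$-approximations, i.e.\ $\cp$ is contravariantly and covariantly finite.
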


\subsubsection{\nameref{sec:aussolred} (See \Cref{sec:aussolred})}\label{subsec:AS}
The following reduction results follow from their corresponding results in \Cref{sec:aussolred} using \Cref{P:SerreFunctor}

\begin{lem}[See \Cref{C:RemoveFiniteTypeComponents}]\label{L:AuslanderSolbergSummand}
   Let $R$ be a finite-dimensional Gorenstein $\C$-algebra and $\GP_{\cp}(R)$ be a Frobenius exact structure on $\GP(R)$. 
   Assume $\ul{\GP}_\cp(R) = \cc_1 \oplus \cc_2$ is a direct sum of triangulated subcategories.

   Then there is a Frobenius exact structure $\GP_\cm(R)$ on $\GP(R)$ such that there is a triangle equivalence $\ul{\GP}_{\cm}(R) \cong \cc_1$. 
\end{lem}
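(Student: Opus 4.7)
The plan is to apply Auslander--Solberg reduction to enlarge the class of projective--injectives of $\GP_\cp(R)$ so as to absorb the summand $\cc_2$. Concretely, I would define $\cm \subseteq \GP(R)$ to be the full subcategory consisting of those $M \in \GP(R)$ whose image in $\ul{\GP}_\cp(R)$ lies in $\cc_2$. Then $\cp \subseteq \cm$, and $\cm$ is additive and closed under direct summands in $\GP(R)$. Since $\cc_2$ is a triangulated subcategory of $\ul{\GP}_\cp(R)$ it is closed under the shift $\Sigma$ and its inverse; this translates to $\cm$ being closed under syzygies and cosyzygies modulo $\cp$.

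Next I would verify the two main inputs for Auslander--Solberg reduction. The first is that $\cm$ is closed under the Nakayama functor modulo $\cp$: by \Cref{P:SerreFunctor}, the category $\ul{\GP}_\cp(R)$ carries a Serre functor, and since $\cc_2$ is a \emph{direct summand} of $\ul{\GP}_\cp(R)$, it is preserved by this Serre functor. This is precisely what will ensure that the modified exact structure is Frobenius rather than merely exact. The second input is functorial finiteness of $\cm$ in $\GP(R)$: the decomposition $\ul{\GP}_\cp(R) = \cc_1 \oplus \cc_2$ provides, for each $X \in \GP(R)$, a canonical projection of the stable image of $X$ onto its $\cc_2$-component, which lifts to a right $\cm$-approximation in $\GP(R)$; the left approximations are then produced dually via the Serre functor.

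With these inputs, Auslander--Solberg reduction (as developed in Section~\ref{sec:aussolred}) yields a Frobenius exact structure $\GP_\cm(R)$ on the additive category $\GP(R)$ whose projective--injectives are exactly $\cm$, and Happel's theorem equips its stable category $\ul{\GP}_\cm(R)$ with a triangulated structure. Because the morphisms factoring through $\cm$ in $\GP(R)$ are precisely those whose stable image factors through $\cc_2$, and $\cc_2$ is a direct summand of $\ul{\GP}_\cp(R)$, the further additive quotient simply collapses the $\cc_2$-summand. Thus one obtains a triangle equivalence $\ul{\GP}_\cm(R) \cong \cc_1$.

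The main obstacle is verifying stability of $\cm$ under the Nakayama functor and functorial finiteness, both of which are exactly where the \emph{direct summand} hypothesis on $\cc_2$ (not just subcategory) and the Serre functor of \Cref{P:SerreFunctor} enter essentially; without them the reduction might produce only an exact structure or fail to have enough approximations.
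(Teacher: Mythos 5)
Your approach is essentially the paper's: same $\cm := \Phi^{-1}(\cc_2)$ where $\Phi \colon \GP(R) \to \GP(R)/\cp$ is the quotient, same use of \Cref{P:SerreFunctor} to see that $\cc_2$ is $\tau$-stable (or equivalently $\mathbb{S}$-stable), same appeal to the decomposition for functorial finiteness of $\cm$ (the paper invokes \cite[Lemma A.7]{ACFGS22}). Applying \Cref{c:changefrob} then yields the Frobenius structure $\GP_\cm(R)$, as you say.

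However, your final step contains a genuine gap. You correctly observe that there is an \emph{additive} equivalence $\ul{\GP}_\cm(R) = \GP(R)/\cm \cong (\GP(R)/\cp)/(\cm/\cp) \cong (\cc_1 \oplus \cc_2)/\cc_2 \cong \cc_1$, but you then conclude ``thus one obtains a triangle equivalence'' without verifying that the two triangulated structures — the Happel triangulation on $\ul{\GP}_\cm(R)$ (with suspension $\Sigma_\cm$ computed by $\cm$-injective coresolutions) and the triangulation on $\cc_1$ inherited as a direct summand of $\ul{\GP}_\cp(R)$ (with suspension the restriction of $\Sigma_\cp$) — coincide under this identification. An additive equivalence between triangulated categories is not automatically a triangle equivalence, and there is no a priori reason the two suspension functors agree.

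The paper closes this gap by introducing the intermediate category $\cx := \Phi^{-1}(\cc_1)$, showing it inherits a Frobenius exact structure $\cx_\cp$ from $\GP_\cp(R)$ with $\ul{\cx}_\cp = \cc_1$ as triangulated categories, and then checking that every conflation of $\cx_\cp$ is also a conflation of $\GP_\cm(R)$ — this uses $\Ext^1_{\ce_\cp}(M,X) \cong \Hom_{\ce/\cp}(\Sigma_\cp^{-1}M, X) = 0$ for $M \in \cm$, $X \in \cx$, which holds precisely because $\cc_1$ and $\cc_2$ are orthogonal. Hence the inclusion $\cx_\cp \hookrightarrow \GP_\cm(R)$ is an exact functor of Frobenius categories, so by \cite[page 23]{HappelBook} it induces a triangle functor $\ul{\cx}_\cp \to \ul{\GP}_\cm(R)$. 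Since this is the additive equivalence you identified, one concludes a triangle equivalence $\cc_1 \cong \ul{\GP}_\cm(R)$. You would need to supply this argument (or something equivalent) to make your last step rigorous.
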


\begin{lem}[See \Cref{C:ReduceType}] \label{L:ReduceType}
Let $d$ be odd and $S$ be a complete local $d$-dimensional ADE-hypersurface singularity such that there is a finite-dimensional Gorenstein $\C$-algebra $R$ and a Frobenius exact structure $\GP_\cp(R)$ which satisfies $\underline{\GP}_{\cp}(R) \cong \Dsg(S)$.

Then there is a Frobenius exact structure $\GP_\cm(R)$ on $\GP(R)$ such that there is a triangle equivalence $\underline{\GP}_{\cm}(R) \cong \Dsg(S')$, where $S'$ is a complete local $3$-dimensional ADE-hypersurface singularity
\begin{enumerate}[label={(\alph*)}]
\item of type $A_3$, if $S$ is of type $D_{2m+1}$ (with $m >1$) or of type $E_6$,
\item of type $D_4$, if $S$ is of type $E_7$ of $E_8$.
\end{enumerate}
In particular, $\Dsg(S')$ is $2$-Calabi--Yau and $\Spec S'$ has a small resolution.
\end{lem}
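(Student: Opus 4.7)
The plan is to carry out the reduction on the MCM side via Buchweitz's equivalence, then transfer the change of Frobenius structure back to $R$. Since $\dim S = 2n+1$ is odd, iterated Knörrer periodicity yields a triangle equivalence $\Dsg(S) \cong \Dsg(\tilde S)$ where $\tilde S$ is the $3$-dimensional ADE singularity of the same Dynkin type $T \in \{D_{2m+1}, E_6, E_7, E_8\}$. Composing with Buchweitz's equivalence (\Cref{T:BuchweitzIntro}) and the given equivalence produces a triangle equivalence $\Phi\colon \ul{\MCM}(\tilde S) \iso \ul{\GP}_\cp(R)$. The key structural input is that, in dimension $3$, $\ul{\MCM}(\tilde S)$ is $\Hom$-finite and $2$-Calabi--Yau (Auslander) of finite CM type, with indecomposable objects naturally labelled by the non-trivial vertices of the Dynkin diagram of $T$ and an AR-quiver explicitly described via the McKay correspondence / matrix factorizations.

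Next, I choose, case by case, a finite set $\cn$ of indecomposable MCM modules of $\tilde S$ whose Dynkin-diagram labels are the vertices to be deleted, so that the remaining sub-Dynkin diagram is $T' = A_3$ (for $T \in \{D_{2m+1}, E_6\}$) or $T' = D_4$ (for $T \in \{E_7, E_8\}$). Concretely: for $D_{2m+1}$ I remove the $2m-2$ vertices of the long leg, leaving the three fork-end vertices forming an $A_3$; for $E_6$ I keep the branch vertex and two of its neighbours; for $E_7$ and $E_8$ I keep the branch vertex and its three neighbours, which form a $D_4$. I then transfer $\cn$ via $\Phi$ to a subcategory $\cn'$ of $\ul{\GP}_\cp(R)$ and let $\cm \subseteq \GP(R)$ be the smallest additive subcategory containing $\cp$ together with all modules whose stable class lies in $\cn'$. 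Because $\ul{\MCM}(\tilde S)$ has only finitely many indecomposables, $\cm$ is functorially finite, and the Auslander--Solberg machinery from the previous subsection (the general form of \Cref{L:AuslanderSolbergSummand}) produces a Frobenius exact structure $\GP_\cm(R)$ on $\GP(R)$ whose stable category is the additive quotient $\ul{\GP}_\cp(R)/[\cn']$.

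To finish, I identify $\ul{\GP}_\cm(R)$ with $\Dsg(S')$ for the claimed $S'$. Transporting across $\Phi$, this is the subfactor of $\ul{\MCM}(\tilde S)$ by $\add(\cn \cup \proj \tilde S)$, which by Iyama--Yoshino-style $2$-Calabi--Yau reduction inherits the structure of a $\Hom$-finite $2$-CY triangulated category. A direct inspection of the surviving AR-quiver (vertices, arrows, and AR-translation) shows that it coincides with the AR-quiver of $\ul{\MCM}(S')$ for the $3$-dimensional ADE singularity $S'$ of sub-Dynkin type $T'$, from which the triangle equivalence $\ul{\GP}_\cm(R) \cong \Dsg(S')$ follows by AR-theoretic reconstruction. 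The ``in particular'' claims are then classical: both $A_3$ and $D_4$ in dimension $3$ are compound Du Val, hence admit small resolutions, and the $2$-CY property for their stable MCM categories is Auslander's theorem.

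The main obstacle is the explicit AR-theoretic identification in the last step: one must verify that removing the prescribed indecomposables (together with the appropriate morphism ideal) genuinely cuts the AR-quiver of type $T$ down to that of $T'$, preserving both the triangulated structure and the AR-translate. The most delicate case is $E_8 \to D_4$, where four vertices are removed; the $D_{2m+1} \to A_3$ case can be handled iteratively by repeated removal of a single leaf vertex, at each step checking that the remaining indecomposables still assemble into the Dynkin quiver one stage smaller.
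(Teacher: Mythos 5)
Your overall strategy lines up with the paper's: transport everything to the $3$-dimensional model via Knörrer periodicity, choose a finite set of indecomposables to ``absorb'' into the projective-injectives, invoke the change-of-Frobenius-structure machinery from \Cref{sec:aussolred}, and identify the new stable category with $\Dsg(S')$ by comparing AR-quivers. This is precisely how the paper proceeds via \Cref{C:Frob}. However, there are two substantive gaps.

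First, the appeal to ``Iyama--Yoshino-style $2$-Calabi--Yau reduction'' is incorrect and points at the wrong mechanism. For \Cref{c:changefrob}/\Cref{C:Frob} to apply, the subcategory $\cm/\cp$ must be closed under the Auslander--Reiten translation $\tau$. Since $\ul{\MCM}(\tilde S)$ is $2$-CY, $\tau \cong \Sigma$, so $\tau$-closedness means $\Sigma$-closedness, and then $\Hom(\cn, \Sigma \cn) \cong \Hom(\cn, \cn) \neq 0$; i.e.\ the removed subcategory is \emph{never} rigid. Iyama--Yoshino's $2$-CY reduction (which passes to $\cn^{\perp}/[\cn]$ for $\cn$ rigid) therefore does not apply. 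What actually controls the AR-quiver of the new stable category is J{\o}rgensen's theorem on quotients by $\tau$-stable functorially finite subcategories (cited as \cite[Theorem 4.2]{Jor0705} in \Cref{C:Frob}); it both computes the AR-quiver of $\ul{\ce}_{\cm}$ by vertex deletion \emph{and} shows that $\ul{\ce}_\cm$ is standard. Standardness is exactly what makes the last step (``AR-theoretic reconstruction'') legitimate, via Keller--Amiot's result that additively finite standard triangulated categories with isomorphic connected AR-quivers are triangle equivalent. You never address standardness, and without it the AR-quiver does not determine the triangle equivalence.

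Second, your description of the AR-quiver of $\ul{\MCM}(\tilde S)$ as ``labelled by the non-trivial vertices of the Dynkin diagram of $T$'' via McKay is the $2$-dimensional (Kleinian) picture. In dimension $3$ (equivalently dimension $1$, by Knörrer), the AR-quivers of the $D$- and $E$-type stable MCM categories are quite different (the relevant source is Dieterich--Wiedemann, combined with Knörrer, as in the paper); in particular the number of vertices and the $\tau$-orbit structure do not match the Dynkin diagram. Consequently your concrete recipes (``remove the $2m-2$ vertices of the long leg,'' ``keep the branch vertex and its three neighbours'') are computed in the wrong quiver and, as stated, are not manifestly closed under $\tau$. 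You would need to redo the case analysis on the actual AR-quivers and verify explicitly that the objects you delete form full $\tau$-orbits and that the residual quiver equals that of the target $A_3$ or $D_4$; this is exactly the case-by-case computation the paper carries out.
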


\subsubsection{\nameref{sec:fromsmallres} (See \Cref{sec:fromsmallres})}
Recall the notion of cluster-tilting objects, which we will use in \Cref{sec:fromsmallres,sec:homresults}.
\begin{defn}[{Cf.\ \Cref{defn:ct}}]\label{defn:ctsmall}
   Let $\cc$ be a $2$-Calabi--Yau triangulated category.
   We say $T \in \cc$ is a \emph{cluster-tilting object} if
\begin{enumerate}[label={{(\alph*)}}]
   \item $\cc(T, \Sigma T)=0$, and
   \item if $X$ is an object such that $\cc(T, \Sigma X)=0$ then $X \in \add T$.
\end{enumerate}
\end{defn}

The following result is the key ingredient on the geometric side. 
It follows from works of Iyama \cite{Iyama07, Iyama07a}, Iyama--Reiten \cite{IyamaReiten}, Keller--Reiten \cite{KellerReiten08} and Van den Bergh \cite{VandenBergh04, NCCR}.

\begin{thm}[{See \Cref{P:ClusterTiltingObject2}}]\label{P:ClusterTiltingObject}
   Let $S \cong \C\llbracket z_1, \ldots, z_n \rrbracket/I$ be a $3$-dimensional Gorenstein $\C$-algebra with an isolated singularity and assume that $\Spec S$ has a small resolution.\nopagebreak

   Then $\Dsg(S)$ contains a cluster-tilting object. Moreover, if the quiver of every cluster-tilting object in $\Dsg(S)$ contains no loops 
   or $2$-cycles, then $S$ is an $A_1$-singularity.
\end{thm}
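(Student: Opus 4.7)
The plan is to combine Van den Bergh's NCCR machinery with the cluster-tilting theory of Iyama-Reiten and Keller-Reiten. For existence, Reid's classification shows that a $3$-dimensional Gorenstein isolated singularity admitting a small resolution is compound Du Val (cDV). Van den Bergh's construction then produces a noncommutative crepant resolution $\Lambda = \End_S(M)$ of $S$ with $M$ a maximal Cohen-Macaulay $S$-module containing $S$ as a summand. By the Iyama-Reiten correspondence between NCCRs and cluster-tilting objects, such an $M$ is a cluster-tilting object in $\MCM(S)$, and the $3$-Calabi-Yau property of $\Lambda$ (via Keller-Reiten) makes $\Dsg(S) \simeq \ul\MCM(S)$ a $2$-Calabi-Yau triangulated category. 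Discarding the free summand $S$, which vanishes in the stable category, yields a cluster-tilting object of $\Dsg(S)$ in the sense of \Cref{defn:ctsmall}.

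For the ``moreover'' statement, I would argue by contraposition: if $S$ is not a conifold, then I construct some cluster-tilting object of $\Dsg(S)$ whose quiver has a loop or a $2$-cycle. The natural candidate is the object $T$ coming from the NCCR above, or an Iyama-Yoshino mutation thereof. The quiver of $\ul\End_{\Dsg(S)}(T)$ admits a geometric description via a small resolution $\pi : Y \to \Spec S$: its vertices correspond to the non-free indecomposable summands of $M$, which are in bijection with the irreducible components of the exceptional fibre of $\pi$, while arrows encode intersection data of these curves together with additional loops coming from exceptional curves of length $\geq 2$ in the sense of Koll\'ar-Mori.

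The main obstacle is then to rule out the ``no loops, no $2$-cycles'' condition outside the conifold case. I would proceed case by case using explicit descriptions of NCCRs for $cA_n, cD_n, cE_n$ with small resolution: an exceptional fibre with two or more irreducible components meeting each other yields a $2$-cycle (by Van den Bergh's arrow formula), while a single exceptional curve of length $\geq 2$ forces a loop (visible, e.g., in the contraction algebra of Donovan-Wemyss). Only the conifold avoids both situations. A cleaner, case-free alternative would go via transitivity of Iyama-Yoshino mutation on cluster-tilting objects combined with the Fomin-Zelevinsky mutation rules, which severely restrict $2$-Calabi-Yau triangulated categories whose cluster-tilting quivers have no loops and no $2$-cycles; executing this uniformly is where I expect most of the effort to lie.
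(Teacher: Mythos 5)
Your existence argument follows the same route as the paper: Reid's cDV classification, Van den Bergh's tilting bundle on a small resolution giving an NCCR $\End_S(M)$, and Iyama--Reiten to conclude that $M$ is a cluster-tilting object in $\Dsg(S)\cong\ul{\MCM}(S)$. (One small correction: the $2$-Calabi--Yau property of $\Dsg(S)$ is intrinsic to $3$-dimensional Gorenstein isolated singularities via Auslander--Reiten/Auslander--Buchweitz duality; it does not need to be deduced from Keller--Reiten and the $3$-CY property of the NCCR.)

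For the ``moreover'' part you are close to the paper's \emph{alternative} geometric argument, which appears only as a remark, but your write-up has a genuine gap. You correctly identify that two intersecting components of the exceptional fibre force a $2$-cycle in the quiver of the NCCR cluster-tilting object (the paper computes $\Ext^1_Y(\co_{C_i},\co_{C_j})\neq 0$ by an excess-intersection spectral sequence), so under the ``no loops, no $2$-cycles'' hypothesis the exceptional fibre is an irreducible $C$. To finish you assert ``a single curve of length $\geq 2$ forces a loop'' and propose to verify it case by case over the cDV types, or else to appeal to mutation transitivity plus Fomin--Zelevinsky rules; you yourself flag both as the ``main obstacle'' and do not carry either through. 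The clean statement you want is Wemyss's result that the vertex of $C$ has no loops precisely when $N_{C/Y}\cong\co(-1)\oplus\co(-1)$, plus Reid's result that this normal bundle forces an $A_1$-singularity; with those references your geometric sketch becomes a proof and no case analysis is needed.

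The paper's \emph{main} proof sidesteps normal bundles and lengths altogether, and this is the ingredient your proposal misses. Once the quiver of $T=\pi_*\ct$ has no loops or $2$-cycles, your own observation (i) reduces it to a single loop-free vertex, so $\End_{\Dsg(S)}(T)\cong\C$ is hereditary. Keller--Reiten's recognition theorem for $2$-CY categories with a cluster-tilting object having hereditary endomorphism algebra then pins down $\Dsg(S)$ as the cluster category $\cc(\C)$. The conifold $\C\llbracket z_0,\dots,z_3\rrbracket/(z_0z_1-z_2z_3)$ also has a cluster-tilting object with endomorphism algebra $\C$, hence the same cluster category, and a recognition lemma for nodal singularities from their singularity categories gives $S\cong\C\llbracket z_0,\dots,z_3\rrbracket/(z_0z_1-z_2z_3)$. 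This Keller--Reiten step replaces your proposed case analysis entirely.
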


\subsubsection{\nameref{sec:homresults} (See \Cref{sec:homresults})}\label{subsec:homresults}

The following result is our key ingredient on the representation theoretic side.
To show part \ref{item:noloops} we first prove a categorical generalization of the `No-loop Theorem' for finite-dimensional algebras of finite global dimension, see \cite{Igusa, IgusaLiuPaquette}. 
We use this and a generalization of results from \cite{BIRSc} to deduce part \ref{item:noloops2cycles}.

\begin{thm}[{See \Cref{T:ARnoLoops} and \Cref{C:Noloops2cycles}}] \label{T:PropertiesDsgR}
   Let $R$ be a finite-dimen\-sional Gorenstein $\C$-algebra and $\GP_\cp(R)$ be a Frobenius exact structure on $\GP(R)$. 
\begin{enumerate}[label={{(\alph*)}}]
   \item Then there are no loops in the Auslander-Reiten quiver of $\underline{\GP}_{\cp}(R)$.\label{item:noloops}
   \item If $\underline{\GP}_{\cp}(R)$ is a $2$-Calabi--Yau category then the quiver of any cluster-tilting object
   $T \in \underline{\GP}_{\cp}(R)$ has neither loops nor $2$-cycles.\label{item:noloops2cycles}
\end{enumerate}
\end{thm}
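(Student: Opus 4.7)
The plan is to prove both parts by combining categorical structural arguments with classical representation-theoretic techniques, leveraging the Frobenius model of $\underline{\GP}_\cp(R)$ and the Serre functor provided by \Cref{P:SerreFunctor}.

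For part \emph{(a)}, the target is a triangulated version of Igusa's no-loop theorem, in the spirit of Igusa--Liu--Paquette. Since $R$ is finite-dimensional, the category $\underline{\GP}_\cp(R)$ is $\Hom$-finite and Krull--Schmidt, and the Serre functor induces AR triangles. Suppose toward a contradiction that there is an irreducible loop $f\colon X \to X$ at an indecomposable $X$. The strategy is to use $f$ together with the finite-dimensional structure of $R$ to derive a numerical contradiction. Concretely, I would lift $f$ to $\GP(R) \subseteq \mod R$, where the ambient AR theory is controlled by the finite-dimensional ring $R$ rather than its Frobenius stable category. The enlargement of the projective-injective subcategory from $\proj R$ to $\cp$ is handled by an Auslander--Solberg-type argument: it only adds extra summands to middle terms of AR sequences but cannot create new loops. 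A trace/dimension computation on the appropriate Cartan-style pairing of Grothendieck groups, in the spirit of Igusa--Liu--Paquette, then produces a contradiction.

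For part \emph{(b)}, let $T$ be a cluster-tilting object in the 2-Calabi--Yau category $\underline{\GP}_\cp(R)$. The Keller--Reiten cluster-tilting equivalence identifies the quotient of $\underline{\GP}_\cp(R)$ by the ideal of morphisms factoring through $\add \Sigma T$ with $\mod \End(T)$, and under this equivalence the quiver $Q_T$ of $T$ becomes the Gabriel quiver of $\End(T)$. To exclude loops in $Q_T$, I would invoke part \emph{(a)}: a loop at a summand $T_i$ of $T$ transports to a loop at $T_i$ in the AR quiver of $\underline{\GP}_\cp(R)$, contradicting \emph{(a)}. To exclude 2-cycles, I would adapt the strategy of Buan--Iyama--Reiten--Scott to our setting: the 2-Calabi--Yau duality together with the mutation exchange triangles at summands of $T$ forces any two opposite arrows between summands to cancel in $\End(T)$, since the resulting composition would witness a nontrivial self-extension forbidden by the cluster-tilting hypothesis $\underline{\GP}_\cp(R)(T,\Sigma T)=0$.

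The hardest part is expected to be \emph{(a)}: the classical no-loop theorem requires finite global dimension of the relevant module category, a condition that fundamentally fails for the triangulated category $\underline{\GP}_\cp(R)$. The heart of the argument will be to design an appropriate categorical substitute for the Cartan-matrix/trace computation that remains valid for a Frobenius stable category with enlarged projective-injective subcategory $\cp$, and to show that the resulting numerical identity is incompatible with the existence of an irreducible endomorphism. Once this foundational ingredient is established, part \emph{(b)} should follow relatively smoothly from the BIRSc strategy, since the needed 2-Calabi--Yau duality and mutation framework are already available in the stable category of a Hom-finite Frobenius category.
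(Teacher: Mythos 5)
Your plan for part~(a) has the right high-level direction---reduce to finite-dimensional algebras and invoke Igusa--Liu--Paquette---but it stops exactly where the real work begins, and you implicitly aim at the wrong target. You note that finite global dimension ``fundamentally fails for the triangulated category $\underline{\GP}_\cp(R)$,'' and indeed it does, but the paper's insight is that one should not work there. The paper instead works in the \emph{functor category} $\mod \GP(R)$, which has finite global dimension by Auslander's theory (\Cref{T:Auslander}), and shows via the Auslander--Reiten conflation in the Frobenius structure that the simple functor $S_X$ at a non-projective indecomposable $X$ has $\prdim_{\mod \GP(R)} S_X \leq 2$ (\Cref{lem:exactpd2}). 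It then localizes the relevant $\Ext$-groups to a genuine finite-dimensional algebra $A = \End(P)$ with $X \in \add P$ (\Cref{lem:localise}), and applies the classical Igusa--Liu--Paquette theorem there. Your ``trace/dimension computation on a Cartan-style pairing'' would need to be rebuilt from scratch in the stable setting; the paper replaces this by the localization trick, which lets one use the classical algebra result as a black box. So your proposal identifies the obstruction correctly but offers no mechanism to overcome it.

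Your proposal for part~(b) contains a genuine error in the no-loop step. You claim that a loop at a summand $T_i$ in the quiver of $T$ (equivalently, $\rad_{\add T}/\rad^2_{\add T}(T_i,T_i) \neq 0$) transports to a loop at $T_i$ in the AR-quiver of $\underline{\GP}_\cp(R)$. This does not follow: while $\rad_{\add T}(T_i,T_i) = \rad_{\underline{\GP}_\cp(R)}(T_i,T_i)$, the square radicals satisfy only $\rad^2_{\add T}(T_i,T_i) \subseteq \rad^2_{\underline{\GP}_\cp(R)}(T_i,T_i)$ and this inclusion is typically strict, since a radical endomorphism of $T_i$ may factor through objects outside $\add T$. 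So a loop in the quiver of $T$ can vanish in $\rad/\rad^2$ of the ambient AR-quiver, and part~(a) gives you nothing here. The paper resolves this by applying the entire no-loop / no-2-cycle machinery \emph{directly to the cluster-tilting subcategory $\ct$}: it shows that $\ct$ inherits the required functor-category finiteness conditions from $\ce$ (\Cref{lem:inheritsetup}, using $\gldim \mod \ct \leq \gldim \mod \ce + 1$, \Cref{lem:fingldim}), so \Cref{thm:noloopfun} applies to $\ct$ itself. For 2-cycles, your sketch (``opposite arrows cancel via the cluster-tilting hypothesis'') is a heuristic, not an argument. The paper's route is explicit: it uses the Iyama--Yoshino exchange conflations at $T$ to build a length-four projective resolution of $S_T$ in $\mod \ct$, directly computes $\Ext^2_{\mod \ct}(S_T,S_T)=0$ because the degree-$2$ term lies in $\cu = \ct\setminus\add T$, and then applies a functorial version of the Gei\ss--Leclerc--Schr\"oer no-2-cycle lemma (\Cref{lem:glsfunctor}). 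Your estimate that (b) would ``follow relatively smoothly'' once (a) is done is therefore backwards: part~(b) requires its own substantial machinery and does not reduce to (a).
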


\subsection{Proof of \Cref{T:Main}} \label{SubS:ProofofTheoremMain}
   Let $\ca = \langle \Db(R_1), \dots, \Db(R_n) \rangle$ be an \absorption of 
   $s$.
   Since $s$ is a hypersurface singularity, $\smash{\Dsg(\widehat{\co}_s)}$ is connected, see \Cref{Pi:Takahashi}.
   By \Cref{prop:reduceabsorption} there are $1 \leq i \leq n$ and a triangulated subcategory $\smash{\cc \subseteq \overline{\Dsg(X)}}$ such that $\smash{\Dsg(\widehat{\co}_s) \oplus \cc = \Dsg(R_i)}$ as triangulated categories.
   By \Cref{T:BuchweitzIntro} we have $\Dsg(R_i) \cong \underline{\GP}_{\proj R_i}(R_i)$.
   By \Cref{L:AuslanderSolbergSummand} there is a Frobenius exact structure $\GP_{\cp}(R_i)$ on $\GP(R_i)$ such that 
   \begin{align}\label{E:FirstReduction}
   \underline{\GP}_{\cp}(R_i) \cong \Dsg(\widehat{\co}_s).
   \end{align}
  
We distinguish the following cases:
 \begin{enumerate}[label={\textup{(\alph*)}}]
 \item $s $ is of type $ \SMAL$, \label{I:smallres}
 \item $s$ is an odd-dimensional ADE-hypersurface singularity and $\widehat{\co}_s \notin \SMAL$. Then $s$ is of one of the following types (cf.\ e.g.\ \cite{BIKR}):
    \begin{enumerate}[label={\textup{(\arabic*)}}]
       \item $A_{2m}$,  \label{I:A2n}
       \item  $D_{2m+1}$ or $E_m$. \label{I:D2m+1En} 
    \end{enumerate}
 \end{enumerate} 
We begin with the case \ref{I:smallres}. Combining \eqref{E:FirstReduction} with the definition of $\SMAL$, there is a complete local Gorenstein threefold singularity $\Spec(S)$ admitting a small resolution of singularities such that
\begin{align}\label{E:SingEqCommNC}
\Dsg(S) \cong \Dsg(\widehat{\co}_s) \cong \underline{\GP}_{\cp}(R).
\end{align}  
As $R$ is a finite-dimensional Gorenstein algebra, there are neither loops nor $2$-cycles in the quivers of cluster-tilting objects in $\underline{\GP}_{\cp}(R)$, by \Cref{T:PropertiesDsgR}\ref{item:noloops2cycles}. 
Hence, \eqref{E:SingEqCommNC} and \Cref{P:ClusterTiltingObject} show that $S$ is nodal.
Therefore, $\smash{\widehat{\co}_s}$ is nodal by \Cref{L:nodal}.

We continue with the case \ref{I:A2n}, i.e.\ $s$ is of type $A_{2m}$.
The AR-quiver of $\Dsg(\smash{\widehat{\co}_s})$ is given by 
\[ \label{E:A2n}
     \includegraphics{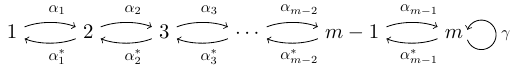}
\]
using Kn{\"o}rrer periodicity, see \cite{Knoerrer} and \cite{DieterichWiedemann86}.
By \Cref{T:PropertiesDsgR}\ref{item:noloops}, the AR-quiver of $\underline{\GP}_{\cp}(R_i) = \Dsg(\smash{\widehat{\co}_s})$ contains no loops, contradicting the existence of $\gamma$. 
This shows that $s$ cannot be of type $A_{2m}$ and hence case \ref{I:A2n} cannot occur.

Finally, we can apply \Cref{L:ReduceType} to reduce the case \ref{I:D2m+1En} to the case \ref{I:smallres}.
Indeed, \Cref{L:ReduceType} yields a new Frobenius exact structure $\GP_{\cm}(R_i)$ on $\GP(R_i)$ such that 
$\underline{\GP}_{\cm}(R_i) \cong \Dsg(S')$, where $S'$ is a ADE-threefold singularity of type $A_3$ or $D_4$. 
But following the argument in case \ref{I:smallres} shows that $S'$ is of type $A_1$. So case \ref{I:D2m+1En} cannot occur.
Under assumption of the auxiliary statements from \Cref{sec:auxiliary}, which are shown in the rest of this paper, the proof is finished.
\hfill \qedsymbol

\section{Preparations}\label{sec:prep}
We recommend recalling the notion of \SODs, cf.\ e.g.\ \cite{Bondal-Kapranov}, and \Cref{D:CatAbs,defn:homfiniteobj} before reading this section.

\subsection{Additional information regarding \Cref{D:CatAbs}}
In this section we provide additional information about the category $\ca^{\sg}$ from \Cref{D:CatAbs}.

First, recall that any admissible subcategory of $\Db(X)$ for a projective variety $X$ is $\infty$-admissible in the sense of \cite[Defintion 1.8]{Bondal-Kapranov}.
\begin{lem}\label{lem:inftyadmissible}
    Let $X$ be a projective variety and $\ca \subset \Db(X)$ be an admissible triangulated subcategory. 
    Then $\langle \ca, {}^\perp \ca \rangle$ and $\langle {}^\perp \ca, \ca \rangle$ are admissible \SODs.
\end{lem}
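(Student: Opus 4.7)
The plan is to show that any admissible subcategory of $\Db(X)$ for a projective variety $X$ is $\infty$-admissible, from which the two decompositions follow. The key structural input is the existence of a Serre functor (equivalently, Grothendieck duality) on $\Db(X)=\Db(\Coh X)$. Since $X$ is projective over $\C$ with finite Krull dimension, there is a bounded dualizing complex $\omega_X^\bullet$, and the functor $S \coloneqq (-)\otimes^{\mathbf{L}}\omega_X^\bullet$ restricts to an autoequivalence of $\Db(X)$ that realizes Serre duality
\[
\Hom_{\Db(X)}(Y,Z) \cong \Hom_{\Db(X)}(Z,SY)^{*}.
\]

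First I would unpack what admissibility of $\ca$ gives: the inclusion $i\colon \ca \hookrightarrow \Db(X)$ has both a left adjoint $L$ and a right adjoint $R$. Standard adjunction arguments identify $\ker L = {}^\perp\ca$ and $\ker R = \ca^\perp$, and the unit/counit triangles furnish two \SODs of $\Db(X)$ pairing $\ca$ with ${}^\perp\ca$ and with $\ca^\perp$. Consequently ${}^\perp\ca$ is right-admissible and $\ca^\perp$ is left-admissible in $\Db(X)$, yielding the two decompositions of the statement as \SODs.

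Next, I would upgrade these one-sided admissibilities to full admissibility using the Bondal--Kapranov principle: in a triangulated category equipped with a Serre functor, a full triangulated subcategory is right-admissible if and only if it is left-admissible. Concretely, Serre duality implies the identification ${}^\perp\ca = S^{-1}(\ca^\perp)$, because $Y \in {}^\perp\ca$ iff $\Hom(Y,i\ca)=0$ iff $\Hom(i\ca,SY)=0$ iff $SY \in \ca^\perp$. Since $S$ is an autoequivalence, this transfers admissibility between $\ca^\perp$ and ${}^\perp\ca$; combined with the one-sided admissibility from the previous paragraph, both orthogonals are admissible subcategories of $\Db(X)$. Explicit left and right adjoints for each orthogonal can be written down as $S\circ (-)\circ S^{-1}$-conjugates of the adjoints for the other orthogonal.

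The main technical obstacle to anticipate is confirming the existence of the Serre functor on $\Db(\Coh X)$ when $X$ is singular: the naive description $(-)\otimes\omega_X[n]$ is only available in the Gorenstein case, and for general projective $X$ one must work with a bounded dualizing complex and verify that tensoring with it preserves $\Db(\Coh X)$. Once this foundational input is secured by citing Grothendieck--Serre duality (or by invoking saturation results of Bondal--Van den Bergh), the rest of the proof reduces to the brief formal manipulation of adjoints and the Serre-duality isomorphism indicated above.
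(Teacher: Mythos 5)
Your proposed strategy hinges on $\Db(\Coh X)$ carrying a Serre functor, and this is precisely where the argument breaks down: for a \emph{singular} projective variety $X$, the category $\Db(\Coh X)$ does not admit a Serre functor, even when $X$ is Gorenstein. The functor $(-) \otimes^{\mathbf{L}} \omega_X^\bullet$ (a shift of $(-)\otimes\omega_X$ in the Gorenstein case) is an autoequivalence of $\Db(\Coh X)$, but the duality isomorphism $\Hom(F,G)^* \cong \Hom(G, F\otimes^{\mathbf{L}}\omega_X^\bullet)$ furnished by Grothendieck duality only holds when at least one of $F,G$ is a \emph{perfect} complex; it is a pairing between $\Perf(X)$ and $\Db(\Coh X)$, not a Serre functor on $\Db(\Coh X)$ itself. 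A concrete obstruction: take $X$ a nodal cubic curve (so $\omega_X \cong \co_X$) and $F = G = \co_p$ the skyscraper at the node. Then $\Ext^n_X(\co_p,\co_p) \neq 0$ for all $n \geq 0$, whereas if $(-)\otimes\omega_X[1] \cong [1]$ were a Serre functor one would need $\Ext^n(\co_p,\co_p)^* \cong \Ext^{1-n}(\co_p,\co_p)$, which vanishes for $n \geq 2$. So there is no Serre functor on $\Db(\Coh X)$, and the identification ${}^\perp\ca = S^{-1}(\ca^\perp)$ you use to pass between the two orthogonals is not available.

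You flag this as the main technical obstacle, but you then suggest it can be secured ``by citing Grothendieck--Serre duality (or by invoking saturation results of Bondal--Van den Bergh).'' Neither works as you propose: Grothendieck duality does not give a Serre functor on $\Db(\Coh X)$ (only a relative duality as above), and saturation is a genuinely different property that does \emph{not} imply the existence of a Serre functor in the $\Hom$-finite singular setting. Saturation, however, is exactly the right input, and it is what the paper uses: Bondal--Kapranov's Proposition 1.5 supplies the two \SODs from the adjoints to the inclusion $\ca \hookrightarrow \Db(X)$, and then Neeman's theorem that $\Db(X)$ is saturated for projective $X$ feeds into Bondal--Kapranov's Corollary 2.9.2 to conclude that the orthogonal complements are themselves admissible (rather than merely one-sided admissible). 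To repair your argument you would need to replace the Serre-functor step with this saturation argument, which is a different mechanism rather than an alternative justification of the same lemma.
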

\begin{proof}
    It follows from \cite[Proposition 1.5]{Bondal-Kapranov} that $\langle \ca, {}^\perp \ca \rangle$ and $\langle {}^\perp \ca, \ca \rangle$ are \SODs.
    As a result of $\Db(X)$ being saturated by work of Neeman, cf.\ \cite[Theorem 2.7]{KPS19}, also ${}^\perp \ca$ and $\ca^{\perp}$ are admissible, using \cite[Corollary 2.9.2]{Bondal-Kapranov}.
\end{proof}

The following specializes to \cite[Theorem 4.4(3)]{KPS19} and shows existence of the induced embedding $\ca/\ca^{\hf} \to \Dsg(X)$ used in \Cref{D:CatAbs}. 

\begin{lem}\label{lem/def:asg}
    Let $X$ be a projective variety and let $\ca \subset \Db(X)$ be an admissible subcategory. Assume that
$\ca=\langle \Db(R_1), \dots, \Db(R_n) \rangle$ is an admissible \SOD, where the $R_i$ are 
$\C$-algebras.
    Then there is an induced embedding $\ca/\ca^{\hf} \to \Dsg(X)$ and $\ca^{\sg} = \langle \Dsg(R_1), \dots, \Dsg(R_n) \rangle$ is an admissible \SOD of its essential image $\ca^{\sg}$.
\end{lem}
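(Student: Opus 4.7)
The plan is to proceed in three stages, each leveraging the admissibility hypothesis. First, for any admissible subcategory $\cb \subseteq \Db(X)$, I would prove the intrinsic identity $\cb^{\hf} = \cb \cap \Perf(X)$. Since $X$ is a noetherian projective scheme one has $\Perf(X) = \Db(X)^{\hf}$; letting $\rho:\Db(X) \to \cb$ denote the right adjoint of the inclusion $\iota: \cb \hookrightarrow \Db(X)$, the adjunction identity $\Hom_{\Db(X)}(F, G[i]) \cong \Hom_{\cb}(F, \rho G[i])$ for $F \in \cb$ reduces Hom-finiteness against $\Db(X)$ to Hom-finiteness against $\cb$, since $\rho$ is essentially surjective onto $\cb$.

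Second, I would show that $\rho$ preserves $\Perf(X)$. For $P \in \Perf(X)$, insert $P$ into the triangle $\iota \rho P \to P \to C \to [1]$ associated to right admissibility of $\cb$, whose third vertex $C$ lies in $\cb^{\perp}$ (the kernel of $\rho$). Applying $\Hom(-, \iota M[i])$ for $M \in \cb$ kills the contributions of $C$ and yields $\Hom_{\Db(X)}(\iota \rho P, \iota M[i]) \cong \Hom_{\Db(X)}(P, \iota M[i])$, which vanishes for almost all $i$ because $P$ is perfect; hence $\rho P \in \cb^{\hf} \subseteq \Perf(X)$. This preservation is precisely what makes the natural functor $\cb/\cb^{\hf} \to \Dsg(X)$ fully faithful: any roof $F \leftarrow H \to G$ in $\Dsg(X)$ with $F, G \in \cb$ is equivalent, via the counit $\iota\rho H \to H$ (using naturality together with $\rho F = F$ and $\rho G = G$), to a roof $F \leftarrow \rho H \to G$ whose cone lies in $\rho(\Perf(X)) \subseteq \cb^{\hf}$, i.e.\ to a roof representing a morphism in $\cb/\cb^{\hf}$; faithfulness follows from the same reduction applied to a roof witnessing the vanishing of a morphism.

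Third, I would apply these two stages twice. Taking $\cb = \ca$ produces the embedding $\ca/\ca^{\hf} \hookrightarrow \Dsg(X)$ with essential image $\ca^{\sg}$. Taking $\cb = \Db(R_i) \subseteq \ca \subseteq \Db(X)$, which is admissible by the admissibility of the given SOD together with \Cref{lem:inftyadmissible}, gives $\Db(R_i)^{\hf} = \Db(R_i) \cap \Perf(X)$; combined with the intrinsic identity $\Db(R_i)^{\hf} = \Perf(R_i)$, which is valid for any (noetherian) $\C$-algebra by the Ext-characterization of finite projective dimension, this identifies $\Db(R_i)/\Db(R_i)^{\hf} = \Dsg(R_i)$ inside $\Dsg(X)$. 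The same Hom-adjunction computation shows that the left and right adjoints of the inclusions $\Db(R_i) \hookrightarrow \ca$ preserve $\ca^{\hf}$, so the SOD $\ca = \langle \Db(R_1), \ldots, \Db(R_n) \rangle$ descends termwise to the claimed admissible SOD $\ca^{\sg} = \langle \Dsg(R_1), \ldots, \Dsg(R_n) \rangle$.

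The main obstacle is the preservation statement $\rho(\Perf(X)) \subseteq \Perf(X)$: once this is in hand, both the full-faithfulness of the embedding $\ca/\ca^{\hf} \hookrightarrow \Dsg(X)$ and the descent of the semi-orthogonal structure across the Verdier quotient reduce to routine Hom-adjunction bookkeeping for admissible subcategories. The remainder is then a systematic iteration of these two stages, first for $\ca$ and then for each component $\Db(R_i)$.
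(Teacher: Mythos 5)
Your three-stage strategy is sound in outline and genuinely different from the paper's, which simply invokes \cite[Proposition 1.10]{orl6} to descend the \SODs $\Db(X) = \langle \ca, {}^\perp\ca\rangle$ and $\ca = \langle \Db(R_1), \dots, \Db(R_n)\rangle$ to the quotients by homologically finite objects. Stages 1 and 3 are fine. However, stage 2 contains a genuine gap: you assert that applying $\Hom(-, \iota M[i])$ for $M \in \cb$ to the decomposition triangle $\iota\rho P \to P \to C$ (with $C \in \cb^\perp$) ``kills the contributions of $C$''. That would require $\Hom(C, \iota M[i]) = 0$, whereas the orthogonality defining $\cb^\perp$ gives $\Hom(\iota M, C[i]) = 0$. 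The groups you actually need, $\Hom_{\Db(X)}(\cb^\perp, \cb[i])$, have no reason to vanish for a general admissible \SOD, so the claimed isomorphism $\Hom(\iota\rho P, \iota M[i]) \cong \Hom(P, \iota M[i])$ is not established.

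The conclusion $\rho(\Perf(X)) \subseteq \cb^{\hf}$ is nevertheless true, but the adjunction trick only works directly for the \emph{left} adjoint. In the \SOD $\Db(X) = \langle \cb^\perp, \cb\rangle$ the projection $\pi$ onto $\cb^\perp$ is left adjoint to the inclusion of $\cb^\perp$, so $\Hom_{\cb^\perp}(\pi P, N[i]) \cong \Hom_{\Db(X)}(P, \iota N[i])$ vanishes for almost all $i$ and therefore $\pi P \in (\cb^\perp)^{\hf} = \cb^\perp \cap \Perf(X)$ (your stage 1 applied to $\cb^\perp$, which is admissible since $\cb$ is). In the triangle $\iota\rho P \to P \to \iota\pi P$ the two right-hand vertices are now perfect, so $\iota\rho P$ is perfect by two-out-of-three, giving $\rho P \in \cb \cap \Perf(X) = \cb^{\hf}$. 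With this repair, the roof replacement via the counit and the iteration in stage 3 go through as you describe; the lesson is that an adjunction argument applies directly to the left projection of a \SOD, and the statement for the right projection must be deduced from it via the decomposition triangle.
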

\begin{proof}
    As $\Db(X)$ is $\Hom$-finite, $R_i$ is finite-dimensional over $\C$ for $1 \leq i \leq n$.
    Hence, $\Db(R_i)^{\hf} \cong \Kb(R_i)$, by \cite[Propition 2.18]{iyama-wemyss}, and $\Db(R_i)/\Kb(R_i) \cong \Dsg(R_i)$.
    Similarly, $\Db(X)/\Db(X)^{\hf} \cong \Dsg(X)$, by \cite[Proposition 1.11]{orl6}.
    By \Cref{lem:inftyadmissible} we have admissible \SODs
    \[ \Db(X) = \langle \ca, {}^{\perp} \ca \rangle \text{ and } \ca = \langle \Db(R_1), \dots, \Db(R_n) \rangle. \]
    Now, \cite[Proposition 1.10]{orl6} yields \SODs of the form $\Dsg(X) = \langle \ca/\ca^{\hf}, ({}^{\perp}\ca)/({}^{\perp}\ca)^{\hf} \rangle$ and $\ca/\ca^{\hf} = \langle \Dsg(R_1), \dots, \Dsg(R_n) \rangle$, where the former \SOD induces the claimed embedding $\ca/\ca^{\hf} \to \Dsg(X)$.
\end{proof}

\subsection{Local consequences of categorical absorptions}
Let $X$ be a projective Gorenstein variety and $s \in \Sing(X)$ be an isolated singularity.
In this section, we show that if $\ca = \langle \Db(R_1), \dots, \Db(R_n) \rangle \subset \Db(X)$ is an \absorption of $s$ then $\Dsg(\smash{\widehat{\co}_s})$ is a direct summand of $\Dsg(R_i)$ for some $1 \leq i \leq n$, provided $\Dsg(\smash{\widehat{\co}_s})$ is a connected triangulated category.
This and \Cref{P:RinKSOD} not only reduce the proof of \Cref{T:Main} to the easier case of \Cref{T:localmain} but also allows us to use \Cref{L:AuslanderSolbergSummand} to obtain a nice Frobenius model for $\Dsg(\smash{\widehat{\co}_s})$. 
We will achieve our goal as a consequence of the following general lemma.

\begin{lem}\label{lem:SODintersection}
   Suppose $\ca_1, \dots, \ca_n \subset \cc$ are idempotent complete, triangulated subcategories such that $\cc_1 \subseteq \langle \ca_1, \dots, \ca_n \rangle \subset \cc_1 \oplus \cc_2$ for some triangulated subcategories $\cc_1, \cc_2 \subset \cc$. Let $\ca_{i,j} \coloneqq \cc_j \cap \ca_i$ for $1 \leq i \leq n$ and $j = 1,2$.
   \begin{enumerate}[label={(\alph*)}]
       \item We have $\ca_i = \ca_{i,1} \oplus \ca_{i,2}$ as triangulated categories for $1 \leq i \leq n$.\label{item:SODintersectionSum}
       \item Furthermore, we have $\cc_1 = \langle  \ca_{1,1}, \dots, \ca_{n,1} \rangle$.\label{item:SODintersection}%
       \item If $\cc_1$ is $\kk$-linear over a field $\kk$, connected and $d$-Calabi--Yau for some $d \in \N$   then $\cc_1 \subseteq \ca_i$ is a triangulated direct summand for some $1 \leq i \leq n$.\label{item:SODintersectionCY}%
   \end{enumerate}
\end{lem}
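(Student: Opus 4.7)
My plan treats the three parts in order: (a) decomposes each $\ca_i$, (b) transports the decomposition to the subcategory it generates, and (c) applies the Calabi--Yau property on $\cc_1$ to force only one summand to survive.

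For (a), I would exploit idempotent completeness of $\ca_i$. Any $X \in \ca_i \subseteq \cc_1 \oplus \cc_2$ decomposes as $X \cong X_1 \oplus X_2$ in $\cc$ with $X_j \in \cc_j$; the projection idempotents $e_1, e_2 \in \End_\cc(X)$ lie in $\End_{\ca_i}(X)$ since $\ca_i$ is a full subcategory, and split in $\ca_i$ by idempotent completeness, so $X_j \in \ca_i \cap \cc_j = \ca_{i,j}$. The orthogonality of the two summands that makes $\ca_i = \ca_{i,1} \oplus \ca_{i,2}$ a direct sum of triangulated subcategories is inherited from the direct sum $\cc_1 \oplus \cc_2$ in $\cc$.

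For (b), I would use the projection $p_1 \colon \langle \ca_1, \ldots, \ca_n \rangle \to \cc_1$ along the decomposition $\cc_1 \oplus \cc_2$. It is additive and triangulated, and by (a) it sends $\ca_i$ into $\ca_{i,1}$. Since $p_1$ restricts to the identity on $\cc_1 \subseteq \langle \ca_1, \ldots, \ca_n \rangle$, applying $p_1$ to any cone-and-shift presentation of $X \in \cc_1$ in terms of objects of $\bigcup_i \ca_i$ yields a presentation of $X$ in terms of $\bigcup_i \ca_{i,1}$, hence $\cc_1 = \langle \ca_{1,1}, \ldots, \ca_{n,1} \rangle$.

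For (c), the idea is to upgrade the inherited semiorthogonal decomposition into an orthogonal one via Serre duality. Interpreting $\langle \ca_1, \ldots, \ca_n \rangle$ as an \SOD in the paper's convention, the semiorthogonality $\Hom(\ca_j, \ca_i) = 0$ for $j > i$ descends to $\Hom(\ca_{j,1}, \ca_{i,1}) = 0$, and together with (b) this makes $\cc_1 = \langle \ca_{1,1}, \ldots, \ca_{n,1} \rangle$ an \SOD. In the $d$-Calabi--Yau $\kk$-linear category $\cc_1$, Serre duality $\Hom_{\cc_1}(X, \Sigma^i Y) \cong D\Hom_{\cc_1}(Y, \Sigma^{d-i} X)$ converts the one-sided vanishing into two-sided vanishing, yielding an orthogonal direct sum $\cc_1 = \ca_{1,1} \oplus \cdots \oplus \ca_{n,1}$ of triangulated subcategories. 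Connectedness of $\cc_1$ forces all but one summand to vanish, so $\cc_1 = \ca_{i,1}$ for some $i$, and (a) then gives $\ca_i = \cc_1 \oplus \ca_{i,2}$, exhibiting $\cc_1$ as a triangulated direct summand of $\ca_i$. I expect the main obstacle to be verifying that the ambient \SOD genuinely restricts along $p_1$ to an \SOD on $\cc_1$ rather than mere generation, and that Serre duality must be applied \emph{within} $\cc_1$ rather than in the ambient $\cc$; once these set-up issues are handled, the Calabi--Yau-plus-connected argument closes the proof in a single line.
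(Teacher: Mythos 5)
Your proof is correct and follows essentially the same route as the paper. Part (a) via idempotent completeness and (c) via Serre duality plus connectedness match the paper verbatim; the only variation is in (b), where you use the triangulated projection $p_1 \colon \cc_1 \oplus \cc_2 \to \cc_1$ to transport a cone-and-shift presentation, whereas the paper rearranges the SOD $\langle \ca_{1,1}, \ca_{1,2}, \dots, \ca_{n,1}, \ca_{n,2} \rangle$ into $\langle \langle \ca_{1,1}, \dots, \ca_{n,1} \rangle, \langle \ca_{1,2}, \dots, \ca_{n,2} \rangle \rangle$ using mutual orthogonality of the $\cc_1$-parts and $\cc_2$-parts. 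Your projection-functor argument is arguably cleaner since it sidesteps the need to justify the rearrangement, but the two are equivalent in substance; your flagged concerns at the end (that semiorthogonality must be checked to restrict to $\cc_1$, and that Serre duality must be applied inside $\cc_1$) are exactly the right points to verify and both go through as you expect.
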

\begin{proof}

   \ref{item:SODintersectionSum}: It is clear that the intersection of triangulated subcategories is again a triangulated subcategory and that $\ca_{i,1}$ and $\ca_{i,2}$ are orthogonal for each $1 \leq i \leq n$.
   As $\ca_i$ is idempotent complete, it is closed under summands in $\cc$.
   In particular, if $X = X_1 \oplus X_2$ is a decomposition of $X \in \ca_i$ for some $1 \leq i \leq n$ with $X_j \in \cc_j$ then $X_j \in \ca_{i,j}$.
   As $\ca_i \subset \cc_1 \oplus \cc_2$ this shows $\ca_i = \ca_{i,1} \oplus \ca_{i,2}$ as triangulated categories.

   \ref{item:SODintersection}: We have $\ca \coloneqq \langle \ca_1, \dots, \ca_n \rangle = \langle \ca_{1,1}, \ca_{1,2}, \dots, \ca_{n,1}, \ca_{n,2}, \rangle$ as we have in particular $\ca_i = \langle \ca_{i,1}, \ca_{i,2} \rangle $ for $1 \leq i \leq n$ by \ref{item:SODintersectionSum}.
   Rearranging
   the terms shows that $\ca = \langle \langle \ca_{1,1}, \dots, \ca_{n,1} \rangle, \langle \ca_{1,2}, \dots, \ca_{n,2} \rangle \rangle$ and this decomposition is actually orthogonal as any subcategory of $\cc_1$ is orthogonal to any subcategory of $\cc_2$.
   Note that rearranging the SOD this way does not require that the components $\ca_{i,j}$ are admissible since $\ca_{i,1}$ and $\ca_{j,2}$ are orthogonal to each other for $1 \leq i,j \leq n$.
   It follows from $\cc_1 \subset \ca$ that $\cc_1 \subset \langle \ca_{1,1}, \dots, \ca_{1,n} \rangle$.
   The other inclusion is trivial as $\cc_1 \subset \cc$ is triangulated.

    \ref{item:SODintersectionCY}: We have $0 = \Hom_{\cc_1}(\Sigma^{-d} Y, X)^\ast \cong \Hom_\kk(\Hom_{\cc_1}(X, Y), \kk)$ for any $X \in \ca_{i,1} $ and any $Y \in \ca_{j,1}$ with $1 \leq i < j \leq n$ as $\cc_1$ is $d$-Calabi--Yau and because the decomposition $\cc_1 = \langle  \ca_{1,1}, \dots, \ca_{n,1} \rangle$ is semiorthogonal.
    This shows that the given decomposition is orthogonal, i.e.\ $\cc_1 = \ca_{1,1} \oplus \dots \oplus \ca_{n,1}$, and hence $\cc_1 = \ca_{i,1} \subset \ca_i$ for some $1 \leq i \leq n$ as $\cc_1$ is connected.
    Thus, we have $\cc_1 \oplus \ca_{i,2} = \ca_i$ by \ref{item:SODintersectionSum}.
\end{proof}

\begin{ex}
   \Cref{lem:SODintersection}\ref{item:SODintersection} is not true in general if there is no $\cc_2 \subset \cc$ with $ \langle \ca_1, \ca_2 \rangle \subset \cc_1 \oplus \cc_2$.
   For example let $\cc = \Db(\mathbb{P}^1) = \langle \ca_1, \ca_2 \rangle$, where $\ca_1 = \langle \co \rangle$, $\ca_2 = \langle \co(1) \rangle$ and $\cc_1 = \Db(\tors\mathbb{P}^1)$. 
   Then $\cc_1 \cap \ca_1 = 0 = \cc_1 \cap \ca_2$.
\end{ex}

In order to apply \Cref{lem:SODintersection}\ref{item:SODintersectionCY} we need the following proposition.
Notice, as $s$ is an isolated singularity, $\{s\} \subset \Sing(X)$ is a connected component.

\begin{prop}\label{prop:decomposition}
   Let $X$ be a quasi-projective variety.
   \begin{enumerate}[label={(\alph*)}]
        \item If $\Sing(X) = \bigsqcup_{i = 1}^n C_i$ is a decomposition into connected components then we have an induced decomposition $ \smash{\overline{\Dsg(X)}} = \bigoplus_{i=1}^n \overline{\Dsg_{C_i}(X)}$.\label{item:decomposition}%
        \item If $s \in \Sing X$ is an isolated singularity then $\overline{\Dsg_{s}(X)} = \Dsg(\widehat{\co}_s)$.\label{item:equal}%
   \end{enumerate}
\end{prop}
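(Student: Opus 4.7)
The plan is to handle (a) by isolating connected components of $\Sing(X)$ one at a time via local cohomology, and then reduce (b) to Orlov's formal completion theorem by excision.

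For (a), the key intermediate step is the orthogonal decomposition
\[
\Db_{Z_1 \sqcup Z_2}(X) = \Db_{Z_1}(X) \oplus \Db_{Z_2}(X), \quad \Perf_{Z_1 \sqcup Z_2}(X) = \Perf_{Z_1}(X) \oplus \Perf_{Z_2}(X)
\]
whenever $Z_1, Z_2$ are disjoint closed subsets of $X$. I would obtain this from the local cohomology triangle $R\underline{\Gamma}_{Z_1} \to \id \to R\underline{\Gamma}_{X \setminus Z_1}$: applied to any $F \in \Db_{Z_1 \sqcup Z_2}(X)$ it produces a functorial splitting $F \cong R\underline{\Gamma}_{Z_1}(F) \oplus R\underline{\Gamma}_{Z_2}(F)$ (the cone is supported on $Z_2$ because $F$ is), while $\Hom$-vanishing between complexes with disjoint cohomological support gives orthogonality. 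Passing to Verdier quotients yields $\Dsg_{Z_1 \sqcup Z_2}(X) = \Dsg_{Z_1}(X) \oplus \Dsg_{Z_2}(X)$, and this decomposition is inherited by the idempotent completion. Iterating over $\Sing(X) = \bigsqcup_{i=1}^n C_i$ yields $\overline{\Dsg_{\Sing(X)}(X)} = \bigoplus_{i=1}^n \overline{\Dsg_{C_i}(X)}$. To conclude, I would invoke the result (already recalled in the remark following \Cref{D:CatAbs}) that $\Dsg_{\Sing(X)}(X) \subseteq \Dsg(X)$ becomes an equivalence after idempotent completion, identifying the left-hand side with $\overline{\Dsg(X)}$.

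For (b), since $s$ is isolated in $\Sing(X)$, I can choose an affine open neighbourhood $U = \Spec A \subset X$ with $\Sing(U) = \{s\}$. Excision identifies $\Dsg_s(X)$ with $\Dsg_s(U) = \Dsg(U)$, compatibly with the inclusion into $\overline{\Dsg(X)}$. Orlov's formal completion theorem \cite{Orlov11} then provides a canonical equivalence $\overline{\Dsg(U)} \cong \Dsg(\widehat{\co}_s)$; since the target is already idempotent complete by \cite[Corollary 2.4]{XWChen}, chaining these equivalences gives the claimed identification $\overline{\Dsg_s(X)} = \Dsg(\widehat{\co}_s)$.

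The main technical subtlety I anticipate is ensuring that every reduction step is compatible with the embedding into $\overline{\Dsg(X)}$ used throughout \Cref{D:CatAbs} and \Cref{lem/def:asg}, since the conclusion is phrased as an identification of subcategories rather than just an abstract equivalence. This should follow from naturality of the restriction, local cohomology and completion functors on singularity categories, but needs to be tracked carefully because the final passage from $\Spec A$ to $\Spec \widehat{\co}_s$ is only an equivalence after idempotent completion.
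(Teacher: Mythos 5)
Your argument for part (a) is essentially the paper's: the paper simply asserts that $\Db_{\Sing(X)}(X)$ and $\Perf_{\Sing(X)}(X)$ decompose orthogonally according to the decomposition of $\Sing(X)$ into connected components, and quotes \cite[Proposition~2.7]{Orlov11} for $\overline{\Dsg(X)} = \overline{\Dsg_{\Sing(X)}(X)}$; your local-cohomology triangle makes the decomposition explicit but proves the same thing. For part (b), the paper's route is more direct than yours: it applies \cite[Proposition~3.3]{Orlov11} to $Z = \{s\} \subset X$ and the morphism $\Spec(\widehat{\co}_s) \to X$ in one step, whereas you interpolate through an affine open $U = \Spec A$. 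That excision step (restriction giving an equivalence $\Dsg_s(X) \simeq \Dsg_s(U)$) is valid because $\{s\}$ is closed in both and pushforward of $Z$-supported coherent (resp.\ perfect) complexes along the open immersion stays coherent (resp.\ perfect), so nothing is lost — it is just an extra detour.

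Two points need correction. First, your equality $\Dsg_s(U) = \Dsg(U)$ does not hold before idempotent completion; as the paper itself warns in the remark following \Cref{D:CatAbs}, the inclusion $\Dsg_{\Sing(U)}(U) \subseteq \Dsg(U)$ can be strict and only becomes an equivalence after passing to idempotent completions. You do acknowledge this subtlety at the end, but the line as written is incorrect — you should pass to $\overline{(-)}$ at that step already. Second, the citation of \cite[Corollary~2.4]{XWChen} for idempotent completeness of $\Dsg(\widehat{\co}_s)$ is wrong: that result is for finite-dimensional algebras, whereas $\widehat{\co}_s$ is a complete local Noetherian ring. The paper instead argues that $\widehat{\co}_s$ is Henselian local, hence has vanishing $\mathrm{K}_{-1}$ by \cite[Theorem~3.7]{Drinfeld}, and that this implies idempotent completeness of the singularity category via \cite[Lemma~1.11]{pavicshinder}. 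You need that argument (or an equivalent one); the XWChen reference does not cover this case.
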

\begin{proof}
   \ref{item:decomposition}: By \cite[Proposition 2.7]{Orlov11} we have $\smash{\overline{\Dsg(X)}  = \overline{\Dsg_{\Sing(X)}(X)}}$.
   Now, both the bounded derived category of coherent sheaves and the category of perfect complexes supported at $\Sing(X)$ decompose into orthogonal sums according to the decomposition of the support $\Sing(X) = \bigsqcup_{i=1}^n C_i$.
   Hence, the same holds for $\smash{\Dsg_{\Sing(X)}(X)}$, and therefore also for its idempotent completion.

   \ref{item:equal}: This follows from \mbox{\cite[Proposition 3.3]{Orlov11}} applied to the closed subspace $Z = \{s\}$ of $X$ and the morphism $\Spec(\widehat{\co}_s) \to X$, using the notation of loc.\ cit.\ together with the fact that $\Dsg( \widehat{\co}_s )$ is idempotent complete (Henselian local rings have vanishing $\mathrm{K}_{-1}$ by \cite[Theorem 3.7]{Drinfeld}, which, together with e.g.\ \cite[Lemma 1.11]{pavicshinder}, implies that their singularity category is idempotent complete).
\end{proof}

By means of \Cref{prop:decomposition}\ref{item:equal} we may identify $\Dsg(\widehat{\co}_s)$ with $\overline{\Dsg_{s}(X)}$ from now on.
We can now proof the main result of this section.

\begin{prop}\label{prop:reduceabsorption2}
   Let $X$ be a projective Gorenstein variety. 
   Suppose $s \in \Sing(X)$ is an isolated singularity such that $\Dsg(\smash{\widehat{\co}_s})$ is connected. 
   If 
   \[\ca = \langle \Db(R_1), \dots, \Db(R_n) \rangle \subseteq \Db(X)\] 
   is an \absorption of $s$ then $\Dsg(\smash{\widehat{\co}_s}) \subset \Dsg(R_i)$ is a triangulated direct summand for some $1 \leq i \leq n$.
   In particular, $\Db(R_i)$ is already an \absorption of $s$.
\end{prop}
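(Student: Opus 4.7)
The plan is to combine the algebraic structure of the absorption (giving a semiorthogonal decomposition on singularity categories) with the geometric decomposition of $\overline{\Dsg(X)}$ at the isolated singular point $s$, and then exploit the Calabi--Yau property of $\Dsg(\widehat{\co}_s)$ to upgrade the resulting semiorthogonal inclusion to an actual direct summand. The main technical vehicle is \Cref{lem:SODintersection}\ref{item:SODintersectionCY}, which was clearly designed for this purpose.

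First, I would set up the relevant inclusions. By \Cref{P:RinKSOD}, each $R_i$ is a finite-dimensional Gorenstein $\C$-algebra, hence $\Dsg(R_i)$ is idempotent complete (by Chen's result cited earlier in the paper). \Cref{lem/def:asg} gives an admissible \SOD $\ca^{\sg} = \langle \Dsg(R_1), \ldots, \Dsg(R_n) \rangle$ inside $\Dsg(X)$; since the components are already idempotent complete, taking the closure inside $\overline{\Dsg(X)}$ does not change the pieces. From the defining inclusion $\Dsg_{\{s\}}(X) \subseteq \ca^{\sg}$ of an \absorption of $s$, passing to idempotent completions yields $\Dsg(\widehat{\co}_s) = \overline{\Dsg_{\{s\}}(X)} \subseteq \overline{\ca^{\sg}}$, using \Cref{prop:decomposition}\ref{item:equal}. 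Simultaneously, because $s$ is isolated, the component $\{s\}$ is a connected component of $\Sing(X)$, and \Cref{prop:decomposition}\ref{item:decomposition} provides an orthogonal decomposition
\begin{equation*}
   \overline{\Dsg(X)} = \Dsg(\widehat{\co}_s) \oplus \overline{\Dsg_{\Sing(X)\setminus\{s\}}(X)}.
\end{equation*}

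Next, I would apply \Cref{lem:SODintersection}\ref{item:SODintersectionCY} to $\cc = \overline{\Dsg(X)}$, the subcategories $\ca_i = \Dsg(R_i)$ (all idempotent complete), and the orthogonal summand decomposition $\cc_1 \oplus \cc_2$ above with $\cc_1 = \Dsg(\widehat{\co}_s)$. The three hypotheses of that lemma are checked as follows: the chain $\cc_1 \subseteq \overline{\ca^{\sg}} = \langle \ca_1,\ldots,\ca_n \rangle \subseteq \cc_1 \oplus \cc_2$ holds by the previous paragraph; connectedness of $\cc_1$ is a standing hypothesis; and the Calabi--Yau property follows from the Auslander--Reiten theorem for isolated Gorenstein singularities, which gives that $\Dsg(\widehat{\co}_s)$ is $(d-1)$-Calabi--Yau with $d = \dim \widehat{\co}_s$ (combined with Buchweitz's \Cref{T:BuchweitzIntro} identifying it with the stable category of maximal Cohen--Macaulay modules). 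The lemma then delivers the existence of an index $i$ with $\Dsg(\widehat{\co}_s) \subseteq \Dsg(R_i)$ as a triangulated direct summand.

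For the final assertion, I would observe that $\Db(R_i)$ is itself admissible in $\Db(X)$: it is admissible in $\ca$ by the given \SOD, and $\ca$ is admissible in $\Db(X)$ by hypothesis, so composition of admissibility gives what is needed. Since $\Db(R_i)^{\sg} = \Dsg(R_i)$ contains $\Dsg(\widehat{\co}_s)$ (hence also $\Dsg_{\{s\}}(X)$) as a summand, the category $\Db(R_i)$ qualifies as an \absorption of $s$ on its own. The only subtlety I expect is the bookkeeping between $\ca^{\sg}$ and its idempotent completion $\overline{\ca^{\sg}}$; this should be harmless since each $\Dsg(R_i)$ is already idempotent complete, so the SOD is preserved under closure and \Cref{lem:SODintersection} applies verbatim. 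Citing the appropriate form of the Calabi--Yau property (with the correct dimension shift) is the one step where care is needed.
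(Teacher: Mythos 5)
Your proof is correct and follows essentially the same route as the paper's: pass to $\ca^{\sg} = \langle \Dsg(R_1), \dots, \Dsg(R_n) \rangle$ via \Cref{lem/def:asg}, sandwich it between $\Dsg(\widehat{\co}_s)$ and the orthogonal decomposition from \Cref{prop:decomposition}, use idempotent completeness of $\Dsg(R_i)$ to promote the inclusions, invoke the Calabi--Yau property, and apply \Cref{lem:SODintersection}\ref{item:SODintersectionCY}. The only cosmetic difference is that the paper cites \cite[Lemma 2.2]{KPS19} explicitly for idempotent completeness of $\ca^{\sg}$ where you argue it more informally, and the paper does not bother pinning down the exact Calabi--Yau dimension since \Cref{lem:SODintersection}\ref{item:SODintersectionCY} only needs $d$-CY for \emph{some} $d$.
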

\begin{proof}\label{proof:prop2.1}
   We have $\ca^{\sg} = \langle \Dsg(R_1), \dots, \Dsg(R_n) \rangle \subseteq \Dsg(X) \subseteq \overline{\Dsg(X)}$ by \Cref{lem/def:asg}. 
   As $\ca$ is an \absorption of $s$ and by \Cref{prop:decomposition} we have 
   \begin{equation} \Dsg_s(X) \subset \ca^{\sg} \subset \overline{\Dsg(X)} = \Dsg(\widehat{\co}_s) \oplus \overline{\Dsg_{\Sing(X) \setminus \{s\}}(X)}.\label{eq:inclusions}\end{equation} 
   Moreover, $\Dsg(R_i)$ is idempotent complete for $1 \leq i \leq n$ by \cite[Corollary 2.4]{XWChen}. 
   In particular, $\ca^{\sg}$ is idempotent complete, cf.\ \cite[Lemma 2.2]{KPS19}.
   Hence, passing to the idempotent completion on the left side in \eqref{eq:inclusions} and using \Cref{prop:decomposition}\ref{item:equal} yield
   \[ \Dsg(\widehat{\co}_s) \subset \langle \Dsg(R_1), \dots, \Dsg(R_n) \rangle  \subset \Dsg(\widehat{\co}_s) \oplus \overline{\Dsg_{\Sing(X) \setminus \{s\}}(X)}.\]
   By work of Auslander and Buchweitz, we know that $\Dsg(\smash{\widehat{\co}_s})$ is $d$-Calabi--Yau, see e.g.~\cite[Theorem 4.28]{WemyssLecture}.
   Now \Cref{lem:SODintersection}\ref{item:SODintersectionCY} shows the claim.
\end{proof}

Recall that singularities of type $\SMAL$ and ADE-hypersurface singularities are singular equivalent to hypersurface singularities $\C\llbracket z_0, \ldots, z_d\rrbracket/(f)$, see \Cref{S:AppendixDetailsOnSing}.
Hence, we see by the following proposition that \Cref{prop:reduceabsorption2} applies to these singularities.

\begin{prop}\label{P:Takahashi}
Let $S=\C\llbracket z_0, \ldots, z_d\rrbracket/(f)$ be an isolated singularity. Then the singularity category $\Dsg(S)$ is connected.
\end{prop}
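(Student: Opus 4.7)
My plan is to invoke Takahashi's classification of thick subcategories of the stable category of maximal Cohen--Macaulay modules over a hypersurface local ring, from which the connectedness of $\Dsg(S)$ is essentially immediate.

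Suppose $\Dsg(S) = \cc_1 \oplus \cc_2$ is a direct sum decomposition into triangulated subcategories; the goal is to show that one of the $\cc_i$ is zero. Each $\cc_i$ is automatically a thick triangulated subcategory of $\Dsg(S)$, being a direct summand. Via the Buchweitz equivalence $\Dsg(S) \simeq \underline{\MCM}(S)$ (\Cref{T:BuchweitzIntro}), under which the shift corresponds to the (co)syzygy functor, $\cc_i$ also becomes a thick subcategory of $\underline{\MCM}(S)$ closed under syzygy and cosyzygy, i.e.\ a thick subcategory in the sense of Takahashi.

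I would then apply Takahashi's theorem (\emph{Classifying thick subcategories of the stable category of Cohen--Macaulay modules}, Adv.\ Math.\ 225 (2010)): for the hypersurface local ring $S$ there is a bijection between the thick subcategories of $\underline{\MCM}(S)$ and the specialization-closed subsets of the singular locus $\Sing S$, given by the support map. Because $S$ has an isolated singularity, $\Sing S = \{\mathfrak{m}\}$ is a single closed point, so the only specialization-closed subsets are $\emptyset$ and $\{\mathfrak{m}\}$, corresponding to the trivial thick subcategories $0$ and $\Dsg(S)$.

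Consequently $\cc_1, \cc_2 \in \{0, \Dsg(S)\}$; since a direct sum decomposition forces $\cc_1 \cap \cc_2 = 0$, at least one of them must vanish, establishing that $\Dsg(S)$ is connected. The main point to verify is that Takahashi's classification indeed covers all triangulated thick subcategories, not merely those closed under some auxiliary tensor action; this is the case for hypersurface rings because on $\underline{\MCM}(S)$ the triangulated shift coincides with a syzygy, so the two notions of thickness agree. A secondary sanity check is to confirm that the specialization-closed subsets on the singular locus of the complete local ring $S$ are just $\emptyset$ and the closed point, which is immediate since $\Sing S$ is a single point endowed with the trivial topology.
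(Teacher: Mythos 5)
Your proposal is correct and takes essentially the same route as the paper: the paper simply cites Takahashi's Corollary~6.9 to assert that $\Dsg(S)$ has no non-trivial thick subcategories for an isolated hypersurface singularity, then concludes connectedness. You unpack the same result via Takahashi's support-theoretic classification (thick subcategories of $\underline{\MCM}(S)$ $\leftrightarrow$ specialization-closed subsets of $\Sing S$), which, since $\Sing S$ is a single point, again yields that the only thick subcategories are $0$ and everything, and hence no non-trivial direct sum decomposition can exist. One small imprecision: on $\underline{\MCM}(S)$ the triangulated shift $\Sigma$ is the \emph{cosyzygy}, not the syzygy; this does not affect the argument, as Takahashi's thick subcategories are by definition closed under both (co)syzygy operations, matching triangulated thickness under the Buchweitz equivalence.
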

\begin{proof}
$\Dsg(S)$ has no non-trivial thick subcategories, cf.\ e.g.\ \cite[Corollary 6.9]{Takahashi}. In particular, it is a connected triangulated category. 
\end{proof}

\subsection{Nodal singularities}
The following recognition result for nodal singularities can also be shown by arguing along the lines of the proof of \Cref{T:ClassOfSing}.

\begin{lem}\label{L:Yoshino}
Let $S$ be a complete local Gorenstein $\C$-algebra of Krull 
dimension $d$. If $\Dsg(S) \cong  \Dsg(\C\llbracket z_0,\dots,z_3 \rrbracket/(z_0 z_1-z_2 z_3))$, then $S \cong \C\llbracket z_0, \dots, z_d \rrbracket/(z_0^2 + \ldots + z_d^2)$, where $d$ is odd.
\end{lem}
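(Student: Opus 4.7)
The plan is to use the very rigid structure of $\Dsg(R)$ for the conifold $R = \C\llbracket z_0,z_1,z_2,z_3\rrbracket/(z_0 z_1 - z_2 z_3)$ and transport strong restrictions back to $S$. First I would record the three properties of $\Dsg(R)$ that will be used: (i) $R$ is a three-dimensional isolated Gorenstein hypersurface, so by the Auslander--Reiten formula $\Dsg(R)$ is $2$-Calabi--Yau; (ii) $\Dsg(R)$ is $\Hom$-finite Krull--Schmidt with exactly two non-isomorphic indecomposable objects, the rank-one MCM ideals $(z_0,z_2)$ and $(z_0,z_3)$; (iii) the shift $\Sigma$ interchanges these two indecomposables, so $\Sigma^2 \cong \mathrm{id}_{\Dsg(R)}$ while $\Sigma \not\cong \mathrm{id}$.

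\emph{Step 1 ($S$ has an isolated singularity and $d$ is odd).} Property (ii) together with Auslander's characterization of isolated singularities via $\Hom$-finiteness of $\ul{\mathrm{MCM}}(S) \simeq \Dsg(S)$ shows that $S$ has an isolated singularity. Therefore, $\Dsg(S)$ carries the Serre functor $\Sigma^{d-1}$ (Auslander--Reiten for Gorenstein isolated singularities). A triangle equivalence preserves Serre functors, so transporting across the given equivalence we find $\Sigma^{d-1} \cong \Sigma^{2}$ on $\Dsg(R)$, i.e.\ $\Sigma^{d-3} \cong \mathrm{id}$. Combined with (iii), this forces $d-3$ to be even, hence $d$ is odd.

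\emph{Step 2 ($S$ is an ADE hypersurface of type $A_1$).} Property (ii) says $\Dsg(S)$ has only finitely many indecomposables, so $S$ has finite Cohen--Macaulay representation type. By the Buchweitz--Greuel--Knörrer--Schreyer theorem, such an $S$ is a simple (ADE) hypersurface singularity. By Knörrer periodicity, the singularity category of any ADE hypersurface in odd dimension $d$ is triangle-equivalent to that of the corresponding one-dimensional ADE singularity. A direct inspection of the indecomposable matrix factorizations in dimension one (as in Yoshino's book) shows: $A_{2k}$ in dimension one has $k$ stable indecomposables, all fixed by $\Sigma$; $A_{2k-1}$ has $k-1$ stable indecomposables fixed by $\Sigma$ together with two branch modules swapped by $\Sigma$; and the types $D_n$, $E_6$, $E_7$, $E_8$ all yield strictly more than two stable indecomposables. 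Only $A_1$ in dimension one produces exactly two stable indecomposables swapped by $\Sigma$. Hence $S$ is of type $A_1$ in dimension $d$, i.e.\ $S \cong \C\llbracket z_0, \dots, z_d\rrbracket/(z_0^2 + \cdots + z_d^2)$.

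\emph{Main obstacle.} The hardest part is Step~2, specifically distinguishing the conifold's $\Dsg$ from that of other odd-dimensional ADE hypersurfaces that happen to have a small number of stable indecomposables: for example $A_4$ in dimension one also has exactly two. The discriminating invariant is the $\Sigma$-action: the two branch MCMs of a node are swapped by $\Sigma$, whereas the two matrix-factorization MCMs of $A_4$ are each $\Sigma$-fixed. Keeping track of this $\Sigma$-action on the stable AR-quiver (rather than just counting indecomposables) is what really pins down $A_1$.
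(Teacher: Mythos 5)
Your argument is correct and follows essentially the same strategy as the paper's proof: deduce finite CM representation type from $\Hom$-finiteness, invoke the Buchweitz--Greuel--Kn\"orrer--Schreyer classification to conclude that $S$ is a simple (ADE) hypersurface singularity, and then determine which one by examining the stable category. The paper compresses the last step into ``follows from the classification''; your Serre-functor computation showing $\Sigma^{d-3}\cong\mathrm{id}$ (hence $d$ odd) and your explicit bookkeeping of the $\Sigma$-action on the indecomposables (correctly flagging $A_4$ as the only competitor with two indecomposables, ruled out because they are $\Sigma$-fixed rather than swapped) are a good way of making that step precise.
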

\begin{proof}
As $\ul{\MCM}(S) \cong \Dsg(S) \cong  \Dsg(\C\llbracket z_0,\dots,z_3 \rrbracket/(z_0 z_1 - z_2 z_3))$, we see that $S$ has finite MCM representation type. 
It follows from \cite[Corollary 8.16]{YoshinoBook}, that $S$ is a simple singularity, so it is isomorphic an ADE-hypersurface singularity $\C\llbracket z_0, \dots, z_d\rrbracket/(f)$, by \cite[Theorem 8.8]{YoshinoBook}. The singularity categories of ADE-hypersurface singularities are completely understood, see e.g.\ \cite{YoshinoBook}. 
It follows from this classification that there exists a triangle equivalence $\Dsg(S) \cong  \Dsg(\C\llbracket z_0,\dots ,z_3 \rrbracket/(z_0 z_1 - z_2 z_3))$ if and only if $S \cong \C\llbracket z_0, \dots, z_d\rrbracket/(z_0^2 + \ldots + z_d^2)$ and $d$ is odd.
This completes the proof.
\end{proof}

%
%
%
%
%
%
%
%
%
%
%

\section{Gorenstein rings and their singularity categories}\label{sec:gor+sing}

Recall \Cref{defn:Goring,D:GProj} before reading this section. 
Unless stated otherwise, we assume that $\ca$ is an additive category for the rest of this section.

\subsection{\cSODs of projective Gorenstein varieties}
First we show that all $\C$-algebras appearing in a \absorption absorption of singularities of a projective Gorenstein variety are themselves Gorenstein.
This follows from work of Kuznetsov--Shinder \cite{KS22}.

\begin{prop} \label{P:KuznetsovShinder}
   Let $X$ be a projective Gorenstein variety, %
   $\ca  \subseteq \Db(X)$ be an admissible triangulated subcategory and $\ca = \langle \Db(R_1), \dots, \Db(R_n) \rangle$ be an admissible \SOD for some $\C$-algebras $R_1, \dots, R_n$.
   
   Then $R_1, \dots, R_n$ are finite-dimensional Gorenstein $\C$-algebras.
\end{prop}
\begin{proof}
   First, notice that $R_i$ is finite-dimensional as $\Db(X)$ is $\Hom$-finite.
   Next, $\Db(X)$ is Gorenstein in the sense of \cite[Definition 4.10]{KS22} by \cite[Proposition 6.1(iv)]{KS22}.
   So, $\Db(R_i) \subseteq \ca \subseteq \Db(X)$ is admissible and hence Gorenstein, again in the sense of \cite[Definition 4.10]{KS22}, by \cite[Proposition 4.13(i)]{KS22}.
   Then $R_i$ is Gorenstein as a dg-algebra concentrated in degree $0$, i.e.\ as a $\C$-algebra, by \cite[Proposition 6.9(iv)]{KS22}.
\end{proof}

\subsection{Gorenstein-projective modules -- Auslander's results.}

Our obstructions in \Cref{T:PropertiesDsgR}, used to show \Cref{T:Main}, rely on the homological properties of $\GP(R)$ for finite-dimensional Gorenstein $\C$-algebras $R$.
In order to study those properties it is convenient to treat $\GP(R)$ like a ring and study \emph{modules} over it.

\begin{defn}\label{defn:modA}
   A contravariant functor $F \colon \ca \to \mathrm{Ab}$ is called \emph{finitely presented}, if
   \[F(-) \cong \coker(\ca(-, X) \xrightarrow{\ca(-, f)} \ca(-, Y))\] 
   for a morphism $f\colon X \to Y$.
   For any additive category $\ca$, we define $\mod \, \ca$ to be the category of finitely presented contravariant (additive) functors $\ca \to \mathrm{Ab}$ with natural transformations as morphisms.
\end{defn}

\Cref{defn:modA} does indeed generalise the definition of modules over a ring. 
For example if $\Lambda$ is a Noetherian ring, then $\mod \proj \Lambda \cong \mod \Lambda$ as categories, where the left hand side is the category of finitely presented functors over the category of finitely generated projective $\Lambda$-modules $\proj \Lambda$ and the right hand side is the category of (ordinary) $\Lambda$-modules.

The following remark shows that the category of finitely presented functors over a $\kk$-linear category over a commutative ring $\kk$ form a $\kk$-linear category again.

\begin{rem}\label{rem:k-linear}
Notice, in \Cref{defn:modA}, if $\ca$ is a $\kk$-linear category, for a commutative ring $\kk$, and $F \colon \cA \to \mathrm{Ab}$ is finitely presented then $F(X)$ is canonically a $\kk$-module for all $X \in \cA$ as it is the cokernel of a $\kk$-linear map of $\kk$-modules. Similarly, any map $F(f) \colon F(Y) \to F(X)$ is $\kk$-linear for any morphism $f \in \ca(X,Y)$. Hence, $F$ can be canonically interpreted as a $\kk$-linear functor $\ca \to \Mod \kk$. 

With the above structure one can also easily show that any natural transformation $F \to G$ of finitely presented functors $F$ and $G$ becomes $\kk$-linear with respect to this canonical $\kk$-module structure, using the Yoneda lemma.  It follows that we could have replaced $\mathrm{Ab}$ by $\Mod \kk$ in \Cref{defn:modA} and obtain the same category $\mod \cA$. In particular, the category of finitely presented functors is a $\kk$-linear category.
\end{rem}

Next, we recall important definitions related to Auslander--Reiten theory.

\begin{defn}\label{defn:sink}
    We say a morphism $f \in \ca(X,Y)$ is
    \begin{enumerate}[label={(\alph*)}]
        \item \emph{right minimal} if $fg = f$ for $g \in \End_{\ca}(Y)$ implies that $g$ is an isomorphism, 
        \item \emph{right almost split} if it is not a split epimorphism and any non-split epimorphism $g \in \ca(Z,Y)$ factors through it, and
        \item \emph{a sink morphism for $Y$} if it is right almost split and right minimal.
        \item We say $\ca$ has \emph{enough sink morphisms}  if all $X \in \ind \ca$ admit a sink morphism.
        \end{enumerate}
        The notions of \emph{left minimal}, \emph{left almost split}, \emph{source morphisms} and \emph{enough source morphisms} are defined dually.
\end{defn}

\begin{defn}\label{defn:covering}
   Let $\cm$ be an additive subcategory of $\ca$. 
   \begin{enumerate}[label={{(\alph*)}}]
      \item A \emph{right $\cm$-approximation}\footnote{\label{theproperterms}These term are also called \emph{$\cm$-precover}, \emph{$\cm$-preenvelope}, \emph{precovering} and \emph{preenveloping}.} of an object $X \in \ca$ is a map $f \colon M \to X$ such that the induced map $\ca(N, f) \colon \ca(N, M) \to \ca(N, X)$ is surjective for all $N \in \cm$. 
         Dually, we define \emph{left $\cm$-approximations}\footref{theproperterms}.
      \item We say $\cm$ is \emph{covariantly finite\footref{theproperterms}} in $\ca$ if every $X \in \ca$ admits a right $\cm$-approximation.
          Dually, we define \emph{contravariantly finite}\footref{theproperterms} subcategories of $\ca$.
    \item We say $\cm$ is \emph{functorially finite} in $\ca$ if it is covariantly and contravariantly finite in $\ca$.
   \end{enumerate}
    \end{defn}

The following results go back to ideas of Auslander, Buchweitz, Reiten, and others. 

\begin{lem}\label{lem:GPfunfinite}
   Let $\Lambda$ be an Gorenstein ring.
   Then $\GP(\Lambda) \subseteq \mod \Lambda$ is a covariantly finite subcategory. In particular, $\GP(\Lambda)$ has weak kernels.
\end{lem}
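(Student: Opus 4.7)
The plan is to invoke the Auslander--Buchweitz approximation machinery. Set $n \coloneqq \max(\injdim_\Lambda \Lambda, \injdim \Lambda_\Lambda)$, which is finite by the definition of a Gorenstein ring. The first key observation is the $n$-th syzygy trick: for any $M \in \mod \Lambda$ and any projective resolution $\cdots \to P_1 \to P_0 \to M \to 0$, the $n$-th syzygy $\Omega^n M$ lies in $\GP(\Lambda)$, since dimension shifting yields $\Ext^i_\Lambda(\Omega^n M, \Lambda) \cong \Ext^{i+n}_\Lambda(M, \Lambda) = 0$ for all $i > 0$.

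The next and main step is to construct a short exact sequence
\[ 0 \to Y_M \to G_M \to M \to 0 \]
with $G_M \in \GP(\Lambda)$ and $\pd_\Lambda Y_M < \infty$. I would build this iteratively, starting from the trivial "approximation" $\mathrm{id}_{\Omega^n M}$ and climbing back up the syzygy short exact sequences $0 \to \Omega^{i+1} M \to P_i \to \Omega^i M \to 0$ by a standard pushout argument. At each step, $Y_M$ grows by a projective summand (so remains of finite projective dimension), while the middle term stays Gorenstein projective because any extension of a GP module by a projective module is again GP (the vanishing $\Ext^{>0}(-,\Lambda) = 0$ is preserved under extensions with the outer terms in $\GP(\Lambda)$).

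To confirm that $G_M \to M$ is a right $\GP(\Lambda)$-approximation, I would apply $\Hom_\Lambda(G', -)$ to the above sequence for an arbitrary $G' \in \GP(\Lambda)$. Surjectivity of $\Hom_\Lambda(G', G_M) \twoheadrightarrow \Hom_\Lambda(G', M)$ reduces to $\Ext^1_\Lambda(G', Y_M) = 0$, which follows by a short induction on $\pd_\Lambda Y_M$; the base case $Y_M$ projective is precisely the defining property of $\GP(\Lambda)$, and the inductive step uses the long exact sequence from a projective cover of $Y_M$. This establishes covariant finiteness in the sense of \Cref{defn:covering}.

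Finally, the weak kernel statement follows formally, without further use of the Gorenstein hypothesis: for any $f \colon G \to G'$ in $\GP(\Lambda)$, form the honest kernel $K \coloneqq \ker_{\mod \Lambda} f$ in the ambient abelian category $\mod \Lambda$, pick a right $\GP(\Lambda)$-approximation $G_K \to K$, and note that the composite $G_K \to K \hookrightarrow G$ is a weak kernel of $f$ in $\GP(\Lambda)$ by combining the universal property of $K$ with the defining surjectivity of the approximation. The main technical obstacle throughout is the bookkeeping in the iterated pushout construction of the second step; conceptually it is routine, but one must verify that Gorenstein projectivity of the middle term and finiteness of the projective dimension of the kernel are both stable under the successive extensions.
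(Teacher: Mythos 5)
Your high-level route coincides with the paper's: both obtain covariant finiteness from the Auslander--Buchweitz approximation theorem (the paper simply cites \cite[Theorem 1.8]{AB} for this), and both produce weak kernels by taking a right $\GP(\Lambda)$-approximation of the ordinary kernel in $\mod\Lambda$ — your final paragraph is essentially the paper's second sentence. The $n$-th syzygy observation and the induction showing $\Ext^1_\Lambda(G',Y_M)=0$ for $G'\in\GP(\Lambda)$ and $\prdim Y_M<\infty$ are correct and are exactly the inputs fed into the AB machinery.

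The gap is in the iterated pushout. The two arrows available at each stage, $G_{i+1}\twoheadrightarrow\Omega^{i+1}M$ (the inductive approximation) and $\Omega^{i+1}M\hookrightarrow P_i$ (the syzygy inclusion), meet at $\Omega^{i+1}M$ but form neither a span nor a cospan, so one cannot push out or pull back along the syzygy sequences alone. The standard construction first chooses a conflation $0\to G_{i+1}\to Q\to G'\to 0$ with $Q$ projective and $G'\in\GP(\Lambda)$ — a cosyzygy of $G_{i+1}$ — and only then performs the pushouts. You are therefore implicitly assuming that $\proj\Lambda$ \emph{cogenerates} $\GP(\Lambda)$. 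That is a standing hypothesis of \cite[Theorem 1.8]{AB}, and with the $\Ext^{>0}_\Lambda(-,\Lambda)=0$ definition of $\GP(\Lambda)$ adopted in the paper it is a nontrivial consequence of the Gorenstein hypothesis (essentially the equivalence with the complete-resolution characterisation), not something that follows formally; it needs to be stated and proved or cited. The remaining bookkeeping slips are minor and disappear once this is supplied: $Y_M$ is not replaced by $Y_M$ plus a projective \emph{summand} but by an extension $0\to Y_M\to Q\to Y'\to 0$ with $Q$ projective, so $\prdim$ grows by at most one per step; and the new middle term is an extension of the two GP modules $G_{i+1}$ and $G'$, so the closure property actually used is closure of $\GP(\Lambda)$ under extensions (which your parenthetical does state correctly).
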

\begin{proof} 
   This first part follows from \cite[Theorem 1.8]{AB}. 
   Then, for any morphism in $\GP(\Lambda)$ a weak kernel in $\GP(\Lambda)$ can by constructed by a right $\GP(\Lambda)$-approximation of a kernel in $\mod \Lambda$.
\end{proof}

\begin{thm}\label{T:Auslander}
   Let $\Lambda$ be a Gorenstein ring. Then the following statements hold:
   \begin{enumerate}[label={{(\alph*)}}]
      \item $\GP(\Lambda)$ has weak kernels and hence $\mod \GP(\Lambda)$ is an abelian category.\label{item:GPab}
      \item $\gldim \mod \GP(\Lambda)<\infty$.\label{item:GPglfin}
      \item If $\Lambda$ is additionally a finite-dimensional $\kk$-algebra over a field $\kk$ then $\GP(\Lambda)$ is a $\kk$-linear, $\Hom$-finite, Krull--Schmidt category with enough sink morphisms.\label{item:GPrest}%
   \end{enumerate}
   Moreover, the same statements are true when replacing $\GP(\Lambda)$ by $\GP(\Lambda)^{\op}$
\end{thm}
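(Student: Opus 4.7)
The plan is to establish the four parts in order and deduce the opposite-category statement by duality.

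For part \ref{item:GPab}, the existence of weak kernels is already provided by \Cref{lem:GPfunfinite}. Once weak kernels are available, the fact that $\mod \GP(\Lambda)$ is abelian is the classical Auslander--Freyd theorem about finitely presented functors on an additive category with weak kernels; the kernel of a natural transformation between representables is computed by taking a weak kernel of the underlying morphism and representing it. So \ref{item:GPab} reduces to citing \Cref{lem:GPfunfinite} and invoking this general fact.

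For part \ref{item:GPglfin}, I would take $n = \injdim{}_\Lambda\Lambda + \injdim \Lambda_\Lambda$ (or similar, bounded by $2\injdim \Lambda$) and show that every $F \in \mod \GP(\Lambda)$ has projective dimension at most some constant depending only on $n$. Starting from a presentation $\GP(\Lambda)(-, X_0) \to \GP(\Lambda)(-, X_1) \to F \to 0$ induced by a morphism $f \colon X_0 \to X_1$ in $\GP(\Lambda)$, I form the kernel $K = \ker f$ in $\mod \Lambda$ and take a right $\GP(\Lambda)$-approximation $X_2 \to K$ provided by \Cref{lem:GPfunfinite}; this yields an exact sequence of representables continuing the resolution of $F$. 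Iterating this process produces syzygy objects $X_i \in \GP(\Lambda)$; the key observation is that after sufficiently many steps the kernel in $\mod \Lambda$ is itself in $\GP(\Lambda)$ (because $\Lambda$ has finite injective dimension, so high enough $\Lambda$-syzygies of any module are Gorenstein projective), at which point a right $\GP(\Lambda)$-approximation can be chosen to be an isomorphism and the functor resolution terminates. The main obstacle is giving an honest bound and carefully checking that the right-approximation step does produce an exact sequence at the level of represented functors; this is the key technical point and follows by a diagram chase using that $\GP(\Lambda)(M, -)$ sends right $\GP(\Lambda)$-approximations to surjections for $M \in \GP(\Lambda)$.

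For part \ref{item:GPrest}, $\kk$-linearity is inherited from $\mod \Lambda$, and $\Hom$-finiteness follows because $\Lambda$ is finite-dimensional over $\kk$ and so the $\Hom$ groups between finitely generated $\Lambda$-modules are finite-dimensional $\kk$-vector spaces. The Krull--Schmidt property follows from the general fact that a $\Hom$-finite $\kk$-linear additive category that is idempotent complete is Krull--Schmidt; $\GP(\Lambda)$ inherits idempotent completeness as a subcategory of $\mod \Lambda$ closed under summands (direct summands of Gorenstein projective modules are Gorenstein projective). For enough sink morphisms, I would invoke Auslander--Reiten theory for Gorenstein algebras: for each non-projective indecomposable $X \in \GP(\Lambda)$ there is an AR-sequence ending at $X$, whose right-hand map is a sink morphism in the ambient category and restricts to a sink morphism in $\GP(\Lambda)$ since $\GP(\Lambda)$ is closed under extensions; for an indecomposable projective $P$, the inclusion $\rad P \hookrightarrow P$, composed with a right $\GP(\Lambda)$-approximation of $\rad P$, gives the required sink morphism.

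Finally, for the ``moreover'' assertion, I would apply the three parts already proved to the opposite algebra $\Lambda^{\op}$, which is again Gorenstein (and finite-dimensional over $\kk$ in the setting of \ref{item:GPrest}) with the same invariants. The functor $(-)^\vee = \Hom_\Lambda(-, \Lambda)$ restricts to a duality $\GP(\Lambda) \xrightarrow{\sim} \GP(\Lambda^{\op})^{\op}$, so $\GP(\Lambda)^{\op} \simeq \GP(\Lambda^{\op})$ as additive categories, and all four statements transport along this equivalence. This last step is formal once the direct statements are in hand; no new argument is required.
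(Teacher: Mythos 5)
Your overall strategy mirrors the paper's for parts~\ref{item:GPab} and the ``moreover'' claim (weak kernels via \Cref{lem:GPfunfinite} plus the Auslander/Freyd criterion; the $\Hom_\Lambda(-,\Lambda)$-duality combined with the symmetry of the Gorenstein condition). For part~\ref{item:GPglfin}, however, you take a genuinely different route: the paper simply cites Enomoto's results that $\GP(\Lambda)$ has $(n-1)$-kernels for $n=\injdim\Lambda_\Lambda$, whence $\gldim \mod\GP(\Lambda)<\infty$, while you attempt a hands-on termination argument. Your hands-on approach can be made to work, but the ``key observation'' as you state it is not quite the right one: the kernels appearing in the functorial resolution are \emph{not} iterated module syzygies of the original $\ker f$, so the fact that high enough $\Lambda$-syzygies are Gorenstein projective does not directly apply. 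The correct mechanism is the Auslander--Buchweitz theory hidden in \cite{AB}: after taking a \emph{surjective} right $\GP(\Lambda)$-approximation $X_2\twoheadrightarrow K_1$ of $K_1=\ker f$, the kernel $K_2$ has \emph{finite projective dimension}; and a Gorenstein projective module of finite projective dimension is projective, so from that point on the right $\GP(\Lambda)$-approximations can be chosen to be projective covers and the functorial resolution continues as an honest projective resolution of $K_2$ and terminates. Spelled out this way your argument does give the bound, but as written the justification is misaligned. The paper's citation of Enomoto's $n$-kernel formalism sidesteps this bookkeeping entirely.

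In part~\ref{item:GPrest} you also deviate slightly, and here there is a small gap. For a non-projective indecomposable $X\in\GP(\Lambda)$ you take the AR-sequence $0\to\tau X\to E\to X\to 0$ in $\mod\Lambda$ and claim its right map ``restricts to a sink morphism in $\GP(\Lambda)$ since $\GP(\Lambda)$ is closed under extensions.'' Closure under extensions only helps if both end terms are already Gorenstein projective, and it is not automatic that $\tau X\in\GP(\Lambda)$ (one would have to argue separately that the module-theoretic $\tau$ preserves $\GP(\Lambda)$ for Gorenstein $\Lambda$). The paper avoids this by using \Cref{lem:sinksubcat} uniformly: compose the sink morphism in $\mod\Lambda$ with a right $\GP(\Lambda)$-approximation of its domain, supplied by \Cref{lem:GPfunfinite}. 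That handles both the projective and non-projective cases at once, without needing $\tau X$ to be Gorenstein projective. You should either supply the missing fact about $\tau$ or, more economically, just apply \Cref{lem:sinksubcat} as the paper does.
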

\begin{proof}
    Clearly, $\Hom_{\Lambda}(-,\Lambda)$ yields a duality between $\GP(\Lambda)$ and $\GP(\Lambda^{\op})$. As the definition of Gorenstein rings is symmetric this shows the last part, given \ref{item:GPab}-\ref{item:GPrest}.

   \ref{item:GPab}: By \Cref{lem:GPfunfinite}, the category $\GP(\Lambda)$ has weak kernels.
   By \cite[Section III.2]{AuslanderQueen} or \cite[Proposition 2.7]{Enomoto16} we conclude that $\mod \GP(\Lambda)$ is an abelian category

   \ref{item:GPglfin}: By \cite[Corollary 4.13]{Enomoto16}, $\GP(\Lambda)$ has $(n-1)$-kernels, where $n=\injdim_\Lambda \Lambda_\Lambda$. 
   Then the statement follows from \cite[Proposition 4.6]{Enomoto16}.

\ref{item:GPrest}:
   By \Cref{D:GProj}, the subcategory $\GP(\Lambda) \subseteq \mod \Lambda$ is closed under direct summands. Hence, $\GP(\Lambda)$ is idempotent complete, since $\mod \Lambda$  is idempotent complete as it is an abelian category.
   Since $\GP(\Lambda)$ is a subcategory of $\mod \Lambda$ it is clear that $\GP(\Lambda)$ is $\Hom$-finite.
   It follows from \cite[Corollary 4.4]{Kra15} that $\GP(\Lambda)$ is Krull--Schmidt. 

   As $\Lambda$ is a finite-dimensional $\kk$-algebra, $\mod \Lambda$ has Auslander--Reiten sequences and hence enough sink morphisms, where the sink morphism of a indecomposable projective $\Lambda$-module $P$ is given by the inclusion $\rad(P) \to P$.
\Cref{lem:sinksubcat} and \Cref{lem:GPfunfinite} show that $\GP(\Lambda)$ has enough sink morphisms.
\end{proof}

Notice that \Cref{T:Auslander} only depends on the underlying additive structure of $\GP(\Lambda)$ and not on the exact structure of $\GP(\Lambda)$.

\begin{prop}\label{cor:Serre}
    Let $R$ be a finite-dimensional Gorenstein $\kk$-algebra, for a field $\kk$, and $\GP_\cp(R)$ be a Frobenius exact structure on $\GP(R)$.
    
    Then the triangulated category $\underline{\GP}_{\cp}(R)$ has a Serre functor.
\end{prop}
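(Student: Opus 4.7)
The plan is to verify the hypotheses of Reiten--Van den Bergh's criterion: a $\kk$-linear, $\Hom$-finite, Krull--Schmidt triangulated category admits a Serre functor if and only if it has Auslander--Reiten triangles. It thus suffices to check these two properties for the triangulated category $\underline{\GP}_\cp(R)$ (whose triangulated structure comes from Happel's theorem applied to the Frobenius exact category $\GP_\cp(R)$).

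First, I would argue that $\underline{\GP}_\cp(R)$ is $\Hom$-finite and Krull--Schmidt over $\kk$. By \Cref{T:Auslander}\ref{item:GPrest}, the ambient additive category $\GP(R)$ is $\Hom$-finite and Krull--Schmidt. The stable category $\underline{\GP}_\cp(R)$ is the additive quotient of $\GP(R)$ by the ideal of morphisms factoring through $\cp$, so every $\underline{\Hom}$-space is a quotient of the finite-dimensional $\kk$-vector space $\Hom_{\GP(R)}(X,Y)$, and endomorphism rings in $\underline{\GP}_\cp(R)$ are quotients of endomorphism rings in $\GP(R)$. Idempotent-completeness is inherited from $\GP(R)$, so together with $\Hom$-finiteness the Krull--Schmidt property passes to $\underline{\GP}_\cp(R)$.

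Second, I would show that $\underline{\GP}_\cp(R)$ has Auslander--Reiten triangles. For this I would invoke Auslander--Solberg's theory of relative homological algebra: once $\cp$ is a functorially finite additive subcategory of $\GP(R)$ containing a cogenerator-generator (which is automatic for the class of projective-injectives of a Frobenius exact structure, as built into the very definition of $\GP_\cp(R)$), the Frobenius exact category $\GP_\cp(R)$ admits Auslander--Reiten sequences whose end-terms range over the indecomposables in $\GP(R) \setminus \cp$. The existence of source and sink morphisms needed as input comes from \Cref{T:Auslander}\ref{item:GPrest}. By the standard construction used in Happel's proof that the stable category of a Frobenius category is triangulated, these relative AR sequences descend to Auslander--Reiten triangles in $\underline{\GP}_\cp(R)$.

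Combining the two steps with Reiten--Van den Bergh then yields a Serre functor on $\underline{\GP}_\cp(R)$. The main obstacle is the second step, namely confirming that the non-standard exact structure $\GP_\cp(R)$ actually admits AR sequences rather than just the standard exact structure on $\GP(R)$; this is precisely the content of the Auslander--Solberg existence theorem and hinges on the functorial finiteness of $\cp$ in $\GP(R)$.
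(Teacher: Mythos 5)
Your proposal is correct and follows essentially the same conceptual route as the paper: establish existence of almost split sequences in the relative Frobenius structure $\GP_\cp(R)$ (using the $\Hom$-finite, Krull--Schmidt, enough-sink/source-morphisms data of $\GP(R)$ from \Cref{T:Auslander}\ref{item:GPrest}), descend to Auslander--Reiten triangles in the stable category, and conclude the existence of a Serre functor. The only difference is bibliographic: the paper channels both steps through Iyama--Nakaoka--Palu's extriangulated framework (\cite[Lemma 3.2, Proposition 5.11, Theorem 3.6]{INP}), whereas you invoke the classical Auslander--Solberg relative AR theory plus Reiten--Van den Bergh — both routes are standard and yield the same conclusion.
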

\begin{proof}
    By \Cref{T:Auslander}\ref{item:GPrest} we know that $\GP(R)$ is a $\Hom$-finite Krull--Schmidt category with enough source and sink morphisms. Then, $\GP_\cp(R)$ has almost split extensions, cf.\ \cite[Definition 3.1]{INP}, by \cite[Lemma 3.2]{INP}, and so does $\ul{\GP}_{\cp}(R)$ using \cite[Proposition 5.11]{INP}.
    As $\ul{\GP}_{\cp}(R)$ is a $\Hom$-finite triangulated category it has a Serre functor by \cite[Theorem 3.6]{INP}.
\end{proof}

\section{Auslander--Solberg reduction}\label{sec:aussolred}
\subsection{Setting the stage}
We first recall, that an exact category with enough projective and injective objects is called \emph{Frobenius exact category} if its projective and injective objects coincide.
In the following, we explain a technique, which under certain technical assumptions produces Frobenius exact categories from exact categories by a modification of exact structures on some fixed category $\ce$. 
This is implicitly contained in  a work of Auslander--Solberg \cite{AS93-1}. 
The technique may be used to obtain new Frobenius exact structures on a given Frobenius exact category.
The new Frobenius exact structure has a larger subcategory of projective-injective objects.
For the rest of this section we fix a category $\ce$. 
\begin{Setup}
Let $\kk$ be an algebraically closed field and $\ce$ be a $\kk$-linear category. 
Let $D(-) := \Hom_{\kk}(-,\kk)$ be the $\kk$-duality on $\mod \kk$.
\end{Setup}

For a detailed exposition of exact categories we refer to Bühler \cite{Buehler}.
Recall that for an exact category $(\cx,\cs_{\cp})$ with enough projective objects $\cp$ one can recover the conflations $\cs_{\cp}$ of $(\cx,\cs_\cp)$ from $\cp$, see \cite[Exercise 11.10]{Buehler}.
\begin{Notation}\label{Not:exact}
    When we write that $\cx_\cp$ is an exact category we mean implicitly that $\cx_\cp \coloneqq (\cx, \cs_\cp)$ is an exact category with enough projective objects given by $\cp$.
\end{Notation}

  The following definition is due to Happel, cf.\ \cite{HappelBook}.

\begin{defn}\label{Not:frobenius}
   Let $\cx_{\cp}$ be a Frobenius exact category.
   Then the additive quotient $\cx/\cp$ has a canonical triangulated structure called the \emph{stable category of $\cx_{\cp}$}.
   We denote it by $\ul{\cx}_{\cp} := (\cx/\cp, \smash{\Sigma_{\ul{\cx}_\cp}}, \smash{\Delta_{\ul{\cx}_\cp}})$. Notice, $\smash{\Sigma_{\ul{\cx}_\cp}} = \smash{\Omega^{-1}_{\ul{\cx}_\cp}}$ is the cosyzygy functor.
\end{defn}

\begin{Notation}\label{not:abuse}
   Sometimes, we may denote the underlying additive category of an exact category $\cx_{\cp}$ (or a triangulated category $\cc$) by the same symbol $\cx_{\cp}$ (or $\cc$).
\end{Notation}

Our main interest lies in modifying Frobenius exact structures $\GP_\cp(R)$ on the category $\GP(R)$ of Gorenstein projective modules over a finite-dimensional Gorenstein $\kk$-algebra $R$.
If $\ul{\GP}_{\cp}(R) \cong \cc_1 \oplus \cc_2$ as triangulated categories then we want to obtain a new Frobenius exact structure $\GP_\cm(R)$ with $\ul{\GP}_{\cm}(R) \cong \cc_1$ as triangulated categories.
Moreover, if $\ul{\GP}_{\cp}(R) \cong \Dsg(S)$ for certain singularities $S$ then we want to obtain a new Frobenius exact structure $\GP_\cm(R)$ satisfying $\ul{\GP}_{\cm}(R) \cong \Dsg(S')$ for a simpler singularity $S'$.

\subsection{Modification of exact structures}

The following is the main idea, which we apply in different situations throughout \Cref{sec:aussolred}.

\begin{prop}[cf.~e.g.~{\cite[Proposition 2.16]{KIWY}}]\label{new Frobenius structure} 
Suppose $\ce_\cp = (\ce, \cs_\cp)$ is an exact structure on $\ce$ with enough injective objects $\ci$. 
Assume there is an equivalence $\tau\colon\ce/\cp\to\ce/\ci$ with a functorial isomorphism 
\[\Ext^1_{\ce_\cp}(X,Y)\cong D\Hom_{\ce/\ci}(Y,\tau X)\]
for $X,Y\in\ce$.
Let $\cm$ be a functorially finite, additive subcategory of $\ce$ which is closed under direct summands, contains $\cp$ and $\ci$ and satisfies $\tau(\cm/\cp)=\cm/\ci$. 

Then there exists a Frobenius exact structure $\ce_\cm = (\ce, \cs_\cm)$ on $\ce$ with projective objects $\cm$. 
Furthermore, $\cs_\cm \subset \cs_\cp$ is the class of conflations $Z \rightarrowtail Y \twoheadrightarrow X$ of $\ce_\cp$ whose deflations induce epimorphisms $\ce(M, Y) \to \ce(M, X)$ for all $M \in \cm$.

\end{prop}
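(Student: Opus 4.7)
My plan is to take $\cs_\cm$ to be exactly what the last sentence of the statement describes, i.e.\ the conflations in $\cs_\cp$ whose deflations remain surjective after applying $\ce(M,-)$ for every $M \in \cm$, and then verify in turn that (i) $\cs_\cm$ defines an exact structure on $\ce$, (ii) $\cm$ coincides with the class of $\cs_\cm$-projectives and there are enough of them, and (iii) $\cm$ also coincides with the class of $\cs_\cm$-injectives, so that $\ce_\cm$ is Frobenius.

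For (i), I would invoke the standard machinery of relative exact structures going back to Auslander--Solberg \cite{AS93-1}: the sub-class of $\cs_\cp$-conflations which stay exact under a prescribed family of representable functors is automatically closed under pullbacks, pushouts and compositions, and hence satisfies Quillen's axioms. For (ii), every $M \in \cm$ is $\cs_\cm$-projective by construction. To produce enough projectives, fix $X \in \ce$ and use covariant finiteness of $\cm$ to choose a right $\cm$-approximation $\pi \colon M_X \to X$. Since $\cp \subseteq \cm$ and $\ce_\cp$ has enough projectives, a projective cover $P \twoheadrightarrow X$ factors through $\pi$, which forces $\pi$ to be an $\cs_\cp$-deflation; the defining approximation property of $\pi$ then places its kernel-sequence into $\cs_\cm$.

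The main work lies in (iii). Take $M \in \cm$, $Z \in \ce$ and a class $\xi \in \Ext^1_{\ce_\cm}(Z, M) \subseteq \Ext^1_{\ce_\cp}(Z, M) \cong D\Hom_{\ce/\ci}(M, \tau Z)$, and denote the corresponding functional by $\varphi$. Given any $g \colon M \to \tau Z$ in $\ce/\ci$, the equivalence $\tau$ together with the hypothesis $\tau(\cm/\cp) = \cm/\ci$ yields $M'' \in \cm$ and $f \colon M'' \to Z$ in $\ce/\cp$ with $\tau M'' \cong M$ and $\tau f = g$ under this identification. Naturality of the given isomorphism in the first variable identifies $\varphi(g)$ with the evaluation of $f^*\xi \in \Ext^1_{\ce_\cp}(M'', M)$ at $\id_{\tau M''}$. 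But since $M'' \in \cm$, the $\cs_\cm$-defining condition implies that the connecting map $\ce(M'', Z) \to \Ext^1_{\ce_\cp}(M'', M)$ in the long exact Ext-sequence of the underlying conflation vanishes, so $f^*\xi = 0$ and therefore $\varphi(g) = 0$. Dually, contravariant finiteness of $\cm$ together with $\ci \subseteq \cm$ supplies enough $\cs_\cm$-inflations out of objects into $\cm$, finishing the verification that $\ce_\cm$ is Frobenius with projective-injectives $\cm$.

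The delicate step is the injectivity argument in (iii): one must carefully exploit both the $\tau$-stability $\tau(\cm/\cp) = \cm/\ci$ and the naturality of the Auslander--Reiten-type duality to upgrade the defining $\cs_\cm$-property of $\cm$ (a projectivity condition) into injectivity of the same subcategory. Everything else is either formal or follows by a dualization argument.
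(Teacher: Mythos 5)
Your argument is correct and is the standard Auslander--Solberg one that underlies \cite[Proposition 2.16]{KIWY}, which the paper cites in place of a proof. The one step you leave implicit that deserves a line is the production of \emph{enough} injectives: a priori $\cs_\cm$ is cut out by a surjectivity condition on deflations, so it is not immediate that the cokernel-conflation of a left $\cm$-approximation $X\rightarrowtail M^X$ lies in $\cs_\cm$; however, exactly the naturality computation you run in step (iii) shows, via the $\tau$-stability $\tau(\cm/\cp)=\cm/\ci$ and the duality $\Ext^1_{\ce_\cp}(X,Y)\cong D\Hom_{\ce/\ci}(Y,\tau X)$, that the deflation-characterization of $\cs_\cm$ (``$\ce(M,Y)\to\ce(M,X)$ onto for $M\in\cm$'') coincides with the dual inflation-characterization (``$\ce(Y,M)\to\ce(Z,M)$ onto for $M\in\cm$''), and the latter for the cokernel-conflation is precisely the left $\cm$-approximation property, so everything closes up.
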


If $\ce_{\cp}$ in \Cref{new Frobenius structure} is a Frobenius exact category then $\cp = \ci$. In that an equivalence $\tau$ as in \Cref{new Frobenius structure} is equivalent to the datum of a Serre functor.

\begin{cor}\label{c:changefrob}
Let $\ce_\cp$ be a Frobenius exact structure on $\ce$ such that $\ul{\ce}_{\cp}$ has a Serre-functor $\mathbb{S}$. 
Let $ \cm$ be a functorially finite, additive subcategory of $\ce$ which is closed under direct summands, contains $\cp$, and satisfies $\tau(\cm/\cp) = \cm/\cp$, where $\tau$ is the Auslander--Reiten translation $\tau \coloneqq \mathbb{S} \circ \Sigma_{\ul{\ce}_\cp}^{-1}$ on $\ul{\ce}_\cp$. 

Then \Cref{new Frobenius structure} yields a Frobenius exact structure $\ce_\cm$ on $\ce$ with projective-injective objects $\cm$.
\end{cor}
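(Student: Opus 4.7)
The plan is to verify the hypotheses of Proposition \ref{new Frobenius structure} in the Frobenius setting, so that Corollary \ref{c:changefrob} becomes a direct specialization.

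First I would observe that since $\ce_\cp$ is Frobenius, the classes of projective and injective objects coincide, so $\cp = \ci$ and $\ce/\cp = \ce/\ci = \ul{\ce}_{\cp}$. In particular, $\cm$ contains both $\cp$ and $\ci$ by hypothesis, the subcategory $\cm/\cp$ of $\ul{\ce}_\cp$ equals $\cm/\ci$, and the condition $\tau(\cm/\cp) = \cm/\ci$ required in Proposition \ref{new Frobenius structure} becomes the given condition $\tau(\cm/\cp) = \cm/\cp$. Functorial finiteness and closure under summands of $\cm$ are also in the hypotheses.

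Next, I would construct the equivalence $\tau$ required by the proposition. Define $\tau \coloneqq \mathbb{S} \circ \Sigma_{\ul{\ce}_\cp}^{-1}$; since $\mathbb{S}$ is a Serre functor on the triangulated category $\ul{\ce}_\cp$, it is a triangle auto-equivalence and hence so is $\tau$. It remains to verify the functorial isomorphism
\[
\Ext^{1}_{\ce_\cp}(X,Y) \;\cong\; D\Hom_{\ul{\ce}_\cp}(Y, \tau X).
\]
The standard identification for Frobenius exact categories (see e.g.\ \cite{HappelBook}) gives
$\Ext^{1}_{\ce_\cp}(X,Y) \cong \Hom_{\ul{\ce}_\cp}(X, \Sigma_{\ul{\ce}_\cp} Y)$,
and Serre duality on $\ul{\ce}_\cp$ yields
\[
\Hom_{\ul{\ce}_\cp}(X, \Sigma_{\ul{\ce}_\cp} Y) \;\cong\; D\Hom_{\ul{\ce}_\cp}(\Sigma_{\ul{\ce}_\cp} Y, \mathbb{S} X) \;\cong\; D\Hom_{\ul{\ce}_\cp}(Y, \Sigma_{\ul{\ce}_\cp}^{-1}\mathbb{S} X).
\]
Because $\mathbb{S}$ is a triangle equivalence it commutes with the shift, so $\Sigma_{\ul{\ce}_\cp}^{-1}\mathbb{S} \cong \mathbb{S}\Sigma_{\ul{\ce}_\cp}^{-1} = \tau$, which gives the desired isomorphism. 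Naturality is inherited from the naturality of both the Frobenius-Ext formula and Serre duality.

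With all hypotheses of Proposition \ref{new Frobenius structure} verified, I would apply it directly to obtain the exact structure $\ce_\cm$ with projective-injective objects $\cm$; since the proposition's conclusion already asserts that this structure is Frobenius, nothing more needs to be checked. The only place where there is any real content is the identification of $\tau$ with the Auslander--Reiten translate of $\ul{\ce}_\cp$ and the commutation $\Sigma^{-1}\mathbb{S} \cong \mathbb{S}\Sigma^{-1}$; I expect this to be the sole, and entirely routine, sticking point, as both are standard consequences of $\mathbb{S}$ being a $\kk$-linear triangle auto-equivalence.
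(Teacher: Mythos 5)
Your proposal is correct and follows essentially the same route as the paper: you reduce to verifying the hypotheses of Proposition~\ref{new Frobenius structure} and establish the required functorial isomorphism by combining the Frobenius $\Ext^1$-formula with Serre duality on $\ul{\ce}_\cp$. The only cosmetic difference is that the paper starts from $\Ext^1_{\ce_\cp}(X,Y)\cong\Hom_{\ce/\cp}(\Sigma^{-1}_{\ul{\ce}_\cp}X,Y)$ and applies Serre duality once, whereas you start from $\Ext^1\cong\Hom(X,\Sigma Y)$ and then invoke the (standard, but worth stating) commutation $\mathbb{S}\Sigma^{-1}\cong\Sigma^{-1}\mathbb{S}$ to land on $\tau$.
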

\begin{proof}
   Since projective and injective objects in $\ce_{\cp}$ coincide, in order to apply Proposition \ref{new Frobenius structure}, it suffices to show functorial isomorphisms
\begin{align}\label{E:AR-duality}
\Ext^1_{\ce_\cp}(X,Y)\cong D\Hom_{\ce/\cp}(Y,\tau X)
\end{align}
for any $X,Y\in\ce$. 
In any Frobenius exact category there is a functorial isomorphism $\Ext^1_{\ce_\cp}(X,Y)\cong \Hom_{\ce/\cp}(\Sigma_{\ul{\ce}_\cp}^{-1}X, Y)$.
Now \eqref{E:AR-duality} follows from Serre duality in $\ul{\ce}_{\cp}$.
\end{proof}

\subsection{Triangulated summands of stable categories}
A direct calculation using the third isomorphism theorem for $\kk$-vectorspaces shows the following useful lemma.
\begin{lem}\label{L:AddIsomorphThm}
     For arbitrary $\kk$-linear categories $\cp \subset \cm \subset \ce$ the canonical map $\ce/\cm \to (\ce/\cp)/(\cm/\cp)$ is an isomorphism of $\kk$-linear categories.
\end{lem}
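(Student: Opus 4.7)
The plan is to verify that the canonical functor $F \colon \ce/\cm \to (\ce/\cp)/(\cm/\cp)$, which is the identity on objects, induces an isomorphism on each Hom-space; since all three quotients have the same objects as $\ce$, this suffices.

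First I would recall the explicit description of the Hom-spaces. For any $\kk$-linear subcategory $\cn \subset \ce$ the quotient $\ce/\cn$ has the same objects as $\ce$ and morphism spaces
\[
(\ce/\cn)(X,Y) \;=\; \ce(X,Y)/I_{\cn}(X,Y),
\]
where $I_{\cn}(X,Y) \subset \ce(X,Y)$ is the $\kk$-subspace of morphisms that factor through some object of $\cn$. The canonical functor $F$ is induced on Hom-spaces by the composition $\ce(X,Y) \twoheadrightarrow (\ce/\cp)(X,Y) \twoheadrightarrow ((\ce/\cp)/(\cm/\cp))(X,Y)$, and by the third isomorphism theorem for $\kk$-vector spaces, this map factors through $\ce(X,Y)/K(X,Y)$, where $K(X,Y)$ is the preimage of $I_{\cm/\cp}(X,Y)$ in $\ce(X,Y)$.

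Next I would reduce the statement to the key identity
\[
K(X,Y) \;=\; I_{\cm}(X,Y).
\]
The inclusion $I_{\cm} \subset K$ is immediate: a morphism factoring through $M \in \cm$ in $\ce$ clearly factors through $M$ in $\ce/\cp$, and $M \in \cm/\cp$. The reverse inclusion is the main content. Suppose $f \in \ce(X,Y)$ lies in $K(X,Y)$, meaning that in $\ce/\cp$ the class $\bar f$ factors as $\bar g\, \bar h$ with $h\colon X \to M$, $g\colon M \to Y$ and $M \in \cm$. Lifting to $\ce$, the morphism $f - g h$ then factors through some $P \in \cp$, say $f - g h = g' h'$ with $h'\colon X \to P$, $g'\colon P \to Y$. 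Since $\cp \subset \cm$, both summands $gh$ and $g'h'$ factor through objects of $\cm$, and using the matrix morphism through $M \oplus P \in \cm$ one gets $f \in I_{\cm}(X,Y)$. This gives the desired equality.

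Combining both steps, $F$ induces a $\kk$-linear bijection $\ce(X,Y)/I_{\cm}(X,Y) \xrightarrow{\sim} ((\ce/\cp)/(\cm/\cp))(X,Y)$ for every $X, Y$, and since $F$ is bijective on objects, it is an isomorphism of $\kk$-linear categories. There is no real obstacle here; the only point that requires a moment of care is that one must use the assumption $\cp \subset \cm$ and closure of $\cm$ under finite direct sums in $\ce$ (which is automatic as $\cm$ is an additive subcategory) to combine the two factorizations into a single factorization through an object of $\cm$.
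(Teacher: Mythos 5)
Your proof is correct and is essentially the detailed version of what the paper only sketches: the paper remarks before the lemma that it follows from "a direct calculation using the third isomorphism theorem for $\kk$-vectorspaces," and your argument carries out exactly that calculation, reducing to the identity $I_{\cm}(X,Y) = K(X,Y)$ on each Hom-space. The one step worth flagging explicitly is that $f - gh \in I_{\cp}(X,Y)$ means $f - gh$ factors through a finite direct sum of objects of $\cp$, so one needs $\cp$ (and $\cm$) to be additive subcategories closed under finite direct sums — which you correctly note is automatic in this context.
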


We are now in the position to achieve our first goal. Indeed, specializing
the following statement  to the category \mbox{$\ce = \GP(R)$} of Gorenstein projective modules over a finite-dimensional Gorenstein $\kk$-algebra $R$ yields \Cref{L:AuslanderSolbergSummand} using \Cref{P:SerreFunctor}.

\begin{lem} \label{C:RemoveFiniteTypeComponents}
Let $\ce_\cp$ be a Frobenius exact structure on $\ce$ such that $\ul{\ce}_\cp = \cc_1 \oplus \cc_2$ is a direct sum of triangulated subcategories. Assume $\ul{\ce}_{\cp}$ has a Serre functor $\mathbb{S}$.

Then, in particular, $\mathbb{S}$ restricts to a Serre functor on $\cc_1$ and $\cc_2$ and there is a Frobenius exact structure $\ce_\cm$ on $\ce$ such that there is a triangle equivalence $\ul{\ce}_{\cm} \cong \cc_1$. 
\end{lem}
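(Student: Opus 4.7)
My plan is to apply \Cref{c:changefrob} with the subcategory
\[
   \cm := \{\, M \in \ce : \overline{M} \in \cc_2 \,\},
\]
the preimage of $\cc_2$ under the stable projection $\ce \to \ul{\ce}_\cp$. By construction $\cp \subseteq \cm$ (since $\overline{P}=0\in\cc_2$ for $P\in\cp$), and $\cm$ is closed under direct summands because $\cc_2$ is. Moreover, $\cm/\cp = \cc_2$ as full subcategories of $\ul{\ce}_\cp$, so once \Cref{c:changefrob} applies, \Cref{L:AddIsomorphThm} together with the direct sum decomposition will give an additive equivalence $\ul{\ce}_\cm = \ce/\cm \cong (\ce/\cp)/(\cm/\cp) = \ul{\ce}_\cp/\cc_2 \cong \cc_1$.

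First I would verify that $\mathbb{S}$ restricts to a Serre functor on each $\cc_i$. For $X \in \cc_2$ and $Y \in \cc_1$ Serre duality gives $\Hom_{\ul{\ce}_\cp}(Y,\mathbb{S}X) \cong D\Hom_{\ul{\ce}_\cp}(X,Y) = 0$ since the decomposition $\cc_1\oplus\cc_2$ is $\Hom$-orthogonal, hence $\mathbb{S}X \in \cc_2$, and symmetrically $\mathbb{S}(\cc_1)\subseteq\cc_1$. Restricting the Serre duality isomorphism to each summand then shows $\mathbb{S}|_{\cc_i}$ is again a Serre functor. In particular, the Auslander--Reiten translate $\tau = \mathbb{S}\circ\Sigma_{\ul{\ce}_\cp}^{-1}$ preserves $\cc_2 = \cm/\cp$, supplying the $\tau$-stability hypothesis of \Cref{c:changefrob}.

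Next I would show $\cm$ is functorially finite in $\ce$. For a right $\cm$-approximation of $X \in \ce$, decompose $\overline{X} \cong \overline{X}_1 \oplus \overline{X}_2$ in $\ul{\ce}_\cp$ with $\overline{X}_i \in \cc_i$, pick a lift $\widetilde{X}_2 \in \cm$ of $\overline{X}_2$, and lift the canonical inclusion $\overline{X}_2 \hookrightarrow \overline{X}$ to a morphism $\phi \colon \widetilde{X}_2 \to X$ in $\ce$. Combining $\phi$ with a deflation $p\colon P_X \twoheadrightarrow X$ from a projective in $\ce_\cp$ yields $(\phi,p)\colon \widetilde{X}_2 \oplus P_X \to X$. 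For any $f \colon M \to X$ with $M \in \cm$, the class $\overline{f}$ factors through $\overline{X}_2$ in $\ul{\ce}_\cp$ (as $\overline{M} \in \cc_2$), producing $g \colon M \to \widetilde{X}_2$ with $f - \phi g$ stably zero and hence factoring through $P_X$ by projectivity; the dual argument gives left $\cm$-approximations.

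The remaining, and most delicate, step is to upgrade the additive equivalence $\bar{\pi}\colon\ul{\ce}_\cm \xrightarrow{\sim} \cc_1$ to a triangle equivalence. By the explicit description in \Cref{new Frobenius structure}, every conflation in $\ce_\cm$ is a conflation in $\ce_\cp$, so it produces a distinguished triangle in $\ul{\ce}_\cp$; projecting onto the summand $\cc_1$ yields a candidate triangle. I expect the main obstacle to be compatibility of shift functors: for $X\in\ce$ the $\cm$-injective envelope $X \rightarrowtail I_{\cm} \twoheadrightarrow \Sigma_{\ul{\ce}_\cm}X$ need not coincide with the $\cp$-injective envelope, but the discrepancy should be absorbed into a $\cc_2$-object and therefore vanish under $\bar{\pi}$. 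This is the standard Auslander--Solberg reduction of the stable category, and once the shift compatibility is checked the triangle equivalence $\ul{\ce}_\cm \cong \cc_1$ follows formally.
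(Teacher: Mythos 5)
Your setup is the paper's: define $\cm = \Phi^{-1}(\cc_2)$ for the quotient $\Phi\colon \ce \to \ce/\cp$, check that the Auslander--Reiten translation $\tau = \mathbb{S}\circ\Sigma_{\ul\ce_\cp}^{-1}$ preserves $\cm/\cp = \cc_2$ (your Serre-duality argument for this, and for $\mathbb{S}$ restricting to each $\cc_i$, is exactly the paper's), invoke \Cref{c:changefrob} to get the new Frobenius structure $\ce_\cm$, and then use \Cref{L:AddIsomorphThm} to identify $\ce/\cm$ with $\cc_1$ as additive categories. Your direct lifting argument for functorial finiteness of $\cm$ is fine and merely replaces the paper's citation to \cite[Lemma A.7]{ACFGS22}.

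The gap is precisely where you flag "the most delicate step." You observe that the $\cm$-injective envelope of $X$ need not be its $\cp$-injective envelope, so $\Sigma_{\ul\ce_\cm}$ and $\Sigma_{\ul\ce_\cp}$ don't obviously agree, and then assert "the discrepancy should be absorbed into a $\cc_2$-object and... the triangle equivalence follows formally." That assertion does not constitute an argument, and "projecting a triangle onto $\cc_1$" is not a well-defined operation on the triangulated structure. The clean way to fill this in — and what the paper does — is to introduce the intermediate category $\cx := \Phi^{-1}(\cc_1)$ and show it carries a Frobenius exact structure $\cx_\cp$ with projectives $\cp$, inherited from $\ce_\cp$ (extension closure, closure under kernels of deflations and cokernels of inflations, all follow from $\cc_1$ being a triangulated summand). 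The key computation is then $\Ext^1_{\ce_\cp}(M, X) \cong \Hom_{\ce/\cp}(\Sigma_{\ul\ce_\cp}^{-1} M, X) = 0$ for $M \in \cm$, $X \in \cx$, since $\Sigma^{-1}_{\ul\ce_\cp} M \in \cc_2$ and $\overline X \in \cc_1$ are orthogonal. This shows every $\cx_\cp$-conflation is an $\ce_\cm$-conflation (via the description of $\cs_\cm$ in \Cref{new Frobenius structure}), so the inclusion $\cx_\cp \hookrightarrow \ce_\cm$ is exact and preserves projectives, hence induces a \emph{triangulated} functor $\ul\cx_\cp \to \ul\ce_\cm$ by Happel's criterion. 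One then checks, using $\ce/\cp \cong \cx/\cp \oplus \cm/\cp$ and \Cref{L:AddIsomorphThm}, that this functor is the additive equivalence $\cc_1 \xrightarrow{\sim} \ul\ce_\cm$ you already have, so it is a triangle equivalence. Without this intermediate $\cx_\cp$, your shift-compatibility worry is not resolved and the proof is incomplete.
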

\begin{proof}
The Serre functor $\mathbb{S}$ on $\ul{\ce}_{\cp}$ restricts to a Serre functor on $\cc_1$ since for a fixed $X \in \cc_1$ and every $Y \in \cc_2$, we have 
\[0 = \Hom_{\ce/\cp}(X, Y) \cong \sD \Hom_{\ce/\cp}(Y, \mathbb{S}(X)),\] 
so $\mathbb{S}(X)$ has to be in $\cc_1$ as well. 
Similarly, $\mathbb{S}$ restricts to a Serre functor on $\cc_2$.

Define $\cm \coloneqq \Phi^{-1}(\cc_2)$ be the preimage of $\cc_2$ under the quotient $\Phi \colon \ce \to \ce/\cp$. 
By \cite[Lemma A.7]{ACFGS22}, we have that $\cm \subset \ce$ is functorially finite and hence there is a Frobenius exact structure $\ce_\cm$ on $\ce$ with projective objects $\cm$, by \Cref{c:changefrob}. 

To show that $\cc_1$ and $\ul{\ce}_{\cm}$ are equivalent as triangulated categories we construct a diagram
\begin{figure}[H]\begin{tikzcd}
    {\mathcal{E}_{\mathcal{M}}} & {\mathcal{X}_\mathcal{P}} & {\mathcal{E}_{\mathcal{P}}} \\
    {\underline{\mathcal{E}}_{\mathcal{M}}} & {\underline{\mathcal{X}}_{\mathcal{P}}} & %
    {\cc_1 \oplus \cc_2 \cong \mathrlap{ \ul{\ce}_{\cp}}}
    \arrow["\text{incl.}", from=1-2, to=1-3]
    \arrow["\text{incl.}"', from=1-2, to=1-1]
    \arrow[from=1-1, to=2-1]
    \arrow[from=1-2, to=2-2]
    \arrow["\sim"', from=2-2, to=2-1]
    \arrow[from=1-3, to=2-3]
    \arrow[from=2-2, to=2-3]
\end{tikzcd}\end{figure} \noindent where $\cx_\cp$ is a Frobenius exact subcategory of both $\ce_\cp$ and $\ce_\cm$ such that the inclusion $\cx_\cp \to \ce_{\cp}$ induces the inclusion $\cc_1 \to \ul{\ce}_\cp$ of the triangulated subcategory $\cc_1$ and such that the inclusion $\ce_\cm \leftarrow \cx_\cp$ induces a triangle equivalence $\ul{\ce}_{\cm} \xleftarrow{\sim} \ul{\cx}_{\cp}$.

Let $\cx \coloneqq \Phi^{-1}(\cc_1)$ be the preimage of $\cc_1$ under the quotient $\Phi \colon \ce \to \ce/\cp$.
By the same argument as in \cite[Example 2.63]{KalckThesis}, we see that $\cx$ is closed under extensions, kernels of deflations and cokernels of inflations in $\ce_\cp$ and the inherited exact structure is a Frobenius exact structure $\cx_\cp$ with projective objects $\cp$.
It is clear that $\ul{\cx}_\cp = \cc_1$ as triangulated categories.

The way the conflations of $\ce_\cm$ are constructed in \Cref{new Frobenius structure} together with $\Ext^1_{\ce_\cp}(M,X)\cong \Hom_{\ce/\cp}(\Sigma_{\ul{\ce}_\cp}^{-1}M, X) = 0$ for all $M \in \cm$ and $X \in \cx$ imply that any conflation in $\cx_\cp$ is a conflation in $\ce_\cm$. As $\cp \subset \cm$ we have that $\cx_\cp$ is a Frobenius exact subcategory of $\ce_{\cm}$ and hence the inclusion $\ce_\cm \leftarrow \cx_\cp$ induces a triangulated functor $\ul{\ce}_{\cm} \leftarrow \ul{\cx}_{\cp}$ by \cite[page 23]{HappelBook}. 

It is not hard to see that (as an additive functor) this functor agrees with the restriction of the canonical quotient $\ce/\cp \to (\ce/\cp)/(\cm/\cp)$ to $\cx/\cp$. Because $\ce/\cp \cong \cx/\cp  \oplus \cm/\cp$ as additive categories, \Cref{L:AddIsomorphThm} implies that $\ul{\ce}_{\cm} \leftarrow \ul{\cx}_{\cp}$ is a triangle equivalence.
\end{proof}

Consider the special case of \Cref{C:RemoveFiniteTypeComponents} where $\ce_{\cp} = \GP_{\cp}(R)$ is a Frobenius exact structure on the Gorenstein projective modules of a Gorenstein $\kk$-algebra $R$ and $\cm = \add M$ for some $M$.
Then there is an equivalence $\GP_{\cm}(R) \cong \GP_{\proj R'}(R')$ of exact categories by \Cref{T:Auslander} and \cite[Theorem 2.8]{KIWY}, where $R' \coloneqq \End_{R}(M)$ is also a finite-dimensional Gorenstein $\kk$-algebra. In that case $R = eR'e$ for an idempotent $e \in R'$.

The following example shows that there are connected Frobenius exact categories $\ce_\cp$ such that the stable category is disconnected, i.e.\ there is a triangle equivalence $\ul{\ce}_\cp \cong \cc_1 \oplus \cc_2$
for non-trivial triangulated categories $\cc_i$.
\begin{ex}\label{Ex:Conndisconnected}
   Let $R=\kk Q/I$ be given by the following quiver
\[Q \coloneqq \begin{tikzcd}[ampersand replacement=\&, column sep = 2em]
	1 \&\& 2 \&\& 3
	\arrow["{a_1}", bend left = 2em, from=1-1, to=1-3]
	\arrow["{a_2}", bend left = 2em, from=1-3, to=1-5]
	\arrow["{b_2}", bend left = 2em, from=1-5, to=1-3]
	\arrow["{b_1}", bend left = 2em, from=1-3, to=1-1]
\end{tikzcd}\]
and admissible ideal $I$ generated by the relations $a_ib_i$ and $b_ia_i$ for $i=1,2$.
Suppose $\ce_{\cp} \coloneqq \GP_{{\proj R}}(R)$ is the category of Gorenstein projective $R$-modules with the standard Frobenius exact structure.
As $Q$ is connected, the additive category $\GP(R)$ is connected.
Indeed, this follows since $\proj R$ is connected and every $X$ in $\GP(R)$ admits a non-zero morphism $P \to X$ with $P$ projective.

The $\kk$-algebra $R$ is a gentle algebra. 
So \cite[Theorem 2.5(b) and (3.4)]{Kalck15} yield triangle equivalences
\begin{align}\label{E:2nodes}
    \ul{\GP}_{\proj R}(R) \cong \frac{\Db(\kk)}{\Sigma^2} \oplus \frac{\Db(\kk)}{\Sigma^2} \cong \Dsg\left(\frac{\kk \llbracket z_0, z_1\rrbracket}{(z_0 z_1)}\right) \oplus \Dsg\left(\frac{\kk \llbracket z_0, z_1\rrbracket}{(z_0 z_1)}\right)
\end{align}
where $\Db(\kk)/\Sigma^2$ denotes the orbit category of $\Db(\kk)$.
\end{ex}

\begin{rem}
    The $\kk$-algebra $R$ in Example \ref{Ex:Conndisconnected} appears in \KSODs of singular varieties, cf.\ \cite{Burban05} for curves and \cite{ps} for threefolds. 
    Its singularity category splits into the singularity categories of two nodes by \eqref{E:2nodes}. 
    However, we show below that this splitting is \emph{not} induced by an admissible \SOD of $\Db(R)$ of the form
    \begin{align}\label{E:fdcatabs2nodes}
        \Db(R) = \langle \Db(R_1), \Db(R_2) \rangle,
    \end{align}
    where the $R_i$ are finite-dimensional $\kk$-algebras. 
    In other words, $\Db(R)$ provides an \absorption of two nodes, which cannot be refined to obtain \absorptions for each node separately. 
    
    However, if we allow derived categories of dg-algebras as components, then the following \SOD yields separate categorical absorptions of the two nodes
    \begin{align*}
        \Db(R) = \langle \Dfd(\kk[\epsilon]/(\epsilon^2)), \Dfd(\kk[\epsilon]/(\epsilon^2)), \Db(\kk) \rangle,
    \end{align*}
where $\kk[\epsilon]/(\epsilon^2)$ is the dg-algebra with $\deg \epsilon=-1$ and trivial differential. Such decompositions are studied in greater generality in \cite{KS22a}.

    To see that a \SOD as in \eqref{E:fdcatabs2nodes} is impossible, we note that 
    \begin{align}\label{E:SingK0node}
        K_0\left(\Dsg(\kk\llbracket z_0, z_1\rrbracket/(z_0  z_1))\right) \cong \ZZ,
    \end{align}
    cf.\ \cite[Lemma 2.22]{KPS19}.
    For a finite-dimensional $\kk$-algebra with precisely $n$ distinct simple modules the Gothendieck group of its derived category is isomorphic to $\Z^n$, see \cite[Section III.1]{HappelBook}.
    Furthermore, if it is additionally local of $\kk$-dimension $d$ then the Grothendieck group of its singularity category is isomorphic to $\ZZ/d\ZZ$. 
    
    Hence, $K_0(\Db(R)) \cong \ZZ^3$.
    As $K_0$ is additive the admissible \SOD \eqref{E:fdcatabs2nodes} implies $K_0(\Db(R_1)) \cong \ZZ$ or $K_0(\Db(R_2)) \cong \ZZ$ and therefore one of the $R_i$ is local. 
    The admissibility of \eqref{E:fdcatabs2nodes} yields that
    \begin{align}\label{E:fdcatabs2nodes2}
        \Dsg(R) \cong \langle \Dsg(R_1), \Dsg(R_2) \rangle,
    \end{align}
    is admissbile, see \cite[Proposition 1.10]{orl6}.
    Considering Grothendieck groups in \eqref{E:fdcatabs2nodes2} and using  \eqref{E:2nodes} together with Buchweitz's equivalence \Cref{T:BuchweitzIntro} as well as \eqref{E:SingK0node} yields a contradiction.
\end{rem}

\subsection{Standard triangulated categories}

For a detailed exposition of quivers and quiver algebras we refer the reader to \cite[Chapter II]{ASS}.

\begin{Notation}
   Let $Q=(Q_0, Q_1)$ be a quiver.
   We denote the \emph{source} and \emph{target} of $\alpha \in Q_1$ by $s(\alpha)$ and $t(\alpha)$, respectively. 
   For $x,y \in Q_0$ we write $Q_1(x,y) \subset Q_1$ for the set of arrows $\alpha \in Q_1$ with $s(\alpha) = x$ and $t(\alpha) = y$. 
   If $Q_1(x,y) \neq \emptyset$ we say that $y$ is a \emph{direct successor} of $x$ and $x$ is a \emph{direct predecessor} of $y$.
   We denote by $x^+$ and $x^-$ the set of all direct successors and direct predecessors of $x \in Q_0$, respectively.
\end{Notation}

\begin{defn}
   Let $Q = (Q_0, Q_1)$ be a quiver.
   \begin{enumerate}[label={(\alph*)}]
      \item We say $Q$ has \emph{no double arrows} if $|Q_1(x, y)| \leq 1$ for any vertices $x,y \in Q_0$.
      \item We call $Q$ \emph{locally finite} if $x^+$ and $x^-$ are finite for all $x \in Q_0$.
      \item A \emph{valuation} on $Q$ is a map $a \colon Q_1 \to \mathbb{N}_{+}$.
            We call $(Q,a)$ a \emph{valued quiver}.
   \end{enumerate}
\end{defn}

The following was originally introduced by Riedtmann for quivers without loops, see \cite[pages 200 and 205]{Riedtmann}.
However, e.g.\ \cite[Definition 2.1.1]{Amiotthesis} allows the underlying quiver to have loops. 

\begin{defn}\label{defn:transquiver}
    A \emph{stable translation quiver} is a pair $(Q, \tau)$ consisting of a locally finite quiver $Q = (Q_0, Q_1)$ without double arrows and a bijection $\tau \colon Q_0 \to Q_0$ which satisfies $(\tau x)^+ = x^-$.
    The bijection $\sigma \colon Q_1 \to Q_1$ with $\sigma Q_1(x, y) = Q_1(\tau y, x)$ is called the \emph{polarisation of $(Q,\tau)$}.
\end{defn}

Notice, since a stable translation quiver does not have double arrows, the polarisation of a stable translation quiver is unique, compare \cite[page 51]{RingelBook}.

The following generalisation is due to Happel--Preiser--Ringel, see \cite[page 289]{Happel-Preiser-Ringel}. See also \cite[Definition 2.1.2]{Amiotthesis}.

\begin{defn}
   Let $(Q,\tau)$ be a stable translation quiver with polarisation $\sigma$.
   A \emph{valued stable translation quiver} is a triplet $(Q,\tau,a)$ where $a$ is a valuation on $Q$ satisfying $a(\sigma \alpha) = a(\alpha)$ for any $\alpha \in Q_1$. 
\end{defn}

\begin{defn}\label{defn:quiver}
    Given a $\kk$-linear, $\Hom$-finite, idempotent complete category $\ca$ one can define a quiver $Q = (Q_0, Q_1)$ without double arrows, called the \emph{quiver of $\ca$}, and a valued quiver $(Q,a)$, called the \emph{valued quiver of $\ca$}.
    \begin{enumerate}[label={(\alph*)}]
        \item The vertices of $Q_0$ are isomorphism classes $[X]$ of $X \in \ind \ca$,
        \item there is an arrow $\alpha \in Q_1([X],[Y])$ precisely if $\rad_{\ca}(X,Y)/\rad_{\ca}^2(X,Y) \neq 0$ for any $X,Y \in \ind \ca$ and in this case $a(\alpha) = \dim_{\kk} \rad_{\ca}(X,Y)/\rad_{\ca}^2(X,Y)$.
    \end{enumerate}
\end{defn}

\begin{defn}\label{defn:ARquiver}
    Let $\cc$ be a $\kk$-linear, $\Hom$-finite, idempotent complete, triangulated category.
    Suppose $\cc$ admits a Serre functor $\mathbb{S} \colon \cc \to \cc$.
    Let $(Q,a)$ be the valued quiver of $\cc$ and $\tau \colon Q_0 \to Q_0, \, [X] \mapsto [\Sigma^{-1}\mathbb{S}(X)]$ the \emph{Auslander--Reiten translation}.
    We call $(Q, \tau, a)$ the \emph{Auslander--Reiten quiver (AR-quiver)} of $\cc$.
\end{defn}

It is well known that the AR-quiver of $\cc$ as above is indeed a valued stable translation quiver. 
Recall also the following well-known fact.

\begin{rem}\label{rem:unique}
    Under the notation and assumptions of \Cref{defn:ARquiver} assume the quiver $Q$ of $\cc$ is connected. Then the action of the Auslander--Reiten translation $\tau$ on objects and hence the AR-quiver $(Q,a,\tau)$ of $\cc$ does only depend on the underlying $\kk$-linear structure.
    Indeed, the existence of a Serre-functor is independent of the triangulation and by \cite[Proposition I.2.4]{ReitenVandenBergh} any triangulation on $\cc$ has Auslander--Reiten triangles. We may assume that $Q$ has more than one vertex because otherwise the action of $\tau$ on objects is unique anyway. Using that $(Q,\tau)$ is a connected stable translation quiver, one then easily shows that at each vertex of $Q$ starts and ends at least one arrow and hence the Auslander--Reiten triangles of $\cc$ are non-degenerate. Applying \cite[Theorem 4.9]{shah} shows the claim.  %
\end{rem}

We will be especially interested in Auslander--Solberg reduction on triangulated categories with an additive generator.

\begin{defn}
    A triangulated category $\cc$ is called \emph{additively finite} if it has an additive generator, i.e.\ if $\cc = \add X$ for some $X \in \cc$.
\end{defn}

\begin{rem}\label{rem:exists}
    Any $\Hom$-finite, idempotent complete, additively finite, triangulated category has a Serre functor, cf.\ \cite[Theorem 1.1]{Amiotthesis}.
\end{rem}

Let $(Q, \tau, a)$ be a valued translation quiver with underlying quiver $Q = (Q_0, Q_1)$ and polarisation $\sigma$.
Define a quiver $\overline{Q} := (\overline{Q}_0, \overline{Q}_1)$ by replacing $\alpha \in Q_1$ by $a(\alpha)$ many arrows, that is $\overline{Q}_0 = Q_0$ and 
\[\overline{Q}_1 = \set{(\alpha,k) \in Q_1 \times \mathbb{N}}{\text{$\alpha \in Q_1$ and $1 \leq k \leq a(\alpha)$}}.\]
Furthermore, define $\overline{\sigma} \colon \overline{Q}_1 \to \overline{Q}_1,\ (\alpha, k) \mapsto (\sigma \alpha, k)$.
Recall also the following, cf.\ e.g.\ \cite[Section 2.5 and Definition 5.1]{Gabriel} or \cite[Section 5]{Amiotthesis}. 
\begin{defn}\label{defn:meshalgebra}
    Suppose $(Q,\tau,a)$ is a finite valued stable translation quiver with polarisation $\sigma$.
    Let $\overline{Q} = (\overline{Q}_0, \overline{Q}_1)$ and $\overline{\sigma}$ be constructed as above.
    \begin{enumerate}[label={(\alph*)}]
       \item The ideal\footnote{We follow the same convention as \cite{ASS} for the composition of arrows.} $I(Q,\tau,a) \coloneqq \big(\smash{\sum_{\alpha \in \overline{Q}_1}} \alpha \overline{\sigma}(\alpha) \big)$ of $\kk \smash{\overline{Q}}^{\op}$ is called the \emph{mesh ideal}.
       \item The \emph{mesh algebra} is the quotient $\kk (Q, \tau, a) := \kk \smash{\overline{Q}}^{\op}/I(Q,\tau,a)$.
       \item A $\kk$-linear, additively finite\footnote{{To define standardness for triangulated categories that are not additively finite }one needs to replace $\kk \smash{\overline{Q}}^{\op}$ by the path category of $Q$ and adopt the definition of $I(Q,\tau,a)$ appropriately.} triangulated category $\cc$ is called \emph{standard} if it is $\Hom$-finite, idempotent complete and $\cc \cong \proj \kk (Q,\tau,a)$ as additive categories, where $(Q,\tau,a)$ is the AR-quiver of $\cc$, cf.\ \Cref{rem:exists,rem:unique}.
    \end{enumerate}
\end{defn}

Regarding the definition of the mesh algebra notice, \cite[page 47]{RingelBook} allows a stable translation quiver $(Q,\tau)$ to have double arrows. This makes the polarisation $\sigma$ of $(Q,\tau)$ in \Cref{defn:transquiver} and hence the mesh relations subject to a choice, compare \cite[page 51]{RingelBook}. In our definition of the mesh relations this choice is hidden in the definition of the map $\overline{\sigma}$ on $(\overline{Q}, \tau)$.
We can derive the following result.
\begin{cor} \label{C:Frob}
   Let $\ce_\cp$ be a $\kk$-linear Frobenius exact category and $\ul{\ce}_{\cp}$ be additively finite and standard in the sense of \Cref{defn:meshalgebra}.

   If removing $\tau$-orbits from the AR-quiver of $\smash{\ul{\ce}_{\cp}}$ yields a connected quiver which is isomorphic to the AR-quiver of a standard triangulated category $\smash{\cc}$ then there is a Frobenius exact structure $\smash{\ce_{\cm}}$ on $\ce$ such that $\smash{\ul{\ce}_{\cm} = \cc}$ as triangulated categories.
\end{cor}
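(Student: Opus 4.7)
The plan is to apply the Auslander--Solberg reduction of \Cref{c:changefrob} to a carefully chosen functorially finite subcategory $\cm \subseteq \ce$, and then to identify the resulting stable category with $\cc$ by exploiting standardness on both sides. Note first that $\ul{\ce}_\cp$ is a $\kk$-linear, $\Hom$-finite, idempotent complete, additively finite triangulated category, so it admits a Serre functor by \Cref{rem:exists}, and hence an Auslander--Reiten translation $\tau$ acting on its AR-quiver $(Q,\tau,a)$. In particular, the hypotheses of \Cref{c:changefrob} make sense.

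First I would construct $\cm$ as follows. Let $I \subseteq Q_0$ denote the union of the $\tau$-orbits that are removed to produce the AR-quiver $(Q',\tau',a')$ of $\cc$, and let $T_0 \in \ul{\ce}_\cp$ be a direct sum of representatives of the indecomposables indexed by $I$. Define $\cm \subseteq \ce$ to be the essential preimage of $\add T_0$ under the quotient $\ce \to \ul{\ce}_\cp$. Then $\cp \subseteq \cm$, the subcategory $\cm$ is closed under direct summands, and $\cm/\cp = \add T_0$ has only finitely many indecomposables, which (as in the proof of \Cref{C:RemoveFiniteTypeComponents}, via \cite[Lemma A.7]{ACFGS22}) forces $\cm$ to be functorially finite in $\ce$. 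Since $I$ is a union of $\tau$-orbits, we also have $\tau(\cm/\cp) = \cm/\cp$. Thus \Cref{c:changefrob} applies and produces a Frobenius exact structure $\ce_\cm$ on $\ce$ with projective-injective objects $\cm$, whose stable category $\ul{\ce}_\cm$ is triangulated by Happel.

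Next I would identify $\ul{\ce}_\cm$ with $\cc$. Using \Cref{L:AddIsomorphThm}, the underlying additive category of $\ul{\ce}_\cm$ is the additive quotient $\ul{\ce}_\cp / (\cm/\cp)$. Since $\ul{\ce}_\cp$ is standard, it is additively equivalent to $\proj \kk(Q,\tau,a)$, and killing the indecomposable projectives indexed by $I$ yields, via a direct computation with mesh relations and using the connectedness of the resulting quiver $(Q',\tau',a')$, an additive equivalence $\ul{\ce}_\cm \simeq \proj \kk(Q',\tau',a')$. Hence $\ul{\ce}_\cm$ is standard with AR-quiver $(Q',\tau',a')$, matching that of $\cc$. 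Invoking the uniqueness of the triangulated structure on a standard additively finite category with prescribed AR-quiver (cf.\ \Cref{rem:unique} and \cite[Section 5]{Amiotthesis}) then upgrades this additive equivalence to a triangle equivalence $\ul{\ce}_\cm \cong \cc$.

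The main obstacle will be the final identification step. Constructing $\cm$ and verifying the hypotheses of \Cref{c:changefrob} is essentially bookkeeping, but establishing that $\ul{\ce}_\cm$ is itself standard and then deducing that the common mesh algebra description determines the triangulation requires care. In particular, one has to check that the mesh relations of the reduced translation quiver $(Q',\tau',a')$ are precisely what remains of the mesh relations of $(Q,\tau,a)$ after quotienting by $\add T_0$, which is where the connectedness assumption on the reduced AR-quiver is used, and one has to appeal to the fact that on a standard additively finite triangulated category the triangulation is determined by the AR-quiver.
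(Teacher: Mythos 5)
Your proof is essentially the same as the paper's: you construct $\cm$ as the preimage of $\add T_0$ under the quotient $\ce \to \ce/\cp$, invoke \cite[Lemma~A.7]{ACFGS22} for functorial finiteness, apply \Cref{c:changefrob}, and then identify $\ul{\ce}_\cm$ with $\cc$ via standardness. The one genuine difference is how the intermediate step -- that $\ul{\ce}_\cm$ is standard with AR-quiver $(Q',\tau',a')$ -- is justified. The paper cites J\o{}rgensen's result \cite[Theorem~4.2(ii),(iii)]{Jor0705}, which under the hypotheses of \mbox{\cite[Set-up~3.1]{Jor0705}} says precisely that removing a $\tau$-stable functorially finite subcategory from a standard triangulated category yields another standard category whose AR-quiver is obtained by deleting those vertices and all adjacent arrows. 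Your ``direct computation with mesh relations'' is in effect a re-derivation of that theorem; the computation does go through (the key point is that since the removed set is a union of $\tau$-orbits, the translation and polarisation restrict, and reducing the mesh relation at $x \notin I$ modulo the idempotents $e_i$, $i \in I$, gives exactly the mesh relation of $(Q',\tau',a')$), but carrying it out carefully is precisely the content of J\o{}rgensen's theorem, so citing it is cleaner. One small inaccuracy: you say connectedness of $(Q',\tau',a')$ is used in the mesh-relation computation -- it is not. Connectedness enters only at the end, to apply \Cref{rem:unique} (which needs a connected quiver to determine $\tau$ from the additive structure) and to invoke \cite[Corollary~2]{KellerAmiot}, the result the paper uses to conclude that two additively finite standard triangulated categories with isomorphic connected AR-quivers are triangle equivalent; your citation of Amiot's thesis is pointing at the same result.
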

\begin{proof}
   We denote the $\tau$-orbits in the AR-quiver of $\ul{\ce}_{\cm}$ that we want to remove by $X_i, \tau(X_i), \ldots, \tau^{n_i}(X_i)$ for $1 \leq i \leq t$, set
\[
   X \coloneqq \bigoplus_{i=1}^t \bigoplus_{k=0}^{n_i} \tau^k(X_i)
\]
and define $\cm$ to be the preimage of $\add X$ under the canonical quotient $\ce \to \ce/\cp$.
Then $\cm \subseteq \ce$ is functorially finite by \cite[Lemma A.7]{ACFGS22} as $\add X =\cm/\cp \subseteq \ce/\cp$ has an additive generator and is therefore functorially finite.
Now, $\ce$ admits a Frobenius exact structure $\ce_\cm$ with projective-injective objects $\cm$, by \Cref{c:changefrob}.
By construction of $\cm$, we have an equivalence $\ce/\cm \cong (\ce/\cp)/\add X$ as additive categories, using Lemma \ref{L:AddIsomorphThm}. 
As $\add X = \cm / \cp$ is functorially finite in $\ce/\cp$ the conditions of \mbox{\cite[Set-up 3.1]{Jor0705}} are satisfied. 
By \Cref{rem:unique} and \cite[Theorem 4.2(ii)]{Jor0705} the \mbox{AR-quiver} of $\ul{\ce}_{\cm}$ is obtained from the AR-quiver of $\ul{\ce}_{\cp}$ by removing the vertices corresponding to the summands in $X$ and all adjacent arrows.
Further, $\ul{\ce}_{\cm}$ is standard by \cite[Theorem 4.2(iii)]{Jor0705} since $\ul{\ce}_{\cp}$ is standard and the AR-quiver of $\ul{\ce}_{\cm}$ is isomorphic to the AR-quiver of $\cc$.
Additively finite, standard triangulated categories with isomorphic, connected AR-quivers are triangle equivalent by \mbox{\cite[Corollary 2]{KellerAmiot}} (see also \cite{KalckYang16}), so there is a triangle equivalence $\ul{\ce}_{\cm} \cong \cc$.\end{proof}

\begin{cor} \label{C:ReduceType}
Let $d$ be odd and $S$ be a complete local $d$-dimensional ADE-hypersurface singularity such that there is a finite-dimensional Gorenstein $\C$-algebra $R$ and a Frobenius exact structure $\GP_\cp(R)$ which satisfies $\underline{\GP}_{\cp}(R) \cong \Dsg(S)$.

Then there is a Frobenius exact structure $\GP_\cm(R)$ on $\GP(R)$ such that there is a triangle equivalence $\underline{\GP}_{\cm}(R) \cong \Dsg(S')$, where $S'$ is a complete local $3$-dimensional ADE-hypersurface singularity
\begin{enumerate}[label={(\alph*)}]
\item of type $A_3$, if $S$ is of type $D_{2m+1}$ (with $m >1$) or of type $E_6$
\item of type $D_4$, if $S$ is of type $E_7$ of $E_8$.
\end{enumerate}
In particular, $\Dsg(S')$ is $2$-Calabi--Yau and $\Spec S'$ has a small resolution, see for example \cite[Theorems 5.7 and 6.2(a)]{BIKR}.
\end{cor}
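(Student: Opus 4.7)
The strategy is to reduce to the three-dimensional case via Knörrer periodicity, identify the right union of $\tau$-orbits in the AR-quiver of $\ul{\GP}_\cp(R) \cong \Dsg(S)$, and then invoke \Cref{C:Frob} to produce $\GP_\cm(R)$. Concretely, since $S$ is an odd-dimensional ADE-hypersurface, Knörrer periodicity supplies a triangle equivalence $\Dsg(S) \cong \Dsg(S_0)$ with $S_0$ a $3$-dimensional ADE-singularity of the same Dynkin type. Via the hypothesis, $\ul{\GP}_\cp(R)$ is therefore the stable category of MCM modules over a $3$-dimensional ADE-hypersurface; in particular it is $\Hom$-finite, additively finite, $2$-Calabi--Yau, and has a fully known AR-quiver (see e.g.\ \cite{BIKR}).

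Next I would verify that $\ul{\GP}_\cp(R)$ is standard in the sense of \Cref{defn:meshalgebra}. This follows from the fact that additively finite $2$-Calabi--Yau categories of ADE type are determined up to triangle equivalence by their AR-quiver, via results of Keller--Reiten and Amiot on cluster categories: the underlying additive category is equivalent to $\proj$ of the associated mesh algebra, so the standard triangulation on the mesh algebra transports along this additive equivalence to agree, up to triangle equivalence, with the given triangulation on $\ul{\GP}_\cp(R)$ (uniqueness being guaranteed by \Cref{rem:unique} together with the results cited in the proof of \Cref{C:Frob}).

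With standardness in hand, the plan is to identify, case by case, a union $\mathcal{O}$ of $\tau$-orbits in the AR-quiver of $\Dsg(S_0)$ whose removal yields a connected valued stable translation quiver isomorphic to the AR-quiver of the target $\Dsg(S')$. For $S_0$ of type $D_{2m+1}$ with $m>1$ or of type $E_6$, one removes the orbits corresponding to the ``long tail'' of the Dynkin diagram until one is left with the AR-quiver of the $3$-dimensional $A_3$-singularity; for $S_0$ of type $E_7$ or $E_8$, one removes analogous orbits to arrive at the AR-quiver of the $3$-dimensional $D_4$-singularity. In each case connectedness of the remainder is a direct combinatorial check, and the target quiver is standard for the same reason as above. \Cref{C:Frob} then gives a Frobenius structure $\GP_\cm(R)$ with $\ul{\GP}_\cm(R) \cong \Dsg(S')$. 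The final sentence of the statement — that $\Dsg(S')$ is $2$-Calabi--Yau and $\Spec S'$ admits a small resolution — is a known input on $3$-dimensional $A_3$ and $D_4$ singularities (cited as \cite[Theorems 5.7 and 6.2(a)]{BIKR} in the statement itself).

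The main obstacle is the combinatorial identification of the $\tau$-orbits to remove. One must draw the AR-quivers of $\Dsg(S_0)$ for each of $D_{2m+1}$ ($m>1$), $E_6$, $E_7$, $E_8$, and explicitly exhibit orbits whose deletion gives a quiver isomorphic (as a valued stable translation quiver) to the AR-quiver of $\Dsg(S')$ with $S'$ of type $A_3$ or $D_4$ respectively. While no single step is difficult in isolation, verifying the isomorphism of valued translation quivers, and in particular the correct $\tau$-action, on the remainder in each exceptional case is where the real work of the proof lies.
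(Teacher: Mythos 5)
Your proposal follows essentially the same route as the paper: reduce to dimension three via Knörrer periodicity, establish standardness of $\ul{\GP}_\cp(R) \cong \Dsg(S)$, exhibit $\tau$-orbits whose removal leaves the AR-quiver of $\Dsg(S')$, and invoke \Cref{C:Frob}. One small caveat: your justification of standardness via ``Keller--Reiten and Amiot'' is somewhat circular, since those recognition results presuppose standardness rather than prove it, whereas the paper cites \cite[Theorem 8.7]{ErdmannSkowronski} directly for this; otherwise the explicit orbit-removal combinatorics (which you rightly identify as the remaining work) is exactly what the paper supplies case by case.
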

\begin{proof}
The category $\Dsg(S)$ and $\Dsg(S')$ are standard, cf.\ e.g.\ \cite[Theorem 8.7]{ErdmannSkowronski}.
Therefore, by Corollary \ref{C:Frob}, it is enough to show that we can remove $\tau$-orbits from the AR-quivers of $\underline{\GP}_{\cp}(R) = \Dsg(S)$ to obtain the AR-quiver of $\Dsg(S')$ in each case. 
The AR-quivers of $\Dsg(S)$ are known by \cite{DieterichWiedemann86} combined with \cite{Knoerrer}. 
We suppress indicating the valuations on the AR-quivers, as all arrows evaluate to $1$.
For type $D_{2m+1}$ the AR-quiver is given as follows
\begin{center}
    \includegraphics{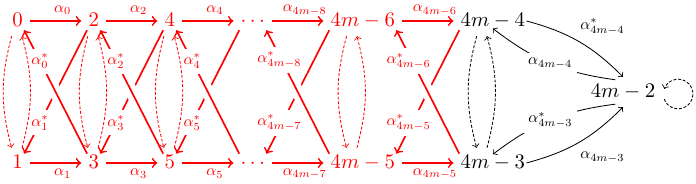}
\end{center}
where the dashed arrows indicate the action of the Auslander--Reiten translation. 
Removing the $\tau$-orbits
$(0,1), (2, 3), \ldots, (4m-6, 4m-5)$ and all adjacent arrows, which are colored in red, we obtain the black subquiver
which is the AR-quiver of type $A_3$.
For type $E_6$ the AR-quiver is given as follows
\begin{center}
    \includegraphics{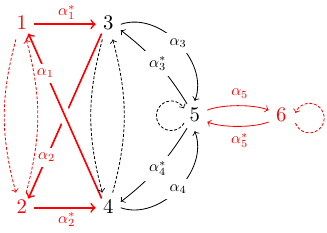}
\end{center}
and removing the $\tau$-orbits
$(1,2)$ and $(6)$ and all adjacent arrows, which are colored in red, we obtain the black subquiver which is again the AR-quiver of type $A_3$. 

We treat the cases $E_7$ and $E_8$ together.
For type $E_8$ the AR-quiver is given by
\begin{center}
    \includegraphics{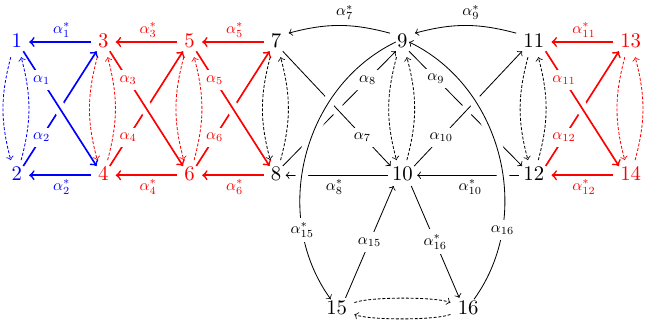}    
\end{center}
and removing the $\tau$-orbit $(1,2)$ and all adjacent arrows, which are colored in blue, gives the AR-quiver of type $E_7$.
Then removing the $\tau$-orbits $(3,4)$, $(5,6)$ and $(13,14)$ and all adjacent arrows, which are colored in red, yields the black subquiver
which is the AR-quiver of $D_4$.
\end{proof}

\section{From small resolutions to cluster-tilting objects via NCCRs}\label{sec:fromsmallres}

\begin{Setup} \label{Set:VdB}
    Let $S \cong \C\llbracket z_1, \ldots, z_n \rrbracket/I$ be a complete local Gorenstein threefold with an isolated singularity. 
\end{Setup}

In particular, \Cref{Set:VdB} implies that $S$ is normal, since it is Cohen--Macaulay and regular in codimension one.

We explain the connections between the existence of a special type of 
commutative resolutions of singularities $Y \to \Spec S$, certain noncommutative resolutions of singularities $\Lambda$ of $S$ and a special type of generators (called \emph{cluster-tilting objects}, see \Cref{defn:ctsmall}) in the singularity category $\Dsg(S)$ of $S$. These are well-known results due to Van den Bergh \cite{VandenBergh04}, Iyama \cite{Iyama07, Iyama07a} and Iyama--Reiten \cite{IyamaReiten} and hold in greater generality. For simplicity, we restrict to the setup needed in this article.  

\begin{defn}\label{D:Small}
A projective morphism $\pi\colon Y \to \Spec S$, such that $Y$ is {regular} {and the induced morphism $\pi \colon Y \setminus \pi^{-1} ( \mathfrak{m} ) \to \Spec(S) \setminus \{ \mathfrak{m} \}$ is an isomorphism}, is called a \emph{resolution of singularities}.
It is called \emph{small}, if additionally all fibers are at most $1$-dimensional.
\end{defn}

\begin{rem}\label{rem:loc-free-sheaves}
    We note that any projective morphism $Y \to \Spec ( A)$ for any commutative noetherian ring $A$ 
    has \emph{enough locally free sheaves}, meaning that any coherent sheaf on $Y$ is a quotient of a locally free sheaf on $Y$.
    Indeed, the fact that $Y$ has an ample line bundle implies that there are enough locally free sheaves on $Y$ by a famous theorem of Serre (see e.g.\ \cite[Chapter II, Theorem 5.17]{hartshorne}). 
\end{rem}

\begin{rem}\label{rem:crepant-res}
    A small resolution as in \Cref{D:Small} is \emph{crepant}, i.e.\ $\pi^*\co_S \cong \omega_Y$, where $\omega_Y$ is the canonical sheaf on $Y$ and $\co_S \cong \omega_S$ is the canonical sheaf on $\Spec(S)$ since $S$ is a 
complete local Gorenstein algebra.
We note that, as a crepant resolution of $\Spec(S)$, $Y$ is Calabi--Yau, i.e.\ $\co_Y \cong \pi^*\co_S \cong \omega_Y$.

     Indeed, set $U = Y \setminus \pi^{-1}(\mathfrak{m})$. The sheaf $\pi^*\omega_S \cong  \pi^*\co_S$ is a line bundle since $\co_S$ is a line bundle. %
     As %
     the restriction %
     $\pi\colon U \to \Spec (S) \setminus \{\mathfrak{m}\}$ is an isomorphism, we have 
    \begin{align*}
        \pi^*\omega_S|_U \cong \omega_U \cong \omega_Y|_U .
    \end{align*}
    As the resolution is small, we have $\mathrm{codim}_Y(Y \setminus U)\geq 2$, this can be used to show that an extension of $\pi^*\omega_S|_U$ to a line bundle on $Y$ is unique, 
    using the %
    isomorphism between the divisor class groups of $Y$ and $U$ (cf.\ e.g.\ \cite[Chapter II, Proposition 6.5(b)]{hartshorne}). Hence, $\pi^*\omega_S \cong \omega_Y$ showing crepancy. 
\end{rem}

Our goal is to explain \Cref{P:ClusterTiltingObject}, which we recall here:

\begin{thm}\label{P:ClusterTiltingObject2}
   Let $S \cong \C\llbracket z_1, \ldots, z_n \rrbracket/I$ be a $3$-dimensional Gorenstein \mbox{$\C$-algebra} with an isolated singularity and assume that $\Spec S$ has a small resolution. 

   Then $\Dsg(S)$ contains a cluster-tilting object. Moreover, if the quiver of every cluster-tilting object in $\Dsg(S)$ contains no loops 
   or $2$-cycles, then $S$ is an $A_1$-singularity.
\end{thm}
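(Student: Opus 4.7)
The plan is to combine Van den Bergh's theorem on NCCRs of small resolutions with Iyama's result producing cluster-tilting objects from NCCRs, and then to use the Donovan--Wemyss contraction algebra to control the quiver of such cluster-tilting objects. By \Cref{rem:crepant-res} the small resolution $\pi\colon Y \to \Spec S$ is crepant, so Van den Bergh's theorem (\Cref{T:VdB2}) supplies a tilting bundle $\mathcal{T}$ on $Y$ whose endomorphism ring $\Lambda := \End_Y(\mathcal{T}) \cong \End_S(M)$ is an NCCR of $S$, where $M$ is an MCM $S$-module containing $S$ as a direct summand. Iyama's theorem (\Cref{T:Iyama}) then shows that the image $\overline{M}$ in $\underline{\MCM}(S) \cong \Dsg(S)$ is a cluster-tilting object, which proves the first assertion.

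For the second assertion I would argue by contradiction: assume that no cluster-tilting object in $\Dsg(S)$ has a loop or a $2$-cycle in its quiver, and aim to conclude that $S$ is the conifold. Reid's classification forces $S$ to be cDV. For any cluster-tilting object $\overline{N}$ arising from an NCCR $\End_S(N)$ with $e_0$ the idempotent onto the summand $S$, the endomorphism algebra of $\overline{N}$ in $\Dsg(S)$ is the \emph{contraction algebra} $A_{\mathrm{con}} := \End_S(N)/(e_0)$, whose Gabriel quiver is exactly the quiver of $\overline{N}$. Since $\Dsg(S)$ is $2$-Calabi--Yau, Iyama--Yoshino mutation transports cluster-tilting objects to cluster-tilting objects (corresponding geometrically to flops of the small resolution), so the hypothesis applies to the quivers of all contraction algebras reachable by mutation. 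For the conifold a direct computation gives $A_{\mathrm{con}} \cong \C$, with a one-vertex, arrow-free quiver, consistent with the hypothesis.

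The crux, and main obstacle, is to rule out every other cDV type. The direct route is case-by-case over Reid's list $cA_n, cD_n, cE_6, cE_7, cE_8$: for $cA_n$ with $n \geq 2$ some NCCR carries a relation of the form $a^n = 0$ at a single vertex, producing a loop; in $cD$ and $cE$ type one locates composable arrows $a,b$ with $ab = 0 = ba$, producing a $2$-cycle. A cleaner alternative is to invoke the Donovan--Wemyss fact that $A_{\mathrm{con}}$ is symmetric and finite-dimensional, and to argue uniformly that the absence of loops and $2$-cycles in the Gabriel quivers of all mutations of $A_{\mathrm{con}}$ forces $A_{\mathrm{con}}$ to be semisimple, which in the symmetric Calabi--Yau setting forces $A_{\mathrm{con}} \cong \C$ and hence the conifold. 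Either way, the conclusion follows by combining the existence result from the first paragraph with this analysis of $A_{\mathrm{con}}$.
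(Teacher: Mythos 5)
Your first paragraph is correct and matches the paper: Van den Bergh's tilting bundle gives an NCCR (\Cref{T:VdB2}), Iyama's result (\Cref{T:Iyama}) turns it into a cluster-tilting object $\pi_*\ct$ in $\Dsg(S)$. For the second assertion, however, you have correctly located ``the crux'' but have not filled it, and neither of your two proposed routes works as stated. The case-by-case route over Reid's list is not carried out, and it is not clear that it would succeed without already knowing something structural about the quiver. The ``cleaner alternative'' contains a false implication: absence of loops and $2$-cycles in the Gabriel quiver of a finite-dimensional symmetric algebra does \emph{not} force semisimplicity (a connected symmetric algebra whose quiver is a $3$-cycle is a counterexample), and invoking mutation of the contraction algebra does not obviously repair this.

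The missing ingredient, and the actual engine of the paper's argument, is geometric: under the tilting equivalence of \Cref{T:VdB}, the simple modules of the contraction algebra $\ul{\End}_S(\pi_*\ct)$ correspond to shifted sheaves $\Sigma\co_{C_i}(-1)$ on the irreducible components $C_i$ of the exceptional curve, and an $\Ext^1$-computation (\Cref{P:Neb}) shows that whenever $C_i$ and $C_j$ meet the quiver has arrows in both directions. Since the exceptional fiber is connected, ``no $2$-cycles'' forces the quiver of this \emph{specific} cluster-tilting object $\pi_*\ct$ to have a single vertex (\Cref{C:SingleVertex}), and ``no loops'' then trivializes it, so $\End_{\Dsg(S)}(\pi_*\ct)\cong\C$. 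Your proposal also stops one step short: even granting $A_{\mathrm{con}}\cong\C$, you do not explain how to recover $S\cong$ conifold. The paper closes this via Keller--Reiten's recognition theorem (both $\Dsg(S)$ and $\Dsg$ of the node are identified with the cluster category $\cc(\C)$) followed by \Cref{L:Yoshino}; alternatively, one can argue geometrically that an irreducible exceptional curve without loops has normal bundle $\co(-1)\oplus\co(-1)$ and then apply Reid (\Cref{rem:geom-argument}). Without the component--intersection--$2$-cycle link and a recognition step, the argument does not go through.
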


The proof proceeds in several steps.
We first show the existence of an NCCR. Using this we get a cluster-tilting object in $\Dsg(S)$. Next, we describe the essential parts of its quiver. Finally, we use this to `characterize' $A_1$-singularities among threefolds with small resolutions. 

\subsection{From small resolutions to NCCRs}

The following result is due to Van den Bergh \cite{VandenBergh04}.
\begin{thm} \label{T:VdB} 
Let $S \cong \C\llbracket z_1, \ldots, z_n \rrbracket/I$ be a $3$-dimensional Gorenstein \mbox{$\C$-algebra} with an isolated singularity. If $\pi\colon Y \to \Spec S$ is a \emph{small} resolution of singularities, then there is a tilting bundle $\ct \in \Db(Y)$. In particular, there is a triangle
equivalence
\begin{align}\label{E:TVdB}
\Db(Y) \cong \Db(\End_Y(\ct)).
\end{align}
\end{thm}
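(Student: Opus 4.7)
The plan is to construct an explicit tilting bundle $\mathcal{T}$ on $Y$ and then invoke a standard recognition criterion (of Hille--Van den Bergh type, as already cited after \eqref{def:tilting}) to deduce the equivalence $\Db(Y) \cong \Db(\End_Y(\mathcal{T}))$. So the entire content is the construction of $\mathcal{T}$ and the verification of the tilting conditions, namely $\Ext^{>0}_Y(\mathcal{T},\mathcal{T}) = 0$ together with classical generation of $\Db(Y)$.

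The first reduction is to the exceptional fibre. Let $E = \pi^{-1}(\mathfrak{m})_{\mathrm{red}}$, which by smallness is a connected projective curve (possibly reducible) over $\C$; write $E = \bigcup_{i} C_i$ for its irreducible components. Away from $E$, the morphism $\pi$ is an isomorphism, so $\Db(Y)$ is controlled by $\Db(\Spec S)$ together with the extra information supported on $E$. The starting bundle is $\mathcal{O}_Y$, which alone produces $\End(\mathcal{O}_Y) = S$ but does not yet generate since $\mathbf{R}\pi_\ast \mathcal{O}_Y \cong \mathcal{O}_S$ (by crepancy, cf.\ \Cref{rem:crepant-res}) kills information supported on $E$; I need to enlarge $\mathcal{O}_Y$ by additional summands that resolve sheaves along $E$.

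Next, using \Cref{rem:loc-free-sheaves} I pick a $\pi$-ample line bundle $\mathcal{L}$ on $Y$ and, after twisting suitably, produce line bundles $\mathcal{L}_i$ whose degrees on the curves $C_j$ are controlled (for instance, of degree $\delta_{ij}$ on a normalization). Each $\mathcal{L}_i$ typically has $\Ext^1_Y(\mathcal{L}_i, \mathcal{O}_Y) \neq 0$, which obstructs tilting. The key device is the \emph{universal extension}: setting $V_i := \Ext^1_Y(\mathcal{L}_i, \mathcal{O}_Y)$ (a finite-dimensional $\C$-vector space, since $S$ is complete local and $\pi$ is projective with one-dimensional fibres), form the tautological extension
\begin{equation*}
0 \to V_i^\ast \otimes \mathcal{O}_Y \to \mathcal{M}_i \to \mathcal{L}_i \to 0
\end{equation*}
whose class corresponds to $\operatorname{id}_{V_i} \in \End(V_i) = V_i^\ast \otimes V_i$. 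Applying $\Hom_Y(-,\mathcal{O}_Y)$ kills $V_i$ under the connecting map, so $\Ext^1_Y(\mathcal{M}_i, \mathcal{O}_Y) = 0$. Finally define
\begin{equation*}
\mathcal{T} := \mathcal{O}_Y \oplus \bigoplus_i \mathcal{M}_i.
\end{equation*}

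The verification splits into two parts. Classical generation is the easier direction: objects of $\Db(Y)$ supported off $E$ are detected by $\mathcal{O}_Y$ via the isomorphism $\pi\colon Y \setminus E \to \Spec S \setminus \{\mathfrak{m}\}$, while the structure sheaves $\mathcal{O}_{C_i}$ and their twists are detected by the $\mathcal{M}_i$'s thanks to the chosen degrees; a standard dévissage then shows $\mathcal{T}$ classically generates $\Db(Y)$. The hard part, which I expect to be the main obstacle, is the vanishing
\begin{equation*}
\Ext^k_Y(\mathcal{T},\mathcal{T}) = 0 \quad \text{for all } k \geq 1.
\end{equation*}
For $k \geq 2$ the vanishing is forced by the dimension bound: using the Leray spectral sequence for $\pi$ and the fact that fibres of $\pi$ have dimension $\leq 1$, one gets $\mathbf{R}^k\pi_\ast \mathcal{F} = 0$ for $k \geq 2$ and any coherent $\mathcal{F}$; combined with $S$ having depth $3$ and the local-to-global Ext spectral sequence, this reduces higher Ext vanishing to lower ones. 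The case $k = 1$ is precisely what the universal extension construction was designed to arrange: a careful bookkeeping of the four Ext groups among $\mathcal{O}_Y$, $\mathcal{L}_j$ and $\mathcal{M}_i$ (using crepancy to identify $\mathcal{O}_Y \cong \omega_Y$ and Serre--Grothendieck duality for $\pi$) shows that after forming universal extensions all remaining $\Ext^1$'s vanish. This Ext computation, which hinges essentially on the small crepant hypothesis, is the crux of Van den Bergh's argument; once it is in place, the Hille--Van den Bergh recognition theorem immediately yields the equivalence \eqref{E:TVdB}.
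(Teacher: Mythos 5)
Your proposal shares the two key devices with the paper's proof -- the universal extension to kill $\Ext^1(\cl, \co_Y)$, and the dimension bound $\mathbf{R}^{\geq 2}\pi_\ast = 0$ together with crepancy and Grauert--Riemenschneider to handle the remaining $\Ext$-vanishings -- but the choice of starting bundle is different in a way that creates genuine gaps. The paper follows Van den Bergh and takes a \emph{single} ample, globally generated line bundle $\cl$: then $\co_Y \oplus \cl$ generates $\Db(Y)$ by Bondal--Van den Bergh's projective-dimension criterion (cited as \cite[Lemma 3.2.2]{VandenBergh04}), $\Ext^{>0}(\cl, \cl) = 0$ follows by twisting from $\Ext^{>0}(\co_Y, \co_Y) = 0$, and crucially $\Ext^{>0}(\co_Y, \cl) = 0$ follows from a short exact sequence $0 \to \ck \to \co_Y^l \to \cl \to 0$ that exists \emph{because} $\cl$ is globally generated. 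So with a single ample $\cl$ the only possibly nonzero extension group is $\Ext^1(\cl, \co_Y)$, and the universal extension handles it; the final check that $\co_Y \oplus \cm$ is tilting is a quoted lemma of Hille--Perling.

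Your choice of several line bundles $\cl_i$ with prescribed degrees $\delta_{ij}$ on the irreducible exceptional curves breaks this bookkeeping in at least three places. First, such $\cl_i$ need not exist: $\Pic(Y) \cong \mathrm{Cl}(S)$ in this setting, and there is no reason for the degree map to $\prod_i \ZZ$ to be surjective. Second, your $\cl_i$ are not globally generated in general (a bundle with degree $0$ on some $C_j$ cannot be), so the step giving $\Ext^{>0}(\co_Y, \cl_i) = 0$ -- which in the paper hinges on global generation -- is missing, and $H^1(Y, \cl_i)$ may well be nonzero. Third, and most seriously, the universal extension of $\cl_i$ by $\co_Y$ only kills $\Ext^1(\cm_i, \co_Y)$; it does nothing to $\Ext^1(\cl_i, \cl_j)$ for $i \neq j$, and these groups appear in the long exact sequence computing $\Ext^1(\cm_i, \cm_j)$. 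The sentence "a careful bookkeeping ... shows that after forming universal extensions all remaining $\Ext^1$'s vanish" is therefore not justified and is false in general without extra positivity of the $\cl_i$. You would either need to iterate the universal-extension construction (with no obvious termination), or -- which is what the paper does -- replace the family $\{\cl_i\}$ by a single ample, globally generated $\cl$, in which case all the required vanishings come for free and a single universal extension suffices.
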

\begin{proof}
    For the convenience of the reader, we include a sketch of the argument.

    We first produce a generator $\cg$ of $\Db(Y)$ and then `(partially) mutate' it, to get the tilting bundle $\ct$. 
    
    Let $\cl$ be an ample line bundle on $Y$. Since $\pi$ has fibers of dimension at most $1$, work of Bondal--Van den Bergh (cf.\ \cite[Lemma 3.2.2]{VandenBergh04}) shows that
    \begin{align*}
        \cg=\co_Y \oplus \cl
    \end{align*}
    generates $\Db(Y)$. We next study the graded Ext-algebra of $\cg$. Combining $\co_Y \cong \omega_Y $ (cf.\ Remark \ref{rem:crepant-res}), with the Grauert--Riemenschneider vanishing theorem shows that 
    \begin{align*}
        \mathsf{R}^q\pi_*\co_Y \cong \mathsf{R}^q\pi_*\omega_Y =0
    \end{align*}
    for $q>0$ which, since $\Spec(S)$ is affine, implies
    \begin{align*}
        \Ext^q_Y(\co_Y, \co_Y) \cong H^q(Y, \co_Y) \cong H^0(\Spec(S), \mathsf{R}^q\pi_*\co_Y) =0.  
    \end{align*}
    Since tensoring with a line bundle is an autoequivalence of $\Db(Y)$, we also deduce
    \begin{align*}
        \Ext^q_Y(\cl, \cl) =0
    \end{align*}
    for all $q>0$. Now we claim that $\Ext^q_Y(\co_Y, \cl)=0$ for all $q>0$.
    Since $\cl$ is ample, it is generated by global sections and we get a short exact sequence
    \begin{align*}
        0 \to \ck \to \co_Y^l \to \cl \to 0.
    \end{align*}
    Applying $\Hom_Y(\co_Y, -)$ yields a long exact sequence and using\footnote{Since we have $\mathsf{R}^{>1}\pi_*\cf = 0$ for all coherent sheaves $\cf$ by our assumption on the fibers of $\pi$, \cite[Chapter III, Corollary 11.2]{hartshorne}.} 
    \begin{align} \label{E:ExtVanish}
        \Ext^{>1}_Y(\co_Y, \cf) \cong H^{>1}(Y, \cf) \cong H^0(\Spec(S), \mathsf{R}^{>1}\pi_*\cf) =0
    \end{align}
    for $\cf=\ck$ and $\cf= \cl$ shows the claim.

    Since $\Ext^1_Y(\cl, \co_Y) \neq 0$ in general, $\cg$ is not tilting. However, there is a well-known construction yielding a tilting bundle in this situation. Consider the universal extension
    \begin{align} \label{E:univExt}
        0 \to \co_Y^k \to \cm \to \cl \to 0,
    \end{align}
    constructed from a minimal set of generators $\epsilon_1, \ldots, \epsilon_k$ of the finitely generated $R$-module $\Ext^1_Y(\cl, \co_Y)$. In particular, this means that the map 
    \begin{align*}
        \Hom_Y(\co_Y^k, \co_Y) \to \Ext^1_Y(\cl, \co_Y) \to 0
    \end{align*}
    induced by \eqref{E:univExt} is surjective. In combination with the Ext-vanishing between $\cl$ and $\co_Y$ that we have established above and with 
    \begin{align*}
        \Ext^{>1}_Y(\cl, \co_Y) \cong \Ext^{>1}_Y(\co_Y, \cl^{-1}) = 0
    \end{align*} (by \eqref{E:ExtVanish}),  this implies that 
    \begin{align*}
        \Ext^{>0}_Y(\co_Y \oplus \cm, \co_Y \oplus \cm) =0,
    \end{align*}
    cf.\ e.g.\ \cite[Lemma 3.1]{HillePerling}. Hence, $\ct \cong \co_Y \oplus \cm$ is tilting, completing the proof.
\end{proof}

\begin{rem}
    The bundle $\cm$ in \eqref{E:univExt} will be not be indecomposable in general.
\end{rem}

 The derived equivalence \eqref{E:TVdB} suggests to view $\End_Y(\ct)$ as a noncommutative analogue of a crepant resolution of $S$. The following definition of Van den Bergh axiomatizes such noncommutative resolutions, cf.\ \mbox{\cite[Definition 4.1 and Lemma 4.2]{NCCR}.}

\begin{defn}\label{D:NCCR} Let $S$ be as in \Cref{Set:VdB}. The $\C$-algebra $\Lambda=\End_S(M)$ is a  \emph{noncommutative crepant resolution (NCCR)} of $S$ if
\begin{enumerate}[label={(\alph*)}]
    \item $M$ is a reflexive\footnote{Since $S$ is Gorenstein, all maximal Cohen--Macaulay $S$-modules are reflexive.} $S$-module.
    \item The $\C$-algebra $\Lambda$ is maximal Cohen--Macaulay as an $S$-module.
    \item $\gldim \Lambda < \infty$.
\end{enumerate}
\end{defn}

Van den Bergh showed that tilting bundles as in Theorem \ref{T:VdB} give rise to NCCRs in the following way.

\begin{thm} \label{T:VdB2} 
We keep the assumptions from \Cref{T:VdB}. The pushforward $\pi_*\ct$ of the tilting bundle $\ct$ on $Y$ is a maximal Cohen--Macaulay $S$-module. There is an isomorphism of $\C$-algebras
\begin{align} \label{E:IsoPushf}
\End_Y(\ct) \cong \End_S(\pi_*\ct) 
\end{align}
and these $\C$-algebras are NCCRs of $S$.
\end{thm}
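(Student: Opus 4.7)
\textbf{Proof proposal for \Cref{T:VdB2}.}

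The plan is to prove the three assertions in sequence, with Grothendieck--Serre duality doing most of the heavy lifting.

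First, I would show that $\pi_*\ct$ is MCM. The key homological input is that the higher direct images of both $\ct$ and its dual $\ct^\vee$ vanish. For $\ct$, this follows from the tilting condition applied to the summand $\co_Y$: indeed $\Ext^{>0}_Y(\co_Y,\ct)=H^{>0}(Y,\ct)=0$ combined with affinity of $\Spec(S)$ gives $\mathbf{R}^{>0}\pi_*\ct=0$. For $\ct^\vee=\co_Y\oplus \cm^\vee$, apply $\Hom_Y(-,\co_Y)$ to the universal extension \eqref{E:univExt}: by construction of $\cm$ the connecting map $\co_Y^k\to\Ext^1_Y(\cl,\co_Y)$ is surjective, and by \eqref{E:ExtVanish} one has $\Ext^{\geq 2}_Y(\cl,\co_Y)=0$, giving $\Ext^{>0}_Y(\cm,\co_Y)=0$ and hence $\mathbf{R}^{>0}\pi_*\ct^\vee=0$. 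Now, since $\omega_Y\cong\co_Y$ and $\omega_S\cong S$, Grothendieck--Serre duality for $\pi$ yields
\begin{align*}
\mathbf{R}\Hom_S(\pi_*\ct,S)\cong \mathbf{R}\pi_*\,\mathbf{R}\cHom_Y(\ct,\omega_Y)=\mathbf{R}\pi_*\ct^\vee=\pi_*\ct^\vee,
\end{align*}
so $\Ext^{>0}_S(\pi_*\ct,S)=0$. Over a Gorenstein local ring this is equivalent to $\pi_*\ct$ being MCM.

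Next, I would establish \eqref{E:IsoPushf}. The functor $\pi_*$ provides a canonical $\C$-algebra homomorphism $\varphi\colon \End_Y(\ct)\to \End_S(\pi_*\ct)$; it remains to show $\varphi$ is bijective. Both sides are identified with $S$-modules of global sections of reflexive sheaves on $Y$ and $\Spec S$, respectively; more precisely, $\End_Y(\ct)$ is the pushforward of the locally free sheaf $\cEnd_Y(\ct)$, and $\End_S(\pi_*\ct)$ is MCM by the very same Grothendieck--Serre duality argument applied to $\cEnd_Y(\ct)$ (using $\Ext^{>0}_Y(\ct,\ct)=0$ and the corresponding vanishing for its dual, which holds since $\cEnd_Y(\ct)^\vee\cong \cEnd_Y(\ct^\vee)$ has the same form as $\cEnd_Y(\ct)$). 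In particular both are MCM, hence reflexive, $S$-modules. Since $\pi$ restricts to an isomorphism over $U:=\Spec S\setminus\{\mathfrak{m}\}$, the map $\varphi$ is an isomorphism over $U$; and since $\Spec S\setminus U$ has codimension $\geq 2$ in the reflexive target, $\varphi$ is an isomorphism globally.

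Finally, the NCCR axioms (\Cref{D:NCCR}) fall out. Reflexivity of $M:=\pi_*\ct$ follows from MCM, proved above. The algebra $\Lambda:=\End_S(M)\cong\End_Y(\ct)$ is MCM over $S$ by the same cohomological computation applied to $\cEnd_Y(\ct)$ in the previous paragraph. For finite global dimension, the tilting equivalence \eqref{E:TVdB} sends $\Perf(Y)$ to $\Perf(\Lambda)$; since $Y$ is regular of finite Krull dimension, $\Db(Y)=\Perf(Y)$, whence $\Db(\Lambda)=\Perf(\Lambda)$, which implies $\gldim\Lambda<\infty$.

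The main obstacle I expect is the bookkeeping in the Grothendieck--Serre duality step, namely checking the dual vanishing $\mathbf{R}^{>0}\pi_*\ct^\vee=0$, which relies crucially on the way $\cm$ was constructed via a \emph{universal} extension (so that the coboundary becomes surjective). Once this is in place, the MCM property and the codimension-two argument for the algebra isomorphism are routine, and the global dimension statement is an immediate consequence of the derived equivalence established in \Cref{T:VdB}.
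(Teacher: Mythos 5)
Your proof of the first two assertions (that $\pi_*\ct$ is MCM, that $\End_Y(\ct)\cong\End_S(\pi_*\ct)$, and that $\Lambda$ is MCM over $S$) takes a different, more self-contained route than the paper: you unpack the argument directly via Grothendieck--Serre duality for the crepant small morphism $\pi$ (using $\mathbf{R}^{>0}\pi_*\ct=0$ and $\mathbf{R}^{>0}\pi_*\ct^\vee=0$, the latter relying on the universality of the extension defining $\cm$) plus a codimension-$2$ / reflexivity argument over the punctured spectrum, whereas the paper simply cites \cite[Proposition 3.2.10]{VandenBergh04} and its proof for exactly these points. Your version is correct and has the virtue of making the paper self-contained on this point; the paper's version is shorter and offloads the work to Van den Bergh.

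There is, however, a genuine gap in your final step. You write that the tilting equivalence gives $\Db(\Lambda)=\Perf(\Lambda)$, ``which implies $\gldim\Lambda<\infty$.'' This implication is not automatic: $\Db(\Lambda)=\Perf(\Lambda)$ only says that every finitely generated $\Lambda$-module has \emph{finite} projective dimension, and without a uniform bound this does not yield finite global dimension (for noetherian rings there are examples, e.g.\ of Nagata type, where every finitely generated module has finite projective dimension yet the global dimension is infinite). The missing input is exactly what the paper supplies at this point: since $S$ is complete local and $\Lambda$ is module-finite over $S$, the ring $\Lambda$ is semiperfect, hence has only finitely many isomorphism classes of simple modules, and the global dimension of $\Lambda$ equals the maximum of the (now known to be finite) projective dimensions of the simples. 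You should add this reduction to simple modules; without it the last sentence of your proof does not follow.
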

\begin{proof}
Set $\Lambda=\End_S(\pi_*\ct)$. In view of \cite[Proposition 3.2.10]{VandenBergh04} and its proof it remains to show that $\gldim \Lambda$ is finite.
Since $Y$ is {regular and since $Y$ has enough locally free sheaves, see Remark \ref{rem:loc-free-sheaves}}, we have 
\begin{align*}
\Perf(Y) = \Db(Y)
\end{align*}
and, using the tilting equivalence \Cref{T:VdB} and \eqref{E:IsoPushf}, we deduce
\begin{align*}
\Perf(\Lambda) = \Db(\Lambda).
\end{align*}
In particular, every finitely generated $\Lambda$-module has finite projective dimension.
Since $S$ is complete, $\Lambda$ is semiperfect -- in particular, there are only finitely many isomorphism classes of simple $\Lambda$-modules. Then the global dimension of $\Lambda$ is the maximum of the projective dimensions of the simples, which completes the proof.
\end{proof}

\subsection{From NCCRs to cluster-tilting objects}

\begin{thm}\label{T:Iyama} Let $S$ be as in \Cref{Set:VdB}. 

If $M \in \MCM(S)$ is a maximal Cohen--Macaulay module such that $\Lambda=\End_S(M)$ is an NCCR of $S$, then $M$ is a cluster-tilting object in $\Dsg(S)$.

\end{thm}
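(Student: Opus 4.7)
The plan is to verify the two defining conditions of a cluster-tilting object (\Cref{defn:ctsmall}) in the $2$-Calabi--Yau triangulated category $\Dsg(S) \simeq \underline{\MCM}(S)$. The identification is Buchweitz's theorem (\Cref{T:BuchweitzIntro}), and the $2$-Calabi--Yau property is a classical result of Auslander for isolated Gorenstein singularities of dimension $3$, where the Serre functor is given by $\Sigma^{d-1} = \Sigma^2$.

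First I would establish condition (a), namely $\underline{\Hom}_S(M, \Sigma M) = \Ext^1_S(M, M) = 0$. Choose a projective presentation $P_1 \to P_0 \to M \to 0$ and apply $\Hom_S(-, M)$ to obtain the exact sequence
\begin{equation*}
    0 \to \Hom_S(M, M) \to \Hom_S(P_0, M) \to \Hom_S(P_1, M) \to \Ext^1_S(M, M) \to 0.
\end{equation*}
The terms $\Hom_S(P_i, M)$ are direct sums of copies of $M$, hence MCM of depth $3$, while $\Hom_S(M, M) = \Lambda$ is MCM by the NCCR hypothesis (\Cref{D:NCCR}). Splitting this into two short exact sequences and applying the depth lemma twice forces $\depth_S \Ext^1_S(M, M) \geq 1$. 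Since $S$ has an isolated singularity, $\Ext^1_S(M, M)$ is supported only at the maximal ideal and thus has depth $0$ unless it vanishes; this forces $\Ext^1_S(M, M) = 0$.

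Next I would show condition (b): if $X \in \MCM(S)$ satisfies $\underline{\Hom}_S(M, \Sigma X) = \Ext^1_S(M, X) = 0$, then $X \in \add M$. The key input is the equality $\gldim \Lambda = 3 = \dim S$ forced by the NCCR condition, together with the equivalence $\Hom_S(M, -) \colon \add M \xrightarrow{\sim} \proj \Lambda$. Applying this equivalence, $\Hom_S(M, X)$ becomes a $\Lambda$-module admitting a finite projective resolution of length at most $3$, which lifts (using the vanishing of $\Ext^1_S(M, M)$ just established) to an exact sequence
\begin{equation*}
    0 \to M_n \to \cdots \to M_0 \to X \to 0
\end{equation*}
in $\MCM(S)$ with each $M_i \in \add M$. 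Combining the hypothesis $\Ext^1_S(M, X) = 0$ with its $2$-Calabi--Yau dual $\Ext^1_S(X, M) = 0$ and analysing the syzygies of this resolution, one shows inductively that each syzygy lies in $\add M$ and that the sequence splits in $\underline{\MCM}(S)$, forcing $X \in \add M$.

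The main obstacle is the lifting step in the proof of (b): one must produce the honest exact sequence in $\MCM(S)$ from a projective $\Lambda$-resolution of $\Hom_S(M, X)$ and control how the Ext-vanishing propagates through the intermediate syzygies. This is precisely where the equality $\gldim \Lambda = \dim S = 3$ and the $2$-CY duality do the heavy lifting. Alternatively, the whole statement can be read off from the general theory developed by Iyama \cite{Iyama07, Iyama07a} and Iyama--Reiten \cite{IyamaReiten}, which in particular establishes the correspondence between NCCRs of complete local Gorenstein $d$-folds with isolated singularities and $(d-1)$-cluster-tilting objects in $\underline{\MCM}(S)$.
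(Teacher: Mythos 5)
The paper's proof is a translation/reduction: it verifies that the notion of NCCR used here matches the one in Iyama--Reiten (generator and codimension-one conditions, via \cite{IyamaWemyss14}), invokes that $S$ is $3$-$CY^-$, and then applies \cite[Theorem~8.18(1)]{IyamaReiten} as a black box to conclude that $M$ is maximal $1$-orthogonal in $\MCM(S)$, i.e., cluster-tilting in $\Dsg(S)$. Your fallback at the end of the proposal---``the whole statement can be read off from the general theory developed by Iyama and Iyama--Reiten''---is therefore exactly what the paper does, so in that sense the two are the same. What you add is an attempted direct verification, and here there are two issues worth flagging.

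First, in your argument for condition~(a) the exact sequence is not correct as written. Applying $\Hom_S(-,M)$ to the projective presentation $P_1\to P_0\to M\to 0$ only yields the left-exact
\[
0 \to \Hom_S(M,M) \to \Hom_S(P_0,M) \to \Hom_S(P_1,M),
\]
and the cokernel of the last map is \emph{not} $\Ext^1_S(M,M)$; it strictly contains it (the quotient by the image of the next map $\Hom_S(P_1,M)\to\Hom_S(P_2,M)$ is needed). The correct version uses the short exact sequence $0\to\Omega M\to P_0\to M\to 0$, which yields the four-term exact sequence
\[
0 \to \Hom_S(M,M) \to \Hom_S(P_0,M) \to \Hom_S(\Omega M,M) \to \Ext^1_S(M,M) \to 0.
\]
The third term $\Hom_S(\Omega M,M)$ is \emph{not} a direct sum of copies of $M$, so you cannot conclude it is MCM of depth $3$; however, it is a $\Hom$ between reflexive modules over a normal domain, hence reflexive, hence of depth $\geq 2$. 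Running the depth lemma twice then gives $\depth \Ext^1_S(M,M)\geq 1$, and since $\Ext^1_S(M,M)$ is finite-length by the isolated-singularity hypothesis, it must vanish. So the depth-lemma strategy does work, but with the corrected exact sequence and the weaker depth bound on the third term.

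Second, for condition~(b) you correctly identify the lifting of a finite projective $\Lambda$-resolution to an $\add M$-resolution in $\MCM(S)$ as the crux, and you honestly flag it as ``the main obstacle.'' That step is genuinely delicate: one needs to control the syzygies, show they are MCM and remain in $\add M$, and then deduce splitting from the Ext-vanishing and $2$-CY duality. This is exactly the content of Iyama's higher Auslander--Reiten theory, and the paper chooses to cite it rather than reproduce it. Your sketch gestures at the right mechanism but leaves the hard inductive argument unperformed, so it would need \cite{Iyama07a, IyamaReiten} to be completed---which is consistent with the paper's approach.
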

\begin{proof}
This follows from Theorem 8.18(1) in Iyama--Reiten \cite{IyamaReiten}, for the convenience of the reader, we translate their statement into our setting. 

Firstly, we take $\Lambda=S$ in their statement -- we note that $S$ is $3$-$CY^-$ by \cite[Proposition 3.10]{IyamaReiten}, since $S$ is local Gorenstein and of Krull dimension $3$.

Now, \cite[Lemma 2.23]{IyamaWemyss14} shows that condition (2) in the definition of an NCCR in \cite[page 1138]{IyamaReiten} is satisfied by NCCRs as in Definition \ref{D:NCCR}. Their condition (1) holds since for Gorenstein rings $S$ maximal Cohen--Macaulay modules $M$ are reflexive. Moreover, we show that $M_\mathfrak{p}$ is free for all height one prime ideals $\mathfrak{p}\subseteq S$, in particular, it is a generator. Firstly, $M_\mathfrak{p}$ is maximal Cohen--Macaulay for any prime ideal $\mathfrak{p} \subseteq S$. Since $S$ has isolated singularities $S_\mathfrak{p}$ is regular for all height one prime ideals $\mathfrak{p}$. As the only maximal Cohen--Macaulay modules over regular local rings are free, we have shown that NCCRs in our sense are NCCRs in the sense of \cite[page 1138]{IyamaReiten}.

Thus \cite[Theorem 8.18(1)]{IyamaReiten} shows that $M$ is a maximal $1$-orthogonal $S$-module in $\MCM(S)$, cf.\ \cite[page 1143]{IyamaReiten}. Using Buchweitz's equivalence $\ul{\MCM}(S) \cong \Dsg(S)$ and the fact that $\ul{\Hom}_S(\Omega(X), Y) \cong \Ext^1_S(X, Y)$ for $X, Y \in \MCM(S)$ shows that $M$ defines a cluster-tilting object in $\Dsg(S)$. 
\end{proof}

\begin{cor}\label{C:ClusterTiltingObject}
In the setup of \Cref{T:VdB2}, $\pi_*\ct$ is a cluster-tilting object in $\Dsg(S)$.
\end{cor}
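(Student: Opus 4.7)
The plan is to invoke \Cref{T:Iyama} directly, with $M \coloneqq \pi_*\ct$. To do this, I must verify the two hypotheses of that theorem: namely, that $\pi_*\ct \in \MCM(S)$ and that $\Lambda \coloneqq \End_S(\pi_*\ct)$ is an NCCR of $S$ in the sense of \Cref{D:NCCR}.

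Both of these are essentially contained in \Cref{T:VdB2}. More precisely, \Cref{T:VdB2} asserts that $\pi_*\ct$ is a maximal Cohen--Macaulay $S$-module, that $\End_Y(\ct) \cong \End_S(\pi_*\ct)$ as $\C$-algebras, and that these isomorphic algebras are NCCRs of $S$. Since $S$ is Gorenstein, any MCM module is reflexive, so the reflexivity condition in \Cref{D:NCCR}(1) is automatic; the remaining conditions (2) and (3) were already checked in the proof of \Cref{T:VdB2} (the MCM property of $\Lambda$ over $S$ via $\pi_*\ct$ being MCM and $\Lambda$ being a direct summand of $\End_S(\pi_*\ct^{\oplus ?})$-type arguments, and finiteness of global dimension via $\Perf(Y) = \Db(Y)$ transferred through the tilting equivalence).

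Thus the two hypotheses of \Cref{T:Iyama} are satisfied for $M = \pi_*\ct$, so $\pi_*\ct$ is a cluster-tilting object in $\Dsg(S)$. This completes the proof. There is no real obstacle here; the corollary is simply the conjunction of \Cref{T:VdB2} (which produces the MCM module underlying an NCCR from a small resolution) and \Cref{T:Iyama} (which upgrades such a module to a cluster-tilting object in the singularity category). The only minor subtlety worth flagging explicitly is that $S$ in our setup is local Gorenstein of Krull dimension $3$ with an isolated singularity, which is precisely the framework in which \Cref{T:Iyama} operates, so no further assumptions need to be checked.
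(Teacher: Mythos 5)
Your proof is correct and is essentially the paper's own argument: the paper's proof of this corollary is the single line that it follows from \Cref{T:Iyama} together with \Cref{T:VdB2}, which is exactly the combination you spell out. The extra remarks about reflexivity and re-verifying conditions (2) and (3) of \Cref{D:NCCR} are harmless but unnecessary, since \Cref{T:VdB2} already asserts outright that $\End_S(\pi_*\ct)$ is an NCCR.
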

\begin{proof}
    This follows from \Cref{T:Iyama} together with \Cref{T:VdB2}. 
\end{proof}

\subsection{The quiver of the cluster-tilting object $\pi_*\ct$ in $\Dsg(S)$}
We describe parts of the quiver of the finite-dimensional $\C$-algebra $\ul{A}=\ul{\End}_S(\pi_*\ct)$. These results are well-known, cf.\ \cite[Theorem 2.15]{WemyssFlops}.

Recall that $\pi\colon Y \to \Spec(S)$ is a small resolution. Let $C=\pi^{-1}(\mathfrak{m})$ be the exceptional curve and let $C^{\mathrm{red}}$ be the corresponding reduced scheme. We decompose $C^{\mathrm{red}}= \bigsqcup C_i$ into irreducible components $C_i$. 
Then $C_i \cong \P^1$ for all $i$ and the intersection of $C_i$ and $C_j$ for $i\neq j$ is either empty or transversal, cf.\ \cite[Lemma 3.4.1]{VandenBergh04}.
\begin{prop}\label{P:QuiverClusterT}
There is an isomorphism of $\C$-algebras
\begin{align*}
    \ul{\End}_S(\pi_*\ct) \cong \C Q/I,
\end{align*}
where $Q$ is a quiver with vertices labelled by the irreducible curves $C_i$ and $I \subseteq \C Q$ is an admissible ideal.

If $C_i \neq C_j$ intersect, then $Q$ contains a subquiver\footnote{This subquiver is not necessarily \emph{full}, that is there might be loops or additional arrows between $C_i$ and $C_j$.}
\begin{equation}\label{E:2Cyc}
\begin{tikzpicture}[description/.style={fill=white,inner sep=2pt}, scale=0.75, baseline=(current  bounding  box.center)]
\matrix (n) [matrix of math nodes, row sep=2em,
                 column sep=2em, text height=1.5ex, text depth=0.25ex,
                 inner sep=0pt, nodes={inner xsep=0.3333em, inner
ysep=0.3333em}] at (0, 0)
    {   C_i && C_j \\ };
\draw[->] ($(n-1-1.east)+(0,0.4)$) to [bend left=25] ($(n-1-3.west) +(0,0.4)$);
\draw[<-] ($(n-1-1.east)+(0,-0.4)$) to [bend right=25] ($(n-1-3.west) +(0,-0.4)$);
\end{tikzpicture}
\end{equation}
\end{prop}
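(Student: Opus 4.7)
My plan is to extract the structure of $\ul{\End}_S(\pi_*\ct)$ directly from the geometry of $\pi$, following the strategy of Van den Bergh and Wemyss. First, I would refine the tilting bundle from the proof of \Cref{T:VdB}: rather than taking the single universal extension of $\cl$, one produces, for each irreducible component $C_i$ of $C^{\mathrm{red}}$, a universal extension $\cm_i$ obtained from a generator of $\Ext^1_Y(\cl_i, \co_Y)$ for an appropriate line bundle $\cl_i$ dual to $C_i$ (or equivalently, by moduli-theoretic constructions as in \cite{VandenBergh04}), so that
\begin{align*}
\ct \;\cong\; \co_Y \oplus \cm_1 \oplus \cdots \oplus \cm_r
\end{align*}
with each $\cm_i$ indecomposable and a similar Ext-vanishing argument as in the proof of \Cref{T:VdB} showing that this is still tilting. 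By \Cref{T:VdB2}, pushing down gives an indecomposable MCM decomposition $\pi_*\ct \cong S \oplus M_1 \oplus \cdots \oplus M_r$ with $M_i = \pi_*\cm_i$, so the Gabriel quiver of $A \coloneqq \End_S(\pi_*\ct) \cong \End_Y(\ct)$ has vertex set $\{S, M_1, \ldots, M_r\}$.

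Passing to the stable algebra $\ul{A} = \ul{\End}_S(\pi_*\ct)$ removes the free summand $S$ (which becomes the unique projective vertex and is killed modulo $\proj S$), leaving precisely one vertex per irreducible component $C_i$. Since $S$ is complete local and $A$ is module-finite over $S$, the algebra $\ul{A}$ is finite-dimensional over $\C$ (each $\ul{\Hom}_S(M_i, M_j)$ is annihilated by a power of $\idm$, hence of finite length over $\C$). Consequently, $\ul{A}$ is basic (by indecomposability of the $M_i$ and idempotent lifting in the complete local setting) and therefore of the form $\C Q/I$ for an admissible ideal $I$, with $Q$ its Gabriel quiver on the vertex set $\{C_i\}$.

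For the 2-cycle claim when $C_i \ne C_j$ meet transversally at a point $p$, the main obstacle is producing arrows $C_i \leftrightarrows C_j$ whose classes in $\rad(\ul{A})/\rad^2(\ul{A})$ are nonzero in \emph{both} directions. My plan is to work locally near $p$: contracting only the two curves $C_i, C_j$ (or restricting to a sufficiently small analytic neighbourhood) reduces to a known situation where the local contraction algebra is well-understood, and the transverse intersection forces nontrivial maps $\cm_i \to \cm_j$ and $\cm_j \to \cm_i$ in the stable category. Concretely, I would use the $3$-Calabi--Yau duality for $\Dsg(S)$ (combined with Auslander--Reiten theory of MCM modules), which gives
\begin{align*}
\ul{\Hom}_S(M_i, M_j) \;\cong\; \ul{\Hom}_S(M_j, M_i[2])^{\ast},
\end{align*}
to ensure that a nonzero map in one direction produces a nonzero dual map in the other. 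The nonvanishing in the first direction follows either from a direct computation with the universal extensions $\cm_i, \cm_j$ and the transverse intersection class in $\Ext^1_Y(\co_{C_i}, \co_{C_j})$, or by citing \cite[Theorem 2.15]{WemyssFlops}. Finally, the classes of these two maps in $\rad/\rad^2$ must be nonzero because any factorisation through a third vertex $C_k$ would, by tracking supports of the underlying sheaves, force $C_k$ to meet both $C_i$ and $C_j$ at $p$; choosing the universal extensions so that distinct $\cm_k$ are supported away from $p$ (or arguing with local-to-global spectral sequences) rules this out and yields the required subquiver \eqref{E:2Cyc}.
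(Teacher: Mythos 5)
Your proposal is a genuinely different route from the paper's, and it has a real gap. The paper avoids decomposing $\ct$ into indecomposables at all: it identifies the vertices of $Q$ with the simple $\ul{A}$-modules, invokes \Cref{L:SerreSub} to lift these to simple $A$-modules annihilated by $e_S$, uses Van den Bergh's description of those simples as shifts of $\co_{C_i}(-1)$ under the tilting equivalence, and then computes $\Ext^1_A(S_i,S_j)\cong\Ext^1_Y(\co_{C_i},\co_{C_j})\cong\C$ via the CKS local-to-global spectral sequence (\Cref{P:Neb}). The arrow $j\to i$ is then obtained simply by repeating the same spectral-sequence computation with the roles of $C_i$ and $C_j$ swapped; the geometric setup is symmetric, so no abstract duality is needed.

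The gap in your argument is the Calabi--Yau duality step. The $2$-Calabi--Yau property of $\Dsg(S)$ gives $\ul{\Hom}_S(M_i,M_j)\cong D\,\ul{\Hom}_S(M_j,\Sigma^2 M_i)$, exactly as you wrote, but the right-hand side is \emph{not} $\ul{\Hom}_S(M_j,M_i)$: the shift by $\Sigma^2$ does not disappear, and there is no general way to pass from a nonzero map $M_j\to\Sigma^2 M_i$ to a nonzero map $M_j\to M_i$. (Note also that the relevant duality here is $2$-CY, not $3$-CY, since $\ddim S=3$ implies $\Dsg(S)$ is $(3-1)$-CY.) So your scheme "nonzero in one direction $\Rightarrow$ nonzero in the other" as stated does not prove the $2$-cycle. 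Your secondary points also need more care: Van den Bergh's construction does produce a tilting bundle whose non-trivial indecomposable summands are indexed by the $C_i$, but this is not simply a matter of taking a single universal extension per $\cl_i$, and the $\cm_i$ are vector bundles on all of $Y$, not sheaves supported near a single curve, so the support-tracking argument for $\rad/\rad^2$ needs to be replaced by an actual computation of the radical filtration. The paper sidesteps all of these issues by working with simple modules and a geometric $\Ext^1$ computation rather than with stable $\Hom$s between the indecomposable summands.
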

\begin{proof}
    Since $\ul{A}:=\ul{\End}_S(\pi_*\ct)$ is a finite-dimensional $\C$-algebra, there is an isomorphism
    \begin{align*}
    \ul{\End}_S(\pi_*\ct) \cong \C Q/I,
\end{align*}
where $Q$ is a finite quiver and $I \subseteq \C Q$ is an admissible ideal. The vertices of $Q$ are in bijection with the simple $\ul{A}$-modules. The number of arrows from vertex $i$ to vertex $j$ is equal to the dimension of $D\Ext^1_{\ul{A}}(S_i, S_j)$ for simple $\ul{A}$-modules $S_i$, $S_j$. By definition, 
\begin{align*}
    \ul{A} \cong \End_S(\pi_*\ct)/(e_S)
\end{align*}
where $e_S$ is the idempotent endomorphism of $\pi_\ast \ct$ which is given by the projection $\pi_*\ct \to S \to \pi_*\ct$. By \Cref{L:SerreSub}, the simple $\ul{A}$-modules correspond to simple $A=\End_S(\pi_*\ct)$-modules that are annihilated by $e_S$ and it suffices to understand $\Ext^1_A(S_i, S_j)$ for these simples. Van den Bergh shows that under the equivalence \eqref{E:TVdB} the simple $A$-modules $S_i$ annihilated by $e_S$ correspond to shifts of sheaves $\Sigma\co_{C_i}(-1)$ on the resolution $Y$, where, as above, the $C_i$ are the irreducible components of $C$, see \cite[Proposition 3.5.7]{VandenBergh04}. Now, if $C_i \neq C_j$ intersect, Proposition \ref{P:Neb} and \eqref{E:TVdB} show 
\begin{align*}
   0  \neq  \Hom_{\Db(Y)}(\Sigma\co_{C_i}(-1), \Sigma^2\co_{C_j}(-1)) \cong \Ext^1_A(S_i, S_j). 
\end{align*}
By symmetry of the components $C_i$ and the discussion above, this completes the proof.
\end{proof}

\begin{prop}\label{P:Neb}
   In the notation of \Cref{P:QuiverClusterT}, assume that $C_i \neq C_j$ 
   intersect. Then 
\begin{align*}
   \Hom_{\Db(Y)}(\Sigma\co_{C_i}(-1), \Sigma^2\co_{C_j}(-1)) \neq  0. 
\end{align*}
\end{prop}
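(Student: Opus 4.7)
The claim is equivalent to showing $\Ext^1_Y(\co_{C_1}(-1), \co_{C_2}(-1)) \neq 0$, since $\Sigma$ is the shift functor. My plan is to reduce this to an explicit local computation at an intersection point via the local-to-global spectral sequence for $\Ext$.

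By \cite[Lemma 3.4.1]{VandenBergh04}, two distinct components $C_i \cong \P^1$ of the exceptional curve of $\pi$ either do not meet or meet transversally, so the hypothesis gives a nonempty finite set $C_1 \cap C_2$ of closed points. Fix one such point $p$. Because $Y$ is smooth of dimension $3$, we may choose formal coordinates at $p$ so that $C_1 = V(x_2, x_3)$ and $C_2 = V(x_1, x_3)$. Since the line bundles $\co_{C_i}(-1)$ are locally free on the smooth curves $C_i$, they become isomorphic to $\co_{C_i}$ in a neighborhood of $p$, so the local Ext sheaves $\cExt^q_Y(\co_{C_1}(-1), \co_{C_2}(-1))$ agree near $p$ with $\cExt^q_Y(\co_{C_1}, \co_{C_2})$. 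A direct computation with the Koszul resolution of $\co_Y/(x_2, x_3)$ shows that the stalks at $p$ satisfy $\cHom_Y(\co_{C_1}, \co_{C_2})_p = 0$ and $\cExt^1_Y(\co_{C_1}, \co_{C_2})_p \cong \C$.

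Globally, $\cHom_Y(\co_{C_1}(-1), \co_{C_2}(-1))$ is supported on the finite set $C_1 \cap C_2$ (outside of which one of the sheaves vanishes) and has zero stalks there by the local calculation, hence vanishes. On the other hand, $\cExt^1_Y(\co_{C_1}(-1), \co_{C_2}(-1))$ is a nonzero skyscraper sheaf on $C_1 \cap C_2$. The local-to-global spectral sequence
\[ E_2^{p,q} = H^p\bigl(Y, \cExt^q_Y(\co_{C_1}(-1), \co_{C_2}(-1))\bigr) \Rightarrow \Ext^{p+q}_Y(\co_{C_1}(-1), \co_{C_2}(-1)) \]
then yields, after using $\cHom_Y = 0$, the identification
\[ \Ext^1_Y(\co_{C_1}(-1), \co_{C_2}(-1)) \cong H^0\bigl(Y, \cExt^1_Y(\co_{C_1}(-1), \co_{C_2}(-1))\bigr), \]
and the right-hand side is nonzero as it contains the contribution from $p$.

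The main obstacle is the explicit local Koszul calculation of $\cExt^1_Y(\co_{C_1}, \co_{C_2})_p$, but this is routine once the transverse intersection is brought into the standard form above. The conceptual point is that all contributions to the Ext come from the finitely many intersection points, and that the twist by line bundles pulled back from the $C_i$ is invisible locally and therefore cannot kill them.
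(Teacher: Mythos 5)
Your proof is correct, and it takes a genuinely different route from the paper. The paper invokes a spectral sequence of Caldararu--Katz--Sharpe \cite[Theorem A.1]{CKS} for $\Ext$ between pushforwards along regular embeddings, which packages the intersection-theoretic data into an excess bundle $E = \mathcal{T}_Y|_p/(\mathcal{T}_{C_1}|_p \oplus \mathcal{T}_{C_2}|_p)$; the nonvanishing of $\Ext^1$ there comes from the fact that $E$ is one-dimensional because the intersection $C_1 \cap C_2$ has dimension $0$ rather than the expected $-1$. You instead use only the standard local-to-global $\Ext$ spectral sequence, reduce to the stalk at a transversal intersection point $p$, and do the Koszul computation by hand in formal coordinates $C_1 = V(x_2, x_3)$, $C_2 = V(x_1, x_3)$: after applying $\Hom(-, \co_{C_2})$ to the Koszul resolution of $\co_{C_1}$ and using $x_3 = 0$ on $C_2$, you get $\cHom_p = 0$ and $\cExt^1_p \cong \co_{C_2,p}/(x_2) \cong \C$, and then $\cHom = 0$ forces $\Ext^1 \cong H^0(Y, \cExt^1) \neq 0$. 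Your approach is more elementary and self-contained (no reliance on the CKS excess-intersection machinery); the paper's approach is shorter on the page and more conceptual, making the role of the excess bundle explicit, which is useful context for understanding why $2$-cycles must appear in the quiver whenever components meet. Both are correct; the one minor point worth stating explicitly in a write-up is that the vanishing $\cHom = 0$ together with the shape of the $E_2$-page (in particular $E_2^{2,0} = 0$ so the differential out of $E_2^{0,1}$ vanishes) is what identifies $\Ext^1$ with $H^0(\cExt^1)$, which you do assert but could spell out.
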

\begin{proof}
     Consider the commutative diagram
\begin{equation*}
\xymatrix{
    \{ p \}\ar@{->}[d]\ar@{->}[r] & C_i \ar@{->}[d]^{\iota_i}\\
 C_j \ar@{->}[r]^{\iota_j} & Y} 
\end{equation*}
of regular embeddings.
    By \cite[Theorem A.1]{CKS} there is a spectral sequence
\[
E_2^{p,q} = \Ext^{p} ( \co_{ \{p \} } , \mathcal{E}^{\vee}|_{\{p\}}\otimes \mathcal{F}|_{\{p\}} \otimes \wedge^1\mathcal{N}_{\{p\} / C_j } \otimes \wedge^{1-q} E ) \Rightarrow \Ext^{p+q}_Y ({\iota_i}_\ast\mathcal{E} , {\iota_j}_\ast\mathcal{F} ),
\]    
where $\mathcal{E}$ and $\mathcal{F}$ are vector bundles on $C_i$ and $C_j$, respectively, $\mathcal{N}_{\{p\} / C_j }$ is the normal bundle of $\{p\}\hookrightarrow C_j$ and $E$ is the excess bundle
\[
E = \frac{\mathcal{T}_Y|_{p}}{\mathcal{T}_{C_i}|_{p}\oplus \mathcal{T}_{C_j}|_{p}}.
\]
In this case, it is clear that $\mathcal{N}_{\{p\} / C_j } \simeq \co_{\{p \}}$ and that $E \simeq \co_{\{p \}}$.
Plugging this in into the spectral sequence and substituting $\mathcal{E}=\co_{C_i}$ and $\mathcal{F}=\co_{C_j}$, we get
\[
E_2^{p,q} = \Ext^{p} ( \co_{ \{p \} } ,  \wedge^{1-q} \co_{\{p\}} ) \Rightarrow \Ext^{p+q}_Y (\co_{C_i} , \co_{C_j} ).
\]
Thus, we read off that $\Ext^{1}_Y (\co_{C_i} , \co_{C_j} ) \simeq \C$. In particular, 
\begin{align*}
   \Hom_{\Db(Y)}(\Sigma\co_{C_i}(-1), \Sigma^2\co_{C_j}(-1)) \cong \Ext^{1}_Y (\co_{C_i} , \co_{C_j} ) \neq  0. \tag*{\qedhere}
\end{align*}%
\end{proof}

\begin{cor}\label{C:SingleVertex} In the setup and notation of \Cref{P:QuiverClusterT}.
    If the quiver $Q$ contains no loops or $2$-cycles as in \eqref{E:2Cyc}, then $Q$ is just a single vertex $C=C_1$ without loops -- in particular, $C$ is irreducible.
\end{cor}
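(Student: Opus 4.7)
The plan is to combine the structural information from \Cref{P:QuiverClusterT} with the connectedness of the exceptional fibre of a projective birational morphism $\pi\colon Y \to \Spec(S)$ from a regular scheme to our complete local (hence normal Gorenstein) base $S$.

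First, I would recall that, by \Cref{P:QuiverClusterT}, the vertices of $Q$ are in bijection with the irreducible components $C_1,\ldots,C_r$ of the reduced exceptional curve $C^{\mathrm{red}}$, and whenever two distinct components $C_i \ne C_j$ intersect, the quiver $Q$ contains a $2$-cycle as in \eqref{E:2Cyc}. Under the hypothesis that $Q$ has no $2$-cycles, this forces the irreducible components of $C^{\mathrm{red}}$ to be pairwise disjoint, so that $C^{\mathrm{red}}$ is a disjoint union $C_1 \sqcup \cdots \sqcup C_r$ of copies of $\PP^1$.

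Next, I would show that $C^{\mathrm{red}}$ is connected. Since $S$ is a complete local Gorenstein (hence normal, as it is Cohen--Macaulay and regular in codimension one) $\C$-algebra, $\Spec(S)$ is connected and normal. The resolution $\pi\colon Y \to \Spec(S)$ is a projective birational morphism from a regular, and in particular normal, scheme $Y$. By Zariski's connectedness theorem (equivalently, the theorem on formal functions applied to $\mathsf{R}^0\pi_*\co_Y = \co_S$ together with the fact that $S$ is local), the fibre $C = \pi^{-1}(\mathfrak{m})$ is connected, and hence so is $C^{\mathrm{red}}$. Combined with the previous step, this forces $r=1$, i.e.\ $C^{\mathrm{red}} = C_1$ is irreducible and $Q$ has a single vertex. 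The no-loops hypothesis on $Q$ then immediately yields that this single vertex carries no loops, completing the argument.

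The only delicate point is justifying $\mathsf{R}^0\pi_*\co_Y = \co_S$ (so as to invoke Zariski connectedness); this is standard for a resolution of a normal variety since $\pi$ is birational and $S$ is normal, but is worth stating explicitly as it is the one input that does not follow formally from \Cref{P:QuiverClusterT} itself. Everything else is essentially a rearrangement of what has already been established in the preceding proposition.
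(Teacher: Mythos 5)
Your proof is correct and follows essentially the same route as the paper's: both use \Cref{P:QuiverClusterT} to relate vertices to irreducible components and $2$-cycles to intersections, and both use the connectedness of the exceptional curve $C$ to force a single vertex. The paper simply asserts connectedness of $C$ as known, whereas you spell out the Zariski-connectedness justification (via $\pi_*\co_Y = \co_S$), which is a reasonable and correct elaboration.
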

\begin{proof}
    If $Q$ contains at least $2$ vertices, then the exceptional curve $C$ has at least $2$ irreducible components. Since $C$ is connected, \Cref{P:QuiverClusterT} and the discussion preceding it show that $Q$ contains a $2$-cycle. It follows that $Q$ has only one vertex without loops.
\end{proof}

For completeness, we include the following statement.
\begin{lem}\label{L:SerreSub}
    Let $\Lambda$ be a (semi-perfect) ring and let $e \in \Lambda$ be an idempotent.

    The natural inclusion $\mod \Lambda/(e) \to \mod \Lambda$ induces a bijection
    \[
    \{\text{Simple $\Lambda/(e)$-modules}\} \ \xleftrightarrow{\text{$1$:$1$}} \ 
    \{\text{Simple $\Lambda$-modules $S$ with $Se=0$} \}
    \]
    up to isomorphism. Moreover, since $\mod \Lambda/(e) \subseteq \mod \Lambda$ is extension closed 
    \[
    \Ext^1_{\Lambda/(e)}(M, N) \cong \Ext^1_{\Lambda}(M, N)
    \]
    for all $\Lambda/(e)$-modules $M$ and $N$.
\end{lem}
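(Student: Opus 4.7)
The plan is to realize $\mod \Lambda/(e)$ as the full subcategory of $\mod \Lambda$ consisting of those modules $M$ satisfying $Me=0$, and then deduce both parts from this identification. The semi-perfectness hypothesis in parentheses is not actually needed for the proof.

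\textbf{Step 1 (Key identification):} I would first show that a $\Lambda$-module $M$ carries a (necessarily unique) $\Lambda/(e)$-module structure compatible with the inclusion if and only if $Me=0$. One direction is trivial since $e\in(e)=\Lambda e\Lambda$. For the converse, if $Me=0$ and $x=aeb\in\Lambda e\Lambda$, then for $m\in M$ one computes $m\cdot(aeb)=((ma)e)b=0$, so $M\cdot(e)=0$ and $M$ factors through $\Lambda/(e)$. Since the forgetful functor $\mod\Lambda/(e)\to\mod\Lambda$ is fully faithful, this identifies $\mod\Lambda/(e)$ with the full subcategory $\{M\in\mod\Lambda : Me=0\}$.

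\textbf{Step 2 (Bijection of simples):} Under the identification of Step 1, a $\Lambda$-submodule of a $\Lambda/(e)$-module is automatically a $\Lambda/(e)$-submodule (and conversely). Hence $S\in\mod\Lambda/(e)$ is simple as a $\Lambda/(e)$-module if and only if it is simple as a $\Lambda$-module; this gives the claimed bijection.

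\textbf{Step 3 (Extension closedness and Ext):} I would verify that the subcategory from Step 1 is closed under extensions in $\mod\Lambda$. Given a short exact sequence $0\to N\to E\xrightarrow{\pi}M\to 0$ with $Me=Ne=0$, set $y:=xe$ for $x\in E$. Then $\pi(y)=\pi(x)e=0$, so $y\in N$; using idempotency, $y=xe=xe^{2}=ye\in Ne=0$, so $Ee=0$ and $E\in\mod\Lambda/(e)$. Computing $\Ext^1$ via Yoneda classes of short exact sequences, extension closedness immediately gives surjectivity of the comparison map $\Ext^1_{\Lambda/(e)}(M,N)\to\Ext^1_{\Lambda}(M,N)$. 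Injectivity is automatic: any $\Lambda$-linear splitting of such a sequence is a morphism between $\Lambda/(e)$-modules and therefore $\Lambda/(e)$-linear, so a sequence that splits over $\Lambda$ also splits over $\Lambda/(e)$.

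No step here poses a real obstacle; the only point that requires a small calculation is the identity $xe=xe^{2}=(xe)e\in Ne=0$ in Step~3, which is the mechanism by which $Me=Ne=0$ propagates to $Ee=0$.
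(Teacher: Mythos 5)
The paper states this lemma ``for completeness'' without proof, so there is no argument to compare against; your proof is correct and takes the only natural route: identify $\mod \Lambda/(e)$ with the full subcategory $\{M \in \mod \Lambda : Me = 0\}$, deduce the bijection of simples from the fact that $\Lambda$-submodules of such an $M$ are automatically $\Lambda/(e)$-submodules, and derive the $\Ext^1$-isomorphism from extension closedness together with the observation that a $\Lambda$-splitting between $\Lambda/(e)$-modules is $\Lambda/(e)$-linear. You are also right that semi-perfectness is not needed for this lemma (the parentheses in the statement already hint at this); it only matters downstream, in \Cref{P:QuiverClusterT}, for having finitely many simples and a quiver presentation.
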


\subsection{Proof of \Cref{P:ClusterTiltingObject}}
We give a `purely algebraic' argument and comment on an alternative more geometric proof in \Cref{rem:geom-argument}.

We have seen in \Cref{C:ClusterTiltingObject} that $\Dsg(S)$ has a specific cluster-tilting object $T=\pi_*\ct$. Our assumption that there are no loops and $2$-cycles in the quiver $Q$ of $T$ and \Cref{C:SingleVertex} imply that $Q$ consists of a single vertex with no loops.
In particular, there are $\C$-algebra isomorphisms
\begin{align*}
    \End_{\Dsg(S)}(T) \cong \ul{\End}_S(T) \cong \C Q \cong \C.
\end{align*}
Summing up, $\Dsg(S)$ is a $2$ Calabi--Yau category  (cf.\ \cite[Theorem 4.28]{WemyssLecture}) with a cluster-tilting object $T$ with \emph{hereditary} endomorphism algebra. As a very special case of Keller--Reiten's recognition theorem \cite[Theorem in Section 2.1]{KellerReiten08} for such categories, we obtain a triangle equivalence
\begin{align}\label{E:KR1}
    \Dsg(S) \cong \cc(\C),
\end{align}
where the latter denotes the cluster category of the hereditary $\C$-algebra $\C$. The singularity category of $S' \coloneqq \C\llbracket z_0, \dots, z_3 \rrbracket/(z_0z_1 - z_2 z_3)$ also has a cluster-tilting object $T'$ with endomorphism algebra $\C$, see \cite[Proposition 2.4]{BIKR} together with Kn\"orrer periodicity \cite{Knoerrer} to reduce from threefolds to curves.
Therefore, applying \cite[Theorem in Section 2.1]{KellerReiten08}
to $\Dsg(S')$ and using \eqref{E:KR1} yields triangle equivalences
\begin{align*}
    \Dsg(S) \cong \Dsg(S').
\end{align*}
Therefore $S \cong S'$ is nodal by Lemma \ref{L:Yoshino}.
\hfill \qedsymbol

\begin{rem}\label{rem:geom-argument}
Here is a more geometric argument for parts of the proof of \Cref{P:ClusterTiltingObject}. If the quiver $Q$ of the cluster-tilting object $\pi_*\ct$ consists of a single vertex, then the small resolution has an irreducible exceptional curve $C$, by \Cref{C:SingleVertex}. 
If this vertex does not have loops, then the  normal bundle of $C$ is isomorphic to $\mathcal{O}(-1) \oplus \mathcal{O}(-1)$ by \cite[2.15]{WemyssFlops}. By Reid's \cite{Reid83} this implies that $S$ is an $A_1$-singularity.
\end{rem}
%
%
%
%
%
%
%
%
%
%
%
%
%
%
%
%
%
%
%
%
%
%
%
%
%
%
%
%
%
%
%
%
%
%
%
%
%
%
%
%
%
%

\section{A categorical `No-loop Theorem' and consequences for cluster-tilting subcategories}\label{sec:homresults}

\subsection{Setting the stage}

We suggest to recall \Cref{defn:modA,defn:sink,defn:covering,defn:quiver,defn:ARquiver} before reading \Cref{sec:homresults}.
We consider sufficiently nice categories, satisfying a subset of the following conditions.

\begin{Setup}\label{setup:algebraicsetup}
   Let $\kk$ be an algebraically closed field and $\ca$ be a $\kk$-linear category and $\mod \ca$ be the category of finitely presented contravariant functors $\cA \to \mathrm{Ab}$, see \Cref{defn:modA} and \Cref{rem:k-linear}.
   We consider the following additional \mbox{conditions on $\ca$.}
   \begin{enumerate}[label={{(\alph*)}}]
      \item $\ca$ is a Krull--Schmidt category.\label{ks}
      \item $\ca$ is $\Hom$-finite, i.e.\ $\dim_\kk \ca(X,Y) < \infty$ for all $X,Y \in \cA$.\label{hf}
      \item $\mod \ca$ is an abelian category.\label{ab}
      \item $\mod \ca$ has finite global dimension.\label{fg}
   \end{enumerate}
\end{Setup}

Notice, \Cref{setup:algebraicsetup}\ref{ab} is equivalent to $\cA$ having weak kernels, cf.\ \cite[Section III.2]{AuslanderQueen}.

For our applications the most relevant example is the category of Gorenstein projective modules over a finite-dimensional Gorenstein $\kk$-algebra (cf.\ \cref{defn:Goring,D:GProj}).
The following is a direct consequence of \Cref{T:Auslander}.

\begin{cor}\label{cor:GPisreallynice}
   For a finite-dimensional Gorenstein $\kk$-algebra $R$ the category $\GP(R)$ of Gorenstein projective modules over $R$ satisfies \Cref{setup:algebraicsetup}\ref{ks}-\ref{fg}.
\end{cor}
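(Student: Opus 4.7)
The plan is to simply read off each of the four conditions from \Cref{T:Auslander}, which applies because any finite-dimensional Gorenstein $\kk$-algebra $R$ is in particular a Gorenstein ring (it is trivially two-sided Noetherian and has finite injective dimension on both sides by \Cref{defn:Goring}). So the argument has no real content beyond a citation; the only thing to be careful about is matching the hypotheses of each part of \Cref{T:Auslander} to the items of \Cref{setup:algebraicsetup}.

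Concretely, I would proceed as follows. First, invoke \Cref{T:Auslander}\ref{item:GPrest}, whose stronger finite-dimensional hypothesis is precisely what is assumed here, to conclude that $\GP(R)$ is $\kk$-linear, $\Hom$-finite, and Krull--Schmidt. This yields \Cref{setup:algebraicsetup}\ref{ks} and \ref{hf} at once (and in fact gives the existence of enough sink morphisms as a bonus, which is not needed at this point but will be useful later). Second, apply \Cref{T:Auslander}\ref{item:GPab}, which only uses that $R$ is Gorenstein, to obtain that $\GP(R)$ has weak kernels and hence that $\mod \GP(R)$ is abelian, giving \Cref{setup:algebraicsetup}\ref{ab}. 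Third, apply \Cref{T:Auslander}\ref{item:GPglfin} to conclude $\gldim \mod \GP(R) < \infty$, which is \Cref{setup:algebraicsetup}\ref{fg}.

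There is no genuine obstacle: the statement is a bookkeeping corollary, packaging the conclusions of \Cref{T:Auslander} into the axiomatic \Cref{setup:algebraicsetup} that will be used throughout \Cref{sec:homresults}. The only point worth flagging is that \Cref{setup:algebraicsetup}\ref{ab} is equivalent to $\GP(R)$ having weak kernels (cf.\ \cite[Section III.2]{AuslanderQueen}), which is the form in which \Cref{T:Auslander}\ref{item:GPab} is stated; this equivalence is already noted in the paragraph following \Cref{setup:algebraicsetup}, so no further justification is needed.
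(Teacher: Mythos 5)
Your proposal is correct and matches the paper's approach exactly: the paper states the corollary as "a direct consequence of \Cref{T:Auslander}," and your unpacking of which part of \Cref{T:Auslander} delivers which item of \Cref{setup:algebraicsetup} is precisely the bookkeeping that is implicitly intended.
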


\subsection{Quivers of $\Hom$-finite Krull--Schmidt categories} 
Assume that $\ca$ is a category satisfying \Cref{setup:algebraicsetup}\ref{ks}-\ref{ab}.
We present and recall generalisations of results which are well-known for $\mod A$ where $A$ is a finite-dimensional $\kk$-algebra.

\begin{defn}\label{defn:simplefunc}
The contravariant functor $S_X(-) \coloneqq \ca(-, X)/\rad_{\ca}(-, X)$ for an indecomposable object $X \in \cA$ is called the \emph{simple functor at $X$}.
\end{defn}

Recall the definition of sink morphisms from \Cref{defn:sink}.

\begin{rem}\label{rem:sink}
Under the assumptions of \Cref{setup:algebraicsetup}\ref{ks}, one can show that a morphism $f \colon Y \to X$ is a sink morphism for $X$ if and only if 
\begin{enumerate}[label={{(\alph*)}}]
    \item $X$ is indecomposable, 
    \item $\im \ca(-,f) = \rad_{\ca}(-,X) \subset \ca(-,X)$ and 
    \item $\ker \ca(-,f) \subset \rad_{\ca}(-,Y)$.
\end{enumerate}
Furthermore, if $S_X \in \mod \cA$ and $X_1 \xrightarrow{\smash{d_1}} X_0 \xrightarrow{\varepsilon} S_X \to 0$ is the start of a projective resolution as in \Cref{cor:minres} then $X_0 \cong X$ and $d_1$ is a sink morphism for $X$. Hence, an object $X \in \ind \ca$ admits a sink morphism if and only if $S_X \in \mod \ca$.
\end{rem}

\begin{rem}\label{rem:simple}
   Suppose $X \in \ind \ca$.
   Then $S_X(\ca \setminus \add X) = 0$ holds by \Cref{lem:radical}, and
   $S_X(X) \cong \kk$ as $S_X(X) = \End_\ca (X)/\rad \End_{\ca} (X)$ is a finite-dimensional division $\kk$-algebra, where $\kk$ is algebraically closed.
   Hence, for $Y \in \ca$ the $\kk$-dimension of $S_X(Y) = \ca(Y,X)/\rad_\ca(Y,X)$ is the multiplicity of $X$ as a direct summand of $Y$. 
   Similarly, one can show that for $Y \in \ca$ the $\kk$-dimension of $\ca(X,Y)/\rad_\ca(X,Y)$ and the multiplicity of $X$ as a direct summand of $Y$ agree.
\end{rem}

The following generalises \cite[Lemma III.2.12]{ASS} to functor categories.

\begin{lem}\label{lem:extar}
    Suppose $X \in \ind \ca$ admits a sink morphism \mbox{$d_1 \colon X_1 \to X$}.
    Then the following numbers agree and are finite for any $Y \in \ind \cA$.
    \begin{enumerate}[label={(\alph*)}]
        \item The multiplicity of $Y$ as a summand of $X_1$.\label{item:sink}
        \item The $\kk$-dimension of $\rad_\ca(Y,X)/\rad_\ca^2(Y,X)$.\label{item:ar}
    \end{enumerate}
    Moreover, if $Y$ admits a sink morphism then the above numbers agree with 
    \begin{enumerate}[label={(\alph*)}, resume]
        \item the $\kk$-dimension of $\Ext_{\mod \ca}^1(S_X, S_Y)$. \label{item:ext}
    \end{enumerate}
\end{lem}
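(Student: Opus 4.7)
The plan is to prove (a)$=$(b) via a direct isomorphism induced by post-composition with $d_1$, and then (a)$=$(c) by applying $\Hom_{\mod \ca}(-, S_Y)$ to the short exact sequence $0 \to \rad_\ca(-,X) \to \ca(-,X) \to S_X \to 0$ in $\mod \ca$ (which exists by \Cref{setup:algebraicsetup}\ref{ab}, since $S_X \in \mod \ca$ by \Cref{rem:sink}).

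For \textbf{(a)$=$(b)}: post-composition with $d_1$ gives a surjection $\ca(Y, X_1) \twoheadrightarrow \rad_\ca(Y, X)$ by the defining property $\im \ca(-, d_1) = \rad_\ca(-, X)$ of a sink morphism. Since $X$ is indecomposable and $d_1$ is not split epi, $d_1 \in \rad_\ca(X_1, X)$, so the surjection descends to
\[\phi\colon \ca(Y, X_1)/\rad_\ca(Y, X_1) \twoheadrightarrow \rad_\ca(Y, X)/\rad_\ca^2(Y, X).\]
For injectivity, given $f \in \ca(Y, X_1)$ with $d_1 \circ f = \sum a_i b_i \in \rad_\ca^2(Y, X)$, each $a_i \in \rad_\ca(Z_i,X)$ is non-split, hence factors as $a_i = d_1 \circ a'_i$ via the sink morphism; setting $f' := \sum a'_i b_i \in \rad_\ca(Y, X_1)$, one has $d_1 \circ (f - f') = 0$, and the sink property $\ker \ca(-, d_1) \subseteq \rad_\ca(-, X_1)$ forces $f - f' \in \rad_\ca(Y, X_1)$, whence $f \in \rad_\ca(Y, X_1)$. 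The left-hand side has $\kk$-dimension equal to the multiplicity of $Y$ in $X_1$ by \Cref{rem:simple}, and everything is finite by $\Hom$-finiteness.

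For \textbf{(c)}: assuming additionally that $Y$ admits a sink morphism, $S_Y \in \mod \ca$, so one may apply $\Hom_{\mod \ca}(-, S_Y)$ to the displayed short exact sequence. Projectivity of $\ca(-, X)$ yields
\[0 \to \Hom(S_X, S_Y) \to S_Y(X) \xrightarrow{\rho} \Hom(\rad_\ca(-, X), S_Y) \to \Ext^1(S_X, S_Y) \to 0,\]
and $\rho([h]) = [h \circ d_1] = 0$ since $d_1 \in \rad_\ca$. Using the epi $\ca(-, X_1) \twoheadrightarrow \rad_\ca(-, X)$ with kernel $\ker \ca(-, d_1) \subseteq \rad_\ca(-, X_1)$, Yoneda identifies $\Hom(\rad_\ca(-, X), S_Y)$ with the subspace of $S_Y(X_1) = \Hom(\ca(-,X_1), S_Y)$ consisting of elements whose associated natural transformation $\ca(-, X_1) \to S_Y$ vanishes on $\ker \ca(-, d_1)$. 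But for any $f \in \ca(X_1, Y)$ and any $g \in \rad_\ca(Z, X_1)$ one has $f \circ g \in \rad_\ca(Z, Y)$ by the two-sided ideal property of the radical, so this vanishing condition is automatic. Hence $\Ext^1(S_X, S_Y) \cong S_Y(X_1)$, whose $\kk$-dimension equals the multiplicity of $Y$ in $X_1$ by \Cref{rem:simple}.

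The main technical point is the injectivity in (a)$=$(b), where both defining properties of a sink morphism (the image and kernel conditions from \Cref{rem:sink}) must be combined with the two-sided ideal structure of the radical; the remaining arguments are essentially a diagram chase using the long exact sequence and Yoneda's lemma.
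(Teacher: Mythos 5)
Your proof is correct, and the main divergence from the paper is in part (a)$=$(b). The paper simply cites \cite[Corollary 3.7]{bautista} for the identity between the multiplicity of $Y$ in $X_1$ and $\dim_\kk \rad_\ca(Y,X)/\rad_\ca^2(Y,X)$; you instead give a self-contained argument, producing the isomorphism $\ca(Y,X_1)/\rad_\ca(Y,X_1) \cong \rad_\ca(Y,X)/\rad_\ca^2(Y,X)$ by post-composition with $d_1$ and verifying injectivity via the factorization-through-$d_1$ trick. This buys you independence from the cited reference at the cost of a page of diagram chasing, and the key step (that each $a_i$ in a $\rad^2$ decomposition is not a split epi, hence factors through the sink morphism) is handled correctly via the paper's characterization of the radical for indecomposable target. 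For (a)$=$(c), your route and the paper's are essentially the same computation in different clothing: the paper picks the start of a minimal projective resolution $\ca(-,X_2) \to \ca(-,X_1) \to \ca(-,X) \to S_X$ (with $d_1,d_2$ radical) and observes via Yoneda that the two outer maps become zero after $\Hom_{\mod\ca}(-,S_Y)$, so $\Ext^1(S_X,S_Y) \cong S_Y(X_1)$; you split off the short exact sequence $0 \to \rad_\ca(-,X) \to \ca(-,X) \to S_X \to 0$, pass to the long exact sequence, and identify $\Hom(\rad_\ca(-,X),S_Y) \cong S_Y(X_1)$ using the presentation $\ca(-,X_1) \twoheadrightarrow \rad_\ca(-,X)$ together with $\ker\ca(-,d_1) \subseteq \rad_\ca(-,X_1)$. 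Both are valid; the paper's version is a touch slicker because it avoids having to re-identify the middle term of the long exact sequence.
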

\begin{proof}
    Since $\ca$ is Krull-Schmidt, \ref{item:sink} is finite. 
    As sink morphisms are unique up to isomorphism, \ref{item:sink} independent of the choice of the sink morphism.
    That \ref{item:sink} and \ref{item:ar} agree follows from \cite[Corollary 3.7]{bautista}.
    
    We show that \ref{item:ext} and \ref{item:sink} agree.
    Pick the start of a projective resolution
    \[\begin{tikzcd}[ampersand replacement=\&]
        {\ca(-,X_2)} \& {\ca(-,X_1)} \& {\ca(-,X)} \&[-.3cm] {S_X} \&[-.5cm] 0
        \arrow["{d_1\cdot-}", from=1-2, to=1-3]
        \arrow["\varepsilon", from=1-3, to=1-4]
        \arrow[from=1-4, to=1-5]
        \arrow["{d_2\cdot-}", from=1-1, to=1-2]
    \end{tikzcd}\]
    with $d_1 \in \rad_{\ca}(X_1, X)$ and $d_2 \in \rad_{\ca}(X_2, X_1)$, see \Cref{cor:minres}.
    Then $d_1$ is a sink morphism for $X$ by \Cref{rem:sink}.
    By the Yoneda lemma, there is a commutative diagram
    \[\scalebox{.95}{\begin{tikzcd}[ampersand replacement=\&]
        {\Hom_{\mod \ca}(\ca(-,X), S_Y)} \&[-.3cm] {\Hom_{\mod \ca}(\ca(-,X_1), S_Y)} \&[-.3cm]     {\Hom_{\mod \ca}(\ca(-,X_2), S_Y)} \\
        {S_Y(X)} \& {S_Y(X_1)} \& {S_Y(X_2)}
        \arrow[from=1-3, to=2-3, "\cong"]
        \arrow[from=1-2, to=2-2, "\cong"]
        \arrow[from=1-1, to=2-1, "\cong"]
        \arrow["{S_Y(d_2)}", from=2-2, to=2-3]
        \arrow[from=1-2, to=1-3, "d_2^\ast"]
        \arrow[from=1-1, to=1-2, "d_1^\ast"]
        \arrow["{S_Y(d_1)}", from=2-1, to=2-2]
    \end{tikzcd}}\]
    where $d_k^\ast = \Hom_{\mod \ca}(\ca(-,d_k),S_Y)$ for $k=1,2$.
    Then $S_Y(d_k) = 0$ for $k= 1,2$ as $d_k$ is in the radical. 
    Hence, $d_k^\ast = 0$ for $k = 1,2$ and the $\kk$-dimension of $\Ext^1_{\mod \cA}(S_X, S_Y)$ and the $\kk$-dimension of $S_Y(X_1)$ agree. 
    The latter is the multiplicity of $Y$ as a summand of $X_1$ by \Cref{rem:simple}.
    This shows that \ref{item:ext} and \ref{item:sink} agree.
 \end{proof}

Recall the definition of the quiver of $\ca$, see \Cref{defn:quiver}. We have the following.
\begin{lem}\label{lem:cyclequot}
     Suppose $\Phi \colon \ca \to \cb$ is a full, essentially surjective additive functor and let $X,Y \in \Phi^{-1}(\ind \cb) \cap \ind \ca$. 
     If there is no arrow $X \to Y$ in the quiver of $\ca$ then there is no arrow $\Phi(X) \to \Phi(Y)$ in the quiver of $\cb$.
\end{lem}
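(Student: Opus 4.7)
The plan is to exploit fullness of $\Phi$ in two places: to lift any radical morphism $\bar f\colon \Phi X \to \Phi Y$ back to a morphism $f\colon X \to Y$ in $\ca$, and to guarantee that $\Phi$ sends radical morphisms to radical morphisms. Once both are in hand, the hypothesis $\rad_\ca(X,Y)=\rad_\ca^2(X,Y)$ can be transported to the corresponding identity in $\cb$.

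First I would establish the auxiliary claim that $\Phi(\rad_\ca(Z,W)) \subseteq \rad_\cb(\Phi Z, \Phi W)$ for arbitrary $Z,W \in \ca$. Given $g \in \rad_\ca(Z,W)$, I would use the invertibility characterisation of the radical: it suffices to check that $1_{\Phi Z} - \bar h \circ \Phi(g)$ is invertible in $\End_\cb(\Phi Z)$ for every $\bar h \in \cb(\Phi W, \Phi Z)$. By fullness, any such $\bar h$ has the form $\Phi(h)$ for some $h \in \ca(W,Z)$, so $1_{\Phi Z} - \bar h \circ \Phi(g) = \Phi(1_Z - h\circ g)$; since $g \in \rad_\ca$ the element $1_Z - h\circ g$ is invertible in $\ca$, and any functor preserves isomorphisms.

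Next, given $\bar f \in \rad_\cb(\Phi X, \Phi Y)$, I would apply fullness once more to lift it to some $f \in \ca(X,Y)$ with $\Phi(f)=\bar f$. Since $\Phi X$ and $\Phi Y$ are indecomposable, $\bar f$ is not an isomorphism; hence $f$ is not an isomorphism either, so by indecomposability of $X,Y$ in the Krull--Schmidt category $\ca$ we have $f \in \rad_\ca(X,Y)$. The hypothesis then yields $\rad_\ca(X,Y)=\rad_\ca^2(X,Y)$, so one can write $f = \sum_i g_i \circ h_i$ with $h_i \in \rad_\ca(X, Z_i)$ and $g_i \in \rad_\ca(Z_i, Y)$ for intermediate objects $Z_i \in \ca$. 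Applying $\Phi$ together with the auxiliary claim gives $\bar f = \sum_i \Phi(g_i) \circ \Phi(h_i) \in \rad_\cb^2(\Phi X, \Phi Y)$, showing $\rad_\cb(\Phi X, \Phi Y) = \rad_\cb^2(\Phi X, \Phi Y)$, i.e., the absence of an arrow $\Phi X \to \Phi Y$ in the quiver of $\cb$.

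The argument is largely formal; the only subtlety I would flag is that the intermediate objects $Z_i$ in the factorisation of $f$ are not assumed indecomposable, so one genuinely needs the invertibility characterisation of the radical (rather than the ``set of non-isomorphisms'' description, which applies only between indecomposable objects) in order to transport radicality along $\Phi$. Essential surjectivity of $\Phi$ plays no active role in the argument beyond ensuring that $\Phi X, \Phi Y$ are vertices of the quiver of $\cb$.
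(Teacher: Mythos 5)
Your proof is correct, and it is genuinely different from the paper's. The paper's own argument is a one-line invocation of \Cref{lem:radicalfunctor}\ref{item:radicaliso}, which asserts the \emph{equality} $\Phi(\rad_\ca^n(X,Y)) = \rad_\cb^n(\Phi(X),\Phi(Y))$ for all $n \geq 1$; from this the paper reads off a $\kk$-linear surjection $\rad_\ca(X,Y)/\rad_\ca^2(X,Y) \twoheadrightarrow \rad_\cb(\Phi X,\Phi Y)/\rad_\cb^2(\Phi X,\Phi Y)$, whose source vanishes by hypothesis. Your argument instead re-derives precisely the fragment it needs: the one-sided inclusion $\Phi(\rad_\ca)\subseteq\rad_\cb$ (this is the paper's \Cref{lem:wellknownrad}), together with a direct lift of $\bar f\in\rad_\cb(\Phi X,\Phi Y)$ to a radical morphism $f\in\rad_\ca(X,Y)$ --- the latter using fullness and the fact that, between indecomposables in a Krull--Schmidt category, any non-isomorphism is radical (\Cref{lem:radical}). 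Factoring $f$ through $\rad_\ca^2$ by hypothesis and pushing forward finishes the argument. Your route is more elementary and makes transparent that essential surjectivity of $\Phi$ is superfluous for this lemma: the paper uses it inside \Cref{lem:radicalfunctor} to lift intermediate objects of $\cb$ back to $\ca$ during the induction on $n$, whereas you factor entirely inside $\ca$ and never need to lift objects (and the hypothesis $X,Y\in\Phi^{-1}(\ind\cb)$ already makes $\Phi X, \Phi Y$ vertices of the quiver of $\cb$). Your remark that the intermediate objects $Z_i$ need not be indecomposable, so that the invertibility characterisation of $\rad$ rather than the non-isomorphism description is required when transporting radicality, is exactly the right subtlety to flag.
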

\begin{proof}
    The functor $\Phi \colon \ca \to \cb$ induces a $\kk$-vector space epimorphism 
    \[\rad_{\ca}(X,Y)/\rad_{\ca}^2(X,Y) \to \rad_{\cb}(\Phi(X),\Phi(Y))/\rad^2_{\cb}(\Phi(X),\Phi(Y))\] 
    for $X,Y \in \ind \ca$ using \Cref{lem:radicalfunctor}\ref{item:radicaliso}.
\end{proof}

\subsection{Restrictions of functor categories}\label{sec:locglo}
In this section we assume that $\ca$ is an idempotent complete category satisfying \Cref{setup:algebraicsetup}\ref{ab}, but we can safely replace $\kk$ by just a commutative ring.
We start with a special case of the Yoneda lemma.

\begin{lem}\label{lem:yoneda}
    Let $P \in \ca$ and $A := \End_\ca(P)$. 
    Given $G \in \mod \ca$ consider the family
    \begin{align*}
        \rho_{X} \colon \Hom_{\mod \ca}(\ca(-, X), G(-)) &\to \Hom_{A}(\ca(P, X), G(P)),\, \alpha \mapsto \alpha_P
    \intertext{of morphisms for $X \in \add P$. This family yields a natural isomorphism} 
    \rho \colon \Hom_{\mod \ca}(\ca(-, -|_{\add P}), G(-)) &\xrightarrow{\simeq} \Hom_{A}(\ca(P, -|_{\add P}), G(P))
    \end{align*}
    of contravariant functors $\add P \to \Mod \kk$.
\end{lem}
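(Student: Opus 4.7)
The plan is to reduce the lemma to the classical Yoneda lemma together with an additivity argument, since $\add P$ is generated under direct sums and summands by $P$ itself. First, I would verify that $\rho$ is well-defined. For a natural transformation $\alpha \colon \ca(-,X) \to G$ in $\mod \ca$, its component $\alpha_P \colon \ca(P,X) \to G(P)$ must be a map of right $A$-modules, where the right $A$-structure on $\ca(P,X)$ is by precomposition and the right $A$-structure on $G(P)$ comes from contravariant functoriality of $G$. Naturality of $\alpha$ at any $a \in A = \End_\ca(P)$ gives $\alpha_P(f \circ a) = G(a)(\alpha_P(f))$ for every $f \in \ca(P,X)$, which is precisely the required $A$-linearity. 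Naturality of the family $(\rho_X)_{X \in \add P}$ in $X$ is then immediate, since for any $f \colon X \to Y$ in $\add P$ both routes around the relevant square compute $\alpha_P \circ \ca(P,f)$.

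Next, I would establish that $\rho_P$ is an isomorphism. On the source side, the classical Yoneda lemma identifies $\Hom_{\mod \ca}(\ca(-,P), G) \cong G(P)$ via $\alpha \mapsto \alpha_P(\id_P)$. On the target side, $\ca(P,P) = A$ as a right $A$-module, so $\Hom_A(A, G(P)) \cong G(P)$ via evaluation at $\id_P$. Chasing $\alpha$ through $\rho_P$ produces $\alpha_P$, whose value at $\id_P$ is $\alpha_P(\id_P)$, so the composite of $\rho_P$ with these two Yoneda identifications is the identity on $G(P)$; in particular $\rho_P$ is a bijection.

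Finally, I would extend this to arbitrary $X \in \add P$ by additivity. Both $X \mapsto \Hom_{\mod \ca}(\ca(-,X), G)$ and $X \mapsto \Hom_A(\ca(P,X), G(P))$ are additive contravariant functors on $\add P$, so they convert finite biproducts in $\add P$ into finite products in $\Mod \kk$, and $\rho$ respects these decompositions. Any $X \in \add P$ is a direct summand of some $P^n$; since $\rho_{P^n}$ is an isomorphism (being the $n$-fold product of $\rho_P$) and isomorphisms are closed under retracts in $\Mod \kk$, the map $\rho_X$ is also an isomorphism. The only real bookkeeping subtlety is the left-versus-right $A$-module convention imposed by the contravariance of $\mod \ca$; once that is fixed there is no substantive obstacle, and the whole argument is formal.
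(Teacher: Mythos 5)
Your proof is correct and takes essentially the same approach as the paper's: both reduce the claim to showing $\rho_P$ is an isomorphism using additivity ($\kk$-linearity) of the two functors, and both identify $\rho_P$ with the classical Yoneda isomorphism after composing with evaluation at the identity. You spell out the $A$-linearity check and the retract argument that the paper leaves implicit, but the key ideas are identical.
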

\begin{proof}
    The defined transformation is clearly $\kk$-linear and natural on $\add P \subset \ca$.
    Hence, to check that $\rho$ is a natural isomorphism we only have to show that $\rho_P$ is an isomorphism, using $\kk$-linearity of both functors. However, $\rho_P$ composed with the canonical isomorphism $\operatorname{eval}_{\id} \colon \Hom_{A}(A, G(P)) \to G(P),\, f \mapsto f(\id)$ is the Yoneda isomorphism, showing that $\rho_P$ is itself an isomorphism. 
\end{proof}

The following lemma allows us to restrict extension groups of functor categories to module categories over finite-dimensional $\kk$-algebras.

\begin{lem}\label{lem:localise}
    For any $F \in \mod \ca$ with $\prdim_{\mod \ca} F < \infty$ there exists an object $P_F \in \ca$ such that for any $P \in \ca$ with $\add P \supset \add P_F$ we have 
    \begin{enumerate}[label={(\alph*)}]
        \item $F(P)$ has finite projective dimension as $A$-module and \label{item:finprdim}
        \item $\Ext^{i}_{\mod \ca}(F, G) \cong \Ext^i_{A}(F(P), G(P))$ for every $G \in \mod \ca$ and all $i \in \N$,\label{item:extiso}
    \end{enumerate}
    where $A \coloneqq \End_{\ca}(P)$. 
\end{lem}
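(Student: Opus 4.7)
The plan is to pick a finite projective resolution of $F$ in $\mod \ca$, let $P_F$ be the direct sum of the objects representing the terms, and then verify that evaluation at any $P$ with $\add P \supset \add P_F$ sends this resolution to a finite projective resolution of $F(P)$ in $\mod A$. Combined with the Yoneda-type identification in \Cref{lem:yoneda}, this will give both (a) and (b) at once.

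Since $\ca$ has weak kernels (\Cref{setup:algebraicsetup}\ref{ab}), the category $\mod \ca$ is abelian and has enough projectives, where the finitely generated ones are retracts of representables $\ca(-, X)$. As $\ca$ is idempotent complete in this subsection, every such retract is itself representable. Using $\prdim_{\mod \ca} F = n < \infty$, I would choose a finite projective resolution
\[
0 \to \ca(-, X_n) \to \cdots \to \ca(-, X_1) \to \ca(-, X_0) \to F \to 0
\]
in $\mod \ca$ with each term representable, and set $P_F \coloneqq X_0 \oplus X_1 \oplus \cdots \oplus X_n$.

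Now suppose $\add P \supset \add P_F$ and set $A \coloneqq \End_\ca P$. Each $X_i$ lies in $\add P$, so $\ca(P, X_i)$ is a direct summand of a finite sum of copies of $\ca(P, P) = A$ and hence a finitely generated projective $A$-module. The key point is that evaluation at $P$ coincides, via Yoneda, with the functor $\Hom_{\mod \ca}(\ca(-, P), -) \colon \mod \ca \to \Mod A$, and is therefore exact because $\ca(-, P)$ is projective in $\mod \ca$. Applying it to the resolution above produces a finite projective resolution of $F(P)$ in $\mod A$, yielding (a). For (b), \Cref{lem:yoneda} furnishes an isomorphism of complexes
\[
\Hom_{\mod \ca}(\ca(-, X_\bullet), G) \;\cong\; \Hom_A(\ca(P, X_\bullet), G(P));
\]
taking $i$-th cohomology of both sides then gives the desired isomorphism $\Ext^i_{\mod \ca}(F, G) \cong \Ext^i_A(F(P), G(P))$.

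I do not anticipate a serious obstacle. The only mild subtlety is arranging that the finite projective resolution of $F$ consists of representable functors, which here is handled by the standing idempotent completeness of $\ca$; without this, one would have to enlarge $P_F$ to absorb complements of the (non-representable) finitely generated projectives. A smaller point is that the isomorphisms supplied by \Cref{lem:yoneda} must respect the differentials of the resolution, but this is immediate from the naturality statement in that lemma.
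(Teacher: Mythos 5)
Your proof is correct and takes essentially the same route as the paper: choose $P_F$ as the direct sum of the objects representing the terms of a finite projective resolution of $F$, observe that evaluation at $P$ (equivalently, $\Hom_{\mod\ca}(\ca(-,P),-)$) is exact and sends the resolution to a finite projective $A$-resolution of $F(P)$, and then use \Cref{lem:yoneda} to identify the Hom-complexes computing the two Ext-groups. The only difference is that you spell out the exactness of evaluation at $P$ via projectivity of $\ca(-,P)$, a detail the paper leaves implicit.
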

\begin{proof}
    Because $\prdim F < \infty$ we can find a finite projective resolution of $F$ in $\mod \ca$. 
    Using the Yoneda lemma we may assume that this finite projective resolution of $F$ is of the form $\ca(-, X_{\bullet})$ for a bounded complex $X_{\bullet}$ with terms in $\ca$.
    
    Let $P_F \coloneqq \bigoplus_{k\in \N} X_{k}$, which is well-defined since $X_\bullet$ is bounded. 
    Suppose we are given $P \in \ca$ with $\add P \supseteq \add P_F$ and $A \coloneqq \End_{\ca}(P)$.
    Since $X_{k}$ is in $\add P$ for $k \in \N$ we know that $\ca(P,X_{k})$ is a projective $A$-module. 
    This shows that $\ca(P, X_{\bullet})$ is a finite projective resolution of $F(P)$ in $\mod A$ and therefore $F(P)$ is of finite projective dimension as $A$-module.
    This establishes part \ref{item:finprdim}.
   
   Let $G \in \mod \ca$. Using that $X_\bullet$ has terms in $\add P$ and \Cref{lem:yoneda} the complexes $\Hom_{\mod \ca}(\ca(-, X_{\bullet}), G(-))$ and $\Hom_{A}(\ca(P, X_{\bullet}), G(P))$ are isomorphic.
   Hence, the two complexes have isomorphic homology, i.e.\ $\Ext_{\mod \ca}^i(F, G) \cong \Ext^i_{A}(F(P),G(P))$ for all $i \in \N$. 
   This establishes part \ref{item:extiso}.
\end{proof}

\subsection{The categorical `no-loop theorem'}\label{sec:noloop}
Recall the definition of the (AR-)quiver of $\ca$, see \Cref{defn:quiver,defn:ARquiver}.
To obtain our obstruction for loops in the AR-quiver of $\ul{\GP}_\cp(R)$ in \Cref{T:PropertiesDsgR}\ref{item:noloops} we use the following modification of the strong no-loop theorem, see Igusa--Liu--Paquette \cite[Theorem 2.2]{IgusaLiuPaquette}.
\begin{thm}\label{thm:noloopfun}
    Suppose $\ca$ is a category satisfying \Cref{setup:algebraicsetup}\ref{ks}-\ref{ab}.
    If $X \in \ind \ca$ admits a sink morphism and satisfies $\prdim_{\mod \ca} S_X < \infty$ then
    \begin{enumerate}[label={{(\alph*)}}]
       \item $\Ext^1_{\mod \cA}(S_X, S_X) = 0$.\label{item:noloopfun1}
       \item Equivalently, the quiver of $\ca$ has no loop at $X$.\label{item:noloopfun2}
    \end{enumerate}
\end{thm}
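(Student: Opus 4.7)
The plan is to reduce \ref{item:noloopfun1} to the strong no-loop theorem for Artin algebras of Igusa--Liu--Paquette by localising to a suitable finite-dimensional endomorphism $\kk$-algebra inside $\ca$.

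First, the equivalence of \ref{item:noloopfun1} and \ref{item:noloopfun2} is immediate from the tools already developed. Specialising $Y = X$ in \Cref{lem:extar} (which applies since $X$ admits a sink morphism by assumption) yields
\[
\dim_\kk \Ext^1_{\mod \ca}(S_X, S_X) \;=\; \dim_\kk \rad_\ca(X,X)/\rad^2_\ca(X,X),
\]
and by \Cref{defn:quiver} the right-hand side is exactly the number of loops at $X$ in the quiver of $\ca$. Hence it suffices to establish \ref{item:noloopfun1}.

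For this, I would apply \Cref{lem:localise} to $F = S_X$, which has finite projective dimension in $\mod \ca$ by hypothesis, obtaining an object $P_{S_X} \in \ca$. Setting $P \coloneqq P_{S_X} \oplus X$ ensures simultaneously that $\add P \supseteq \add P_{S_X}$ and that $X \in \add P$. Then $A \coloneqq \End_\ca(P)$ is a finite-dimensional $\kk$-algebra by \Cref{setup:algebraicsetup}\ref{ks}--\ref{hf}, and \Cref{lem:localise}\ref{item:finprdim}--\ref{item:extiso} provides $\prdim_A S_X(P) < \infty$ together with an isomorphism
\[
\Ext^1_{\mod \ca}(S_X, S_X) \;\cong\; \Ext^1_A(S_X(P), S_X(P)).
\]

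It remains to identify $S_X(P)$ as an $A$-module. By \Cref{rem:simple}, $S_X$ vanishes on every indecomposable summand of $P$ not isomorphic to $X$ and satisfies $S_X(X) \cong \kk$; consequently $S_X(P)$ is precisely the simple $A$-module at the primitive idempotent of $P$ corresponding to the summand $X$. The desired vanishing $\Ext^1_A(S_X(P), S_X(P)) = 0$ then follows directly from the strong no-loop theorem for Artin algebras \cite[Theorem 2.2]{IgusaLiuPaquette}, which asserts exactly that $\Ext^1_A(T,T) = 0$ for any simple $A$-module $T$ of finite projective dimension. The main point requiring care is the choice of $P$: enlarging $P_{S_X}$ by the summand $X$ is the minimal adjustment that simultaneously localises the Ext computation correctly \emph{and} makes the module $S_X(P)$ genuinely simple, so that the classical result applies verbatim.
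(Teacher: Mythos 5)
Your proposal is correct and takes essentially the same route as the paper: localise via \Cref{lem:localise} to a finite-dimensional algebra $A = \End_\ca(P)$ with $X \in \add P$, identify $S_X(P)$ as a simple $A$-module of finite projective dimension, and invoke Igusa--Liu--Paquette \cite[Theorem 2.2]{IgusaLiuPaquette}.

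The one place that needs tightening is the claim that $S_X(P)$ is simple. You choose $P := P_{S_X} \oplus X$, which may not be basic (for instance if $X$ already occurs as a summand of $P_{S_X}$), and you cite \Cref{rem:simple}, but that remark only gives $\dim_\kk S_X(P) = $ multiplicity of $X$ in $P$ --- it says nothing about the $A$-module structure of $S_X(P)$. If the multiplicity is $m>1$, then $\dim_\kk S_X(P) = m$, so simplicity is not immediate from dimension; one must argue via the idempotent structure (identify $S_X(P)$ with $e(A/\rad A)$ for a primitive idempotent $e$ corresponding to a copy of $X$, and use that this is a simple module over the matrix block containing $\bar e$). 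The paper sidesteps this entirely by taking $P$ to be a \emph{basic} object with $\add P \supseteq \add(P_{S_X} \oplus X)$, so that the multiplicity is $1$, hence $\dim_\kk S_X(P) = 1$, and a one-dimensional module over a finite-dimensional $\kk$-algebra is automatically simple. Your argument does work, but either pass to a basic $P$ as the paper does, or supply the short idempotent argument; citing \Cref{rem:simple} alone is not quite enough to justify the word ``consequently''.
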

\begin{proof}
   Using \Cref{lem:localise} we can choose a basic object $P$ with $X \in \add P$ so that $\prdim_A S_X(P) < \infty$, where $A \coloneqq \End_{\ca}(P)$ is a finite-dimensional $\kk$-algebra, and $\Ext^1_{\mod \ca}(S_X,S_X) \cong \Ext^1_{A}(S_X(P), S_X(P))$. 
   Since $P$ is basic, $X$ appears with multiplicity one in $P$ and, by \Cref{rem:simple}, the module $S_X(P)$ has $\kk$-dimension $1$.
   Hence, the $A$-module $S_X(P)$ is simple.
   So, $\Ext^1_{\mod \ca}(S_X,S_X) \cong \Ext^1_A(S_X(P), S_X(P)) = 0$, by \cite[Theorem 2.2]{IgusaLiuPaquette}.
    This shows part \ref{item:noloopfun1} and part \ref{item:noloopfun2} follows by \Cref{lem:extar}.
\end{proof}

Recall also our conventions from \Cref{Not:exact,Not:frobenius,not:abuse,defn:quiver,defn:ARquiver}, used in the following lemma and the following two corollaries. 
We want to combine \Cref{thm:noloopfun} with the following lemma 

\begin{lem}\label{lem:exactpd2}
    Let $\ce_\cp$ be an Frobenius exact category satisfying \Cref{setup:algebraicsetup}\ref{ks}, \ref{ab}.
    \begin{enumerate}[label={{(\alph*)}}]
        \item If $X \in \ind \ce \setminus \ind \cp$ admits a sink morphism in $\ul{\ce}_{\cp}$ then it admits a sink morphism in $\ce$ and $\prdim_{\mod \ce} S_X \leq 2$.\label{item:exactpd2}
        \item If $\ul{\ce}_{\cp}$ has a Serre-functor then every $X \in \ind \ce \setminus \ind \cp$ admits a sink morphism in $\ce$ and $\prdim_{\mod \ce} S_X \leq 2$.\label{item:exactpd2serre}
    \end{enumerate}

\end{lem}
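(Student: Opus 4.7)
The plan is to reduce both parts to a single explicit construction: build a \emph{deflation} sink morphism $[g,p]\colon Y \oplus P \twoheadrightarrow X$ in $\ce$ and then read off a length-two projective resolution of $S_X$ from the resulting conflation by applying the Yoneda embedding.

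For part (a), I would lift the given sink morphism $\bar g\colon \bar Y \to X$ in $\ul{\ce}_\cp$ to a morphism $g\colon Y \to X$ in $\ce$, and pick a deflation $p\colon P \twoheadrightarrow X$ from a projective $P \in \cp$, which exists because $\ce_\cp$ is Frobenius. Any such $p$ is automatically a right $\cp$-approximation of $X$: if $f\colon Z \to X$ factors as $Z \to P' \to X$ with $P' \in \cp$, then the map $P' \to X$ lifts through $p$ since $P'$ is projective. The composite $[g,p]\colon Y \oplus P \to X$ is therefore a deflation, and I claim it is a sink morphism in $\ce$: it is not a split epimorphism, for otherwise $\bar g$ would split in $\ul{\ce}_\cp$, contradicting it being a sink morphism; and any radical map $f\colon Z \to X$ with $Z$ indecomposable factors through $[g,p]$ (if $\bar f = 0$ then $f$ factors through $\cp$ and hence through $p$; otherwise $\bar f$ factors through $\bar g$ in $\ul{\ce}_\cp$, and lifting the factorization and correcting by a map through $p$ yields the desired factorization in $\ce$).

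Taking the kernel of the deflation $[g,p]$ in the Frobenius category $\ce_\cp$ gives a conflation $\tau X \rightarrowtail Y \oplus P \twoheadrightarrow X$. Applying $\ce(-,-)$, left exactness together with the sink property of $[g,p]$ (which gives $\im(\ce(-, Y \oplus P) \to \ce(-, X)) = \rad_\ce(-, X) = \ker(\ce(-, X) \twoheadrightarrow S_X)$) produces the exact sequence
\[
0 \to \ce(-, \tau X) \to \ce(-, Y \oplus P) \to \ce(-, X) \to S_X \to 0
\]
in $\mod \ce$. This is a projective resolution of $S_X$ of length two, which completes part (a). Part (b) follows formally: by Reiten--Van den Bergh \cite{ReitenVandenBergh}, a Serre functor on the $\Hom$-finite category $\ul{\ce}_\cp$ forces Auslander--Reiten triangles to exist everywhere, hence sink morphisms for every indecomposable object of $\ul{\ce}_\cp$; and every $X \in \ind \ce \setminus \ind \cp$ remains indecomposable in the stable category (as $\ce_\cp$ is Krull--Schmidt Frobenius and $X$ is non-projective), so part (a) applies.

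The main delicate step is the verification that $[g,p]$ is a sink morphism in $\ce$. The subtlest case is when $Z \cong X$ in $\ul{\ce}_\cp$: here one must invoke the fact that a non-projective indecomposable object in a Krull--Schmidt Frobenius category is determined up to isomorphism in $\ce$ by its image in the stable category, so $Z \cong X$ already in $\ce$, and then treat radical endomorphisms of $X$ by lifting factorizations from $\ul{\End}(X)$ to $\End(X)$ modulo morphisms factoring through $\cp$.
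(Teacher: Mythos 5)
Your argument follows essentially the same route as the paper: build a deflation of the form $[g,p]\colon Y\oplus P\twoheadrightarrow X$ from a lift $g$ of the stable sink morphism and a projective deflation $p$, and read off a length-two projective resolution of $S_X$ from the resulting conflation via the Yoneda embedding; part (b) via Reiten--Van den Bergh is identical. The one imprecision is that you verify $[g,p]$ is right almost split but not right minimal, so it is a priori only a right almost split morphism, not a sink morphism -- the paper handles this by first extracting a genuine sink morphism $f$ (via right minimization in a Krull--Schmidt category, \Cref{rem:krause} / \Cref{cor:sinklift}) and then observing that $f$ is a deflation because the projective deflation $g\colon P\to X$ factors through it (\cite[Proposition 7.6]{Buehler}). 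For the projective resolution your non-minimal $[g,p]$ works just as well, since all you need is $\im\,\ce(-,[g,p])=\rad_\ce(-,X)$; but to conclude that $X$ \emph{admits a sink morphism} in $\ce$ you should invoke that a right almost split morphism can be replaced by a right minimal one in a Krull--Schmidt category. With that word added, the two proofs are the same.
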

\begin{proof}
    \ref{item:exactpd2}: By \Cref{cor:sinklift} there is a sink morphism $f \colon Y \to X$ in $\ce$. There also exists a deflation $g \colon P \to X$ with $P \in \cp$, which is not a split epimorphism as $X \in \ind \ce \setminus \ind \cp$.
    As $f$ is a sink morphism, $g$ factors through $f$ which is therefore a deflation by \cite[Proposition 7.6]{Buehler}.
    Hence, there is a conflation 
    \begin{center}
        \begin{tikzcd} 
            K \ar[r, rightarrowtail] & Y \ar[r, twoheadrightarrow, "f"] & X 
        \end{tikzcd}
    \end{center}
     in $\ce_\cp$. Applying $\ce(-,-)$ to this conflation yields a projective resolution of $S_X$.
     
     \ref{item:exactpd2serre}: If $\ul{\ce}_{\cp}$ has a Serre-functor, then $\ul{\ce}_\cp$ has Auslander--Reiten triangles, in the sense of \cite[Section I.4]{HappelBook}, by \cite[Theorem A]{ReitenVandenBergh} and hence enough sink morphisms. The claim follows from \ref{item:exactpd2}.
\end{proof}

\begin{cor}\label{cor:noloops3}
   Suppose $\ce$ satisfies \Cref{setup:algebraicsetup}\ref{ks}-\ref{ab} and $\ce_\cp$ is a Frobenius exact structure on $\ce$ such that $\ul{\ce}_{\cp}$ has a Serre-functor. 
   
   Then the AR-quiver of $\ul{\ce}_{\cp}$ has no loops.
\end{cor}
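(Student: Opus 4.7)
The plan is to combine the categorical no-loop theorem \Cref{thm:noloopfun} with the homological bound from \Cref{lem:exactpd2}\ref{item:exactpd2serre}, and then transport the conclusion from the quiver of $\ce$ down to the quiver of $\ul{\ce}_\cp$ via the canonical quotient functor $\Phi\colon\ce\to\ul{\ce}_\cp$.

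First, I would observe that, by \Cref{defn:ARquiver}, the underlying quiver of the AR-quiver of $\ul{\ce}_\cp$ coincides with the quiver of $\ul{\ce}_\cp$ in the sense of \Cref{defn:quiver}, so it suffices to exclude loops there. A vertex of this quiver can be represented by some indecomposable $X\in\ind\ce\setminus\ind\cp$; this uses that $\ce$ is Krull--Schmidt and that $\cp\subseteq\ce$ is closed under direct summands, so every indecomposable object of $\ul{\ce}_\cp$ has an indecomposable, non-projective lift in $\ce$. Since $\ul{\ce}_\cp$ has a Serre functor by hypothesis, \Cref{lem:exactpd2}\ref{item:exactpd2serre} then both furnishes a sink morphism for $X$ in $\ce$ and yields the estimate $\prdim_{\mod\ce}S_X\leq 2<\infty$.

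With these two ingredients in place, I would apply \Cref{thm:noloopfun} to the category $\ca=\ce$ (which satisfies \Cref{setup:algebraicsetup}\ref{ks}--\ref{ab} by assumption) to conclude $\Ext^1_{\mod\ce}(S_X,S_X)=0$, equivalently, that the quiver of $\ce$ has no loop at $X$. Finally, I would invoke \Cref{lem:cyclequot} applied to the full, essentially surjective additive quotient functor $\Phi\colon\ce\to\ul{\ce}_\cp$: our chosen $X$ lies in $\Phi^{-1}(\ind\ul{\ce}_\cp)\cap\ind\ce$, and therefore the absence of a loop at $X$ in the quiver of $\ce$ forces the absence of a loop at $\Phi(X)$ in the quiver of $\ul{\ce}_\cp$. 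Since this applies at every vertex, the AR-quiver of $\ul{\ce}_\cp$ is loop-free. The only delicate point is to justify the existence of indecomposable non-projective lifts of vertices of $\ul{\ce}_\cp$ to $\ce$, but this is bookkeeping with the Krull--Schmidt property rather than a genuine obstacle; the real work has already been done in \Cref{thm:noloopfun} and \Cref{lem:exactpd2}.
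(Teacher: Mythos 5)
Your proposal is correct and follows exactly the route taken in the paper: invoke \Cref{lem:exactpd2}\ref{item:exactpd2serre} to secure sink morphisms and $\prdim_{\mod\ce}S_X\leq 2$ at non-projective indecomposables, apply the categorical no-loop theorem \Cref{thm:noloopfun} in $\ce$, and transfer via \Cref{lem:cyclequot} through the quotient $\ce\to\ce/\cp$. The paper additionally cites \Cref{lem:radicalfunctor} for the lifting of indecomposables to $\ind\ce\setminus\ind\cp$, a point you re-derive by hand from the Krull--Schmidt property, but this is the same argument.
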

\begin{proof}
    By \Cref{thm:noloopfun} and \Cref{lem:exactpd2} there are no loops at the non-projective vertices of the quiver of $\ce$. By \Cref{lem:cyclequot,lem:radicalfunctor} there are no loops at any vertices in the AR-quiver of $\ul{\ce}_{\cp}$.
\end{proof}

Also recall the notions related to Gorenstein rings from \Cref{defn:Goring,D:GProj}.

\begin{cor}\label{T:ARnoLoops}
   Let $R$ be a finite-dimensional Gorenstein $\kk$-algebra and $\GP_\cp(R)$ be a Frobenius exact structure on $\GP(R)$.
   
   Then the AR-quiver of $\underline{\GP}_{\cp}(R)$ has no loops.
\end{cor}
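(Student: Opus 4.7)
The plan is to reduce this corollary directly to \Cref{cor:noloops3} by verifying its hypotheses in the setting $\ce = \GP(R)$. The statement of \Cref{cor:noloops3} requires two inputs: first, that the underlying additive category $\GP(R)$ satisfies \Cref{setup:algebraicsetup}\ref{ks}-\ref{ab}; and second, that the stable category $\underline{\GP}_\cp(R)$ admits a Serre functor.

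For the first input, I would simply invoke \Cref{cor:GPisreallynice}, which states that $\GP(R)$ satisfies all of \Cref{setup:algebraicsetup}\ref{ks}-\ref{fg} whenever $R$ is a finite-dimensional Gorenstein $\kk$-algebra (in particular, the first three conditions that we need). Notice that these properties depend only on the underlying additive structure of $\GP(R)$ and not on the chosen Frobenius exact structure $\GP_\cp(R)$, so they are automatic here.

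For the second input, I would invoke \Cref{P:SerreFunctor} (stated in this paper as the reference to \Cref{cor:Serre}), which guarantees that for any Frobenius exact structure $\GP_\cp(R)$ on $\GP(R)$, the associated stable category $\underline{\GP}_\cp(R)$ carries a Serre functor. This is the only place where the specific exact structure $\cp$ enters the argument.

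With both hypotheses verified, \Cref{cor:noloops3} applied to $\ce_\cp = \GP_\cp(R)$ yields that the AR-quiver of $\underline{\GP}_\cp(R)$ contains no loops, completing the proof. There is no real obstacle here: the entire content of the corollary is the packaging of \Cref{cor:noloops3}, \Cref{cor:GPisreallynice}, and \Cref{P:SerreFunctor} into one convenient statement tailored to the setting of Gorenstein projective modules, which is the form needed for \Cref{T:PropertiesDsgR}\ref{item:noloops} and ultimately for \Cref{T:Main}.
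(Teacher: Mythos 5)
Your proposal is correct and matches the paper's proof exactly: both reduce to \Cref{cor:noloops3} by citing \Cref{cor:GPisreallynice} for the Setup conditions on the underlying additive category $\GP(R)$ and \Cref{P:SerreFunctor} for the existence of a Serre functor on $\underline{\GP}_{\cp}(R)$.
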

\begin{proof}
   By \Cref{cor:GPisreallynice}, the category $\GP(R)$ satisfies \Cref{setup:algebraicsetup}\ref{ks}-\ref{ab} and has a Serre-functor by \Cref{P:SerreFunctor}. 
   Hence, we can apply \Cref{cor:noloops3}.
\end{proof}

\subsection{The categorical `no-2-cycle theorem'}
In this section, we show a result similar to \Cref{thm:noloopfun}, but for $2$-cycles, by using the same trick of applying \Cref{lem:localise}.
First, we generalize Geiß--Leclerc--Schröer's \mbox{\cite[Proposition 3.11]{GLSInv}}, which builds on earlier work of Lenzing and Bongartz. We build on work of Igusa--Liu--Paquette \cite{IgusaLiuPaquette}.

\begin{lem}\label{lem:glsgeneralized}
   Let $A \coloneqq \kk Q/I$ be a finite-dimensional $\kk$-algebra, where $I \lhd \kk Q$ is an admissible ideal. 
   Assume that there are distinct vertices $i$ and $j$ in $Q$ such that $\prdim_A (S_i \oplus S_j) < \infty$ and such that $\Ext_A^2(S_i, S_i) = 0$ and $\Ext_A^2(S_j, S_j) = 0$. 
   Then 
   \begin{enumerate}[label={(\alph*)}]
      \item $\Ext^1_{A}(S_i, S_j) = 0$ or $\Ext_A^1(S_j, S_i) = 0$.\label{item:glsgeneralized1}
      \item Equivalently, $Q$ contains no $2$-cycle $i \leftrightarrows j$.\label{item:glsgeneralized2}
   \end{enumerate}
\end{lem}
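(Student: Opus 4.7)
The equivalence of (a) and (b) is standard: since $I$ is admissible, $\dim_\kk \Ext_A^1(S_i, S_j)$ equals the number of arrows $i \to j$ in $Q$, so a $2$-cycle $i \leftrightarrows j$ in $Q$ exists precisely when both $\Ext_A^1(S_i, S_j)$ and $\Ext_A^1(S_j, S_i)$ are nonzero. Hence it suffices to prove (a).

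I plan to argue (a) by contradiction, assuming there are arrows $\alpha\colon i \to j$ and $\beta\colon j \to i$ in $Q$. The argument splits into two cases according to whether the length-two paths $\alpha\beta$ and $\beta\alpha$ lie in $I$. If $\alpha\beta \in I$, then $\alpha\beta$ is a length-two cycle at $i$ lying in $I$; admissibility ($I \subseteq J^2$, where $J$ is the arrow ideal) gives $JI + IJ \subseteq J^3$, so $\alpha\beta$ gives a nonzero class in $e_i(I/(JI + IJ))e_i$, yielding $\dim_\kk \Ext_A^2(S_i, S_i) \geq 1$, contrary to hypothesis. Symmetrically, $\beta\alpha \in I$ contradicts $\Ext_A^2(S_j, S_j) = 0$. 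So it remains to rule out the case $\alpha\beta, \beta\alpha \notin I$.

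In this remaining case I would adapt the Cartan-matrix / Grothendieck-group strategy behind the strong no-loop theorem of Igusa--Liu--Paquette \cite{Igusa, IgusaLiuPaquette} to a two-dimensional setting, as in \cite[Proposition 3.11]{GLSInv}. Let $C$ denote the Cartan matrix of $A$, with entries $c_{k\ell} = \dim_\kk e_k A e_\ell$. For $m \in \{i,j\}$, finiteness of $\prdim_A S_m$ makes the alternating sum $v_m := \sum_{\ell \geq 0}(-1)^\ell [P_\ell^m] \in \ZZ^n$ over the minimal projective resolution of $S_m$ well defined, and a standard $K_0$-computation gives $C v_m = e_m$, where $n$ is the number of vertices of $Q$; the $k$-th coordinate of $v_m$ equals $\sum_{\ell \geq 0}(-1)^\ell \dim_\kk \Ext_A^\ell(S_m, S_k)$. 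The strong no-loop theorem (applicable since $\prdim_A S_i, \prdim_A S_j < \infty$) combined with $\Ext_A^2(S_i,S_i) = \Ext_A^2(S_j,S_j) = 0$ pins down the leading three terms of $(v_i)_i$ and $(v_j)_j$, while the assumed $2$-cycle forces $(v_i)_j$ and $(v_j)_i$ to begin at order $-a$ and $-b$, where $a := \dim_\kk\Ext_A^1(S_i,S_j) \geq 1$ and $b := \dim_\kk\Ext_A^1(S_j,S_i) \geq 1$. Restricting $Cv_i = e_i$ and $Cv_j = e_j$ to the $2 \times 2$ block indexed by $\{i,j\}$, and using that $c_{ii}, c_{jj} \geq 1$ (from trivial paths) and $c_{ij}, c_{ji} \geq 1$ (from $\beta$ and $\alpha$), one then runs the positivity/parity argument of \cite{IgusaLiuPaquette, GLSInv} on this block to extract a contradiction.

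The main obstacle is controlling the higher-degree tails $\sum_{\ell \geq 3}(-1)^\ell \dim_\kk \Ext_A^\ell(S_m, S_k)$ in the remaining case so that the leading $2 \times 2$ contradiction survives; here the matrix-positivity analysis of Igusa, strengthened as in \cite{GLSInv}, should furnish the required control and close the argument.
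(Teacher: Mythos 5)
Your case split is in the wrong place, and the hard case is left incomplete. The right dichotomy is not whether $\alpha\beta$ and $\beta\alpha$ literally lie in $I$, but whether they appear as summands of \emph{minimal relations} of $A$ --- i.e., whether they are \emph{cyclically free} in the sense of Igusa--Liu--Paquette. Your case (i) (``$\alpha\beta \in I$'') treats only the situation where $\alpha\beta$ is itself a minimal relation; if instead $\alpha\beta$ occurs as a summand of a longer minimal relation $\alpha\beta + c$ with $c \neq -\alpha\beta$, it falls into your case (ii), yet there \cite[Lemma 3.10]{GLSInv} already gives $\Ext^2_A(S_i, S_i) \neq 0$, so this sub-case is ruled out by hypothesis and should be eliminated before anything Cartan-theoretic begins. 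The paper handles this cleanly: it shows $\alpha\beta$ and $\beta\alpha$ are cyclically free precisely by noting that any minimal relation having $\alpha\beta$ (resp.\ $\beta\alpha$) as a summand would, via \cite[Lemma 3.10]{GLSInv}, force $\Ext^2_A(S_i, S_i) \neq 0$ (resp.\ $\Ext^2_A(S_j, S_j) \neq 0$), contradicting the standing assumptions.

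More importantly, once cyclic freeness is established, the paper simply invokes \cite[Theorem 2.3]{IgusaLiuPaquette} with $e = e_i + e_j$ (the common support of $\alpha\beta$ and $\beta\alpha$) to conclude $\prdim_A(S_i \oplus S_j) = \infty$ in one step --- no Cartan-matrix or Grothendieck-group computation at all. Your plan to re-run the Cartan-matrix positivity argument of \cite{Igusa, GLSInv} in the $2\times 2$ block indexed by $\{i,j\}$ amounts to re-deriving a special case of \cite[Theorem 2.3]{IgusaLiuPaquette} from scratch, and you explicitly flag that you have not worked out how to control the higher-degree tails $\sum_{\ell \geq 3}(-1)^{\ell}\dim_\kk\Ext_A^\ell(S_m,S_k)$. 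That control is precisely the technical content of the Igusa--Liu--Paquette theorem; what is missing is not a computational detail but the result you should be quoting.
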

\begin{proof}
    Suppose the contrary of \ref{item:glsgeneralized2}, i.e.\ there are arrows $\alpha \colon i \to j$ and $\beta \colon j \to i$ in $Q$. Let $\sigma_1 \coloneqq \alpha \beta$ and $\sigma_2 \coloneqq \beta \alpha$ as well as $e \coloneqq e_i + e_j$. 
    
    We show that $\sigma_1$ and $\sigma_2$ are cyclically free, in the sense of \cite[page 2739]{IgusaLiuPaquette}, that is $\sigma_1$ and $\sigma_2$ are not summands of a \emph{minimal relation} $\rho := \sum_{l=1}^n \lambda_l p_l \in I$ of $A$, where the $\lambda_l \in \kk$ are non-zero scalars, the $p_l$ are distinct paths in $Q$ from $i$ to $i$ and $\sum_{l \in \Omega} \lambda_l p_l \notin I$ for all $\emptyset \subsetneq \Omega \subsetneq \{1,\dots,n\}$. 
    Indeed, if $p_l = \sigma_1$ for some $1 \leq l \leq n$ then $\rho / \lambda_l$ would be of the form $\alpha \beta + c$ where $c \neq - \alpha \beta$ is a sum of paths from $i$ to $i$. This would imply $\Ext^2_A(S_i, S_i) \neq 0$ by \cite[Lemma 3.10]{GLSInv}. Similarly, $p_l \neq \sigma_2$ for $1 \leq l \leq n$. This shows that $\sigma_1$ and $\sigma_2$ are cyclically free.
    
    Since $e = e_i + e_j$ is the support of $\sigma_1$ and $\sigma_2$ we can apply \cite[Theorem 2.3]{IgusaLiuPaquette} and obtain $\prdim (S_i \oplus S_j) = \infty$, which is a contradiction. Hence, part \ref{item:glsgeneralized2} is true and part \ref{item:glsgeneralized1} follows from part \ref{item:glsgeneralized2} by \Cref{lem:extar}, cf.\ also \cite[Lemma III.2.12]{ASS}.
\end{proof}

Recall the definition of the quiver of a category from \Cref{defn:quiver}.
\begin{thm}\label{lem:glsfunctor}
    Suppose $\ca$ is a category satisfying \Cref{setup:algebraicsetup}\ref{ks}-\ref{ab}. 
    If $X,Y \in \ind \ca$ admit sink morphisms and satisfy $\Ext^2_{\mod \ca}(S_{X}, S_{X}) = 0$ and $\Ext^2_{\mod \ca}(S_Y, S_Y) = 0$ as well as $\prdim_{\mod \ca} (S_{X} \oplus S_{Y}) < \infty$ then 
    \begin{enumerate}[label={(\alph*)}]
       \item $\Ext^1_{\mod \ca}(S_X, S_Y) = 0$ or $\Ext^1_{\mod \ca}(S_Y, S_X) =0$.\label{item:glsfunctor1}
       \item Equivalently, the quiver of $\ca$ has no $2$-cylce $X \leftrightarrows Y$.\label{item:glsfunctor2}
    \end{enumerate}
\end{thm}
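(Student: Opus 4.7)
The plan is to mirror the approach used in the proof of \Cref{thm:noloopfun}: reduce the question to the module category over a finite-dimensional $\kk$-algebra via \Cref{lem:localise}, and then apply the algebra-level statement of \Cref{lem:glsgeneralized}.

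First, I would apply \Cref{lem:localise} to the functor $F = S_X \oplus S_Y \in \mod \ca$, which has finite projective dimension by assumption. This yields an object $P_F \in \ca$ such that for any $P \in \ca$ with $\add P \supset \add P_F$, setting $A \coloneqq \End_\ca(P)$ gives a natural isomorphism $\Ext^k_{\mod \ca}(F, G) \cong \Ext^k_A(F(P), G(P))$ for every $G \in \mod \ca$ and every $k \geq 0$. By enlarging $P_F$ if necessary, I may choose $P$ to be basic with $X, Y \in \add P$, using \Cref{setup:algebraicsetup}\ref{ks}. Then $A$ is a basic finite-dimensional $\kk$-algebra by \Cref{setup:algebraicsetup}\ref{hf}, hence $A \cong \kk Q / I$ for some quiver $Q$ and an admissible ideal $I$, and $X, Y$ correspond to distinct vertices $i \neq j$ of $Q$. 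Combining \Cref{rem:simple} with the standard identification of simples over basic finite-dimensional $\kk$-algebras, one checks that $S_X(P) \cong S_i$ and $S_Y(P) \cong S_j$ as simple $A$-modules.

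Second, I would transport the hypotheses via the isomorphism from \Cref{lem:localise}: the assumption $\prdim_{\mod \ca}(S_X \oplus S_Y) < \infty$ becomes $\prdim_A(S_i \oplus S_j) < \infty$, while $\Ext^2_{\mod \ca}(S_X, S_X) = 0 = \Ext^2_{\mod \ca}(S_Y, S_Y)$ becomes $\Ext^2_A(S_i, S_i) = 0 = \Ext^2_A(S_j, S_j)$. \Cref{lem:glsgeneralized} therefore yields that $Q$ contains no $2$-cycle $i \leftrightarrows j$, equivalently $\Ext^1_A(S_i, S_j) = 0$ or $\Ext^1_A(S_j, S_i) = 0$. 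Pulling this vanishing back along \Cref{lem:localise} proves part~\ref{item:glsfunctor1}. Part~\ref{item:glsfunctor2} then follows from part~\ref{item:glsfunctor1} together with \Cref{lem:extar}, which identifies arrows in the quiver of $\ca$ with non-vanishing of the corresponding $\Ext^1$-groups in $\mod \ca$.

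The main delicate point is the initial choice of $P$: one needs it simultaneously basic, containing both $X$ and $Y$ as summands, and large enough that $\add P \supset \add P_F$, so that $X$ and $Y$ become distinct vertices of the Gabriel quiver of $A$ and the corresponding simples are exactly $S_X(P)$ and $S_Y(P)$. Once this setup is established, the reduction is a direct translation; the actual representation-theoretic content lies in \Cref{lem:localise} (the categorical-to-algebraic transfer) and \Cref{lem:glsgeneralized} (the Igusa--Liu--Paquette-type obstruction for $2$-cycles at the algebra level).
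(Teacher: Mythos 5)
Your proof is correct and follows essentially the same strategy as the paper: reduce via \Cref{lem:localise} to the module category of a basic finite-dimensional algebra $A = \End_\ca(P)$, identify $S_X(P)$ and $S_Y(P)$ as distinct simple $A$-modules, transport the hypotheses, invoke \Cref{lem:glsgeneralized}, and conclude with \Cref{lem:extar}. The one cosmetic difference is that you apply \Cref{lem:localise} once to the sum $F = S_X \oplus S_Y$, whereas the paper applies it to $S_X$ and $S_Y$ separately (taking $P$ as a basic generator of $\add(X \oplus Y \oplus P_{S_X} \oplus P_{S_Y})$); since the hypotheses and conclusion involve the individual groups $\Ext^i_{\mod\ca}(S_Z, S_{Z'})$ rather than $\Ext^i_{\mod\ca}(F,-)$, you implicitly need the isomorphism of \Cref{lem:localise} to respect the direct-sum decomposition in the first variable. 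This is true (e.g.\ because a finite projective resolution of $F$ can be taken to be the direct sum of resolutions of $S_X$ and $S_Y$, so $P_F = P_{S_X} \oplus P_{S_Y}$ works for each summand individually), but it is slightly cleaner to invoke the lemma twice as the paper does.
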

\begin{proof}
   Using \Cref{lem:localise} there are objects $P_{S_Z}$ for $Z \in \{X,Y\}$ so that for every $P$ with $\add P_{S_Z} \subseteq \add P$ the module $S_{Z}(P)$ has finite projective dimension over $\End_{\ca}(P)$ and $\Ext^i_{\mod \ca}(S_{Z},G) \cong \Ext^i_{\End_{\ca}(P)}(S_{Z}(P), G(P))$ for $G \in \mod \ca$ and $i \in \mathbb{N}$.
   Let $P$ be a basic additive generator of $\add(X \oplus Y \oplus P_{S_X} \oplus P_{S_Y})$ and $A$ be the basic finite-dimensional $\kk$-algebra $\End_{\ca}(P)$.
    Notice, $A$ is the quotient of a quiver algebra $\kk Q$ by an admissible ideal $I \lhd \kk Q$, by \cite[Theorem II.3.7]{ASS}.
    
    By the above, we know that $S_{X}(P) \oplus S_{Y}(P)$ has finite projective dimension over $A$ and $\Ext_{\mod \ca}^i(S_{Z}, S_{Z'}) \cong \Ext_A^i(S_{Z}(P), S_{Z'}(P))$ for $i = 1,2$ and $Z,Z' \in \{X,Y\}$.
    Using \Cref{rem:simple} and $P$ being basic with $X, Y \in \add P$, we obtain that $S_{Z}(P)$ has $\kk$-dimension $1$ for $Z \in \{X,Y\}$ and is hence simple.
    By \Cref{lem:glsgeneralized}, we have $\Ext_A^1(S_X(P), S_Y(P)) = 0$ or $\Ext_A^1(S_X(P),S_Y(P)) = 0$. 
    This shows part \ref{item:glsfunctor1} and part \ref{item:glsfunctor2} follows from part \ref{item:glsfunctor1} by \Cref{lem:extar}.
\end{proof}

\subsection{Obstructions from cluster-tilting objects}
   The aim of this section is to generalize \cite[Proposition II.1.11]{BIRSc} to show \Cref{T:PropertiesDsgR}\ref{item:noloops2cycles}.
   Indeed, we use the same strategy as in loc.\ cit., we just have to take care of more technicalities. 
   
   Recall our conventions regarding exact categories and the stable category of Frobenius exact categories, see \Cref{sec:aussolred}. 
   In particular, recall \Cref{Not:exact,not:abuse,Not:frobenius}.
   Throughout this section we assume that $\ce$ is a category satisfying \Cref{setup:algebraicsetup}\ref{ks} and that $\ce_\cp$ is a Frobenius exact structure on $\ce$ such that its stable category $\ul{\ce}_\cp = (\ce/\cp, \smash{\Sigma_{\ul{\ce}_\cp}}, \smash{\Delta_{\ul{\ce}_\cp}})$ is $2$-Calabi--Yau.

\begin{defn}\label{defn:ct}
   A subcategory $\ct$ of $\ce$ (or $\ce/\cp$) is called a \emph{cluster-tilting subcategory}\footnote{This is sometimes called $2$-cluster-tilting subcategory.}\footnote{In a non-Calabi--Yau setting the definition in \Cref{remark:proj in ct}\ref{item:otherdef} is different and more common.} of $\ce_{\cp}$ (or $\ul{\ce}_{\cp}$) if the following conditions hold. 
\begin{enumerate}[label={{(\alph*)}}]
   \item $\Ext^1(T, T')=0$ for all $T,T' \in \ct$.
   \item If $X$ is an object such that $\Ext^1(T, X)=0$ for all $T \in \ct$, then $X \in \ct$.
   \item $\ct \subseteq \ce$ (or $\ct \subseteq \ce/\cp$) is functorially finite.
\end{enumerate}
Here, $\Ext^1(-,-) := \Ext^1_{\ce_{\cp}}(-,-)$ (or $\Ext^1(-,-) := {\Hom_{\ce/\cp}}(-,\Sigma_{\ul{\ce}_\cp}-)$).
\end{defn}

Cluster-tilting subcategories $\ct$ of $\ul{\ce}_{\cp}$ with an additive generator are precisely the subcategories $\ct = \add T$ for a cluster-tilting object $T$ in the sense of \cref{defn:ctsmall}. Notice, because $\ul{\ce}_{\cp}$ is $\Hom$-finite the condition that $\add T$ is functorially finite is automatic for any (cluster-tilting) object $T \in \ce/\cp$.

\begin{rem} Suppose we are in the setting of \Cref{defn:ct}.
\begin{enumerate}[label={{(\alph*)}}]\label{remark:proj in ct}
   \item By definition any cluster-tilting subcategory is an additive subcategory which is closed under direct summands and isomorphisms.
   \item Suppose $\ct$ is a cluster-tilting subcategory of $\ce_{\cp}$.
         Then $\cp \subseteq \ct$ by condition (b) and the Frobenius property of $\ce_{\cp}$\label{item:proj in ct}.
         This implies that any left (right) $\ct$-approximation is an inflation (deflation) by a similar argument as in the proof of \Cref{lem:exactpd2}\ref{item:exactpd2}.
   \item\label{item:otherdef} One can check that $\ct$ is a cluster-tilting subcategory of $\mathcal{E}_{\cp}$ (or $\ul{\ce}_{\cp}$) if and only if it is functorially finite in $\ce$ (or $\ce/\cp$) and
   \begin{align*}
      \ct &= \set{X }{\text{$\Ext^1_{}(T, X)=0$ for all $T \in \ct$}} \\
          &= \set{X }{\text{$\Ext^1_{}(X, T)=0$ for all $T \in \ct$}}.
   \end{align*}
   The latter definition is used in \cite{BIKR} in our setting. 
   The second equality follows from the $2$-Calabi--Yau property, see \cite[Proposition I.1.1]{BIRSc} for $\ce_\cp$.
   This definition is also used in \cite[Definition 4.13]{Jasso}.
\end{enumerate}
\end{rem}

The next lemma shows that one can pass between cluster-tilting subcategories of $\ce_\cp$ and $\ul{\ce}_\cp$, see \cite[Lemma I.1.3.]{BIRSc}.

\begin{lem}\label{LIRSc}
   Let $\ct$ be an additive subcategory of $\ce$ containing all objects in $\cp$. 
   
   Then $\ct$ is a cluster-tilting subcategory of $\mathcal{E}_{\cp}$ if and only if $\ct/\cp$ is a cluster-tilting subcategory of $\ul{\ce}_{\cp}$.
\end{lem}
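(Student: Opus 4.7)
The proof hinges on the Happel isomorphism
\[
\Ext^1_{\ce_\cp}(X, Y) \cong \Hom_{\ul{\ce}_\cp}\bigl(X,\Sigma_{\ul{\ce}_\cp} Y\bigr),
\]
which is functorial in $X, Y \in \ce$ and encodes the triangulated structure on $\ul{\ce}_\cp$. Under this identification, condition (a) of \Cref{defn:ct} for $\ct$ in $\ce_\cp$ and for $\ct/\cp$ in $\ul{\ce}_\cp$ coincide, and the $\Ext^1$-vanishing hypothesis in condition (b) translates verbatim to the corresponding $\Hom$-vanishing in $\ul{\ce}_\cp$. Together with the assumption $\cp\subseteq\ct$, this makes it natural to verify the three defining conditions of cluster-tilting one at a time in each direction.

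Condition (a) is immediate from the displayed isomorphism applied to pairs $T,T'\in\ct$. For condition (b) in the forward direction, given $\bar X\in\ce/\cp$ with $\Hom_{\ul{\ce}_\cp}(\ct/\cp,\Sigma\bar X)=0$, I lift $\bar X$ to some $X\in\ce$, deduce $\Ext^1_{\ce_\cp}(\ct,X)=0$ from the isomorphism, apply condition (b) for $\ct$ in $\ce_\cp$ to conclude $X\in\ct$, and hence $\bar X\in\ct/\cp$. In the backward direction, given $X\in\ce$ with $\Ext^1_{\ce_\cp}(\ct,X)=0$, the isomorphism gives $\Hom_{\ul{\ce}_\cp}(\ct/\cp,\Sigma\bar X)=0$, so $\bar X\cong \overline{\tilde X}$ in $\ul{\ce}_\cp$ for some $\tilde X\in\ct$. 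The standard Frobenius lifting lemma (an isomorphism in the stable category lifts to an isomorphism in $\ce$ after adding projectives) then produces $P,Q\in\cp$ with $X\oplus P\cong\tilde X\oplus Q$ in $\ce$. Since $\cp\subseteq\ct$ the right-hand side lies in $\ct$, and summand closure of $\ct$, which follows from closure of $\ct/\cp$ under summands in $\ul{\ce}_\cp$ combined with the same Frobenius trick, yields $X\in\ct$.

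For condition (c) in the forward direction, a right $\ct$-approximation $f\colon T\to X$ in $\ce$ descends to a right $\ct/\cp$-approximation of $\bar X$ in $\ce/\cp$: any morphism $T'\to\bar X$ with $T'\in\ct/\cp$ admits a lift to $\ce$, factors through $f$ there, and this factorization passes to the quotient. In the backward direction I pick a right $\ct/\cp$-approximation $\bar f\colon\bar T\to\bar X$, lift $\bar f$ to $f\colon T\to X$ in $\ce$, and combine it with a deflation $p\colon P\to X$ from $P\in\cp\subseteq\ct$, available by the Frobenius property. Then $(f,p)\colon T\oplus P\to X$ is a right $\ct$-approximation: any $g\colon T'\to X$ with $T'\in\ct$ differs from a lift of some factorization of $\bar g$ through $\bar f$ by a morphism factoring through $\cp$, and hence through $p$. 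Left approximations are dual.

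The main subtlety is the summand-closure step in the backward direction of condition (b), which is resolved by the Frobenius lifting lemma together with the hypothesis $\cp\subseteq\ct$. The overall argument parallels \cite[Lemma I.1.3]{BIRSc}, whose proof can be adapted to our slightly more general setting.
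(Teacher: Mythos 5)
The paper does not prove this lemma; it simply cites \cite[Lemma I.1.3]{BIRSc}. Your proof fills in the argument, and the route you take---translating each of the three defining conditions (\ref{defn:ct}) across the Happel isomorphism $\Ext^1_{\ce_\cp}(X,Y) \cong \Hom_{\ul{\ce}_\cp}(X,\Sigma Y)$ and the Frobenius lifting lemma---is the natural and standard one, essentially the argument of \cite{BIRSc}. Your handling of condition (a), of condition (b) in the forward direction, and of functorial finiteness (c) in both directions (descending an approximation in one direction, lifting and combining with a deflation from a projective in the other) is correct.

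The one place where I would push back is your treatment of summand closure in the backward direction of condition (b). Having obtained $X \oplus P \cong \tilde X \oplus Q$ with $\tilde X \oplus Q \in \ct$, you conclude $X \in \ct$ by invoking ``summand closure of $\ct$, which follows from closure of $\ct/\cp$ under summands in $\ul{\ce}_\cp$ combined with the same Frobenius trick.'' That deduction is circular: if $Y$ is a summand of some $T \in \ct$, passing to $\ce/\cp$ only tells you $\bar Y \cong \bar{T'}$ for some $T' \in \ct$, whence $Y \oplus P' \cong T' \oplus Q'$ for projectives $P', Q'$; to descend from $Y \oplus P' \in \ct$ to $Y \in \ct$ you would again need summand closure of $\ct$. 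The genuine resolution is a matter of convention: in \cite{BIRSc} a ``subcategory'' of an additive category is by definition closed under direct summands, and the hypothesis of the lemma should be read that way. (In this paper the phrase ``additive subcategory'' does not automatically carry summand closure---see the hypotheses of \Cref{new Frobenius structure}, where it is listed separately---so strictly speaking the statement here should either add that hypothesis or note it is inherited from the cluster-tilting assumption in the nontrivial direction.) With that convention made explicit, your proof is complete and correct; without it, the step is a genuine gap, and your stated justification does not close it.
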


The following is an easy modification of \cite[Proposition II.1.11(a)]{BIRSc} to our more general setting concerning cluster-tilting \emph{subcategories} instead of \emph{objects}.
    Notice also that a similar statement on the level of stable categories is not true in general, cf.\ \cite[Corollary on page 128]{KellerReiten07}.

\begin{lem}\label{lem:fingldim}
    Suppose $\ct$ is a cluster-tilting subcategory of $\ce_{\cp}$. 
    Then 
    \begin{enumerate}[label={(\alph*)}]
       \item $\prdim_{\mod \ct} \ce(-|_{\ct},X) \leq 1$ for $X \in \ce$ and\label{item:projdim1}
       \item $\gldim \mod \ct \leq \gldim \mod \ce + 1$.\label{item:fingldim}
    \end{enumerate}
\end{lem}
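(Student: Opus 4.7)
The plan is to follow the strategy of Burban--Iyama--Reiten--Scott \cite[Proposition II.1.11(a)]{BIRSc}, generalised from cluster-tilting objects to cluster-tilting subcategories.

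For part (a), given $X \in \ce$, I would first choose a right $\ct$-approximation $t_0 \colon T_0 \to X$, which exists because $\ct$ is functorially finite in $\ce$. Using $\cp \subseteq \ct$ (cf.\ \Cref{remark:proj in ct}\ref{item:proj in ct}), any deflation $P \twoheadrightarrow X$ with $P \in \cp$ factors through $t_0$, so $t_0$ itself is a deflation by \cite[Proposition 7.6]{Buehler}, just as in the proof of \Cref{lem:exactpd2}\ref{item:exactpd2}. This yields a conflation $K \rightarrowtail T_0 \twoheadrightarrow X$ in $\ce_\cp$. The crux is to show $K \in \ct$: applying $\ce_\cp(T',-)$ to this conflation for any $T' \in \ct$, the long exact Ext-sequence combines $\Ext^1_{\ce_\cp}(T', T_0) = 0$ (since $T_0 \in \ct$) with surjectivity of $\ce(T', T_0) \to \ce(T', X)$ (from the approximation property) to force $\Ext^1_{\ce_\cp}(T', K) = 0$. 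As this holds for every $T' \in \ct$, the cluster-tilting property gives $K \in \ct$. Applying Yoneda and restricting to $\ct$ then produces a short projective resolution of $\ce(-|_\ct, X)$ in $\mod \ct$ of length at most $1$.

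For part (b), given $F \in \mod \ct$ with projective presentation $\ce(-|_\ct, T_1) \xrightarrow{f \cdot -} \ce(-|_\ct, T_0) \to F \to 0$, I would lift the morphism $f \colon T_1 \to T_0$ (a priori in $\ct$, but also in $\ce$) to a morphism $\ce(-, f) \colon \ce(-, T_1) \to \ce(-, T_0)$ of representable functors in $\mod \ce$, and let $G \in \mod \ce$ be its cokernel. Since restriction to $\ct$ is exact (it is just evaluation on objects of $\ct$), we have $G|_\ct = F$. Now pick a finite projective resolution $0 \to \ce(-, Y_n) \to \cdots \to \ce(-, Y_0) \to G \to 0$ in $\mod \ce$ with $n \leq \gldim \mod \ce$. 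Restricting to $\ct$ yields an exact sequence in $\mod \ct$ whose terms $\ce(-|_\ct, Y_i)$ each have projective dimension at most $1$ by part (a). A routine dimension-shift (iterating $\prdim C \leq \max(\prdim B, \prdim A + 1)$ for short exact sequences $0 \to A \to B \to C \to 0$) then yields $\prdim_{\mod \ct} F \leq n + 1 \leq \gldim \mod \ce + 1$.

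The only non-trivial step is the identification of $K$ with an object of $\ct$ via the cluster-tilting property in part (a); everything else — the deflation property of the approximation, the exactness of pointwise restriction $\mod \ce \to \mod \ct$, and the dimension-shift bound — is standard homological bookkeeping.
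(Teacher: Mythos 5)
Your argument is correct and follows essentially the same route as the paper's. The only real difference is in part (a): the paper cites \cite[Proposition 4.15]{Jasso} (or \cite[Proposition II.1.7]{BIRSc}) for the existence of a conflation $T_1 \rightarrowtail T_0 \twoheadrightarrow X$ with both terms in $\ct$, whereas you rederive this fact from scratch — taking a right $\ct$-approximation $t_0 \colon T_0 \to X$, using $\cp \subseteq \ct$ and the obscurity axiom to show $t_0$ is a deflation, and then running the long exact $\Ext$-sequence to deduce $K = \ker t_0 \in \ct$. That re-derivation is exactly the content of the cited proposition, so the two proofs of (a) are interchangeable; your write-up is a bit more self-contained at the price of a few extra lines. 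Part (b) — lifting a presentation of $F$ to a $G \in \mod \ce$ with $G|_{\ct} = F$, restricting a finite projective resolution of $G$, invoking part (a) on each term, and finishing by the standard dimension-shift inequality $\prdim C \leq \max(\prdim B, \prdim A + 1)$ — is the paper's proof verbatim.
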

\begin{proof}
    Because $\ct$ is a cluster-tilting subcategory in $\ce_\cp$ there are $T_0, T_1 \in \ct$ and a conflation $T_0 \rightarrowtail T_1 \twoheadrightarrow X$ of $\ce_\cp$, cf.\ \cite[Proposition 4.15]{Jasso} or \cite[Proposition II.1.7]{BIRSc}. 
    Applying $\ce(-|_{\ct}, -)$ to this conflation yields an exact sequence
    \[\begin{tikzcd}[ampersand replacement=\&]
        0 \& {\ct(-,T_0)} \& {\ct(-,T_1)} \& {\ce(-|_{\ct},X)} \& {\Ext^1_{\ce_\cp}(-|_{\ct},T_0)}.
        \arrow[from=1-2, to=1-3]
        \arrow[from=1-3, to=1-4]
        \arrow[from=1-4, to=1-5]
        \arrow[from=1-1, to=1-2]
    \end{tikzcd}\]
    The cluster-tilting property of $\ct$ implies $\Ext^1_{\ce_\cp}(-|_{\ct}, T_0) = 0$ and hence part \ref{item:projdim1}.
    
    Let $F \in \mod \ct$ be arbitrary. 
    Then there are $T_0, T_1 \in \ct$ and $f_1 \colon T_1 \to T_0$, so that 
    \[\begin{tikzcd}[ampersand replacement=\&]
        {\ct(-,T_1)} \& {\ct(-,T_0)} \&[-.3cm] F \&[-.5cm] 0
        \arrow["{f_1\cdot-}", from=1-1, to=1-2]
        \arrow["\varepsilon", from=1-2, to=1-3]
        \arrow[from=1-3, to=1-4]
    \end{tikzcd}\]
    is exact in $\mod \ct$. 
    Consider $G(-) \coloneqq \cok(\ce(-,T_1) \smash{\xrightarrow{f_1\cdot-}} \ce(-,T_0))$ in $\mod \ce$.
    Clearly, $G(-|_\ct) = F(-)$.
    Restring a projective resolution of $G$ over $\mod \ce$ yields a resolution of $F$ over $\mod \ct$ by $\prdim_{\mod \ce} G + 1$  objects of the form $\ce(-|_\ct,X)$, where $X \in \ce$.
    Using part \ref{item:projdim1}, and the well-known fact that
    \[\prdim_{\mod \ct} H'' \leq \max\{\prdim_{\mod \ct} H, \prdim_{\mod \ct} H'\} + 1\] 
    for any exact sequence $0 \to H' \to H \to H'' \to 0$ in $\mod \ct$, inductively on this resolution of $F$ yields $\prdim_{\mod \ct} F \leq \prdim_{\mod \ce} G+1$.
\end{proof}

\begin{lem}\label{lem:inheritsetup}
    Suppose that $\ct$ is a cluster-tilting subcategory of $\ce_\cp$.   
    If $\ce$ satisfies \Cref{setup:algebraicsetup}\ref{ks}-\ref{fg} then so does $\ct$.
\end{lem}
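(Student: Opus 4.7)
The plan is to verify each of the conditions \ref{ks}--\ref{fg} from \Cref{setup:algebraicsetup} separately for $\ct$, using that $\ct \subseteq \ce$ is a full additive subcategory closed under direct summands and isomorphisms (by the definition of a cluster-tilting subcategory), and that $\ct$ is functorially finite in $\ce$.

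For \ref{ks} and \ref{hf}: since $\ct \subseteq \ce$ is a full subcategory, $\Hom$-finiteness descends. As $\ct$ is closed under direct summands, idempotents split in $\ct$, and each indecomposable object of $\ct$ is indecomposable in $\ce$ and therefore has local endomorphism ring. Hence $\ct$ is Krull--Schmidt and $\Hom$-finite.

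For \ref{ab}: I need to show that $\ct$ has weak kernels. Given a morphism $f \colon T_1 \to T_0$ in $\ct$, I first use that $\ce$ has weak kernels, see \ref{ab} for $\ce$, to obtain a weak kernel $k \colon K \to T_1$ in $\ce$. Then I take a right $\ct$-approximation $g \colon T \to K$, which exists because $\ct \subseteq \ce$ is contravariantly finite. The claim is that $k \circ g \colon T \to T_1$ is a weak kernel of $f$ in $\ct$: indeed $f \circ (k \circ g) = 0$, and for any morphism $h \colon T' \to T_1$ in $\ct$ with $f h = 0$, the weak kernel property in $\ce$ yields a factorisation $h = k \circ h'$ with $h' \colon T' \to K$ in $\ce$, and the right $\ct$-approximation property factors $h' = g \circ h''$ with $h'' \colon T' \to T$ in $\ct$, giving $h = (k g) h''$. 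Then \cite[Section III.2]{AuslanderQueen} (already cited in the proof of \Cref{T:Auslander}\ref{item:GPab}) gives that $\mod \ct$ is abelian.

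For \ref{fg}: this is essentially already established. By \Cref{lem:fingldim}\ref{item:fingldim} applied with $\ct$ in the role of the cluster-tilting subcategory of $\ce_\cp$, we have $\gldim \mod \ct \leq \gldim \mod \ce + 1$, and the right-hand side is finite by \ref{fg} for $\ce$. So $\mod \ct$ has finite global dimension.

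The only step requiring any real argument is \ref{ab}, and even there the main obstacle is just combining weak kernels in $\ce$ with the contravariant finiteness of $\ct$ in the standard way; the other three conditions are essentially inherited or already proven.
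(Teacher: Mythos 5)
Your proof is correct and takes essentially the same route as the paper's: weak kernels for \ref{ab} are built by composing a weak kernel in $\ce$ with a right $\ct$-approximation, \ref{ks} and \ref{hf} are observed to descend to additive, full subcategories closed under direct summands, and \ref{fg} is deduced from \Cref{lem:fingldim}\ref{item:fingldim}. The only difference is that you spell out the verification for \ref{ab} and the Krull--Schmidt argument in slightly more detail than the paper does.
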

\begin{proof}
    If $f \colon T \to T'$ is a morphism in $\ct$ with weak kernel $g \colon X \to T$ in $\ce$ then for any right $\ct$-approximation $h \colon T'' \to X$ the morphism $gh$ is a weak kernel of $f$ in $\ct$. As $\ct \subset \ce$ is functorially finite this shows that \ref{ab} is inherited.
    
    \Cref{setup:algebraicsetup}\ref{ks} and \ref{hf} are inherited by additive subcategories which are closed under direct summands and that \ref{fg} is inherited follows from \Cref{lem:fingldim}\ref{item:fingldim}.
\end{proof}

\Cref{lem:glsfunctor} allows us to modify the proof of \cite[Proposition II.1.11]{BIRSc} to fit our more general setting. Recall the definition of the quiver of a category from \Cref{defn:quiver}.

\begin{thm}\label{thm:noloopsand2cycles}
    Let $\ct$ be a cluster-tilting subcategory of $\ce_{\cp}$ and suppose that $\ce$ satisfies \Cref{setup:algebraicsetup}\ref{ks}-\ref{fg}.
    
    Then the quiver of $\ct/\cp$ has no loops and no $2$-cycles.
\end{thm}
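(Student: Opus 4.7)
The strategy generalizes \cite[Proposition II.1.11]{BIRSc} from cluster-tilting \emph{objects} to cluster-tilting \emph{subcategories}, using our categorical no-loop theorem (\Cref{thm:noloopfun}) and categorical no-$2$-cycle theorem (\Cref{lem:glsfunctor}) in place of the finite-dimensional analogues from \cite{Igusa, IgusaLiuPaquette}. First, by \Cref{lem:inheritsetup}, the subcategory $\ct$ inherits \Cref{setup:algebraicsetup}\ref{ks}--\ref{fg} from $\ce$; in particular, $\mod \ct$ is abelian of finite global dimension, so $\prdim_{\mod \ct} S_X < \infty$ for every $X \in \ind \ct$. The $2$-Calabi--Yau assumption on $\ul{\ce}_{\cp}$ provides a Serre functor and hence Auslander--Reiten triangles in $\ul{\ce}_{\cp}$; combined with the functorial finiteness of $\ct \subseteq \ce$ (inherent to cluster-tilting), this yields sink morphisms in $\ct$ for every $X \in \ind \ct \setminus \ind \cp$.

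The no-loop case is then immediate: \Cref{thm:noloopfun} applied to each such $X$ rules out loops at $X$ in the quiver of $\ct$. Since objects of $\cp$ become zero in $\ct/\cp$, the vertices of the quiver of $\ct/\cp$ correspond exactly to these non-projective indecomposables, and \Cref{lem:cyclequot} applied to the additive quotient $\Phi \colon \ct \to \ct/\cp$ transports the absence of loops from $\ct$ to $\ct/\cp$.

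For $2$-cycles---which is the main obstacle---I must verify the hypothesis $\Ext^2_{\mod \ct}(S_X, S_X) = 0$ of \Cref{lem:glsfunctor} for each $X \in \ind \ct \setminus \ind \cp$. Mirroring the BIRSc argument, the plan is to lift the Auslander--Reiten triangle $\Sigma X \to Z \to X \to \Sigma^2 X$ of $X$ in $\ul{\ce}_\cp$ (using $\tau X = \Sigma X$ by the $2$-Calabi--Yau property) to a conflation $\Omega^{-1} X \rightarrowtail \widetilde{Z} \twoheadrightarrow X$ in $\ce_\cp$, and then to resolve its terms by $\ct$-objects via the cluster-tilting property (using that $\ct$ is functorially finite and contains $\cp$). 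This produces an explicit projective resolution
\[\ct(-, T_2) \to \ct(-, T_1) \to \ct(-, X) \to S_X \to 0\]
in $\mod \ct$. The technical heart of the argument is to show that $X$ does not appear as a direct summand of $T_2$: by the natural analogue of \Cref{lem:extar} for second Ext-groups, this multiplicity equals $\dim_\kk \Ext^2_{\mod \ct}(S_X, S_X)$, so its vanishing gives the required $\Ext^2$-vanishing. This uses both the exchange/mutation structure of $\ct$ and that the second syzygy of $S_X$ is controlled by $\Sigma X$ modulo $\cp$, which is distinct from $X$ in the relevant sense. Given this vanishing, \Cref{lem:glsfunctor} rules out $2$-cycles between distinct non-projective indecomposables in the quiver of $\ct$, and a second application of \Cref{lem:cyclequot} to $\Phi$ transfers the conclusion to $\ct/\cp$.
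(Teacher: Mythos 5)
Your high-level strategy matches the paper's: inherit \Cref{setup:algebraicsetup}\ref{ks}--\ref{fg} via \Cref{lem:inheritsetup}, obtain sink morphisms from the Serre functor and Auslander--Reiten theory, rule out loops with \Cref{thm:noloopfun}, reduce the $2$-cycle statement to showing $\Ext^2_{\mod\ct}(S_T,S_T)=0$ for $T\in\ind\ct\setminus\ind\cp$, feed that into \Cref{lem:glsfunctor}, and transfer to $\ct/\cp$ via \Cref{lem:cyclequot} and \Cref{lem:radicalfunctor}. So far so good.

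The problem is the $\Ext^2$-vanishing step, which you rightly flag as the technical heart but then misdescribe. The paper does \emph{not} lift the Auslander--Reiten triangle $\Sigma X\to Z\to X$ of $\ul{\ce}_\cp$; the AR middle term $Z$ is not in $\ct$, so that triangle cannot directly furnish the projective resolution in $\mod\ct$. Instead, the paper applies Iyama--Yoshino mutation lifted to the Frobenius exact category (\Cref{cor:IyamaYoshino}): with $\cu \coloneqq \ct\setminus\add T$ there are conflations $T^*\rightarrowtail U\twoheadrightarrow T$ and $T\rightarrowtail V\twoheadrightarrow T^*$ in $\ce_\cp$ whose deflations are right minimal right $\cu$-approximations (so in particular $U,V\in\cu$). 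Applying $\ce(-|_\ct,-)$ and gluing yields the projective resolution
\[
0\to\ct(-,T)\to\ct(-,V)\to\ct(-,U)\to\ct(-,T)\to S_T\to 0,
\]
and $\Ext^2_{\mod\ct}(S_T,S_T)$ is the homology at $S_T(V)$, which vanishes because $V\in\cu$. Your assertion that the second syzygy of $S_X$ is "controlled by $\Sigma X$ modulo $\cp$" is the misstep: the relevant object is the mutation partner $T^*$, which is in general not $\Sigma T$, and the object whose multiplicities matter is the third \emph{projective} $V$, which must lie in $\cu$. That fact comes from the second mutation conflation, not from any property of $\Sigma X$. If you insisted on starting from the AR triangle you would still need to produce the two $\cu$-approximation conflations, so the Iyama--Yoshino input is unavoidable and should be the actual starting point.
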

\begin{proof}
    \Cref{lem:inheritsetup} shows that $\ct$ satisfies \Cref{setup:algebraicsetup}\ref{ks}-\ref{fg}.
    As $\ul{\ce}_{\cp}$ has a Serre-functor, it follows from \cite[Theorem A]{ReitenVandenBergh} that $\ul{\ce}_\cp$ has Auslander--Reiten triangles and hence enough sink morphisms.
    By \Cref{cor:sinklift} every object in $\ind \ct \setminus \ind \cp$ admits a sink morphism in $\ce$ and hence, by \Cref{lem:sinksubcat}, in $\ct$.
    Using this and \Cref{setup:algebraicsetup}\ref{fg} together with \Cref{thm:noloopfun} this shows that the quiver of $\ct$ has no loops at the objects in $\ind \ct \setminus \ind \cp$.
    Hence, there are no loops in the quiver of $\ct/\cp$, by \Cref{lem:cyclequot,lem:radicalfunctor}.
    
    Let $T \in \ind \ct \setminus \ind \cp$. We claim that $\Ext^2_{\mod \ct}(S_T,S_T) = 0$. Let $\cu \coloneqq \ct \setminus \add T$.
    By \Cref{cor:IyamaYoshino} there are conflations
    \[\text{\begin{tikzcd}[ampersand replacement=\&]
    {T^\ast} \& U \& T
    \arrow["f", tail, from=1-1, to=1-2]
    \arrow["g", two heads, from=1-2, to=1-3]
    \end{tikzcd} and \begin{tikzcd}[ampersand replacement=\&]
    T \& V \& {T^\ast}
    \arrow["{i}", tail, from=1-1, to=1-2]
    \arrow["{j}", two heads, from=1-2, to=1-3]
    \end{tikzcd}}
    \]
    in $\ce_\cp$ such that $f$ and $i$ are left minimal left $\cu$-approximations and $g$ and $j$ are right minimal right $\cu$-approximations.
    As $T$ is non-projective the quiver of $\ct$ has no loops at $T$ by the above.
    Using \Cref{lem:sinkiscover} we see that $g$ is a sink morphism for $T$ in $\ct$.
    Hence, applying the left exact functor $\ce(-|_{\ct},-)$ to the first sequence gives us an exact sequence
    \[\begin{tikzcd}[ampersand replacement=\&]
    0 \& \ce(-|_{\ct},{T^\ast}) \& \ct(-,U) \& \ct(-,T) \& S_T \& 0.
    \arrow["f\cdot-", from=1-2, to=1-3]
    \arrow["g\cdot-", from=1-3, to=1-4]
    \arrow[from=1-5, to=1-6]
    \arrow[from=1-4, to=1-5]
    \arrow[from=1-1, to=1-2]
    \end{tikzcd}\]
    Applying the same functor to the second sequence gives us an exact sequence
    \[\begin{tikzcd}[ampersand replacement=\&]
    0 \& \ct(-,{T}) \& \ct(-,V) \& \ce(-|_{\ct},T^{\ast}) \& \Ext^1_{\ce_{\cp}}(-|_{\ct}, T)
    \arrow["i\cdot-", from=1-2, to=1-3]
    \arrow["j\cdot-", from=1-3, to=1-4]
    \arrow[from=1-4, to=1-5]
    \arrow[from=1-1, to=1-2]
    \end{tikzcd}\]
    where $\Ext^1_{\ce_{\cp}}(-|_{\ct},T) = 0$ as $T \in \ct$ and $\ct$ is cluster-tilting subcategory of $\ce_{\cp}$.
    Hence, gluing of the two sequences yields a projective resolution
    \[\begin{tikzcd}[ampersand replacement=\&]
    0 \&[-.5em] \ct(-,T) \& \ct(-,V) \& \ct(-,U) \& \ct(-,T) \& S_T \&[-.5em] 0
    \arrow["fj\cdot-", from=1-3, to=1-4]
    \arrow["g\cdot-", from=1-4, to=1-5]
    \arrow[from=1-5, to=1-6]
    \arrow[from=1-6, to=1-7]
    \arrow[from=1-2, to=1-3, "i\cdot-"]
    \arrow[from=1-1, to=1-2]
    \end{tikzcd}\]
    of $S_T$ in $\mod \ct$.
    Applying $\Hom_{\mod \ct}(-, S_T)$ to this resolution and using the Yoneda lemma, we see that $\Ext_{\mod \ct}^2(S_T,S_T)$ is isomorphic to the homology of
    \[\begin{tikzcd}[ampersand replacement=\&, column sep = 1.5cm]
        {S_T(U)} \& {S_T(V)} \& {S_T(T)}
        \arrow["{S_T(fj)}", from=1-1, to=1-2]
        \arrow["{S_T(i)}", from=1-2, to=1-3]
    \end{tikzcd}\]
    at $S_T(V)$.
    However, $S_T(V) = 0$ as $V \in \cu = \ct \setminus \add T$, so $\Ext^2_{\mod \ct}(S_T, S_T) = 0$.
    
    Now, there are no $2$-cycles $T \leftrightarrows T'$ between objects $T,T' \in \ind \ct \setminus \ind \cp$ using \Cref{setup:algebraicsetup}\ref{fg} together with \Cref{lem:glsfunctor}.
    Hence, the quiver of $\ct/\cp$ has no $2$-cycles by \Cref{lem:cyclequot,lem:radicalfunctor}.
\end{proof}

Recall the notions related to Gorenstein rings from \Cref{D:GProj,defn:Goring} and the definition of the AR-quiver from \Cref{defn:ARquiver}.
Applying the above \namecref{thm:noloopsand2cycles} in our setting shows the following corollary. 
In particular, this shows Theorem \ref{T:PropertiesDsgR}\ref{item:noloops2cycles}.

\begin{cor}\label{C:Noloops2cycles}
      Let $R$ be a finite-dimensional Gorenstein $\kk$-algebra and $\GP_{\cp}(R)$ be a Frobenius exact structure on $\GP(R)$, such that $\ul{\GP}_{\cp}(R)$ is $2$-Calabi--Yau.
      
      Then there are neither loops nor $2$-cycles in the quiver of any cluster-tilting subcategory $\ct $ of $\ul{\GP}_{\cp}(R)$.
\end{cor}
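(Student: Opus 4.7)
The plan is to deduce this corollary as a direct application of \Cref{thm:noloopsand2cycles} in the special case $\ce = \GP(R)$. First I would verify that $\GP(R)$ satisfies all the hypotheses required by \Cref{thm:noloopsand2cycles}: indeed, \Cref{cor:GPisreallynice} gives us at once that $\GP(R)$ satisfies \Cref{setup:algebraicsetup}\ref{ks}-\ref{fg}, which is exactly the algebraic input needed. The $2$-Calabi--Yau hypothesis on the stable category is part of the assumptions of the corollary.

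Next I would translate the cluster-tilting subcategory $\ct \subseteq \ul{\GP}_\cp(R)$ upstairs: by \Cref{LIRSc}, the preimage $\tilde{\ct}$ of $\ct$ under the canonical quotient $\GP(R) \to \ul{\GP}_\cp(R) = \GP(R)/\cp$ is a cluster-tilting subcategory of the Frobenius exact category $\GP_\cp(R)$, and moreover $\tilde{\ct}/\cp = \ct$. This is exactly the setup in which \Cref{thm:noloopsand2cycles} operates.

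Applying \Cref{thm:noloopsand2cycles} to $\tilde{\ct} \subseteq \GP_\cp(R)$ yields that the quiver of $\tilde{\ct}/\cp$ has neither loops nor $2$-cycles. Since $\tilde{\ct}/\cp = \ct$, this is the desired conclusion. I do not expect any genuine obstacle here: the corollary is truly a specialisation, so the only thing to check is that each hypothesis of \Cref{thm:noloopsand2cycles} is supplied either by \Cref{cor:GPisreallynice}, by \Cref{LIRSc}, or by the standing assumptions of the corollary. No new technical work is required beyond citing these results.
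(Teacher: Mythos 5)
Your proposal is correct and matches the paper's proof exactly: both lift the cluster-tilting subcategory via \Cref{LIRSc}, check the hypotheses of \Cref{thm:noloopsand2cycles} using \Cref{cor:GPisreallynice}, and conclude. No discrepancies.
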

\begin{proof}
   Any cluster-tilting subcategory of $\ul{\GP}_{\cp}(R)$ arises from a cluster-tilting subcategory of $\GP_{\cp}(R)$ by \Cref{LIRSc}.
   Now we can apply \Cref{thm:noloopsand2cycles} because of \Cref{cor:GPisreallynice}.
\end{proof}

\appendix \section{Background on (functor) categories}

\subsection{Krull--Schmidt categories and the categorical radical.}
We recall some well-known facts about the radical of Krull--Schmidt categories, cf.\ e.g.\ \cite{Kra15}.
In this section, we assume that $\ca$ is a $\kk$-linear category over a commutative ring $\kk$.

\begin{defn}\label{defn:KS}
    We say that $\cA$ is \emph{Krull--Schmidt} if any $X \in \cA$ admits a decomposition $X \cong X_1 \oplus \cdots \oplus X_n$ with $\End_\cA (X_i)$ local for $i = 1,\dots,n$.
\end{defn}
Equivalently, $\ca$ is idempotent complete and $\End_\cA (X)$ is semiperfect for $X \in \cA$, see \cite[Corollary 4.4]{Kra15}.
By \cite[Theorem 4.2]{Kra15} any object in $\ca$ admits a \emph{unique} decomposition into indecomposable objects.
In particular, the following is true.
\begin{lem}\label{lem:unique}
    Suppose $\ca$ is Krull--Schmidt and $X,Y \in \ca$.
    Then $\add X \cap \add Y \neq 0$ if and only if $X$ and $Y$ share an (indecomposable) direct summand.
\end{lem}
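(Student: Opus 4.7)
The plan is to use the uniqueness of the Krull--Schmidt decomposition, which is the key consequence of \Cref{defn:KS} (see \cite[Theorem 4.2]{Kra15}): every object of $\ca$ decomposes, uniquely up to isomorphism and permutation, into a finite direct sum of indecomposables.

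The "if" direction is immediate: if $Z$ is a common indecomposable summand of $X$ and $Y$, then $Z \in \add X \cap \add Y$ and $Z \neq 0$ (indecomposables in a Krull--Schmidt category are non-zero, as $\End_\ca Z$ is local and hence non-zero).

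For the "only if" direction, I would pick a non-zero object $Z \in \add X \cap \add Y$ and decompose it into indecomposables $Z \cong Z_1 \oplus \cdots \oplus Z_k$ with $k \geq 1$, which is possible by the Krull--Schmidt property. Since $Z \in \add X$, there exist $n \in \N$ and $W \in \ca$ with $Z \oplus W \cong X^n$. Decomposing $X$ and $W$ into indecomposables and invoking uniqueness of the Krull--Schmidt decomposition of $X^n$, each $Z_i$ must be isomorphic to some indecomposable summand of $X$. The same argument applied to $Y$ shows that $Z_1$ is also isomorphic to an indecomposable summand of $Y$, and we are done.

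There is no main obstacle here: the statement is essentially a tautology once the uniqueness part of the Krull--Schmidt decomposition is in hand, and the proof reduces to picking a non-zero element of $\add X \cap \add Y$, extracting one of its indecomposable summands, and tracing it through the respective decompositions of $X$ and $Y$. The only subtle point worth spelling out is that membership in $\add X$ is witnessed by an embedding into some power $X^n$ (not $X$ itself), so one really needs the uniqueness statement applied to $X^n$, not just to $X$.
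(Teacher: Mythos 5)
Your proof is correct and takes the same route the paper has in mind: the paper states this lemma immediately after recalling the uniqueness of Krull--Schmidt decompositions from \cite[Theorem 4.2]{Kra15} and treats it as an immediate consequence, which is exactly what you spell out, including the correct observation that membership in $\add X$ is witnessed by a complement inside some power $X^n$.
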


A very useful tool when studying $\kk$-linear categories is the radical, which generalises the Jacobson radical from $\kk$-algebras to $\kk$-linear categories. 
\begin{defn}
    We say that $f \in \ca(X,Y)$ is in the \emph{radical $\rad_{\ca}(X,Y)$ of $\ca$} if $\id_X - gf$ is invertible for all $g \in \ca(Y,X)$. 
    Inductively, for $n \geq 2$ we define $\rad_{\ca}^n(X,Y)$ as the set of all $f \in \ca(X,Y)$ which are of the form $f = hg$ for some $Z \in \ca$ and $g \in \rad^{n-1}_{\ca}(X,Z)$ as well as $h \in \rad_{\ca}(Z,Y)$.
\end{defn}

It is well-known that a morphism $f \in \ca(X,Y)$ belongs to $\rad_{\ca}(X,Y)$ if and only if $\id_{Z} - gfh$ is invertible for all $g \in \ca(Y,Z)$ and $h \in \ca(Z,X)$.
This is also equivalent to $\id_{Y} - fg$ being invertible for all $g \in \ca(Y,X)$, see for example \cite[Corollary 2.10]{Kra15}.
It follows that $\rad_{\ca}$ is an ideal in $\ca$, cf.\ \cite[Proposition 2.9]{Kra15}, and $f \in \rad_{\ca}(X,Y)$ if and only if $f \in \rad_{\ca^{\op}}(Y,X)$. One also easily checks $\rad_{\ca}(X,X) = \rad \End_{\cA}(X)$, where $ \rad \End_{\cA}(X)$ is the Jacobson radical of $\End_{\cA}(X)$.

The radical of a Krull--Schmidt category has a particularly nice characterization.

\begin{lem}\label{lem:bestchar}
    If $\cA$ is Krull--Schmidt then $f \in \ca(X,Y)$ is in the radical precisely if there are no $g \in \ca(Y, Z)$ and $h \in \ca(Z,X)$ such that $gfh$ is an automorphism of some non-zero object $Z \in \ca$.
\end{lem}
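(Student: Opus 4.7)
My plan is to prove the two implications separately, leveraging the equivalent reformulation of the radical recalled just before the lemma: $f \in \rad_\ca(X,Y)$ if and only if $\id_Z - gfh$ is invertible for every $Z \in \ca$ and every pair $g \in \ca(Y,Z)$, $h \in \ca(Z,X)$. For the forward direction, suppose $f \in \rad_\ca(X,Y)$ and assume for contradiction that $gfh$ is an automorphism of some nonzero $Z$. Replacing $g$ by $(gfh)^{-1}g$ produces a new morphism, still denoted $g$, with $gfh = \id_Z$, so $\id_Z - gfh = 0$. But $0 \in \End_\ca Z$ is not invertible when $Z \neq 0$, contradicting the equivalent characterization.

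For the backward direction, the plan is to reduce to the indecomposable case via the Krull--Schmidt decomposition. Choose decompositions $X = \bigoplus_i X_i$ and $Y = \bigoplus_j Y_j$ into indecomposables, with canonical inclusions $\iota_i$, $\iota'_j$ and projections $\pi_i$, $\pi'_j$, and set $f_{ji} := \pi'_j f \iota_i \in \ca(X_i, Y_j)$. The first key intermediate claim is that $f \in \rad_\ca(X,Y)$ if and only if every matrix component $f_{ji}$ lies in $\rad_\ca(X_i, Y_j)$. One direction is immediate from the equivalent characterization by taking $g = \beta \pi'_j$ and $h = \iota_i$ for arbitrary $\beta \in \ca(Y_j, X_i)$; the other direction reduces to the standard fact that the Jacobson radical of the semiperfect ring $\End_\ca X$ consists precisely of those elements whose Peirce components all lie in $\rad_\ca(X_i, X_j)$. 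The second key intermediate claim is that for indecomposable $A, B$, a morphism $\alpha \in \ca(A,B)$ is in $\rad_\ca(A,B)$ if and only if it is not an isomorphism: indeed, if $\alpha$ is not an iso then for any $\beta \in \ca(B,A)$ the composite $\beta\alpha \in \End_\ca A$ cannot be invertible either (otherwise $\alpha$ would be a split monomorphism into the indecomposable $B$ with local endomorphism ring, forcing it to be an iso), whence $\beta\alpha \in \rad \End_\ca A$ and $\id_A - \beta\alpha$ is invertible; the converse is trivial, taking $\beta = \alpha^{-1}$.

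With these two claims in hand, the backward direction concludes as follows: if $f \notin \rad_\ca(X,Y)$, then some $f_{ji}$ is not in $\rad_\ca(X_i, Y_j)$, and hence is an isomorphism. Setting $Z := X_i$ (which is nonzero as it is an indecomposable summand), $h := \iota_i$ and $g := f_{ji}^{-1} \pi'_j$ yields $gfh = f_{ji}^{-1} \pi'_j f \iota_i = f_{ji}^{-1} f_{ji} = \id_Z$, an automorphism of the nonzero object $Z$, as required.

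The main obstacle I expect is justifying the Peirce-decomposition characterization of $\rad \End_\ca X$ used above, namely that in a semiperfect ring $R$ with a complete system of primitive orthogonal idempotents $\{e_i\}$, an element $r \in R$ lies in $\rad R$ if and only if $e_j r e_i \in e_j (\rad R) e_i$ for all $i,j$. This is a standard result in the theory of semiperfect rings but requires some care. The remaining steps are fairly routine consequences of the locality of $\End_\ca X_i$ and the uniqueness of the Krull--Schmidt decomposition.
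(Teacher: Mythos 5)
Your proof is correct, but it takes a genuinely different route to the indecomposable case than the paper does. The paper argues by a descending-chain/inductive reduction: split $X \cong X_1 \oplus X_2$, note that if $f$ is not in the radical then at least one of the restrictions $f_1, f_2$ is not either, and that any automorphism factoring through a restriction also factors through $f$; iterating this (and its dual on $Y$) reduces to $X, Y$ indecomposable, where locality of $\End_\ca X$ finishes the job directly. You instead decompose both $X$ and $Y$ at once and work with the full Peirce matrix $(f_{ji})$, then invoke the intermediate claim that a morphism between indecomposables is either radical or an isomorphism. Both routes are sound; yours is more symmetric and makes the final factorization through a specific indecomposable summand $Z = X_i$ explicit, while the paper's is slightly more parsimonious in what it decomposes.

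One remark worth flagging: the ``main obstacle'' you anticipate -- the Peirce-decomposition characterization of $\rad\End_\ca X$ for semiperfect rings -- is not actually needed for the backward direction of your first key claim. If every $f_{ji} \in \rad_\ca(X_i, Y_j)$, then writing $f = \sum_{i,j} \iota'_j f_{ji} \pi_i$ and using only that $\rad_\ca$ is a two-sided ideal (closed under composition on both sides and under finite sums) already yields $f \in \rad_\ca(X,Y)$. So that step is elementary and no detour through the structure theory of semiperfect rings is required. Your second key claim (radical $\Leftrightarrow$ non-isomorphism for indecomposables) is essentially the content of the paper's subsequent Lemma~\ref{lem:radical}, which in the paper is deduced \emph{from} the present lemma; your self-contained argument for it via locality of $\End_\ca A$ and split monomorphisms into indecomposables is fine, so there is no circularity.
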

\begin{proof}
    We show there is an automorphism factoring through $f$ if it is not in the radical.
    Suppose
    \[f = \left[\begin{smallmatrix} f_1 & f_2 \end{smallmatrix}\right] \colon X_1 \oplus X_2 \to Y\]
    is a decomposition then one of $f_1$ and $f_2$ is not in the radical, as it is an ideal.
    Furthermore, if a non-trivial automorphism factors through $f_1$ or $f_2$ then it factors through $f$.
    As $\cA$ has the descending chain condition on direct summands, this argument and its dual shows that we can assume that $X$ and $Y$ are indecomposable.
    As $f$ is not in the radical we have $g \in \ca(Y,X)$ such that $\id_X - gf$ is not invertible.
    As $\End_{\ca} (X)$ is local that means that $\id_X - gf$ is in $\rad \End_{\ca}( X)$.
    So $gf = \id_X - (\id_X -gf)$ is invertible and hence an automorphism.

    The converse follows as radical is an ideal not containing any isomorphism.
\end{proof}

\begin{lem}\label{lem:radical}
    Suppose $\cA$ is Krull--Schmidt.
    For $X,Y \in \cA$ indecomposable any morphism $f \in \ca(X, Y)$ is either in $\rad_\ca(X,Y)$ or an isomorphism.
    For $X,Y \in \ca$ not necessarily indecomposable the three statements
    \begin{align*}
        &\rad_{\ca}(X, Y) = \ca(X, Y) \text{,}\\
        &\rad_{\ca}(Y, X) = \ca(Y, X) \text{,}\\
        &\add (X) \cap \add (Y) =0 
    \end{align*}
    are equivalent.
\end{lem}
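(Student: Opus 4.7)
The plan is to first handle the case of indecomposable objects and then reduce the general statement to this case via the Krull--Schmidt decomposition.

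For the indecomposable case, I would argue as follows. Suppose $X,Y \in \ca$ are indecomposable and $f \in \ca(X,Y)$ is not in $\rad_\ca(X,Y)$. By definition there exists $g \in \ca(Y,X)$ such that $\id_X - gf$ is not invertible. Since $\ca$ is Krull--Schmidt and $X$ is indecomposable, $\End_\ca X$ is local, so the non-invertible element $\id_X - gf$ lies in its Jacobson radical. Hence $gf = \id_X - (\id_X - gf)$ is a unit $u \in \End_\ca X$. Then $u^{-1}g$ is a left inverse to $f$, showing $f$ is a split monomorphism with splitting idempotent $e \coloneqq f u^{-1} g \in \End_\ca Y$. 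Since $Y$ is indecomposable, $\End_\ca Y$ is local and thus contains only the trivial idempotents $0$ and $\id_Y$. Since $ef = f \neq 0$ (because $u^{-1}gf = \id_X \neq 0$ as $X$ is indecomposable, hence non-zero) we must have $e = \id_Y$, which forces $f$ to be an isomorphism.

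For the second assertion, fix Krull--Schmidt decompositions $X \cong \bigoplus_{i=1}^m X_i$ and $Y \cong \bigoplus_{j=1}^n Y_j$ into indecomposables, and write morphisms between $X$ and $Y$ as matrices using the inclusions and projections. The essential intermediate fact is that the radical is compatible with these direct sum decompositions: a matrix $f = (f_{ji}) \in \ca(X,Y)$ lies in $\rad_\ca(X,Y)$ if and only if each entry $f_{ji} \in \ca(X_i, Y_j)$ lies in $\rad_\ca(X_i,Y_j)$. One direction is immediate from $\rad_\ca$ being an ideal (pre- and post-composing $f_{ji}$ with the corresponding section/retraction). For the other direction, if every entry is in the radical, then given any $g \in \ca(Y,X)$, the composite $gf$ is a sum of morphisms of the form $\iota_i g_{ij}f_{jk}\pi_k$ (in $\End_\ca X$), each of which is in the radical ideal; so $\id_X - gf$ is a unit, using Lemma~\ref{lem:bestchar} applied component-wise. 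Analogously on the $Y$-side.

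Combining both steps yields the equivalence. From the indecomposable case we know $\rad_\ca(X_i, Y_j) = \ca(X_i, Y_j)$ if and only if there is no isomorphism $X_i \to Y_j$, i.e.\ $X_i \not\cong Y_j$; and this condition is symmetric in $X_i$ and $Y_j$. Thus $\rad_\ca(X,Y) = \ca(X,Y)$ if and only if $X_i \not\cong Y_j$ for all $i,j$, if and only if $\rad_\ca(Y,X) = \ca(Y,X)$. Finally, by Lemma~\ref{lem:unique} (uniqueness of Krull--Schmidt decomposition), the non-triviality of $\add(X) \cap \add(Y)$ amounts to the existence of some $i,j$ with $X_i \cong Y_j$, giving the third equivalence. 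The only subtlety in the whole argument is the direct-sum compatibility of the radical, which needs some care, but follows cleanly from Lemma~\ref{lem:bestchar}; the rest is routine manipulation with local endomorphism rings.
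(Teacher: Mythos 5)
Your proof is correct. For the indecomposable case, you argue directly from the definition of the radical together with locality of endomorphism rings, constructing the idempotent $e = fu^{-1}g$ and using that $\End_\ca Y$ has only trivial idempotents; the paper instead invokes Lemma~\ref{lem:bestchar} (its characterization of the radical via factoring automorphisms) to reduce immediately to the observation that an automorphism factoring through $f$ picks out a common direct summand of $X$ and $Y$. For the general case you spell out the matrix/direct-sum compatibility of the radical and reduce to the indecomposable case, whereas the paper again quotes Lemma~\ref{lem:bestchar} together with Lemma~\ref{lem:unique} and elides the matrix argument. Both routes are sound; yours is a bit more elementary and self-contained (it does not really need Lemma~\ref{lem:bestchar} for the indecomposable case, and the appeal to it in your direct-sum step is superfluous since the ideal property plus $\rad_\ca(X,X)=\rad\End_\ca X$ already gives $\id_X - gf$ invertible), while the paper's approach is shorter by leaning on the lemma it proved just before. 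One small redundancy: once you know $u^{-1}gf = \id_X$, you already have a split mono, and the idempotent argument on the $Y$-side simply forces it to split trivially — that step is right, just worth noting it hinges on $X$ being non-zero, which you correctly remark.
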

\begin{proof}
    A non-trivial automorphism factors through a morphism $f \in \cA(X,Y)$ if and only if $f$ induces an isomorphism between a non-trivial common direct summand of $X$ and $Y$.
    If $X$ and $Y$ are indecomposable this is only possible if $f$ is itself an isomorphism.
    The first part follows now from \Cref{lem:bestchar}. 

    Notice, a non-trivial common direct summand between $X$ and $Y$ yields a non-trivial automorphism factoring through a morphism in $\ca(X,Y)$.
    Hence, there is an automorphism factoring through a morphism in $\ca(X,Y)$ if and only if $X$ and $Y$ share a non-trivial direct summand.
    Again, \Cref{lem:bestchar} and \Cref{lem:unique} show equivalence of the statements in the second part.
\end{proof}

We show that the Krull--Schmidt property is inherited by quotients and that the quotient functor induces an epimorphism on the radical.
First, recall the following.
\begin{lem}\label{lem:wellknownrad}
    Let $\Phi \colon \ca \to \cb$ be a full $\kk$-linear functor.
    Then $\Phi(\rad_{\ca}(X,Y))$ is a subset of $\rad_{\cb}(\Phi(X),\Phi(Y))$ for any $X, Y \in \ca$.
\end{lem}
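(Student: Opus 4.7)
The plan is to apply the characterization of the radical directly: recall that $f \in \rad_{\ca}(X,Y)$ if and only if $\id_X - gf$ is invertible for every $g \in \ca(Y,X)$, and similarly for $\cb$.

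First, I would pick an arbitrary $f \in \rad_{\ca}(X,Y)$ and aim to show that $\Phi(f)$ satisfies the defining property of $\rad_{\cb}(\Phi(X),\Phi(Y))$. So let $h \in \cb(\Phi(Y), \Phi(X))$ be arbitrary; I need $\id_{\Phi(X)} - h\Phi(f)$ to be invertible. Here I would use fullness of $\Phi$: there exists $g \in \ca(Y,X)$ with $\Phi(g) = h$. Since $\Phi$ is $\kk$-linear and a functor, it preserves identities, $\kk$-linear combinations, and compositions, hence
\[
\id_{\Phi(X)} - h\Phi(f) \;=\; \Phi(\id_X) - \Phi(g)\Phi(f) \;=\; \Phi(\id_X - gf).
\]
Because $f \in \rad_{\ca}(X,Y)$, the morphism $\id_X - gf$ is invertible in $\ca$, and any functor sends isomorphisms to isomorphisms, so $\Phi(\id_X - gf)$ is invertible in $\cb$. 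This gives $\Phi(f) \in \rad_{\cb}(\Phi(X),\Phi(Y))$, as desired.

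There is essentially no hard part: the only non-trivial ingredient is fullness, which is exactly what lets us replace an arbitrary $h$ on the $\cb$-side by $\Phi(g)$ for some $g$ on the $\ca$-side. Without fullness the statement would fail, since $\Phi$ could miss morphisms $h$ for which $\id_{\Phi(X)} - h\Phi(f)$ is not invertible. Note also that $\kk$-linearity of $\Phi$ is used only to make sense of the expression $\id - gf$ being preserved; for the statement itself one only needs $\Phi$ to be an additive (or even just pointed) full functor.
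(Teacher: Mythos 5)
Your proof is correct and follows the same line of reasoning as the paper: starting from $f\in\rad_{\ca}(X,Y)$, use fullness to write any $h\in\cb(\Phi(Y),\Phi(X))$ as $\Phi(g)$, observe that $\id_{\Phi(X)}-h\Phi(f)=\Phi(\id_X-gf)$ is invertible because functors preserve isomorphisms, and conclude. There is no meaningful difference in approach.
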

\begin{proof}
    If $f \in \rad_{\ca}(X,Y)$ then $\id_X - gf$ is invertible in $\End_{\ca} (X)$ for all $g \in \ca(Y,X)$.
    Therefore, $\Phi(\id_X - gf) = \id_{ \Phi(X)} - \Phi(g)\Phi(f)$ is invertible in $\End_{\cb} (\Phi(X))$ for every morphism $\Phi(g) \in \Phi(\ca(Y,X))$.
    As $\Phi$ is full, $\Phi(f) \in \rad_{\cb}(\Phi(X),\Phi(Y))$ follows.
\end{proof}

\begin{lem}\label{lem:localepi}
   Let $\phi \colon \Lambda \to \Lambda/I$ be the quotient of a local $\kk$-algebra $\Lambda$ by an ideal $I \unlhd \Lambda$.
   Then $\phi(\rad \Lambda) = \rad \Lambda/I$ and if $I \lhd \Lambda$ is proper then $\Lambda/I$ is local. 
\end{lem}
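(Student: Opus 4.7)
The plan is to reduce everything to the standard ring-theoretic characterization of locality: a $\kk$-algebra $\Lambda$ is local if and only if $\Lambda/\rad\Lambda$ is a division ring, which, in particular, means that $\rad \Lambda$ is the unique maximal two-sided ideal and contains every proper ideal.

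First I would dispose of the trivial case $I = \Lambda$: both $\phi(\rad\Lambda)$ and $\rad(\Lambda/I) = \rad(0)$ are the zero ideal, so the equality $\phi(\rad\Lambda) = \rad(\Lambda/I)$ holds vacuously, and locality is not asserted in this case.

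Next, I would assume $I \lhd \Lambda$ is a proper ideal. Since $\Lambda$ is local, $\rad\Lambda$ is the unique maximal two-sided ideal of $\Lambda$, and hence $I \subseteq \rad\Lambda$. It follows that $\phi(\rad\Lambda) = \rad\Lambda/I$ is a well-defined proper two-sided ideal of $\Lambda/I$. The third isomorphism theorem for rings yields a canonical isomorphism
\[
  (\Lambda/I)\big/(\rad\Lambda/I) \;\cong\; \Lambda/\rad\Lambda,
\]
and the right-hand side is a division ring because $\Lambda$ is local. Thus $\rad\Lambda/I$ is a maximal two-sided ideal of $\Lambda/I$ with division-ring quotient.

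Finally, I would argue uniqueness: any proper two-sided ideal $J' \lhd \Lambda/I$ corresponds, via $\phi$, to a proper two-sided ideal $J \lhd \Lambda$ with $I \subseteq J$, and by locality of $\Lambda$ we have $J \subseteq \rad\Lambda$, hence $J' \subseteq \rad\Lambda/I$. This shows simultaneously that $\Lambda/I$ is local and that $\rad(\Lambda/I) = \rad\Lambda/I = \phi(\rad\Lambda)$, completing the proof. There is no real obstacle here; the only point to be careful about is to apply locality of $\Lambda$ to conclude $I \subseteq \rad\Lambda$ before identifying $\phi(\rad\Lambda)$ with $\rad\Lambda/I$.
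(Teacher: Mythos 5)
Your overall strategy---the correspondence theorem for ideals under $\phi$ together with the third isomorphism theorem---is the same as the paper's, and most of the steps are fine. The gap is in the final uniqueness step: you only show that $\rad\Lambda/I$ is a maximal \emph{two-sided} ideal of $\Lambda/I$ with division-ring quotient and that every proper two-sided ideal of $\Lambda/I$ lies inside it. For a general noncommutative ring $A$, possessing a unique maximal two-sided ideal $J$ with $A/J$ a division ring does \emph{not} imply that $A$ is local. A concrete obstruction: take $V$ an infinite-dimensional $\kk$-vector space and $A=\kk\cdot\mathrm{id}_V+\mathrm{fin}(V)$ the ring of ``scalar plus finite-rank'' operators; then the finite-rank operators form the unique nonzero proper two-sided ideal $J$ with $A/J\cong\kk$, yet $\rad A=0$ and $A$ is not local. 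What your observations (a) and (b) actually give is only the inclusion $\rad(\Lambda/I)\subseteq\rad\Lambda/I$ (since $\rad\Lambda/I$ is a maximal left ideal, being a two-sided ideal with division-ring quotient); the reverse inclusion, which is what is needed to identify $\phi(\rad\Lambda)$ with $\rad(\Lambda/I)$ and conclude locality, does not follow from the two-sided ideal count.

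The fix is painless and keeps your approach intact: run the same uniqueness argument with \emph{left} (equivalently right) ideals. Under $\phi^{-1}$, maximal left ideals of $\Lambda/I$ correspond bijectively to maximal left ideals of $\Lambda$ containing $I$; since $\Lambda$ is local, $\rad\Lambda$ is its unique maximal left ideal, and $I\subseteq\rad\Lambda$ because $I$ is a proper left ideal. Hence $\Lambda/I$ has the unique maximal left ideal $\rad\Lambda/I=\phi(\rad\Lambda)$, which gives locality and the identification $\rad(\Lambda/I)=\phi(\rad\Lambda)$ at once. Alternatively, you could appeal to the standard fact that a surjective ring homomorphism carries the Jacobson radical into the Jacobson radical of the target, which directly supplies the missing inclusion $\phi(\rad\Lambda)\subseteq\rad(\Lambda/I)$.
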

\begin{proof}
    The statement for $I = \Lambda$ is clear, so we assume that $I \lhd \Lambda$ is proper.
    The maximal ideals of $\Lambda/I$ are via $\phi^{-1}$ in bijection with the maximal ideals of $\Lambda$ containing $I$.
    As $I \lhd \Lambda$ is proper, it is contained in the unique maximal ideal $\rad \Lambda$ of $\Lambda$.
    Hence, $\Lambda/I$ has a unique maximal ideal $\rad \Lambda/I$ which satisfies $\phi^{-1} (\rad \Lambda/I) = \rad \Lambda$.
    We have $\phi(\rad \Lambda) = \phi(\phi^{-1}(\rad \Lambda/I)) = \rad \Lambda/I$ as $\phi$ is surjective.
\end{proof}

If $\Lambda$ is not semiperfect $\phi(\rad \Lambda) = \rad \Lambda/I$ does not have to be true.
For example $\phi \colon \kk[z] \to \kk[z]/(z^2)$ satisfies $\rad \kk[z] = 0$ but $\rad \kk[z]/(z^2) \neq 0$.
This can be used to show that part \ref{item:radicaliso} of the following \namecref{lem:radicalfunctor} may fail if $\cA$ is not Krull--Schmidt.

\begin{lem}\label{lem:radicalfunctor}
    Suppose $\Phi \colon \ca \to \cb$ is a full, essentially surjective $\kk$-linear functor and that $\ca$ is a Krull--Schmidt category.
    Then
    \begin{enumerate}[label={(\alph*)}]
        \item $\cb$ is a Krull--Schmidt category,\label{item:ks}
        \item every $X \in \ca$ admits a decomposition $X \cong X_1 \oplus \cdots \oplus X_n$ and such that for each $1 \leq i \leq n$ either $X_i \in \Phi^{-1}(\ind \cb) \cap \ind \ca$ or $X_{i} \in \Phi^{-1}(0) \cap \ind \ca$, and\label{item:kspreimage}
        \item $\Phi(\rad^n_{\ca}(X,Y)) = \rad^n_{\cb}(\Phi(X), \Phi(Y))$ for $n \geq 1$ and $X,Y \in \cA$.\label{item:radicaliso}
    \end{enumerate}
    In particular, the essential image of $\Phi^{-1}(\ind \cb) \cap \ind \ca$ under $\Phi$ is $\ind \cb$.
\end{lem}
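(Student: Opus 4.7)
For part (a), the key observation is that fullness of $\Phi$ gives a surjective $\kk$-algebra homomorphism $\End_{\ca}(X) \to \End_{\cb}(\Phi X)$ for every $X \in \ca$. Given $Y \in \cb$, choose $X \in \ca$ with $\Phi(X) \cong Y$ and decompose $X \cong X_1 \oplus \cdots \oplus X_n$ into indecomposables with local endomorphism rings. Then $Y \cong \Phi(X_1) \oplus \cdots \oplus \Phi(X_n)$ and each $\End_{\cb}(\Phi X_i)$ is a quotient of the local ring $\End_{\ca}(X_i)$, hence is either zero or local by \Cref{lem:localepi}. Discarding the zero summands yields a decomposition of $Y$ into objects with local endomorphism rings, so $\cb$ is Krull--Schmidt.

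Part (b) follows from the same analysis: decomposing $X$ into indecomposables of $\ca$ and grouping those whose image under $\Phi$ is non-zero (which are automatically in $\Phi^{-1}(\ind\cb)$ by the previous paragraph) separately from those that are killed by $\Phi$. The ``in particular'' statement is a consequence of (a) and (b): for $Y \in \ind\cb$, lift $Y \cong \Phi(X)$, apply (b), and use uniqueness of the Krull--Schmidt decomposition in $\cb$ to identify $Y$ with one of the $\Phi(X_i)$.

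The main work is part (c), and I would proceed by induction on $n$. The inclusion $\Phi(\rad^n_{\ca}(X,Y)) \subseteq \rad^n_{\cb}(\Phi X, \Phi Y)$ is \Cref{lem:wellknownrad} (iterated). For the reverse inclusion with $n=1$, I would decompose $X \cong \bigoplus X_i$ and $Y \cong \bigoplus Y_j$ into indecomposables of $\ca$, so that any $g \in \rad_{\cb}(\Phi X, \Phi Y)$ is given by a matrix of entries $g_{ji} \colon \Phi(X_i) \to \Phi(Y_j)$ with each entry again in the radical (since the radical of a Krull--Schmidt category is an ideal detected entrywise via \Cref{lem:radical}). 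Each $g_{ji}$ is then lifted to a morphism in $\rad_{\ca}(X_i, Y_j)$: if $\Phi(X_i) \cong 0$ or $\Phi(Y_j) \cong 0$ take the lift to be zero; if $X_i \not\cong Y_j$ any full lift works, since every morphism between non-isomorphic indecomposables lies in $\rad_{\ca}$ by \Cref{lem:radical}; and if $X_i \cong Y_j$ identify $\ca(X_i, Y_j)$ with $\End_{\ca}(X_i)$ and $\cb(\Phi X_i, \Phi Y_j)$ with $\End_{\cb}(\Phi X_i)$ via a fixed isomorphism, and invoke \Cref{lem:localepi} to lift radical elements to radical elements under the surjection $\End_{\ca}(X_i) \twoheadrightarrow \End_{\cb}(\Phi X_i)$. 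Assembling the $h_{ji}$ into a matrix gives a lift $h \in \rad_{\ca}(X,Y)$ of $g$.

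The induction step for $n > 1$ is then straightforward: given $f = h \circ g \in \rad^n_{\cb}(\Phi X, \Phi Y)$ with $g \in \rad^{n-1}_{\cb}(\Phi X, Z)$ and $h \in \rad_{\cb}(Z, \Phi Y)$, essential surjectivity provides $W \in \ca$ with $\Phi(W) \cong Z$; absorb the isomorphism into $g$ and $h$ (the radical is an ideal so this is harmless), apply the inductive hypothesis to lift $g$ to some $g' \in \rad^{n-1}_{\ca}(X,W)$, apply the $n=1$ case to lift $h$ to some $h' \in \rad_{\ca}(W,Y)$, and note $\Phi(h' \circ g') = h \circ g$ with $h' \circ g' \in \rad^n_{\ca}(X,Y)$. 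The heart of the argument is the $n=1$ case, and the non-trivial point there is genuinely the passage from local ring quotients to compatibility between radicals, which is exactly what \Cref{lem:localepi} supplies.
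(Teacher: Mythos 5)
Your proof is correct and follows essentially the same route as the paper: reduce part (c) to morphisms between indecomposables using the additivity of the radical, and then split into the two cases $X_i\not\cong Y_j$ (where $\rad_\ca(X_i,Y_j)=\ca(X_i,Y_j)$ and fullness finishes) and $X_i\cong Y_j$ (where \Cref{lem:localepi} lifts radical elements of the local endomorphism ring). The paper phrases the reduction to indecomposables slightly more tersely rather than writing out the matrix decomposition, but the substance, the auxiliary lemmas invoked, and the induction on $n$ are the same.
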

\begin{proof}
    If $X \in \ind \ca$ then $\Phi(X)$ has local endomorphism ring or vanishes in $\cb$ by \Cref{lem:localepi}.
    If $X \in \ca$ is arbitrary then $X \cong X_1 \oplus \cdots \oplus X_n$ in $\ca$ for some $X_1, \dots, X_n \in \ind \ca$. After reordering we may assume that $\Phi(X_1), \dots, \Phi(X_k)$ have local endomorphism ring and $X_{k+1}, \dots, X_{n}$ map to zero. This establishes \ref{item:kspreimage}.
    We have an isomorphism $\Phi(X) \cong \Phi(X_1) \oplus \cdots \oplus \Phi(X_k)$ in $\cb$ with $\End_{\cb} (\Phi(X_i))$ local by the above. As $\Phi$ is essentially surjective this establishes \ref{item:ks}.

    Using that $\Phi$ is essentially surjective it suffices to show \ref{item:radicaliso} for the case $n = 1$ by induction on $n$ and the definition of the higher powers of the radical.
    As $\Phi$ is a $\kk$-linear functor between Krull--Schmidt categories and as the radicals are ideals it suffices to consider the case where $X,Y$ are indecomposable in $\cA$.
    If $X \cong Y$ in $\ca$ then without loss of generality $X = Y$ and the lemma follows from \Cref{lem:localepi}.
    If $X \not \cong Y$ in $\cA$ then 
    \[\Phi(\rad_{\ca}(X,Y)) = \Phi(\ca(X,Y)) = \cb(\Phi(X),\Phi(Y)) \supset \rad_{\cb}(\Phi(X),\Phi(Y))\] 
    by \Cref{lem:radical} and $\Phi$ being full.
    \Cref{lem:wellknownrad} completes the proof of \ref{item:radicaliso}.

    To see the last part, notice that every object in $\ind \cb$ has a preimage $X$ in $\ca$.
    However, if we pick a decomposition of $X$ as in \ref{item:kspreimage} then $\Phi(X) \cong \Phi(X_1)$ as $\Phi(X)$ is indecomposable.
\end{proof}

\subsection{Functor categories}
In this section we assume that $\ca$ is a category satisfying \Cref{setup:algebraicsetup}\ref{ks}, \ref{ab}, but we can safely replace $\kk$ by just a commutative ring.
Then $\mod \ca$ is an abelian category with enough projective objects given precisely by the $\Hom$-functors $\ca(-,X)$ for $X \in \ca$. 
For $X,Y \in \ca$ the Yoneda lemma yields that
\[\ca(X,Y) \to \Hom_{\mod \ca}(\ca(-,X),\ca(-,Y)),\, f \mapsto \tau_f \coloneqq \ca(-,f)\]
is a natural $\kk$-linear isomorphism. 
Notice, $\tau_f$ is just left multiplication by $f$.
We use this isomorphism to freely pass between morphisms in $\ca$ and morphisms of projective $\ca$-modules. We start with the following well-known lemmas.
\begin{lem}\label{lem:mincover}
    For every $F \in \mod \ca$ there exists a short exact sequence
    \[\begin{tikzcd}[ampersand replacement=\&]
        0 \& G \& {\ca(-,X_0)} \& F \& 0
        \arrow[from=1-1, to=1-2]
        \arrow[from=1-2, to=1-3, "k"]
        \arrow["\varepsilon", from=1-3, to=1-4]
        \arrow[from=1-4, to=1-5]
    \end{tikzcd}\]
    in $\mod \ca$ with $\varepsilon$ right minimal.
\end{lem}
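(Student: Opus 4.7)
The plan is to construct the sequence by taking any finite projective presentation and trimming redundant indecomposable summands via the Krull--Schmidt property, then verifying right minimality using the locality of endomorphism rings of indecomposables. In detail, since $F \in \mod \ca$ is in particular finitely generated, pick any epimorphism $\pi \colon \ca(-, Y) \twoheadrightarrow F$ with $Y \in \ca$. By \Cref{setup:algebraicsetup}\ref{ks}, write $Y \cong \bigoplus_{i=1}^n Y_i$ with each $Y_i$ indecomposable. For $I \subseteq \{1,\ldots,n\}$ let $Y_I \coloneqq \bigoplus_{i \in I} Y_i$ and let $\pi_I \colon \ca(-, Y_I) \to F$ be the corresponding restriction; call $I$ \emph{admissible} if $\pi_I$ is surjective. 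Since $\{1,\ldots,n\}$ is admissible, we may choose an admissible $I$ of minimal cardinality. Set $X_0 \coloneqq Y_I$, $\varepsilon \coloneqq \pi_I$, and $G \coloneqq \ker \varepsilon$ (which exists in the abelian category $\mod \ca$ by \Cref{setup:algebraicsetup}\ref{ab}); this yields the desired short exact sequence, and it remains only to verify that $\varepsilon$ is right minimal.

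Let $h \in \End_\ca(X_0)$ with $\varepsilon \circ \tau_h = \varepsilon$. Writing $h = (h_{ij})_{i,j \in I}$ with $h_{ij} \colon Y_j \to Y_i$ and setting $\pi_i \coloneqq \varepsilon|_{\ca(-,Y_i)}$, this hypothesis unpacks column-by-column into
\[ \pi_k \circ \tau_{\mathrm{id}_{Y_k} - h_{kk}} \;=\; \sum_{i \in I,\, i \neq k} \pi_i \circ \tau_{h_{ik}} \quad \text{for each } k \in I. \]
Since $Y_k$ is indecomposable in a Krull--Schmidt category, $\End_\ca(Y_k)$ is local; if some $h_{kk}$ were not an automorphism then $\mathrm{id}_{Y_k} - h_{kk}$ would be, and the identity would factor $\pi_k$ through $\bigoplus_{i \neq k} \ca(-, Y_i) \to F$, making $I \setminus \{k\}$ admissible and contradicting the minimality of $I$. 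Hence every diagonal entry $h_{kk}$ is an automorphism of $Y_k$.

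The main obstacle is then to deduce that $h$ itself is an automorphism of $X_0$: note that invertibility of all diagonal blocks of a matrix over a (local) ring does not in general imply invertibility of the full matrix. This is handled by running the same style of argument on off-diagonal entries. If some $h_{ij}$ with $i \neq j$ were not in $\rad \ca(Y_j, Y_i)$, then by \Cref{lem:radical} one would have $Y_i \cong Y_j$ and $h_{ij}$ would itself be an isomorphism; composing the column-$j$ form of the displayed identity with $\tau_{h_{ij}^{-1}}$ would then express $\pi_i$ as a morphism factoring through $\bigoplus_{i' \in I \setminus \{i\}} \ca(-, Y_{i'}) \to F$, contradicting the minimality of $I$ once more. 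Hence every off-diagonal entry $h_{ij}$ lies in $\rad \ca(Y_j, Y_i)$. Setting $D \coloneqq \mathrm{diag}(h_{kk})$, the matrix $D^{-1}(h - D)$ has zero diagonal and off-diagonal entries $h_{kk}^{-1} h_{ij} \in \rad \ca(Y_j, Y_i)$, so it lies in $\bigoplus_{i,j \in I} \rad \ca(Y_j, Y_i)$, which is the Jacobson radical of the semiperfect ring $\End_\ca(X_0)$. Therefore $\mathrm{id} + D^{-1}(h - D)$ is a unit and so is $h = D \bigl(\mathrm{id} + D^{-1}(h - D)\bigr)$, completing the verification that $\varepsilon$ is right minimal.
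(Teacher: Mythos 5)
Your proof is correct but takes a genuinely different route from the paper's. The paper simply cites the dual of Krause--Saor\'{i}n \cite[Corollary 1.4]{KS98} to split any projective presentation $\varepsilon'' = [\varepsilon\;\; 0]$ into a right minimal piece plus zero, and then takes the kernel. You instead re-derive the needed instance of that result from scratch: choose a subset $I$ of the indecomposable summands of a generating object $Y$ which is minimal among those for which $\pi_I$ is still surjective, and then check right minimality of $\pi_I$ directly. The verification is organised into three clean steps -- locality of $\End_\ca(Y_k)$ forces each diagonal block $h_{kk}$ to be an automorphism, \Cref{lem:radical} forces each off-diagonal block $h_{ij}$ into the radical (else minimality of $I$ is violated), and then $h = D(\mathrm{id} + D^{-1}(h-D))$ is a unit because $D^{-1}(h-D) \in \rad_\ca(X_0,X_0) = J(\End_\ca X_0)$. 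This is longer than the paper's one-line citation but makes explicit exactly where the Krull--Schmidt hypothesis and the locality of endomorphism rings enter, which the black-box citation hides. One minor slip: for $i \neq j$ the $(i,j)$-entry of $D^{-1}(h-D)$ is $h_{ii}^{-1}h_{ij}$ rather than $h_{kk}^{-1}h_{ij}$; since the radical is a two-sided ideal this does not affect the conclusion.
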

\begin{proof}
    By definition there is an exact sequence     
    \[\begin{tikzcd}[ampersand replacement=\&]
        \ca(-,X_1'') \& {\ca(-,X''_0)} \& F \& 0
        \arrow["d_1'' \cdot -", from=1-1, to=1-2]
        \arrow["\varepsilon''", from=1-2, to=1-3]
        \arrow[from=1-3, to=1-4]
    \end{tikzcd}\]
    in $\mod \ca$.
    By the dual of \cite[Corollary 1.4]{KS98}, we have
    \[\varepsilon'' = \left[\begin{smallmatrix}\varepsilon & 0\end{smallmatrix}\right] \colon \ca(-, X_0) \oplus \ca(-, X_0') \to F\]
    with $\varepsilon$ being right minimal for some decomposition $X_0'' \cong X_0 \oplus X_0'$.  
    As $\mod \ca$ is abelian, a kernel $k \colon G \to \ca(-,X_0)$ of $\varepsilon$ exists in $\mod \ca$.
\end{proof}

\begin{lem}\label{lem:minradical}
    Suppose we are given an exact sequence
    \[\begin{tikzcd}[ampersand replacement=\&]
        \ca(-,X_1) \& {\ca(-,X_0)} \& F
        \arrow["d_1\cdot -", from=1-1, to=1-2]
        \arrow["\varepsilon", from=1-2, to=1-3]
    \end{tikzcd}\]
    in $\mod \ca$. 
    Then $d_1 \in \rad_{\ca}(X_1,X_0)$ if and only if $\varepsilon$ is right minimal.
\end{lem}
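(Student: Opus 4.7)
The plan is to translate everything via the Yoneda lemma into a statement about morphisms in $\ca$ and then match it directly to the characterization of the radical recalled in the preceding appendix material (and in particular the equivalence described around \Cref{lem:bestchar,lem:radical}).

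First I would use the Yoneda isomorphism to identify $\End_{\mod \ca}(\ca(-,X_0)) \cong \End_{\ca}(X_0)$: every natural endomorphism of $\ca(-,X_0)$ is of the form $\tau_h = h\cdot -$ for a unique $h \in \End_{\ca}(X_0)$. So $\varepsilon$ is right minimal precisely when every $h \in \End_{\ca}(X_0)$ with $\varepsilon \circ \tau_h = \varepsilon$ satisfies $\tau_h$ being an isomorphism, which by Yoneda is equivalent to $h \in \End_{\ca}(X_0)$ being an isomorphism.

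Next I would use the exactness of the given presentation to rewrite the condition $\varepsilon \circ \tau_h = \varepsilon$. Rearranging yields $\varepsilon \circ (\tau_h - \id) = 0$, so $\tau_h - \id = \tau_{h - \id_{X_0}}$ factors through $\ker \varepsilon = \im(d_1 \cdot -)$, i.e.\ $\tau_{h - \id_{X_0}} = \tau_{d_1} \circ \tau_{h'}$ for some $h' \in \ca(X_0, X_1)$. Applying Yoneda again, this translates to $h - \id_{X_0} = d_1 \circ h'$ for some $h' \in \ca(X_0, X_1)$; conversely any such $h$ satisfies $\varepsilon \circ \tau_h = \varepsilon$. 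Hence $\varepsilon$ is right minimal if and only if for every $h' \in \ca(X_0, X_1)$ the endomorphism $\id_{X_0} + d_1 h' \in \End_{\ca}(X_0)$ is invertible. Replacing $h'$ by $-h'$ shows this is equivalent to $\id_{X_0} - d_1 h'$ being invertible for all $h' \in \ca(X_0, X_1)$, which is exactly the defining characterization of $d_1 \in \rad_{\ca}(X_1, X_0)$ recalled at the start of the appendix.

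The argument is essentially a direct translation, so there is no real obstacle beyond being careful that the factorization coming from exactness in $\mod \ca$ lifts to a genuine morphism in $\ca$ via Yoneda; since $\ca(-,X_1)$ is projective and exactness provides a lift, this is automatic.
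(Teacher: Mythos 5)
Your argument is correct and follows essentially the same route as the paper: translate right minimality via Yoneda to a statement about endomorphisms $h$ of $X_0$, use exactness of the presentation to show that $\varepsilon\tau_h=\varepsilon$ is equivalent to $h=\id_{X_0}+d_1h'$ for some $h'\in\ca(X_0,X_1)$, and match the resulting invertibility condition to the characterization $\id_{X_0}-d_1h'$ invertible of $d_1\in\rad_\ca(X_1,X_0)$. One small slip in the closing remark: the lift does not rely on $\ca(-,X_1)$ being projective — the relevant point is either projectivity of the source $\ca(-,X_0)$, or (cleaner, and what the paper does) simply evaluating the exact sequence at $X_0$ and applying it to $\id_{X_0}$, which produces $h'$ directly from exactness.
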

\begin{proof}
    Let $\varepsilon$ be right minimal.
    Let $g \colon X_0 \to X_1$ be an arbitrary morphism.
    Then $\varepsilon \tau_{\id_{X_0} - d_1 g} = \varepsilon (\tau_{\id_{X_0}} - \tau_{d_1} \tau_{g}) = \varepsilon$ as $\varepsilon \tau_{d_1} = 0$ and so $\tau_{\id_{X_0}-d_1g}$ is invertible. Thus, $\id_{X_0} - d_1 g$ is invertible so $d_1 \in \rad_{\ca}(X_1,X_0)$ as $g$ was arbitrary. 
    
    On the other side, suppose $d_1 \in \rad_{\ca}(X_1, X_0)$. 
    Let $\tau_h \colon \ca(-,X_0) \to \ca(-,X_0)$ satisfy $\varepsilon \tau_h  =\varepsilon$. 
    Then $\varepsilon_{X_0}(\id_{X_0} - h) = (\varepsilon \tau_{\id_{X_0} - h})_{X_0}(\id_{X_0}) = (\varepsilon (\tau_{\id_{X_0} }- \tau_{h}))_{X_0}(\id_{X_0}) = 0$. By exactness there is a $g \in \ca( X_0, X_1)$ satisfying $d_1 g = \id_{X_0} - h$.
    So, $h = \id_{X_0} - d_1 g$ is invertible and so is $\tau_{h}$. This shows that $\varepsilon$ is right-minimal.
\end{proof}

This allows us to construct minimal projective resolutions in $\mod \cA$.

\begin{cor}\label{cor:minres}
 Every functor $F \in \mod \ca$ admits a projective resolution
\[\begin{tikzcd}
    \cdots & {\ca(-,X_2)} & {\ca(-,X_1)} & {\ca(-,X_0)} & F & 0
    \arrow["{d_2} \cdot -", from=1-2, to=1-3]
    \arrow["{d_1} \cdot -", from=1-3, to=1-4]
    \arrow["\varepsilon", from=1-4, to=1-5]
    \arrow["{d_3} \cdot -", from=1-1, to=1-2]
    \arrow[from=1-5, to=1-6]
\end{tikzcd}\]
with $d_i \in \rad_{\ca}(X_i,X_{i-1})$ for $i > 0$.
\end{cor}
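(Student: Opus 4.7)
The plan is to construct the resolution inductively, alternately using \Cref{lem:mincover} to obtain a right minimal projective cover of the current syzygy and \Cref{lem:minradical} to deduce that the resulting differential lies in the radical. Assumption \ref{ab} of \Cref{setup:algebraicsetup}, which makes $\mod \ca$ abelian, is precisely what allows the induction to continue, since it guarantees that the kernel of a morphism between finitely presented functors is again finitely presented.

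First, I would apply \Cref{lem:mincover} to $F$ to obtain a short exact sequence $0 \to G_0 \to \ca(-, X_0) \xrightarrow{\varepsilon} F \to 0$ in $\mod \ca$ with $\varepsilon$ right minimal. Setting $G_{-1} \coloneqq F$ and $\pi_0 \coloneqq \varepsilon$, I would then iteratively apply \Cref{lem:mincover} to each $G_{n-1}$ to produce short exact sequences $0 \to G_n \to \ca(-, X_n) \xrightarrow{\pi_n} G_{n-1} \to 0$ with $\pi_n$ right minimal. Splicing these short exact sequences yields a projective resolution of $F$ whose differentials $\ca(-, X_n) \to \ca(-, X_{n-1})$ for $n \geq 1$ are the compositions of $\pi_n$ with the monomorphism $G_{n-1} \hookrightarrow \ca(-, X_{n-1})$. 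By the Yoneda lemma, each such composition is of the form $d_n \cdot -$ for a unique morphism $d_n \in \ca(X_n, X_{n-1})$.

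To verify that $d_n \in \rad_{\ca}(X_n, X_{n-1})$, I would consider the three-term sequence
\[
   \ca(-, X_n) \xrightarrow{d_n \cdot -} \ca(-, X_{n-1}) \xrightarrow{\pi_{n-1}} G_{n-2},
\]
which is exact at the middle term because the image of $d_n \cdot -$ is precisely $G_{n-1} = \ker \pi_{n-1}$. Since $\pi_{n-1}$ is right minimal by construction, \Cref{lem:minradical} applied to this sequence directly yields $d_n \in \rad_{\ca}(X_n, X_{n-1})$. The base case $n=1$ is handled uniformly by the convention $G_{-1} = F$, $\pi_0 = \varepsilon$.

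I do not expect a serious obstacle here: the two preceding lemmas already do the substantive work, and the remaining argument is a bookkeeping exercise verifying that right minimality is propagated correctly through the splicing. The one point that warrants care is ensuring that the induction is well-defined at every stage, and this is exactly the role played by the assumption that $\mod \ca$ is abelian.
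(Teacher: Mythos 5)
Your proof is correct and follows precisely the route the paper intends: the paper's proof consists of the single line ``repeated application of Lemma~\ref{lem:mincover} and~\ref{lem:minradical},'' and you have simply spelled out the induction (build successive minimal covers of the syzygies, splice, and apply Lemma~\ref{lem:minradical} to see each differential lands in the radical) together with the correct observation that Setup~\ref{setup:algebraicsetup}\ref{ab} is what ensures each syzygy $G_n$ remains in $\mod\ca$ so the induction can proceed.
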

\begin{proof}
    This follows from repeated application of \Cref{lem:mincover} and \ref{lem:minradical}.
\end{proof}

Lastly, the following is an immediate consequence of the Yoneda lemma.

\begin{lem}\label{lem:minimalfun}
     A morphism $f \colon X \to Y$ is right minimal if and only if its induced morphism $\tau_f \colon  \ca(-,X) \to \ca(-,Y)$ is right minimal.
\end{lem}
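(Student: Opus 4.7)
The plan is to reduce Lemma \ref{lem:minimalfun} to a direct application of the Yoneda lemma. Concretely, the Yoneda embedding yields a $\kk$-linear bijection $\Phi_Y \colon \End_{\ca}(Y) \to \End_{\mod \ca}(\ca(-,Y))$ sending $g$ to $\tau_g = \ca(-,g)$, and this bijection is multiplicative in the sense that $\tau_{fg} = \tau_f \tau_g$ whenever $fg$ is defined. From this, the lemma will follow once two observations are checked.

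First, I would verify that $g \in \End_{\ca}(Y)$ is an isomorphism if and only if $\tau_g$ is an isomorphism of functors. This is immediate because the Yoneda embedding is fully faithful and sends identities to identities, so that a two-sided inverse of $g$ in $\ca$ corresponds under $\Phi_Y$ to a two-sided inverse of $\tau_g$ in $\mod \ca$.

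Second, multiplicativity of $\Phi_Y$ lets me translate the defining equation of right minimality: the identity $fg = f$ in $\ca(X,Y)$ is equivalent to $\tau_f \tau_g = \tau_f$ in $\mod \ca$. Combined with surjectivity of $\Phi_Y$---which ensures that every $\beta \in \End_{\mod \ca}(\ca(-,Y))$ with $\tau_f \beta = \tau_f$ is of the form $\beta = \tau_g$ for some unique $g \in \End_{\ca}(Y)$---and with the first observation, this yields the claimed equivalence in both directions.

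There is no genuine obstacle here; the only subtlety is notational, namely fixing composition conventions consistently on both sides so that the identity $\tau_{fg} = \tau_f \tau_g$ holds in the stated form (the paper follows the ASS convention, whence $\tau_f$ is described as ``left multiplication by $f$''). Once conventions are fixed, the proof is a routine unpacking of the Yoneda lemma.
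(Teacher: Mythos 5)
Your approach matches the paper's exactly: the paper gives no proof beyond noting that the lemma is an immediate consequence of the Yoneda lemma, and your argument---that the fully faithful Yoneda embedding induces a $\kk$-linear, multiplicative bijection on endomorphism rings which preserves isomorphisms, so that the defining equation of minimality transports across---is the intended unpacking. One correction: right minimality of $f\colon X\to Y$ is a condition on endomorphisms of the \emph{domain} $X$, not the codomain $Y$ (every $g\in\End_\ca(X)$ with $fg=f$ is required to be an isomorphism, cf.\ \cite[Definition~I.5.11]{ASS}), so the relevant Yoneda bijection is $\Phi_X\colon\End_\ca(X)\to\End_{\mod\ca}(\ca(-,X))$ rather than $\Phi_Y$. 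You appear to have inherited this slip from \Cref{defn:sink}(a), which writes $\End_\ca(Y)$ and seems to contain a typo; swapping $Y$ for $X$ throughout your argument leaves its structure and correctness intact.
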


\subsection{Approximations}
In this section we assume that $\ca$ satisfies \Cref{setup:algebraicsetup}\ref{ks}-\ref{ab}.
In particular, $\kk$ is an algebraically closed field.

\begin{rem}\label{rem:krause}
Because $\ca$ is Krull-Schmidt it follows from \cite[Corollary 1.4]{KS98} that every morphism in any additive, idempotent complete subcategory $\cb$ of $\ca$ admits a right minimal version. 
In particular, any object in $\ca$ that admits a right almost split morphism also admits a sink morphism and any object in $\ca$ that admits a right $\cb$-approximation admits a right minimal right $\cb$-approximation. 
\end{rem}

\begin{lem}\label{lem:sinksubcat}
    Suppose $\cb \subseteq \ca$ is an additive, idempotent complete and covariantly finite subcategory and let $X \in \cb$.
    If $X$ admits a sink morphism in $\ca$ then it admits a sink morphism in $\cb$.
\end{lem}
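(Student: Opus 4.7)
The plan is to lift the sink morphism $f\colon Y \to X$ of $\ca$ to a right almost split morphism in $\cb$ by precomposing with a right $\cb$-approximation of $Y$, and then invoke Remark~\ref{rem:krause} to minimize. Concretely, since $\cb$ is covariantly finite in $\ca$, there exists a right $\cb$-approximation $g\colon B \to Y$ with $B \in \cb$. Set $h \coloneqq fg \colon B \to X$, which is a morphism in $\cb$ since $\cb \subseteq \ca$ is a (full) additive subcategory.

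Next I would verify that $h$ is right almost split in $\cb$. It is not a split epimorphism: if $s\colon X \to B$ satisfied $hs = \id_X$, then $f(gs) = \id_X$ in $\ca$, contradicting the fact that $f$ is right almost split in $\ca$ and hence not a split epimorphism. For the almost split property, take any non-split epimorphism $k \colon Z \to X$ in $\cb$; since $\cb$ is a full subcategory of $\ca$, $k$ remains a non-split epimorphism in $\ca$, and the right almost split property of $f$ yields $\ell \colon Z \to Y$ in $\ca$ with $f\ell = k$. The right $\cb$-approximation property of $g$ applied to $Z \in \cb$ then produces $m \colon Z \to B$ in $\cb$ with $gm = \ell$, so that $hm = fgm = f\ell = k$. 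Thus $k$ factors through $h$ and $h$ is right almost split in $\cb$.

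Finally, the subcategory $\cb$ inherits the Krull--Schmidt property from $\ca$: it is additive and idempotent complete, and for every $X \in \cb$ the endomorphism ring $\End_{\cb}(X) = \End_{\ca}(X)$ (by fullness) decomposes as a product of local rings. Hence Remark~\ref{rem:krause} applies to $\cb$, and the morphism $h$ admits a right minimal version $h' \colon B' \to X$ in $\cb$; being right minimal and right almost split (right minimality is preserved under restricting to a direct summand whose complement has trivial image), $h'$ is the desired sink morphism for $X$ in $\cb$. The only subtlety worth double-checking is that a right minimal version of a right almost split morphism is again right almost split, which follows since the right minimal version differs from the original by a direct summand on which the morphism vanishes.
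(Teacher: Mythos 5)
Your proof is correct and follows essentially the same route as the paper's: precompose the sink morphism $f$ with a right $\cb$-approximation to obtain a right almost split morphism in $\cb$, then pass to its right minimal version via Remark~\ref{rem:krause}. The extra detail you supply (checking non-split-epi, checking the factorization property, noting that $\cb$ inherits Krull--Schmidt) is all sound — the last point is in fact already baked into the first sentence of Remark~\ref{rem:krause}, which is stated for arbitrary additive idempotent complete subcategories of $\ca$, so no separate verification is needed.
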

\begin{proof}
    Let $f \colon Y \to X$ be a sink morphism for $X$ in $\ca$ and $g \colon Z \to Y$ be a right $\cb$-approximation.
    Using the lifting properties of $f$ and $g$ it is easy to see that $fg$ is a right almost split morphism in $\cb$.
    The lemma follows from \Cref{rem:krause}.
\end{proof}

In the following two lemmas we use the quiver of a category, see \Cref{defn:quiver}, and sink morphisms, see \Cref{defn:sink}.

\begin{lem}\label{lem:sinkiscover}
    Suppose the quiver of $\ca$ has no loops at an object $X \in \ind \ca$.
    Then a morphism $f \colon Y \to X$ is a sink morphism for $X$ if and only if it is a right minimal right $(\ca \setminus \add X)$-approximation of $X$.
\end{lem}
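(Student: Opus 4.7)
The plan is to show both directions separately, with the key preparatory observation that the no-loop hypothesis at $X$ forces $\End_{\ca}(X) \cong \kk$. Indeed, since $\ca$ is $\Hom$-finite and $X \in \ind \ca$, the endomorphism algebra $\End_\ca(X)$ is a finite-dimensional local $\kk$-algebra, so its Jacobson radical $\rad_\ca(X,X)$ is nilpotent. The no-loop hypothesis translates to $\rad_\ca(X,X) = \rad_\ca^2(X,X)$ (by \Cref{defn:quiver}), and combined with nilpotency this yields $\rad_\ca(X,X) = 0$. As $\kk$ is algebraically closed and $\End_\ca(X)$ is a finite-dimensional local $\kk$-algebra with trivial radical, $\End_\ca(X) \cong \kk$.

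For the forward direction, suppose $f \colon Y \to X$ is a sink morphism. Right minimality is part of the definition of sink morphisms, so what remains is to show that $Y \in \ca \setminus \add X$ and that $f$ is a right $(\ca \setminus \add X)$-approximation. For the former, by \Cref{lem:extar} the multiplicity of $X$ as a summand of $Y$ equals $\dim_\kk \rad_\ca(X,X)/\rad_\ca^2(X,X)$, which vanishes by the no-loop hypothesis. For the approximation property, given any $g \colon Z \to X$ with $Z \in \ca \setminus \add X$, the morphism $g$ cannot be a split epimorphism (as that would force $X$ to be a summand of $Z$), so $g$ factors through $f$ by the right almost split property.

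For the reverse direction, suppose $f \colon Y \to X$ is a right minimal right $(\ca \setminus \add X)$-approximation. Right minimality is given. To see that $f$ is not a split epimorphism, note $Y \in \ca \setminus \add X$ by hypothesis, so $X$ is not a summand of $Y$. It remains to verify that any morphism $g \colon Z \to X$ which is not a split epimorphism factors through $f$. Write $Z = X^n \oplus Z'$ where $Z' \in \ca \setminus \add X$ collects the non-$X$ summands, and decompose $g = [\,g_1 \ g_2\,]$ with $g_1 \colon X^n \to X$ and $g_2 \colon Z' \to X$. By the approximation property, $g_2 = f h_2$ for some $h_2 \colon Z' \to Y$. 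The morphism $g_1$ is given by an $n$-tuple $(\lambda_1, \ldots, \lambda_n) \in \kk^n$ since $\End_\ca(X) \cong \kk$; if some $\lambda_i \neq 0$, then composing $g$ with the obvious section into the $i$-th copy of $X$ would exhibit $g$ as a split epimorphism, contradicting the hypothesis. Hence $g_1 = 0$, and $g = f \circ [\,0 \ h_2\,]$ factors through $f$ as required.

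The main technical point (and the only place the no-loop hypothesis really enters) is the reduction $\End_\ca(X) \cong \kk$; once that is in hand, both directions become essentially formal bookkeeping with the Krull--Schmidt decomposition of arbitrary $Z \in \ca$ into $\add X$ and $\ca \setminus \add X$ parts.
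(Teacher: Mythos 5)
The forward direction is fine and essentially matches the paper. The reverse direction, however, has a genuine gap: the preparatory claim that the no-loop hypothesis forces $\End_\ca(X)\cong\kk$ is false, and the rest of that direction leans on it.

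The error is in the identification of radicals. The no-loop hypothesis says $\rad_\ca(X,X)=\rad_\ca^2(X,X)$, where $\rad_\ca^2(X,X)$ is the \emph{categorical} square: morphisms $X\to X$ admitting a factorization $X\to Z\to X$ through \emph{some} object $Z\in\ca$ with both factors radical. This is in general strictly larger than the \emph{ring-theoretic} square $(\rad\End_\ca X)^2$, because the middle object $Z$ need not lie in $\add X$. So nilpotency of the Jacobson radical of $\End_\ca(X)$ does not let you conclude $\rad_\ca(X,X)=0$. A concrete counterexample: take $\ca=\proj\Lambda$ for $\Lambda=\kk Q/\bigl((ba)^2,(ab)^2\bigr)$, where $Q$ is the quiver with vertices $1,2$, an arrow $a\colon 1\to 2$ and an arrow $b\colon 2\to 1$. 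Then the quiver of $\proj\Lambda$ (which is $Q$) has no loop at $P_1$, yet $\End_\Lambda(P_1)\cong\kk[t]/(t^2)$: the radical endomorphism $ba$ of $P_1$ factors through $P_2\notin\add P_1$, so it belongs to $\rad_\ca^2(P_1,P_1)$ even though $(ba)^2=0$ in $\End(P_1)$. Consequently, in your reverse direction, the component $g_1\colon X^n\to X$ need not be a tuple of scalars, and the case where its entries are non-invertible but nonzero radical endomorphisms of $X$ is left unaddressed.

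What the no-loop hypothesis actually buys you is the weaker (and correct) statement that every $h\in\rad\End_\ca(X)$ factors through some $Z\in\ca\setminus\add X$. Indeed, decomposing the middle object of a factorization into its $\add X$-part and its complement gives $\rad_\ca^2(X,X)=N+(\rad\End_\ca X)^2$, where $N$ is the set of endomorphisms factoring through $\ca\setminus\add X$; the no-loop hypothesis then reads $\rad\End_\ca X=N+(\rad\End_\ca X)^2$ and Nakayama's lemma gives $\rad\End_\ca X=N$. With that, one can repair your reverse direction: each non-invertible $g_{1,i}\colon X\to X$ lies in $\rad\End_\ca X=N$, factors through some $W_i\in\ca\setminus\add X$ and hence through $f$ by the approximation property. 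The paper sidesteps this entirely by taking a basis $f_1,\dots,f_n$ of $\rad\End_\ca X$ (not assuming it vanishes), showing that $\bigl[\,f\ f_1\ \cdots\ f_n\,\bigr]$ is right almost split, deducing that $X$ has a sink morphism, and then invoking the forward direction plus uniqueness of sink morphisms to conclude that $f$ itself is one. Either route works, but as written your argument is incorrect; it must be replaced by the Nakayama argument above or by the paper's construction.
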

\begin{proof}
    Let $f \colon Y \to X$ be a sink morphism.
    We have $Y \in \ca \setminus \add X$ by \Cref{lem:extar} as the quiver of $\ca$ has no loops at $X$.
    Any morphism $h \colon Z \to X$ with $Z \in \ca \setminus \add X$ cannot be a split epimorphism and hence factors through $f$.
    This shows that $f$ is a right $(\ca \setminus \add X)$-approximation and $f$ is right minimal by definition.

    Conversely, suppose that $f$ is a right minimal right $(\ca \setminus \add X)$-approximation.
    If we can show that $X$ admits a sink morphism then $f$ is a sink morphism for $X$ as any sink morphism for $X$ is a right minimal right $(\ca \setminus \add X)$-approximation of $X$ by the first part and hence isomorphic to $f$.
    To this end we pick a $\kk$-basis $f_1, \dots, f_n$ of $\rad \End_\ca (X)$.
    Then any non-split epimorphism $Z \to X$ factors through \[g \coloneqq \left[ \begin{smallmatrix} f & f_1 \cdots & f_n \end{smallmatrix} \right] \colon Y \oplus X^n \to X.\]
    Indeed, it suffices to show this for $Z$ indecomposable and this follows from considering $Z \cong X$ and $Z \in \ind \ca \setminus \{X\}$ separately.
    Hence, $g$ is a right almost split morphism for $X$ and hence $X$ admits a sink morphism by \Cref{rem:krause}.
\end{proof}

\begin{lem} \label{lem:sinkcover}
    Suppose that $X \in \ind \cA$ admits a sink morphism.
    Then $\ca \setminus \add X$ is covariantly finite in $\ca$.
 \end{lem}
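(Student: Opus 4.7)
\medskip

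The plan is to reduce the problem to constructing a right $(\ca \setminus \add X)$-approximation of $X$ itself: for any $Y \in \ca$, decompose $Y \cong Y' \oplus X^n$ with $Y' \in \ca \setminus \add X$ via Krull--Schmidt, and if $\Phi_X \colon X^* \to X$ is an approximation from $\ca \setminus \add X$ then $\id_{Y'} \oplus \Phi_X^{\oplus n} \colon Y' \oplus (X^*)^n \to Y$ serves as an approximation of $Y$.

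To produce $\Phi_X$, I would take the sink morphism $f \colon Z \to X$ and decompose $Z \cong Z' \oplus X^k$ with $Z' \in \ca \setminus \add X$, writing $f = (f_1, f_2)$ with $f_1 \colon Z' \to X$ and $f_2 \colon X^k \to X$. The components of $f_2$ lie in $\rad \End X$, since any component that were an isomorphism would split $f$. For $W \in \ca \setminus \add X$, \Cref{lem:radical} gives $\ca(W,X) = \rad_\ca(W,X)$, and by \Cref{rem:sink} this equals $\im \ca(W,f)$. Combining these with the decomposition $f = (f_1, f_2)$ yields
\[
\ca(W, X) \subseteq f_1 \cdot \ca(W, Z') + (\rad \End X) \cdot \ca(W, X).
\]
Iterating this inclusion and invoking that $\rad \End X$ is nilpotent (since $\End X$ is a finite-dimensional local $\kk$-algebra by \Cref{setup:algebraicsetup}\ref{hf} and Krull--Schmidt), say $(\rad \End X)^N = 0$, I obtain
\[
\ca(W,X) \subseteq \sum_{m=0}^{N-1} (\rad \End X)^m \cdot f_1 \cdot \ca(W, Z').
\]

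The subspace $V := \sum_{m=0}^{N-1} (\rad \End X)^m \cdot f_1$ of $\ca(Z', X)$ is finite-dimensional by Hom-finiteness; picking a $\kk$-basis $\varphi_1, \ldots, \varphi_L$ of $V$ and setting $\Phi_X := (\varphi_1, \ldots, \varphi_L) \colon (Z')^L \to X$ gives a morphism from $(Z')^L \in \ca \setminus \add X$ through which every $g \colon W \to X$ with $W \in \ca \setminus \add X$ factors. This is the desired approximation of $X$, and hence the plan above produces approximations for arbitrary $Y$.

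The main obstacle is the case in which the quiver of $\ca$ has a loop at $X$ (equivalently $k \geq 1$ in the decomposition of $Z$), because then the sink morphism itself does not originate in $\ca \setminus \add X$ and a single application of the factorisation property is insufficient. The argument above circumvents this by iterating the factorisation and exploiting the twin finiteness inputs of Hom-finiteness and nilpotency of $\rad \End X$ to collapse the infinite-looking procedure into a single morphism from a finite direct sum of copies of $Z'$.
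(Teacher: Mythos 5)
Your proof is correct and rests on the same two pillars as the paper's: the reduction to finding a right $(\ca \setminus \add X)$-approximation of $X$ itself, and the nilpotency of $\rad\End_\ca(X)$ to terminate the recursion that arises when the sink morphism's domain contains copies of $X$. The implementations differ slightly. The paper builds a recursive sequence $X_0 = X, X_1, X_2, \ldots$ inside $\add X$, choosing a fresh right $\rad_\ca$-approximation $X_{i+1} \oplus Y_{i+1} \to X_i$ at every step, and assembles the approximation $[h_1 \cdots h_n] \colon Y_1 \oplus \cdots \oplus Y_n \to X$ from the ``escape'' components $Y_i$. You instead fix the single sink morphism $f \colon Z' \oplus X^k \to X$, derive the inclusion $\ca(W,X) \subseteq f_1\,\ca(W,Z') + \rad\End(X)\,\ca(W,X)$, iterate, and then compress the resulting infinite-looking expression into the finite-dimensional subspace $V = \sum_{m<N}(\rad\End X)^m f_1 \subseteq \ca(Z',X)$ from which you extract a basis. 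This is a cleaner bookkeeping: you avoid having to re-decompose the domains $X_i$ (whose multiplicity of $X$ grows with $i$) and instead cap the size of the target object a priori by $\dim_\kk V$, whereas the paper's $Y_1 \oplus \cdots \oplus Y_n$ can be a much larger direct sum of copies of $Z'$. Both routes rely on exactly the same hypotheses (Krull--Schmidt, Hom-finiteness, Lemma~\ref{lem:radical}, Remark~\ref{rem:sink}), and the passage from $X$ to arbitrary $Y = Y' \oplus X^n$ is handled identically.
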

\begin{proof}
    First notice, at every object of $\add X$ ends a radical morphism through which any other radical morphism ending at the same object factors.
    In fact, such a morphism can be obtained as a direct sum of sink morphisms ending at $X$.
    We call such a morphism a \emph{right $\rad_{\ca}$-approximation}. 

    It suffices to show that $X$ has a right $(\ca \setminus \add X)$-approximation.
    Let $X_0 = X$ and pick for $i \in \N$ inductively a right $\rad_{\ca}$-approximation 
    \begin{align}\label{eq:radapproxmation}\left[\begin{smallmatrix} f_{i+1} & g_{i+1} \end{smallmatrix}\right] \colon X_{i+1} \oplus Y_{i+1} \to X_{i},\end{align}
    where the domain is decomposed such that $X_{i+1} \in \add X$ and $\add Y_{i+1} \cap \add X = 0$.
    Since $\End_{\ca}(X)$ is a finite-dimensional $\kk$-algebra there exists an $n \in \N$ such that $\rad^n \End_{\ca}(X) = 0$. 
    Let $h_1 = g_1$ and $h_i = f_1 \cdots f_{i-1} g_i$ for $i \geq 2$.
    We claim that
    \[h \coloneqq \left[\begin{smallmatrix} h_{1} & \cdots & h_n \end{smallmatrix}\right] \colon Y_{1} \oplus \cdots \oplus Y_n \to X\]
    is a right $(\ca \setminus \add X)$-approximation.
    
    Let $Z \in \ca \setminus \add X$ and $k_0 \colon Z \to X$ be arbitrary.
    We have $\ca(Z, X_i) = \rad_{\ca}(Z, X_i)$ for $i \in \mathbb{N}$ by \Cref{lem:radical}.
    We can pick morphisms
    \[\left[\begin{smallmatrix} k_{i+1} \\ l_{i+1} \end{smallmatrix}\right] \colon Z \to X_{i+1} \oplus Y_{i+1}\] 
    inductively such that $\label{eq:firstfactor}k_i = f_{i+1}k_{i+1} + g_{i+1}l_{i+1}$
    for $i \in \N$, where we used that \eqref{eq:radapproxmation} is a right $\rad_{\ca}$-approximation.
    By induction one shows
    \begin{align}\label{eq:magicfactorization} k_0 = f_1 \cdots f_{n} k_{n} + \sum_{i=1}^{n} h_i l_i = f_1 \cdots f_{n} k_{n} + \left[\begin{smallmatrix} h_{1} & \cdots & h_n \end{smallmatrix}\right] \left[\begin{smallmatrix} l_{1} \\ \vdots \\ l_n \end{smallmatrix}\right]. \end{align}
    Notice, $f_1 \cdots f_{n} = 0$ because $\rad^n \End_{\ca} (X) = 0$ and $X_i \in \add X$ for $i \in \N$.
    Thus \eqref{eq:magicfactorization} shows that $k_0$ factors through $h$.
\end{proof}

\subsection{Frobenius exact categories}
In this section we assume that $\ce_\cp$ is a Frobenius exact category with stable category $\ul{\ce}_{\cp} \coloneqq (\ce/\cp, \smash{\Sigma_{\ul{\ce}_\cp}}, \smash{\Delta_{\ul{\ce}_\cp}})$, c.f.\ \Cref{Not:frobenius}.
For any morphism $f \in \ce(X,Y)$ we denote by $\ul{f} \in \Hom_{\ce/\cp}(X,Y)$ its image under the additive quotient functor $\ce \to \ce/\cp$.

\begin{lem}\label{lem:lifttriangle}
Suppose $X,Y,Z \in \ce$.
Any triangle
\begin{align}\begin{tikzcd}[ampersand replacement=\&]
   X \& Y \& Z \& {\Sigma_{\ul{\ce}_\cp} X}
    \arrow["", from=1-1, to=1-2]
    \arrow["", from=1-2, to=1-3]
    \arrow["", from=1-3, to=1-4]
\end{tikzcd}\label{eqn:generic_triangle}\end{align}
in $\ul{\ce}_{\cp}$ is isomorphic to a triangle induced by a conflation in $\ce_\cp$ of the form
\begin{align*}\begin{tikzcd}[ampersand replacement=\&]
    X \& {Y''} \& Z. \& {}
    \arrow["", tail, from=1-1, to=1-2]
    \arrow["", two heads, from=1-2, to=1-3]
\end{tikzcd}\label{eqn:genericonflation}\end{align*}
\end{lem}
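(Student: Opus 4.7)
The strategy is to reconstruct a conflation directly from the third (connecting) morphism $\underline{h}: Z \to \Sigma_{\ul{\ce}_\cp} X$ of the given triangle, exploiting Happel's explicit construction of the triangulated structure on $\ul{\ce}_\cp$ (cf.\ \Cref{Not:frobenius}). In that construction, the shift object $\Sigma_{\ul{\ce}_\cp} X$ is defined via a fixed conflation $X \xhookrightarrow{\iota_X} I_X \twoheadrightarrow \Sigma_{\ul{\ce}_\cp} X$ with $I_X \in \cp$, which exists because $\ce_\cp$ has enough injectives coinciding with projectives.

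My plan is as follows. First, lift $\underline{h}$ to a morphism $h: Z \to \Sigma_{\ul{\ce}_\cp} X$ in $\ce$ and form the pullback
\begin{equation*}
\begin{tikzcd}
Y'' \ar[r] \ar[d, twoheadrightarrow] & I_X \ar[d, twoheadrightarrow] \\
Z \ar[r, "h"] & \Sigma_{\ul{\ce}_\cp} X.
\end{tikzcd}
\end{equation*}
By the axioms of exact categories, pullbacks of deflations along arbitrary morphisms exist and are again deflations, so $Y'' \twoheadrightarrow Z$ is a deflation. Its kernel agrees with the kernel of $I_X \twoheadrightarrow \Sigma_{\ul{\ce}_\cp} X$, namely $X$, so we obtain a conflation $X \rightarrowtail Y'' \twoheadrightarrow Z$ in $\ce_\cp$ of the desired form \eqref{eqn:genericonflation}. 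The pullback diagram above is then a morphism of conflations with the identity on $X$ onto the chosen conflation defining $\Sigma_{\ul{\ce}_\cp} X$.

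By Happel's definition of the connecting morphism of a triangle coming from a conflation, the third map of the triangle induced by $X \rightarrowtail Y'' \twoheadrightarrow Z$ is precisely the rightmost component of this morphism of conflations, which is $\underline{h}$. Thus the given triangle \eqref{eqn:generic_triangle} and the triangle induced by $X \rightarrowtail Y'' \twoheadrightarrow Z$ agree on the outer vertices $X$, $Z$, $\Sigma_{\ul{\ce}_\cp} X$ and on the connecting morphism $\underline{h}$. Applying the standard consequence of TR3 (that a morphism of triangles which is an isomorphism on two vertices is an isomorphism on the third) with the identity on $X$ and on $\Sigma_{\ul{\ce}_\cp} X$ yields an isomorphism $\underline{\beta}: Y \to Y''$ in $\ul{\ce}_\cp$ completing the morphism of triangles, proving the claim.

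The only nontrivial step is identifying Happel's connecting morphism for the pullback conflation with $\underline{h}$, which is essentially built into the construction since Happel's definition uses exactly a morphism of conflations into the fixed conflation defining $\Sigma_{\ul{\ce}_\cp} X$, and the one we produced via pullback has the identity on the left column.
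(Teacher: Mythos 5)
Your proof is correct, but it takes a genuinely different route from the paper. The paper's argument cites an external reference (\cite[Appendix A.1(vii)]{JY22}) for the fact that any triangle in $\ul{\ce}_\cp$ is isomorphic to one induced by a conflation $X \rightarrowtail Y' \twoheadrightarrow Z'$; the third vertex $Z'$ is then stably isomorphic to $Z$, and a pullback along a lift $f \in \ce(Z, Z')$ of that isomorphism produces the desired conflation $X \rightarrowtail Y'' \twoheadrightarrow Z$. Your proof instead works directly from Happel's explicit construction of the triangulation: you lift the connecting morphism $\underline{h}$ to $\ce$ and form the pullback of the \emph{fixed} conflation $X \rightarrowtail I_X \twoheadrightarrow \Sigma_{\ul{\ce}_\cp} X$ along that lift. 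This is more self-contained (no external citation) and directly explains why the resulting triangle is the right one, but it shifts the burden to the final TR3 step, whereas the paper's pullback-along-$f$ trick keeps the isomorphism of triangles visibly intact throughout. One small gloss in your write-up: when you invoke TR3 ``with the identity on $X$ and on $\Sigma_{\ul{\ce}_\cp} X$,'' the square that actually commutes and to which TR3 is applied (after rotation) is the one with identities on $Z$ and on $\Sigma_{\ul{\ce}_\cp} X$ and the common connecting map $\underline{h}$; TR3 then produces $\underline{\beta}\colon Y \to Y''$, and the two-out-of-three consequence shows it is an isomorphism. The identity on $X$ is automatically compatible rather than being part of the initial data. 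This is a matter of phrasing, not a gap, and both approaches are sound.
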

\begin{proof}
   It is well-known that the triangle (\ref{eqn:generic_triangle}) is isomorphic to a triangle which is induced by a conflation $X \rightarrowtail Y' \twoheadrightarrow Z'$ in $\ce_\cp$, cf.\ e.g.\ \cite[Appendix A.1(vii)]{JY22} in a different setup.
   In particular, there is a morphism in $f \in \ce(Z, Z')$ which becomes an isomorphism in $\ce/\cp$.
   Through the pullback of $X \rightarrowtail Y' \twoheadrightarrow Z'$ along $f$ we obtain a conflation $X \rightarrowtail Y'' \twoheadrightarrow Z$ in $\ce_\cp$ which also induces a triangle isomorphic to (\ref{eqn:generic_triangle}). 
\end{proof}

\begin{lem}\label{lem:jenny}
     Suppose $f \colon Y \to X$ is a deflation in $\ce_\cp$ and $g \colon Z \to X$ is a morphism. 
     If $\ul{g}$ factors through $\ul{f}$ in $\ce/\cp$ then $g$ factors through $f$ in $\ce$.
\end{lem}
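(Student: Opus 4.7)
The plan is to unpack the hypothesis that $\ul{g}$ factors through $\ul{f}$ in $\ce/\cp$ and use the projectivity of objects in $\cp$ with respect to deflations.

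Concretely, if $\ul{g} = \ul{f}\cdot\ul{h}$ for some $\ul{h}\in\ce/\cp(Z,Y)$, then I would lift $\ul{h}$ to some $h\in\ce(Z,Y)$ and observe that $g - fh$ must vanish in $\ce/\cp$. By definition of the additive quotient, this means that $g - fh$ factors through some object of $\cp$, i.e.\ $g - fh = b\circ a$ for morphisms $a\colon Z\to P$ and $b\colon P\to X$ with $P\in\cp$.

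Now the key step is to use that $P$ is projective in the exact category $\ce_\cp$. Since $f\colon Y\twoheadrightarrow X$ is a deflation and $b\colon P\to X$ is any morphism, projectivity of $P$ yields a lift $b'\colon P\to Y$ with $fb' = b$. Combining this with the previous step gives
\[ g = fh + ba = fh + fb'a = f(h + b'a), \]
which exhibits $g$ as factoring through $f$ in $\ce$.

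There is no real obstacle here: the statement is essentially the projectivity property of $\cp$ transported across the quotient $\ce \to \ce/\cp$, and the whole argument is a three-line diagram chase. The only thing to be careful about is that the factorization of $g - fh$ through $\cp$ uses the standard characterization of the ideal $\cp\subseteq\ce$ generating the stable category, and that the lift $b'$ exists precisely because deflations onto $X$ are, by definition of projectivity in an exact category, right-lifted against morphisms from projective objects.
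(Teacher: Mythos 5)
Your proof is correct and is essentially identical to the paper's: lift $\ul{h}$, note that $g - fh$ (the paper uses $fh - g$, a harmless sign change) factors through a projective $P \in \cp$, lift the map $P \to X$ along the deflation $f$ by projectivity, and reassemble the factorization.
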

\begin{proof}
    By assumption there is $h \colon Z \to Y$ with $\ul{g} = \ul{f} \ul{h}$.
    Then $fh - g$ factors through a projective object, say $fh - g = qp$ for $p \colon Z \to P$ and $q \colon P \to X$ where $P \in \cp$.
    Since $f$ is a deflation in $\ce_\cp$ and $P$ is projective, the morphism $q$ factors through $f$, say $q = fr$ with $r \colon P \to Y$.
    This shows $g = f(h-rp)$.
\end{proof}

\begin{cor}\label{cor:sinklift}
    Assume $\ce$ satisfies \Cref{setup:algebraicsetup}\ref{ks}.
    If $X \in \ind \ce \setminus \ind \cp$ has a sink morphism in $\ce/\cp$ then it has a sink morphism in $\ce$.
\end{cor}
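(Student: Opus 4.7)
The plan is to construct an explicit right almost split morphism for $X$ in $\ce$ out of a lift of the given sink morphism in $\ce/\cp$, and then invoke \Cref{rem:krause} to minimize it.

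Let $\ul{f}\colon Y \to X$ be a sink morphism in $\ce/\cp$, choose any lift $f \in \ce(Y,X)$, and pick a deflation $p\colon P \twoheadrightarrow X$ with $P \in \cp$, which exists because $\ce_\cp$ has enough projectives. Set
\[ g \coloneqq [\, f \ \ p\,] \colon Y \oplus P \to X . \]
The candidate sink morphism will be the right minimal version of $g$, whose existence is guaranteed by \Cref{rem:krause} once we know $g$ is right almost split.

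First I would verify that $g$ is not a split epimorphism. If it were, passing to $\ce/\cp$ and using $\ul{p} = 0$ (since $p$ factors through $P \in \cp$) would make $\ul{f}$ a split epi, contradicting that $\ul{f}$ is a sink morphism. Next, given $h\colon Z \to X$ not a split epimorphism with $Z$ indecomposable (we may reduce to this case by Krull--Schmidt), I would split into two cases. If $Z \in \cp$, then $h$ factors through the deflation $p$ and hence through $g$. If $Z \notin \cp$, the main point is to show that $\ul{h}$ cannot be a split epimorphism in $\ce/\cp$: otherwise, because $X$ is indecomposable and non-projective (so $\End_{\ce/\cp}(X)$ is local and non-zero) the non-trivial idempotent $\ul{sh}$ (with $\ul{s}$ a section of $\ul{h}$) would force $\ul{h}$ to be an isomorphism, and then \Cref{lem:radicalfunctor}\ref{item:radicaliso} (radicals of local rings match under the quotient $\End_{\ce}(X) \to \End_{\ce/\cp}(X)$) would imply $h$ itself is an isomorphism in $\ce$, contradicting that $h$ is not a split epi. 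Thus $\ul{h}$ is not a split epimorphism, so $\ul{h}$ factors through $\ul{f}$, say $\ul{h} = \ul{f}\,\ul{k}$ for some $k \in \ce(Z, Y)$. Then $h - fk$ factors through some projective $R \in \cp$, and because $p$ is a deflation and $R$ is projective, this factorization lifts through $p$; this yields the desired factorization $h = g \circ \binom{k}{\,\cdot\,}$ in $\ce$.

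I expect the only mildly subtle step to be excluding the case $\ul{h}$ a split epimorphism in $\ce/\cp$; once that is handled the rest is a direct verification using the enough-projectives property and the definition of sink morphisms. Finally, a right minimal version $g'$ of $g$ exists by \Cref{rem:krause}, and $g'$ is then a sink morphism for $X$ in $\ce$.
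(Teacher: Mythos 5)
Your proposal is correct and follows essentially the same route as the paper's proof: both construct the candidate right almost split map $[\,f\ \ p\,]\colon Y\oplus P\to X$, verify it is right almost split, and invoke \Cref{rem:krause} to extract a right minimal version. The only cosmetic difference is that you re-derive inline the factorization-lifting step that the paper delegates to its auxiliary \Cref{lem:jenny}, and your argument that $\ul{h}$ is not a split epimorphism could be shortened to: $h$ not split epi and $Z,X$ indecomposable give $h\in\rad_\ce(Z,X)$ by \Cref{lem:radical}, hence $\ul{h}\in\rad_{\ce/\cp}(Z,X)$ and in particular not a split epi.
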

\begin{proof}
    Suppose $\ul{f} \colon Y \to X$ is a sink morphism in ${\ce}/\cp$ and $g \colon P \to X$ is a deflation in $\ce_\cp$ with $P \in \cp$. 
    Then $g \in \rad_{\ce}(P,X)$ by \Cref{lem:radical} and $\ul{f} \in \rad_{{\ce}/\cp}(Y, X)$ by assumption. 
    We may assume that $f \in \rad_{\ce}(Y,X)$ by \Cref{lem:radicalfunctor}\ref{item:radicaliso}.
    Then 
    \[h \coloneqq \left[\begin{smallmatrix} f & g \end{smallmatrix}\right] \colon Y \oplus P \to X\]
    is deflation in $\ce_\cp$, cf.\ e.g.\ \cite[Lemma A.1(i)]{JY22}, and a radical morphism.
    
    If $k \in \ce(Z,X)$ is not a split epimorphism then $k \in \rad_{\ce}(Z,X)$ and therefore $\ul{k} \in \rad_{\ce/\cp}(Z, X)$ by \Cref{lem:radicalfunctor}\ref{item:radicaliso}. 
    By \Cref{lem:jenny}, $k$ factors through $h$ as $\ul{k}$ factors through $\ul{f} \cong \ul{h}$.
    Hence $h$ is a right almost split morphism ending at $X$.
    It follows from \Cref{setup:algebraicsetup}\ref{ks} that $X$ has a sink morphism in $\ce$, cf.\ e.g.\ \Cref{rem:krause}.
\end{proof}

\begin{cor}\label{cor:jenny}
    Let $\ca \subseteq \ce$ be an additive subcategory which is closed under direct summands and contains $\cp$.
    If $f \colon Y \to X$ is a deflation in $\ce_\cp$ and $\ul{f} \colon Y \to X$ is a right $(\ca/\cp)$-approximation, then $f$ is a right $\ca$-approximation.
\end{cor}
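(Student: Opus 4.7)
The plan is to reduce directly to \Cref{lem:jenny}. Let $g \colon A \to X$ be an arbitrary morphism in $\ce$ with $A \in \ca$; I must show that $g$ factors through $f$ in $\ce$.

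Passing to the stable category, I obtain the induced morphism $\ul{g} \colon A \to X$ in $\ce/\cp$. Since $A \in \ca$ and $\ca$ contains $\cp$, the object $A$ lies in $\ca/\cp$, and so by the assumption that $\ul{f}$ is a right $(\ca/\cp)$-approximation of $X$, the morphism $\ul{g}$ factors through $\ul{f}$ in $\ce/\cp$.

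Now $f$ is a deflation in $\ce_\cp$ by hypothesis, so \Cref{lem:jenny} applies and yields that $g$ factors through $f$ in $\ce$. Since $g$ was arbitrary, $f$ is a right $\ca$-approximation, as claimed.

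There is no real obstacle here: the statement is essentially a packaging of \Cref{lem:jenny} in the language of approximations, and the only thing to check is that the hypothesis on $\ca$ (closed under summands and containing $\cp$) is enough to ensure that stable approximations lift to $\ce$-approximations — which it is, since containing $\cp$ guarantees $A \in \ca/\cp$ for every $A \in \ca$.
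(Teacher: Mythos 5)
Your argument for the lifting property is correct and matches the paper's route (reduce to \Cref{lem:jenny}), but there is a real gap: you never verify that $Y$ itself lies in $\ca$. By the standard convention (implicit in \Cref{defn:covering} and consistent with how it is used throughout the paper, e.g.\ in \Cref{lem:GPfunfinite} and \Cref{lem:sinksubcat}), a right $\ca$-approximation of $X$ is a morphism $M \to X$ with $M \in \ca$, so verifying only the lifting property is not enough. This is in fact the one non-trivial observation the paper's proof makes: $\ul{f} \colon Y \to X$ being a right $(\ca/\cp)$-approximation forces $Y$ to be isomorphic in $\ce/\cp$ to some object of $\ca$; lifting such a stable isomorphism gives $Y \oplus P \cong A \oplus Q$ in $\ce$ with $A \in \ca$ and $P, Q \in \cp$; since $\cp \subseteq \ca$ and $\ca$ is additive and closed under direct summands, $Y \in \ca$.

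Your closing remark also misreads the role of the hypotheses on $\ca$. You say that ``containing $\cp$ guarantees $A \in \ca/\cp$ for every $A \in \ca$'' --- but any $A \in \ca$ is automatically an object of $\ca/\cp$, regardless of whether $\cp \subseteq \ca$. The assumptions that $\ca$ is closed under direct summands and contains $\cp$ are used precisely (and only) to conclude $Y \in \ca$ from $Y \in \ca/\cp$. Once you add that step, the proof is complete.
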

\begin{proof}
    We have $Y \in \ca$ because $\ca$ is closed under direct summands and contains $\cp$.
    The corollary now follows from \Cref{lem:jenny}.
\end{proof}

\subsection{Iyama--Yoshino mutation}
The following is well-known to the experts. 
We give a proof for convenience of the reader.

\begin{thm}[{Iyama--Yoshino mutation, cf.\ \cite{IyamaYoshino}}]\label{thm:IyamaYoshino}
    Let $\cc$ be a $2$-Calabi--Yau triangulated category satisfying \Cref{setup:algebraicsetup}\ref{ks}, \ref{hf}.
    Suppose $\ct \subseteq \cc$ is a cluster-tilting subcategory and $\cu \coloneqq \ct \setminus \add T$ for some $T \in \ind \ct$. 
    Then there is $T^\ast \in \ind \cc \setminus \ind \ct$ such that
    \begin{enumerate}[label=(\alph*)]
        \item $\ct^\ast \coloneqq \add( \cu \cup \{T^\ast\})$ is a cluster-tilting subcategory of $\cc$ and
        \item there are triangles
        \[\text{\begin{tikzcd}[ampersand replacement=\&]
        {T^\ast} \& U \& T \& {\Sigma T^\ast}
        \arrow["f", from=1-1, to=1-2]
        \arrow["g", from=1-2, to=1-3]
        \arrow["h", from=1-3, to=1-4]
        \end{tikzcd} and \begin{tikzcd}[ampersand replacement=\&]
        T \& V \& {T^\ast} \& {\Sigma T}
        \arrow["{i}", from=1-1, to=1-2]
        \arrow["{j}", from=1-2, to=1-3]
        \arrow["{k}", from=1-3, to=1-4]
        \end{tikzcd}}
        \]
        in $\cc$, where $g$ and $j$ are right minimal right $\cu$-approximations and $f$ and $i$ are left minimal left $\cu$-approximations.
    \end{enumerate}
\end{thm}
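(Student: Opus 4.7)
The plan is to follow the classical Iyama--Yoshino mutation argument, adapted from \cite{IyamaYoshino}, \cite[Section I.1]{BIRSc} or \cite[Theorem 3.5]{BIRSc}, with careful bookkeeping of minimality. First I would verify that $\cu = \ct \setminus \add T$ is functorially finite in $\cc$: using that $\ct \subseteq \cc$ is functorially finite and that $\cu \subseteq \ct$ is covariantly (resp.\ contravariantly) finite by the argument of \Cref{lem:sinkcover} (such arguments only use that $\ct$ is $\Hom$-finite and Krull--Schmidt, which follows from $\cc$ being so and $\ct \subset \cc$ being closed under summands), one concludes functorial finiteness of $\cu$ in $\cc$ by composing approximations. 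Then pick a right minimal right $\cu$-approximation $g \colon U \to T$, which exists by \Cref{rem:krause}, and complete it to a triangle
\begin{equation*}
   T^\ast \xrightarrow{f} U \xrightarrow{g} T \xrightarrow{h} \Sigma T^\ast
\end{equation*}
in $\cc$. The candidate for the mutated object is $T^\ast$ (up to taking a left-minimal version of $f$).

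The key computations go via the long exact Hom-sequences and the $2$-Calabi--Yau property. For any $U' \in \cu$, applying $\Hom_\cc(-, U')$ to the above triangle and using $\Hom_\cc(\Sigma^{-1}T, U') \cong \Ext^1_\cc(T, U') = 0$ (cluster-tilting property of $\ct$) gives surjectivity of $\Hom_\cc(U, U') \to \Hom_\cc(T^\ast, U')$, so $f$ is a left $\cu$-approximation; replacing $U$ and $T^\ast$ by left-minimal versions of $f$ yields the desired triangle. Conversely, applying $\Hom_\cc(U', -)$ and using $\Ext^1_\cc(U', T) = 0 = \Ext^1_\cc(U', U)$ shows that $\Hom_\cc(U', U) \to \Hom_\cc(U', T)$ is surjective (equivalently, a direct consequence of $g$ being a right $\cu$-approximation), whence $\Hom_\cc(U', \Sigma T^\ast) = 0$. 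By $2$-Calabi--Yau duality $\Hom_\cc(T^\ast, \Sigma U') \cong D\Hom_\cc(U', \Sigma T^\ast) = 0$, so $\Ext^1_\cc(T^\ast, U') = 0 = \Ext^1_\cc(U', T^\ast)$. A similar computation with $\Hom_\cc(T^\ast, -)$ applied to the triangle, combined with the $2$-Calabi--Yau property and the fact that $\Hom_\cc(T, \Sigma U) = 0$, yields $\Ext^1_\cc(T^\ast, T^\ast) = 0$. The second triangle $T \xrightarrow{i} V \xrightarrow{j} T^\ast \xrightarrow{k} \Sigma T$ arises dually by taking a left minimal left $\cu$-approximation of $T$ and completing the triangle, and by the uniqueness of mutation (proved via the same Ext-calculations) both constructions produce the same $T^\ast$.

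The main obstacles I expect are (i) showing that $T^\ast \notin \add \cu$ and $T^\ast \notin \add T$, so that $T^\ast \in \ind\cc \setminus \ind\ct$ and $\ct^\ast$ is genuinely different from $\ct$, and (ii) verifying the maximality clause in \Cref{defn:ct}, namely that any $X \in \cc$ with $\Ext^1_\cc(\ct^\ast, X) = 0$ lies in $\ct^\ast$. For (i), if $T^\ast$ had a summand $T_0 \in \cu$ then the left $\cu$-approximation property of $f$ would force $T_0$ to be a direct summand of $U$ on which $g$ vanishes, contradicting right-minimality of $g$; ruling out a summand isomorphic to $T$ uses $\Ext^1_\cc(T^\ast, T) = 0$ (which follows from the second triangle) against $\Ext^1_\cc(T, T) = 0$ via the nonsplit triangle $T \to V \to T^\ast$, whose connecting morphism $k \in \Hom_\cc(T^\ast, \Sigma T)$ is non-zero unless $T^\ast \in \cu$. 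For (ii), given $X$ with $\Ext^1_\cc(\ct^\ast, X) = 0$, applying $\Hom_\cc(-, X)$ to the first triangle and using $2$-Calabi--Yau duality plus the cluster-tilting property of $\ct$ lets one verify $\Ext^1_\cc(T, X) = 0$ as well, so $X \in \ct$ and then a direct check using the triangles shows $X \in \add(\cu \cup \{T^\ast\}) = \ct^\ast$. This final maximality step is the most delicate, since it requires juggling both triangles and both sides of $2$-Calabi--Yau duality simultaneously.
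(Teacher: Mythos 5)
Your approach is genuinely different from the paper's: you attempt to re-derive Iyama--Yoshino mutation from scratch by constructing the approximation triangles and verifying the cluster-tilting axioms by hand, whereas the paper verifies the technical hypotheses (Krull--Schmidt, $\Hom$-finiteness, functorial finiteness of $\cu$ via \Cref{lem:sinkcover,lem:sinksubcat}) and then invokes the published Iyama--Yoshino machinery directly --- specifically \cite[Theorem 5.3]{IyamaYoshino} for the existence and uniqueness of the second cluster-tilting completion $\ct^\ast \supset \cu$, and \cite[Proposition 2.6]{IyamaYoshino} for the equivalence $\GG\colon \ct/\cu \to \ct^\ast/\cu$. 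The paper's route gets the indecomposability of $T^\ast$ for free (it is the lift of the indecomposable $\GG(T)$ through the additive quotient, via \Cref{lem:radicalfunctor}), and gets both triangles simultaneously as the defining triangles of the mutation pair, after a cancellation step removing the $V^\ast$-summand.

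The gaps in your sketch are exactly at the points the paper avoids by citation. First, your argument establishes at most that $T^\ast$ has no summand in $\cu$ (from right-minimality of $g$) and no summand isomorphic to $T$ (via a second triangle), but this only yields $T^\ast \notin \add\ct$, not $T^\ast \in \ind\cc$; indecomposability of $T^\ast$ is a nontrivial consequence of the exchange theorem and does not drop out of the single triangle computation. Second, and more seriously, the claim that "the uniqueness of mutation ... (proved via the same Ext-calculations)" makes the two approximation triangles produce the \emph{same} $T^\ast$ is circular: that the $\cu$-mutation pair construction is well-defined and symmetric is precisely \cite[Proposition 2.6]{IyamaYoshino}, which requires its own argument. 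Third, you correctly flag the maximality clause of \Cref{defn:ct} as the most delicate step but then leave it at "a direct check using the triangles shows $X \in \ct^\ast$"; a direct check does not close here --- one needs an approximation argument (take a right $\ct^\ast$-approximation of the putative $X$, analyze its cone, and run an induction using both triangles), which is the content of \cite[Theorem 5.3]{IyamaYoshino}. None of these are small omissions: they are the substantive theorems that the paper deliberately outsources. A from-scratch proof is possible, but would need to reproduce these arguments in full rather than gesture at them.
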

\begin{proof}
    Notice, $\cc$ has enough source and sink morphisms as it has a Serre functor, see \cite[Theorem A]{ReitenVandenBergh}.
    By \Cref{lem:sinksubcat} and its dual, $\ct$ has enough source and sink morphisms.
    Now, $\cu$ is functorially finite in $\ct$ by \Cref{lem:sinkcover} and hence $\cu$ is also functorially finite in $\cc$ as $\ct$ is functorially finite in $\cc$. 
    This implies that $\cu$ is an almost complete cluster-tilting subcategory in the sense of \cite[Definition 5.2]{IyamaYoshino}. 
    By \cite[Theorem 5.3]{IyamaYoshino} there are exactly two cluster-tilting subcategories $\ct$ and $\ct^\ast$ of $\cc$ with $\cu \subseteq \ct, \ct^\ast$. 
    By the same theorem $(\ct,\ct^\ast)$ is an $\cu$-mutation pair in the sense of \cite[Definition 2.5]{IyamaYoshino}.
    This $\cu$-mutation pair gives us an equivalence $\GG \colon \ct/\cu \to \ct^\ast/\cu$ as in \cite[Proposition 2.6]{IyamaYoshino}.
    As $T \in \ind (\ct/\cu)$ is an additive generator of $\ct/\cu$ its image $\GG(T) \in \ind (\ct^\ast/\cu)$ is an additive generator of $\ct^\ast/\cu$.
    As $\cc$ is Krull--Schmidt we can decompose $\GG(T) \cong T^\ast \oplus V^*$ in $\ct^\ast$ such that $T^\ast \in \ind \ct^\ast \setminus \ind \cu$ and $V^* \in \cu$ and as $\GG(T)$ is a additive generator of $\ct^\ast/\cu$ we have $\ct^\ast = \add(\cu \cup \{T^\ast\})$, compare \Cref{lem:radicalfunctor}\ref{item:kspreimage}.
    By definition of $\GG$, see \cite[Proposition 2.6]{IyamaYoshino}, there is a triangle
    \begin{equation}\begin{tikzcd}[ampersand replacement=\&, column sep = .9cm]
        {T} \& {V'} \& {T^\ast \oplus V^*} \& {\Sigma T}
        \arrow["", from=1-1, to=1-2]
        \arrow["", from=1-2, to=1-3]
        \arrow["", from=1-3, to=1-4]
    \end{tikzcd}\label{eqn:triangle}\end{equation}
    in $\cc$ with $V' \in \cu$.
    Since $V^\ast, T \in \ct$ we have $\cc(V^\ast,\Sigma T) = 0$, so the canonical inclusion $V^\ast \to T^\ast \oplus V^\ast$ factors through $V' \to T^\ast \oplus V^\ast$. 
    A standard argument shows that there is a direct summand of the triangle (\ref{eqn:triangle}), which is of the shape
    \[\begin{tikzcd}[ampersand replacement=\&, column sep = .9cm]
        {T} \& {V} \& {T^\ast} \& {\Sigma T}
        \arrow["i", from=1-1, to=1-2]
        \arrow["j", from=1-2, to=1-3]
        \arrow["k", from=1-3, to=1-4]
    \end{tikzcd}\]
    with $V \oplus V^{\ast} = V' \in \cu$. 
    It follows from \cite[page 126]{IyamaYoshino} that $j$ is a left $\cu$-approximation and $i$ is a right $\cu$-approximation. 
    Since $T \notin \add \cu$ we have $i \in \rad_{\cc}(T,V)$. 
    Hence, $j$ is right-minimal using that the Yoneda-embedding of $\cc$ maps triangles to exact sequences and \Cref{lem:minradical,lem:minimalfun}.
    This implies that $j$ is a right minimal right $\cu$-approximation. 
    Dually, $i$ is a left minimal left $\cu$-approximation.
    
    Notice, $\cu = \ct^\ast \setminus \add T^\ast$. 
    Hence, the other triangle is obtained by reversing the roles of $T$ and $T^\ast$.
\end{proof}
\begin{cor}\label{cor:IyamaYoshino}
    Let $\ce_\cp$ be a Frobenius exact category such that its stable category $\ul{\ce}_\cp$ is $2$-Calabi--Yau and $\ct$ be a cluster-tiling subcategory of $\ce_\cp$. 
    Suppose $\ce$ satisfies \Cref{setup:algebraicsetup}\ref{ks}.
    Let $\cu \coloneqq \ct \setminus \add T$ for some object $T \in \ind \ct \setminus \ind \cp$.
    Then we have an object $T^\ast \in \ind \ce \setminus \ind \ct$ and conflations
        \begin{equation}\label{eqn:holyconflations}\text{\begin{tikzcd}[ampersand replacement=\&]
        {T^\ast} \& U \& T
        \arrow["f", from=1-1, to=1-2, tail]
        \arrow["g", from=1-2, to=1-3, two heads]
        \end{tikzcd} and \begin{tikzcd}[ampersand replacement=\&]
        T \& V \& {T^\ast}
        \arrow["{i}", from=1-1, to=1-2, tail]
        \arrow["{j}", from=1-2, to=1-3, two heads]
        \end{tikzcd}}
        \end{equation}
        in $\ce_\cp$, where $g$ and $j$ are right minimal right $\cu$-approximations and $f$ and $j$ are left minimal left $\cu$-approximations.
\end{cor}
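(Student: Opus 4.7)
The plan is to reduce to the triangulated Iyama--Yoshino mutation by passing to the $2$-Calabi--Yau stable category $\ul{\ce}_\cp$ and then lift the resulting mutation triangles to conflations in $\ce_\cp$. First, by \Cref{LIRSc} the subcategory $\ct/\cp$ is cluster-tilting in $\ul{\ce}_\cp$, and the image of $T \in \ind \ct \setminus \ind \cp$ remains indecomposable in $\ct/\cp$. Applying \Cref{thm:IyamaYoshino} to $(\ul{\ce}_\cp, \ct/\cp, T)$ produces $T^\ast \in \ind \ul{\ce}_\cp \setminus \ind(\ct/\cp)$ together with mutation triangles whose connecting maps are minimal $(\cu/\cp)$-approximations. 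Using \Cref{lem:radicalfunctor}\ref{item:kspreimage}, I lift $T^\ast$ to an indecomposable object of $\ce$, still denoted $T^\ast$. Its nonvanishing in $\ul{\ce}_\cp$ forces $T^\ast \notin \cp$, and if $T^\ast$ were in $\ct$ its class in $\ct/\cp$ would be indecomposable, contradicting its exclusion from $\ind(\ct/\cp)$. Hence $T^\ast \in \ind \ce \setminus \ind \ct$.

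Next I apply \Cref{lem:lifttriangle} and its dual to lift the mutation triangles to conflations
\[T^\ast \stackrel{f}{\rightarrowtail} U \stackrel{g}{\twoheadrightarrow} T, \qquad T \stackrel{i}{\rightarrowtail} V \stackrel{j}{\twoheadrightarrow} T^\ast\]
in $\ce_\cp$ with the prescribed outer terms $T^\ast$ and $T$. Since $T$ is non-projective we have $\cp \subseteq \cu$, so the new middle terms $U,V$ lie in $\cu$ (they agree with the stable middle terms modulo projective summands, and $\cu$ is closed under direct summands). Then \Cref{cor:jenny} shows that $g$ and $j$ are right $\cu$-approximations in $\ce$, and the dual of \Cref{cor:jenny}, obtained by dualising \Cref{lem:jenny}, shows that $f$ and $i$ are left $\cu$-approximations in $\ce$.

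The main obstacle will be verifying right and left minimality in the exact category $\ce$, since a minimal approximation in $\ul{\ce}_\cp$ can acquire projective summands upon lifting. For $g$, suppose it is not right minimal; the Krull--Schmidt property of $\ce$ then yields a decomposition $U = U_0 \oplus U_1$ with $U_1 \neq 0$ and $g|_{U_1} = 0$. The conflation splits as the direct sum of a conflation $\ker(g|_{U_0}) \rightarrowtail U_0 \twoheadrightarrow T$ and the trivial conflation on $U_1$, forcing $T^\ast \cong \ker(g|_{U_0}) \oplus U_1$ in $\ce$. Indecomposability of $T^\ast$ combined with $U_1 \neq 0$ forces $\ker(g|_{U_0}) = 0$, making $g|_{U_0}$ an isomorphism $U_0 \cong T$ with $U_0 \in \cu$, which contradicts $T \notin \cu$. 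Hence $g$ is right minimal, and analogous arguments handle $j$ and, dually, the left minimality of $f$ and $i$.
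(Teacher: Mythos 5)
Your proof is correct and follows the same overall strategy as the paper: pass to $\ul{\ce}_\cp$ via \Cref{LIRSc}, invoke \Cref{thm:IyamaYoshino}, lift $T^\ast$ with \Cref{lem:radicalfunctor}, lift the triangles to conflations with \Cref{lem:lifttriangle}, and obtain $\cu$-approximations from \Cref{cor:jenny} and its dual. The one place where you diverge is the minimality step. The paper argues forward: since $T^\ast, T \notin \add\cu$ while $U, V \in \cu$, \Cref{lem:radical} forces $f$ and $i$ to be radical morphisms, and then left-exactness of the Yoneda embedding combined with \Cref{lem:minradical} and \Cref{lem:minimalfun} immediately gives right minimality of $g$ and $j$ (and dually left minimality of $f$ and $i$). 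You instead argue by contradiction using Krull--Schmidt: if $g$ is not right minimal, decompose $U \cong U_0 \oplus U_1$ with $g|_{U_1} = 0$ and $U_1 \neq 0$, and show $T^\ast$ acquires $U_1$ as a direct summand, contradicting either indecomposability of $T^\ast$ (since $T^\ast \cong U_1 \in \cu \subseteq \ct$ contradicts $T^\ast \notin \ind\ct$) or the exclusion $T \notin \cu$. Both routes work; yours is more elementary, the paper's is shorter because it does not need to unpack the conflation. One small caveat in your phrasing: asserting that the conflation ``splits as a direct sum'' implicitly requires that $g|_{U_0}$ is itself a deflation with a kernel in the exact structure. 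The cleaner way to reach the same conclusion is the split-mono observation you essentially make: since $g\iota_{U_1} = 0$ and $f = \ker(g)$, the inclusion $\iota_{U_1}$ factors as $fs$ with $\pi_{U_1} f s = \id_{U_1}$, so $s$ is a split monomorphism exhibiting $U_1$ as a direct summand of $T^\ast$; the contradiction then follows without any claim about $g|_{U_0}$ being a deflation. You also correctly flag the point, left implicit in the paper, that $\cp \subseteq \cu$ because $T$ is non-projective, which is what makes \Cref{cor:jenny} applicable with $\ca = \cu$.
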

\begin{proof}
    By \Cref{LIRSc} the subcategory $\ct/\cp$ is cluster-tilting in $\ul{\ce}_\cp$.
    By \Cref{lem:radicalfunctor}, $\ul{\ce}_{\cp}$ inherits \Cref{setup:algebraicsetup}\ref{ks} from $\ce$.
    There are $T^\ast \in \ind ({\ce}/\cp) \setminus \ind (\ct/\cp)$ and triangles
    \[\text{\begin{tikzcd}[ampersand replacement=\&]
        {{T}^\ast} \& {U} \& {T} \& {\Sigma_{\ul{\ce}_\cp} {T}^\ast}
        \arrow["\ul{f}", from=1-1, to=1-2]
        \arrow["\ul{g}", from=1-2, to=1-3]
        \arrow["\ul{h}", from=1-3, to=1-4]
    \end{tikzcd} and \begin{tikzcd}[ampersand replacement=\&]
        {T} \& {V} \& {{T}^\ast} \& {\Sigma_{\ul{\ce}_\cp} T}
        \arrow["{\ul{i}}", from=1-1, to=1-2]
        \arrow["{\ul{j}}", from=1-2, to=1-3]
        \arrow["{\ul{k}}", from=1-3, to=1-4]
    \end{tikzcd}}\]
    in $\ul{\ce}_\cp$ where $\ul{g}$ and $\ul{j}$ are right minimal right $(\cu/\cp)$-approximations and $\ul{f}$ and $\ul{i}$ are left minimal left $(\cu/\cp)$-approximations, by \Cref{thm:IyamaYoshino}.
    We may assume that $T^\ast \in \ind \ce \setminus \ind \ct$, by \Cref{lem:radicalfunctor}, and that these triangles are induced by conflations as in \eqref{eqn:holyconflations}, by \Cref{lem:lifttriangle}.
    By \Cref{cor:jenny} and its dual $f$ and $i$ are left $\cu$-approximations and $g$ and $j$ are right $\cu$-approximations.
    Since $T,T^\ast \notin \add \cu$ we have that $f$ and $i$ are in the radical, by \Cref{lem:radical}.
    Hence, $g$ and $j$ are right minimal using that the Yoneda-embedding of $\ce_{\cp}$ is left exact and \Cref{lem:minradical,lem:minimalfun}.
    Dually, one shows that $f$ and $i$ are left minimal.
\end{proof}

%
%
%

%
%
%
%
%
%
%
%
%
%
%
%

%
%
%
%
%
%
%
%
%
%
%
%
%
%
%
%
%
%
%
%
%
%
%
%
%
%
%
%
%
%
%
%
%
%
%
%
%

%
%
%
%
%
%
%
%
%
%
%
%
%
%
%
%
%
%
%
%
%
%
%
%
%
%

\section{More details on the singularities in \Cref{T:Main}}\label{S:AppendixDetailsOnSing}

We use the following notation throughout this section
\begin{Setup}
    Let $P_d \coloneqq \C \llbracket z_0, \dots, z_d\rrbracket$ for $d \in \N$.
\end{Setup}
There are two (non-disjoint) classes of odd-dimensional singularities appearing in the statement of our main result \Cref{T:Main}.

Firstly, the odd-dimensional ADE-hypersurface singularities\footnote{In dimension $2$, the ADE-hypersurface singularities are also known as Du Val singularities, Kleinian singularities, simple surface singularities, canonical singularities or rational double points. They play a key role in the definition of cDV-singularities, cf.\ \Cref{Reid}. } $P_d/(f)$ listed in \Cref{rem:explicit}. 
Secondly, the singularities of type $\SMAL$ (cf.\ \Cref{D:frakS}), which are generalizations of ADE-hypersurface singularities, see \Cref{R:BIKR}. Our description of these singularities in \Cref{T:ClassOfSing} is as explicit as the description of isolated Gorenstein threefold singularities admitting a small resolution of singularities.

\begin{thm}[Reid \cite{Reid83}]\label{Reid}
Let $S \in \SMAL^{(3)}$, i.e.\ $S$ is an isolated Gorenstein complete local threefold singularity over $\C$ that admits a small resolution of singularities. Then $S$ is a \emph{compound Du Val (cDV)} singularity, i.e.\ there is an isomorphism  
\begin{align*}
    S \cong P_3/(g + z_3 h),
\end{align*}
 where $g \in P_2$ is of type ADE and $h \in P_3$. In particular, $S$ is a hypersurface singularity.
\end{thm}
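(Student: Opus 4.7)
The plan is to follow Reid's original strategy in three stages: first exhibit $S$ as a hypersurface, then show that a generic hyperplane section of $\Spec S$ is a Du Val surface singularity, and finally combine these to obtain the asserted normal form.

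\emph{Step 1 (Hypersurface).} Let $\pi\colon Y \to \Spec S$ be a small resolution. By Remark \ref{rem:crepant-res}, $\pi$ is crepant, so $Y$ is Calabi--Yau. Because every fiber has dimension at most one, the exceptional set $C = \pi^{-1}(\mathfrak{m})$ is a tree of rational curves $C_i \cong \P^1$ (compare the discussion preceding Proposition \ref{P:QuiverClusterT}). A local computation of the tangent cone along $C$, combined with the Gorenstein hypothesis, gives $\mathrm{mult}_{\mathfrak{m}}(S) = 2$ and $\edim S \leq 4$. Cohen's structure theorem then produces a surjection $P_3 \twoheadrightarrow S$ whose kernel is principal, i.e.\ $S \cong P_3/(F)$ for some $F \in \mathfrak{m}^2 \subset P_3$.

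\emph{Step 2 (Generic hyperplane section is Du Val).} Choose a generic hyperplane $H \subset \Spec S$ passing through the singular point. A dimension-count using the at-most-one-dimensional fibers of $\pi$ shows that the strict transform $\widetilde{H} \subset Y$ is smooth and meets the exceptional curves transversally. Since $H$ is Cartier on $\Spec S$ and $\pi$ is an isomorphism away from $\mathfrak{m}$, we have $\widetilde{H} = \pi^{*}H$ as divisors on $Y$. Adjunction together with crepancy yields
\begin{equation*}
    K_{\widetilde{H}} \cong (K_Y + \widetilde{H})|_{\widetilde{H}} \cong \pi^{*}(K_{\Spec S} + H)|_{\widetilde{H}},
\end{equation*}
so $\widetilde{H} \to H$ is a crepant resolution of the normal surface $H$. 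Hence $H$ is a Du Val surface singularity, i.e.\ isomorphic to $P_2/(g)$ for some $g \in P_2$ of type ADE.

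\emph{Step 3 (Conclusion).} Choose coordinates on $P_3$ so that $z_3$ cuts out the generic hyperplane from Step 2. Then $F(z_0, z_1, z_2, 0) \in P_2$ defines the hyperplane section $H$, so by Step 2 we may arrange $F(z_0, z_1, z_2, 0) = g(z_0, z_1, z_2)$ for some $g \in P_2$ of ADE type. Consequently $F - g$ vanishes on $\{z_3 = 0\}$, so $F = g + z_3 h$ for some $h \in P_3$, as claimed.

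The main obstacle is Step 2: one has to justify both the smoothness of $\widetilde{H}$ for a generic $H$ and the crepant-resolution conclusion. Smoothness needs a Bertini-type argument adapted to the non-complete intersection behavior around the exceptional tree (using that $\pi$ has fibers of dimension $\leq 1$), while the adjunction computation requires verifying that $H$ is normal and that strict transform agrees with pullback — standard but not automatic, as it crucially uses that $\pi$ is small.
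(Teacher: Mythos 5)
The paper does not prove this theorem; it cites Reid \cite{Reid83} and takes the result as given, so there is no internal proof to compare your attempt against. Your outline does track Reid's actual strategy — show that a general hyperplane section through the singular point is Du Val, then read off the cDV normal form — but there are two genuine gaps as written.

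Step 1 is both unsubstantiated and logically misplaced. You assert $\mathrm{mult}_{\mathfrak{m}}(S) = 2$ and $\edim S \leq 4$ from a "local computation of the tangent cone along $C$, combined with the Gorenstein hypothesis," without giving any such computation, and these numbers are not directly readable from the data of a small resolution; in the correct argument they are \emph{consequences} of Step 2. Once $H$ is Du Val one gets $\edim H \leq 3$, hence $\edim S \leq \edim H + 1 \leq 4$, and then the hypersurface property follows because a Cohen--Macaulay quotient of codimension one in a regular local ring is cut out by a single equation. Since Step 2 does not use Step 1, the fix is to delete Step 1 and extract the hypersurface structure afterwards. The more serious gap is the smoothness of $\widetilde{H}$, which you flag but do not supply, and which is precisely the technical heart of Reid's proof. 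The linear system of hyperplanes through $\mathfrak{m}$ pulled back to $Y$ has the entire exceptional curve $C$ in its base locus, so ordinary Bertini does not give smoothness along $C$. Because $\pi$ contracts $C$, one has $T_pC \subseteq \ker(d\pi_p)$ for all $p \in C$, and this kernel may be strictly larger at special $p$ (the normal bundle of an irreducible flopping curve need not be $\mathcal{O}(-1)^{\oplus 2}$), so $\im(d\pi_p) \subseteq T_{\mathfrak{m}}\Spec S$ can drop below dimension $2$. One must show that a general linear form is nonzero on $\im(d\pi_p)$ for every $p \in C$, which requires analyzing how this image varies along $C$; without that the argument is incomplete. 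The rest of Step 2 — the identification $\widetilde{H} = \pi^{*}H$ as reduced divisors using that $\pi$ is small, normality of $H$ by Serre's criterion (isolated singularity plus Cohen--Macaulay), and the crepancy/adjunction chain — is sound once smoothness of $\widetilde{H}$ is granted, and Step 3 then follows.
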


The following result follows for example  from \cite[Theorems 5.7 and 6.2(c)]{BIKR}. 

\begin{prop}\label{P:BIKR}
Let $g \in P_1$ such that $P_3/(g + z_2^2 + z_3^2) \in \SMAL^{(3)}$. Then \begin{align}\label{E:PrimeFact}
  g=f_1 \cdots f_n,  
\end{align} where $f_i \in (z_0, z_1) \setminus (z_0, z_1)^2$ are irreducible and mutually prime. Conversely, for every $g$ as in \eqref{E:PrimeFact} the singularity $P_3/(g + z_2^2 + z_3^2)$ is in $\SMAL^{(3)}$.
\end{prop}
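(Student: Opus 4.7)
The plan is to reduce the statement to a classification for one-dimensional hypersurface singularities via Knörrer periodicity, and then invoke the cited BIKR theorems.

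I would first rewrite the hypersurface: the linear change of coordinates $u = z_2 + iz_3$, $v = z_2 - iz_3$ identifies $S$ with $\C\llbracket z_0, z_1, u, v\rrbracket/(uv + g)$, and Knörrer periodicity \cite{Knoerrer} then supplies a triangle equivalence
\[ \Dsg(S) \;\cong\; \Dsg(R), \qquad R \coloneqq P_1/(g). \]
The isolated singularity condition is now direct: a Jacobian computation in the $(z_0, z_1, u, v)$-presentation shows $\Sing(\Spec S) = V(u, v, g, \partial_{z_0} g, \partial_{z_1} g)$, which is supported at the origin precisely when $V(g) \subset \Spec P_1$ is a reduced curve. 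Since $P_1$ is a UFD, this is equivalent to $g = f_1 \cdots f_n$ being a product of mutually prime irreducibles lying in $(z_0, z_1)$ (after absorbing a unit), which establishes half of the statement.

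To characterize when each $f_i$ additionally lies outside $(z_0, z_1)^2$, I would invoke the chain of correspondences supplied by \Cref{T:VdB,T:VdB2,T:Iyama}: existence of a small resolution of $\Spec S$ forces the existence of a cluster-tilting object in $\Dsg(S)$, and for compound $A$-type singularities the converse also holds (every NCCR arises from an honest small resolution). Via the Knörrer equivalence the question becomes: when does $\underline{\MCM}(R) = \Dsg(R)$ admit a cluster-tilting object? The answer, supplied by \cite[Theorems 5.7 and 6.2(c)]{BIKR}, is exactly that every irreducible factor $f_i$ of $g$ must have nonzero linear part, i.e., that every analytic branch of the curve $V(g)$ is smooth at the origin. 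This gives the forward implication, and it also yields the converse once the existence of a cluster-tilting object is translated back to the existence of a small resolution.

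The main obstacle lies in this last translation in the sufficiency direction: a cluster-tilting object in $\Dsg(S)$ only produces a non-commutative crepant resolution a priori, and one must upgrade this to an honest small (commutative) resolution of $\Spec S$. For our compound $A$-type hypersurface this can be carried out explicitly by iteratively blowing up the ideals $(u, f_1 \cdots f_k)$ for $1 \leq k \leq n-1$ in $\Spec S$ and verifying, chart by chart, that each intermediate space is regular and that the resulting birational morphism has fibres of dimension at most one. The smoothness hypothesis $f_i \notin (z_0, z_1)^2$ ensures that each blow-up centre is regular, making this step concrete if somewhat tedious.
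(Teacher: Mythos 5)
The paper's own ``proof'' of this proposition is the single sentence ``The following result follows for example from \cite[Theorems~5.7 and 6.2(c)]{BIKR}'', so your task here is really to reconstruct (parts of) the BIKR argument. Your outline does essentially that, and the necessity direction is sound: Kn\"orrer periodicity reduces to $R=P_1/(g)$, the isolated-singularity test via the Jacobian gives reducedness of $g$, and \Cref{T:VdB,T:VdB2,T:Iyama} turn a small resolution into a cluster-tilting object in $\Dsg(R)$, at which point \cite[Theorem~5.7]{BIKR} gives $f_i\notin(z_0,z_1)^2$. This matches the logical skeleton of BIKR rather than offering a genuinely different route.

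The sufficiency direction, however, is where your proposal has a real gap. You assert ``for compound $A$-type singularities the converse also holds (every NCCR arises from an honest small resolution)'' without a reference, and then you defer the actual content to an iterated-blow-up construction that you describe but do not verify. The passage from a cluster-tilting object (equivalently, an NCCR) to an honest small commutative resolution is \emph{precisely} what \cite[Theorem~6.2(c)]{BIKR} proves; it is not an easy formal consequence of the NCCR package, and for a general $3$-dimensional Gorenstein singularity the implication ``NCCR $\Rightarrow$ small resolution'' is false. So either cite \cite[Theorem~6.2(c)]{BIKR} at this point (which is what the paper does, and which makes the whole cluster-tilting detour unnecessary for the converse), or actually carry out the chart-by-chart smoothness and fibre-dimension checks for the successive blow-ups of $(u, f_1\cdots f_k)$ and confirm crepancy; as written, the converse is only sketched. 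A secondary, small point: you should also note that $\SMAL^{(3)}$ requires $S$ to be \emph{singular}, which forces $g\in(z_0,z_1)^2$, hence $n\ge 2$ in the factorization; for $n=1$ the hypersurface $P_3/(g+z_2^2+z_3^2)$ is regular, so the ``converse'' half of the statement implicitly excludes that case.
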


\begin{rem}\label{R:BIKR}
    Threefolds of the form $P_3/(g + z_2^2 + z_3^2) $ belong to the class of $cA_n$-singularities, see e.g.\ \cite[Proposition 6.1(e)]{BIKR}.
    In particular, all ADE-hypersurface singularities in dimension $3$ are $cA_n$-singularities for $n=1$ or $2$, see for example \cite[Proposition 6.1(a), (b)]{BIKR}.
    Conversely, isolated $cA_n $-singularities are of the form $P_3/(g + z_2^2 + z_3^2) $ for some $g \in P_1$, which follows from Morse Lemma (see for example \cite[Section 11.1]{Morse}).
\end{rem}

\begin{thm} \label{T:ClassOfSing}
Let $S \in \SMAL$. 
Then $d:=\dim \, S$ is odd and there exists a $f \in P_3$ such that the following holds.
\begin{enumerate}[label={(\alph*)}]
\item The threefold $\Spec(P_3/(f))$ admits a small resolution of singularities.\label{item:threefoldsmallres}
\item If $d \geq 3$ there is an $\C$-algebra isomorphism
\[
S \cong P_d/(f + z_{4}^2 + \cdots + z_d^2).
\]
\item If $d =1$, then there exists $g=f_1 \cdots f_n \in P_1$ such that 
\begin{enumerate}[label={(\arabic*)}]
    \item $f_i \in (z_0, z_1) \setminus (z_0, z_1)^2$ are irreducible and mutually prime,
    \item $P_3/(f)\cong P_3/(g + z_2^2 + z_3^2)$,
    \item $S \cong P_1/(g)$.
\end{enumerate}

\end{enumerate}
\end{thm}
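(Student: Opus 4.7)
The plan is to unravel the definition of $\SMAL$, produce the candidate $f \in P_3$ via Reid's classification (\Cref{Reid}), and then use Knörrer periodicity together with the splitting lemma to put $S$ in the required normal form. The proof hinges on two rigidity inputs: (i) that $S \in \SMAL$ must itself be a hypersurface, obtained by applying Gulliksen's theorem to the $2$-periodicity of $\Dsg$ inherited from the hypersurface representative $T$; and (ii) that being in $\SMAL^{(3)}$ is preserved under $\C$-linear triangle equivalence of singularity categories in dimension three, which one proves by transporting cluster-tilting objects and invoking the \cref{T:VdB2,T:Iyama} dictionary.

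By the definition of $\SMAL$, pick $T \in \SMAL^{(3)}$ with $\Dsg(S) \cong \Dsg(T)$. By \Cref{Reid}, $T \cong P_3/(f_0)$ for some $f_0 \in P_3$, supplying the initial candidate for $f$ and establishing (a). Since $T$ is a three-dimensional complete local Gorenstein hypersurface, Eisenbud's matrix factorisation theorem gives $\Sigma^2 \cong \id$ on $\Dsg(T)$, and Auslander's Serre-functor theorem shows $\Dsg(T)$ is $2$-Calabi--Yau. Both properties transport to $\Dsg(S)$ along the equivalence. The $2$-periodicity forces $S$ to be a hypersurface $S \cong P_d/(h)$ by Gulliksen's characterisation. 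Matching Calabi--Yau dimensions, $d-1$ must be even, so $d$ is odd.

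Apply the splitting lemma to normalise $h$: after a coordinate change,
\begin{align*}
    h = \tilde h(z_0, \ldots, z_k) + z_{k+1}^2 + \cdots + z_d^2
\end{align*}
with $\tilde h \in \mathfrak{m}^3 \subset P_k$. Iterated Knörrer periodicity (pairing the squares) yields $\Dsg(S) \cong \Dsg(P_k/(\tilde h))$ whenever $d - k$ is even; since $d$ is odd, so is $k$. Rigidity input (ii) applied to $\Dsg(P_k/(\tilde h)) \cong \Dsg(T)$---transporting a cluster-tilting object produced by \Cref{T:Iyama} from the NCCR of $T$ (\Cref{T:VdB2}), and running Iyama--Reiten's recognition backwards to reconstruct a small resolution on the target---forces $k \in \{1, 3\}$ and places the ring $P_3/(f) \in \SMAL^{(3)}$, where one sets $f := \tilde h$ if $k = 3$ and $f := \tilde h + z_2^2 + z_3^2$ if $k = 1$. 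Substituting back into the normal form gives $S \cong P_d/(f + z_4^2 + \cdots + z_d^2)$, proving (b) for $d \geq 3$. When $d = 1$, set $g := \tilde h$; then $P_3/(f) = P_3/(g + z_2^2 + z_3^2) \in \SMAL^{(3)}$, and \Cref{P:BIKR} immediately yields the factorisation $g = f_1 \cdots f_n$ with the required properties, establishing (c).

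The main obstacle is rigidity input (ii): that $\SMAL^{(3)}$-membership is preserved under $\C$-linear singular equivalence in dimension three. Small resolutions are geometric, not manifestly categorical, data, so the plan bridges this gap by transporting cluster-tilting objects across triangle equivalences and then invoking Van den Bergh's dictionary (\Cref{T:VdB2}) together with the Iyama--Reiten converse (\Cref{T:Iyama}); the $\mathfrak{m}^3$-condition on $\tilde h$ then has to be combined with finiteness properties of the transported cluster-tilting structure to bound the corank $k$ by $3$ and complete the reduction to the three-dimensional picture.
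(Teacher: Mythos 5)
The first half of your argument—deducing that $S$ is a hypersurface of odd dimension via $2$-periodicity of $\Dsg$, Gulliksen's theorem and Serre duality, then normalising $h$ by the splitting lemma and reducing by iterated Knörrer periodicity—is sound and is essentially what the paper alludes to in the remark following \Cref{D:frakS}. The gap is exactly where you flag the ``main obstacle,'' and the mechanism you offer does not close it.

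Transporting a cluster-tilting object from $\Dsg(T)$ to $\Dsg(P_k/(\tilde h))$ does give a cluster-tilting object on the target, but \Cref{T:Iyama} and the Iyama--Reiten dictionary only translate cluster-tilting objects into NCCRs for rings that are \emph{already} $3$-dimensional; they supply no bound on $k$, which is the Krull dimension of $P_k/(\tilde h)$. Likewise, \Cref{T:VdB2} goes from a small resolution to an NCCR, not the other way; there is no purely categorical route here to a small resolution on the target, and ``the $\mathfrak{m}^3$-condition combined with finiteness of the cluster-tilting structure'' is not a concrete argument. Worse, at the level of triangulated categories the corank is genuinely not accessible: Knörrer periodicity itself shows that $\Dsg$ cannot distinguish $k$ from $k+2$, so some finer invariant is required. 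Even in the case $k=1$ the existence of a cluster-tilting object in $\Dsg(P_1/(\tilde h))$ only forces $\tilde h$ to be reduced (\cite{BIKR}), not to factor into mutually prime \emph{linear} pieces, which is the condition \Cref{P:BIKR} needs; so even your $d=1$ case is not closed. A smaller issue: the splitting lemma may produce $\tilde h=0$ (the nodal case), where your formula $f := \tilde h + z_2^2 + z_3^2$ gives a non-isolated singularity and the argument needs a separate branch.

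The paper closes the gap by a fundamentally different input. The existence of a cluster-tilting object in $\Dsg(S')$ feeds the Jasso--Muro triangulated Auslander--Iyama correspondence, which (as observed by Keller) gives \emph{uniqueness of dg-enhancement} of $\Dsg(S')$. This lifts the triangle equivalence $\Dsg(S)\cong\Dsg(S')$ to a dg-equivalence, and then the classification of dg-singularity categories of matrix factorisations in \cite{K21b} is invoked — this is precisely the statement that a hypersurface with $\tilde h\in\mathfrak m^3$ is determined, up to isomorphism and Knörrer suspension, by its dg-singularity category, which is what pins down the normal form. For $d=1$ the factorisation of $g$ is then extracted via \Cref{P:BIKR}. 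Your proposal avoids dg-enhancements entirely, and without them (or a substitute for Kalck's classification) there is no way in your argument to conclude that your constructed $P_3/(f)$ is isomorphic to $T$ and hence admits a small resolution.
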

\begin{proof}
By definition of $\SMAL$, there exists a threefold $S'$ such that $\Spec(S')$ admits a small resolution of singularities and a triangle equivalence
\begin{align} \label{E:singuleq}
\Dsg(S) \cong \Dsg(S').
\end{align}
Since $\Spec(S')$ admits a small resolution, $S' \cong P_3/(f)$ is a hypersurface singularity by \Cref{Reid} and  $\Dsg(S')$ has a cluster-tilting object by \Cref{P:ClusterTiltingObject}. As observed by Keller, the triangulated Auslander--Iyama correspondence of Jasso--Muro \cite{JassoMuro} implies that $\Dsg(S')$ has a unique dg-enhancement. Hence, the equivalence \eqref{E:singuleq} lifts to the dg-level. Now, the main result of \cite{K21b} (combined with \Cref{P:BIKR} in the case $d=1$) completes the proof.
\end{proof}

    In view of our obstruction \Cref{T:PropertiesDsgR}\ref{item:threefoldsmallres},
    it is natural to wonder whether there are further isolated Gorenstein singularities $S$ such that the AR-quiver of $\Dsg(S)$ contains loops.
    The following result gives a partial negative answer.

    \begin{thm}[Wiedemann] \label{T:Wiedemann}
        Let $S \coloneqq P_d/I$ be a Gorenstein domain with an isolated singularity, such that one of the following conditions is satisfied:
        \begin{enumerate}[label={(\alph*)}]
            \item $\dim \, S =1$,\label{item:case1}
            \item $S = P_{2l+1}/(g(z_0, z_1) + z_2^2 + \cdots + z_{2l+1}^2),$ where $g \in P_1$ is irreducible.\label{item:case2}
        \end{enumerate}
        If the AR-quiver of $\Dsg(S)$ contains a loop, then $S$ is an $A_{2n}$-singularity. 
    \end{thm}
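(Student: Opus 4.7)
The plan is to reduce both cases to one-dimensional hypersurface curve singularities via Knörrer periodicity and then to invoke a finiteness theorem of Wiedemann together with the classification of one-dimensional complete local Gorenstein domains of finite Cohen--Macaulay type.

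First, in case (b), Knörrer periodicity \cite{Knoerrer} yields a triangle equivalence
\[
\Dsg\bigl(P_{2l+1}/(g + z_2^2 + \cdots + z_{2l+1}^2)\bigr) \;\cong\; \Dsg\bigl(P_1/(g)\bigr),
\]
which preserves Auslander--Reiten triangles and hence induces an isomorphism of AR-quivers (as valued translation quivers). Since $g$ is assumed irreducible and since $S$ has an isolated singularity (which is equivalent to $(\partial_0 g,\partial_1 g)$ being $\mathfrak{m}$-primary in $P_1$), the curve $P_1/(g)$ is itself a one-dimensional complete local Gorenstein domain with an isolated singularity, and its stable AR-quiver inherits the loop from that of $\Dsg(S)$. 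Thus case (b) reduces to case (a) applied to $P_1/(g)$.

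In case (a), $\Dsg(S) \cong \underline{\MCM}(S)$ by Buchweitz (\Cref{T:BuchweitzIntro}). The decisive input is Wiedemann's theorem on loops in the Auslander--Reiten graph of an order: applied to the one-dimensional complete local Cohen--Macaulay ring $S$, it asserts that the presence of a loop in the AR-quiver of $\underline{\MCM}(S)$ forces $S$ to be of finite Cohen--Macaulay type. Combining this with the Buchweitz--Greuel--Schreyer classification of one-dimensional complete local Gorenstein domains of finite CM type (see e.g.\ \cite[Chapters 8 and 9]{YoshinoBook}), we conclude that $S \cong P_1/(f)$ for some ADE polynomial $f$ in the two-variable form of \Cref{rem:explicit}. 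In particular, $S$ is automatically a hypersurface singularity, which is part of the conclusion of the theorem.

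Finally, the AR-quivers of $\Dsg(P_1/(f))$ for each ADE type are explicit from Dieterich--Wiedemann \cite{DieterichWiedemann86} (the shapes relevant here also appear inside the proofs of \Cref{T:Main} and \Cref{C:ReduceType} via Knörrer periodicity). Direct inspection shows that the only ADE curve singularity whose stable AR-quiver carries a loop is $A_{2n}$ with $n\geq 1$, completing the argument. The main obstacle is clearly the finiteness step: showing that a single loop in the AR-quiver of $\underline{\MCM}(S)$ already forces finite Cohen--Macaulay type. Since this is precisely Wiedemann's deep result on orders with loops, our proof will cite it rather than attempt to reprove it.
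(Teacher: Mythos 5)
Your proof is correct and takes essentially the same route as the paper: Knörrer periodicity reduces case (b) to case (a), Wiedemann's order-theoretic results on loops in the AR-graph force finite CM type (and in fact pin down the AR-component shape), and the ADE classification then identifies $S$. The only thing the paper does in addition is spell out the translation from the $\MCM$-setting to Wiedemann's order setting: one uses Noether normalization to find a DVR $R = \C\llbracket t\rrbracket \subseteq S$ with $S$ module-finite over $R$, and the \emph{domain} hypothesis is then used twice — once to conclude $S$ is $R$-free (hence an $R$-order), and once to see that $KS$ is a field, so that the minimal possible $R$-rank of an $S$-lattice is $\dim_K(KS)$; this minimality forces the modules in Wiedemann's construction of the AR-component to be indecomposable, which is the point where Brauer--Thrall I for orders bites. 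Your proposal glosses over exactly these two uses of the domain hypothesis by asserting "Wiedemann's theorem... applied to the one-dimensional complete local Cohen--Macaulay ring $S$" as a black box, but this is a matter of exposition rather than a gap.
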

    \begin{proof}
        By Knörrer's periodicity \cite{Knoerrer}, part \ref{item:case2} is a consequence of part \ref{item:case1}, so we can assume $\dim \, S=1$. 
        A special case of Wiedemann's \mbox{\cite[Corollary on page 354]{Wiedemann}} shows that the AR-quiver of $\ul{\MCM}(S) \cong \Dsg(S)$ equals the AR-quiver of $\Dsg(A_{2n})$ and in particular, $S$ has finite MCM-representation type. 
        
        For the convenience of the reader, we first translate our setting into Wiedemann's and then sketch his argument. By a complete local version of Noether normalization (cf.\ \cite[\href{https://stacks.math.columbia.edu/tag/0323}{Section 0323, Lemma 10.160.11.}]{stacks-project}), there is an embedding of $\C$-algebras $R:=\C\llbracket t \rrbracket \subseteq S$ such that $S$ is a finitely generated $R$-module. Since $S$ is a domain, $S$ is torsion-free as an $R$-module and hence free, since $R$ is a DVR. This shows that $S$ is an $R$-order as in \cite{Wiedemann}. Moreover, in our setting, $S$-lattices coincide with maximal Cohen--Macaulay $S$-modules, so the AR-quiver in \cite{Wiedemann} is indeed the AR-quiver of $\ul{\MCM}(S)$. Let $K$ be the quotient field of $R$, then $KS:=K \otimes_R S$ is a field (as it is a domain which is finite-dimensional as a $K$-vectorspace). Every $S$-module $M$ yields a $KS$-module $KM:=K \otimes_R M$. Now, Wiedemann defines the $R$-rank of $M$ as $\dim_K(KM)$. For completeness, we note that the $R$-rank depends on the fixed embedding $R \subseteq S$ -- this does not affect Wiedemann's arguments. Since $KS$ is a field, the minimal possible $R$-rank of an $S$-module is $\dim_K(KS)$.

        Next, Wiedemann constructs the AR-component containing an indecomposable MCM $S$-module $M_0$ with a loop, see \mbox{\cite[Proof of Theorem 2]{Wiedemann2}}. 
        By construction, all indecomposable modules in this component have minimal possible $R$-rank, since this is true for $M_0$ by \cite[Part (i) of Proposition on page 353]{Wiedemann} and since the $R$-rank is additive on short exact sequences. We note that the key point is that all the MCM modules $M_i$ in Wiedemann's construction have to be indecomposable as they have minimal possible $R$-rank. 
        
        Summing up, there is a AR-component where the $R$-rank is bounded and hence \cite[Theorem 2]{Wiedemann} shows that the AR-quiver of $\Dsg(S)$ is finite and consists only of the component containing the loop. 
        By construction this component equals the AR-quiver of $\Dsg(A_{2n})$, cf.~\mbox{\cite[Corollary on page 354]{Wiedemann}}.
        
        Now, by \cite[Corollary 8.16]{YoshinoBook}, $S$ is an ADE-hypersurface singularity. The ADE-hypersurface singularities in any fixed dimension are completely determined by the AR-quiver of their singularity categories \cite{YoshinoBook}. This completes the proof.
    \end{proof}

\emph{Acknowledgements.} 
We thank Evgeny Shinder for many interesting discussions. 
We thank Wassilij Gnedin for discussions about Theorem \ref{T:Wiedemann} and Norihiro Hanihara, Yujiro Kawamata, Sasha Kuznetsov, Steffen König, Duco van Straten and Fei Xie for their interest and comments on an earlier version.

Martin Kalck was partially funded by the Deutsche Forschungsgemeinschaft (DFG,
German Research Foundation) – Projektnummern 496500943; 201167725.
He would like to thank his family, Wolfgang Soergel and the Mathematical Institute in Freiburg for giving him the opportunity to start working on this paper. 
He is also very grateful to Dong Yang and Michael Wemyss for answering his questions and to the GK1821 in Freiburg for (partial) financial support.
He would like to thank Peter Jørgensen for an invitation to Aarhus, which can be seen as the starting point of the collaboration between the first and second named author.

Carlo Klapproth was supported by Aarhus University Research Foundation, grant no.\ AUFF-F-2020-7-16, and the EliteForsk Rejsestipendium from the Independent Research Fund Denmark, grant no.\ 2083-00072B. He would like to thank his supervisor Peter Jørgensen for guidance and support. 

Crucial progress was made during the conference "Silting Theory, Algebras and Representations" in Prague in September 2022. The first and second named author are very grateful to Jan Šťovíček and the DFG network on "Silting Theory" for organizing this conference, for providing excellent working conditions and for financial support.

\addtocontents{toc}{\protect\setcounter{tocdepth}{0}}


\begin{thebibliography}{999}
\bibitem{All03} D.~Allcock, \emph{The moduli space of cubic threefolds}, J. Algebraic Geom. 12 (2003), no. 2, 201--223.

\bibitem{Amiotthese} C. Amiot, \emph{Sur les petites cat\'egories triangul\'ees}, PhD thesis, U. Paris 7, 2008.
\bibitem{Amiotthesis} C.~Amiot, \emph{On the structure of triangulated categories with finitely
  many indecomposables}, Bull. Soc. Math. France \textbf{135} (2007), no.~3,
  435--474.
\bibitem{Morse} V.I.~Arnold, M.S.~Gusein-Zade, A.N.~Varchenko \emph{Singularities of differentiable maps}, Vol. I, Birkh\"auser Basel, 1988.
  
\bibitem{ACFGS22} J.~August, M.~Cheung, E.~Faber, S.~Gratz, S.~Schroll, \emph{Cluster structures for the $A_\infty$ singularity}, arXiv:2205.15344.
\bibitem{Auslander76} M.~Auslander, \emph{Functors and morphisms determined by objects}, Representation
  theory of algebras (Proc. Conf., Temple Univ., Philadelphia, Pa., 1976),
  Dekker, New York, 1978, pp.~1--244. Lecture Notes in Pure Appl. Math., Vol.
  37.
    \bibitem{Aus84}
\bysame, \emph{{Isolated singularities and existence of almost split
  sequences. Notes by Louise Unger.}}, {Representation theory II, Groups and
  orders, Proc. 4th Int. Conf., Ottawa/Can. 1984, Lect. Notes Math. 1178,
  194-242 (1986)}.
\bibitem{AuslanderQueen} \bysame, \emph{The representation dimension of artin algebras}, Queen Mary College Mathematics Notes (1971). 
Republished in \emph{Selected works of Maurice Auslander}, Amer. Math. Soc., Providence (1999).  
\bibitem{AB} M.~Auslander, R.-O.~Buchweitz, \emph{The Homological Theory of Maximal Cohen-Macaulay Approximations}, Societe Mathematique de France. Memoire \textbf{38}, 5--37, (1989).
\bibitem{ARS} M.~Auslander, I.~Reiten, S.~Smal\o, \emph{Representation theory of Artin algebras}
Cambridge Studies in Advanced Mathematics 36, Cambridge University Press (1995).
\bibitem{AuslanderSmalo80} M.~Auslander, S.~Smal\o, \emph{Preprojective modules over Artin algebras},
J. Algebra \textbf{66}  (1980), 61--122.

\bibitem{AS93-1} M.~Auslander, O.~Solberg, \emph{Relative homology and representation theory. I. Relative homology and homologically finite subcategories}, Comm. Algebra 21 (1993), no. \textbf{9}, 2995--3031.

\bibitem{ASS} I.~Assem, D.~Simson, A.~Skowronski, \emph{Elements of the Representation Theory of Associative Algebras. Vol.\ 1}, LMS Student Texts, \textbf{65}, Cambridge University Press, (2006).

\bibitem{bautista}
R.~Bautista, \emph{Irreducible morphisms and the radical of a category}, Ann.
Inst. Math. Univ. Nac. Auto. Mex. 22 (1982) 83 - 135.

\bibitem{BassBook} H.~Bass, \emph{
Algebraic K-theory},
Mathematics Lecture Note Series. New York-Amsterdam: W.A. Benjamin, Inc. xv, 762 pp. (1968). 

\bibitem{Bondal-Kapranov}
A. I. Bondal and M. M. Kapranov, \emph{Representable functors, Serre functors, and recon-
structions}, Izv. Akad. Nauk SSSR Ser. Mat. \textbf{53} (1989), no. 6, 1183–1205, 1337 (Russian).

\bibitem{BL+}
A.\,Bayer, M.\,Lahoz, E.\,Macrì, H.\,Nuer, A.\,Perry, P.\,Stellari, \emph{Stability conditions in families}, Publications mathématiques de l'IHÉS, (2021) 133(1), 157-325.

\bibitem{Gabriel}
K.~Bongartz, P.~Gabriel \emph{Covering spaces in representation-theory}, Invent Math 65, 331–378 (1982).

\bibitem{BrownWemyss} G. Brown, M. Wemyss, \emph{Gopakumar--Vafa invariants do not determine flops}, Comm. in Math. Physics 361 (2018): 143--154.

\bibitem{BIRSc} A.~Buan, O.~Iyama, I.~Reiten, J.~Scott, \emph{Cluster structure for 2-Calabi--Yau categories and unipotent groups}, arXiv:math.RT/0701557.

\bibitem{Buchweitz87} R.-O.~Buchweitz, \emph{Maximal Cohen-Macaulay modules and Tate-Cohomology over
Gorenstein rings}, Preprint 1987, available at http://hdl.handle.net/1807/16682.

\bibitem{BuchweitzGreuelSchreyer} R.-O.~Buchweitz, G.-M.~Greuel, F.-O.~Schreyer, 
\emph{Cohen-Macaulay modules on hypersurface singularities. II.} 
Invent. Math. \textbf{88} (1987), 165--182.

\bibitem{Burban05}
I. Burban, \textit{Derived Categories of Coherent sheaves on Rational Singular Curves}, In: Representations
of finite-dimensional algebras and related topics in Lie Theory and geometry, Fields Inst. Commun.
40, Amer. Math. Soc., Providence, RI (2004), 173--188.

\bibitem{BIKR} I.~Burban, O.~Iyama,  B.~Keller,  I.~Reiten, 
\emph{Cluster tilting for one-dimensional hypersurface singularities},
Adv. Math. \textbf{217} (2008), no. 6, 2443--2484.

\bibitem{BurbanKreussler} I.~Burban, B.~ Kreussler, \emph{Vector bundles on degenerations of elliptic curves and Yang-
Baxter equations}, Mem. Amer. Math. Soc. 220 (2012), no. 1035, vi+131. 

\bibitem{Buehler}
Theo Bühler,
\emph{Exact categories,}
Expositiones Mathematicae,
Volume 28, Issue 1,
2010,
Pages 1-69,

\bibitem{CKS}
A. Cald\u{a}r\u{a}ru, S. Katz and E. Sharpe, {\it D-branes, B field and Ext groups}, Adv. Theor. Math. Phys. 7 (2004), 381--404.
\bibitem{XWChen} X.-W.~Chen, \emph{The singularity category of an algebra with radical square zero}, Doc. Math.,\textbf{16} 921 -- 936, 2011. 
\bibitem{ChenGeneralizedSerreduality} X.-W.~Chen, \emph{Generalized Serre duality}, Journal of Algebra, \textbf{328} (2011), 268--286. 

\bibitem{DieterichWiedemann86}
E.~Dieterich, A.~Wiedemann, \emph{{The Auslander-Reiten quiver of a
  simple curve singularity.}}, Trans. Am. Math. Soc. \textbf{294} (1986),
  455--475.
  \bibitem{Drinfeld} V.~Drinfeld, \emph{Infinite dimensional vector bundles in algebraic geometry: an introduction}, Progr. Math., 244,
Birkh\"auser, 2006.
 \bibitem{ErdmannSkowronski} K.~Erdmann, A.~Skowronski, \emph{Periodic algebras}, Trends in representation theory of algebras and related topics (2008): 201--251.
\bibitem{Enomoto16}
H.\ Enomoto, \emph{Classifying exact categories via Wakamatsu tilting}, Journal of Algebra, \textbf{485} (2017), 1--44.
\bibitem{GLSInv} C.~Gei{\ss}, B.~Leclerc, J.~Schr{\"o}er, \emph{
Rigid modules over preprojective algebras}
Inventiones Mathematicae \textbf{165} (2006), 589--632.
\bibitem{Gulliksen} T.~Gulliksen, \emph{A proof of the existence of minimal R-algebra resolutions}, Acta Math.120 (1968), 53--58.
\bibitem{Happel} D.~Happel, {\em On the derived category of a finite-dimensional algebra}, Comment. Math. Helv., \textbf{62} (1987), 339--389.
\bibitem{HappelBook} \bysame, {\em Triangulated categories in the representation theory of finite-dimensional algebras}, LMS Lecture Notes Series,  \textbf{119}, Cambridge University Press, (1988).
\bibitem{Happel-Preiser-Ringel}
D.~Happel, U.~Preiser, C.M.~Ringel, \emph{Vinberg’s characterization of dynkin diagrams using subadditive functions with application to DTr-periodic modules}. In: V.~Dlab, P.~Gabriel (eds), \emph{Representation Theory II. Lecture Notes in Mathematics}, vol 832. Springer, Berlin, Heidelberg (1980). 
\bibitem{hartshorne}
R.~Hartshorne, {\it Algebraic geometry}, Graduate Texts in Mathematics 52, Springer-Verlag, New-York, (1977).
\bibitem{HillePerling} L. Hille, M. Perling, \emph{Tilting bundles on rational surfaces and quasi-
hereditary algebras}, Ann. Inst. Fourier (Grenoble) 64 (2014), no. 2, 625--644.
\bibitem{HilleVandenBergh} L. Hille, M. Van den Bergh, \emph{Fourier-Mukai transforms}, In: L. Angeleri Hügel, D. Happel, H. Krause (eds), \emph{Handbook of Tilting Theory},  London Mathematical Society Lecture Note Series, Cambridge University Press (2007) 147–178.
\bibitem{Igusa} K.~Igusa,  \emph{Notes on the no loops conjecture}, J. Pure Appl. Algebra (2) \textbf{69} (1990) 161--176.
\bibitem{IgusaLiuPaquette} K.~Igusa, S.~Liu, C.~Paquette,
\emph{A proof of the strong no loop conjecture},  Advances in Mathematics \textbf{228} (2011) 2731--2742.
\bibitem{Iyama07} O.~Iyama, \emph{Auslander correspondence}, Adv. Math. \textbf{210} (2007),
  no.~1, 51--82.
 \bibitem{Iyama07a}
\bysame, \emph{Higher-dimensional {A}uslander-{R}eiten theory on maximal
  orthogonal subcategories}, Adv. Math. \textbf{210} (2007), no.~1, 22--50. 
\bibitem{INP}
O.~Iyama, H.~Nakaoka and Y.~Palu, \emph{Auslander–Reiten theory in extriangulated categories}, Trans. Amer. Math. Soc. Ser. B \textbf{11} (2024), 248-305.
\bibitem{IyamaReiten} O.~Iyama, I.~Reiten, \emph{Fomin--Zelevinsky mutation and tilting modules over Calabi--Yau
algebras}, American Journal of Mathematics 130.4 (2008): 1087--1149.
\bibitem{IyamaWemyss14} O.~Iyama, M.~Wemyss, \emph{Maximal modifications and Auslander--Reiten duality for non-isolated singularities}, Invent. Math.,197(3) (2014), 521--586.
\bibitem{iyama-wemyss} O. Iyama and M. Wemyss, {\it Singular derived categories of $\Q$-factorial terminalizations and maximal modification algebras}, Adv. Math. 261 (2014), 85--121.
\bibitem{IyamaYoshino} O.~Iyama, Y.~Yoshino, \emph{Mutation in triangulated categories and rigid Cohen–Macaulay modules}, Inventiones mathematicae (1) \textbf{172} (2008) 1432--1297.
\bibitem{IyengarKrause} S.~Iyengar, H.~Krause, \emph{The Nakayama functor and its completion for Gorenstein algebras}, arXiv:2010.05676. 
\bibitem{Jasso} G.~Jasso, \emph{$n$-Abelian and $n$-exact categories,} Math. Z, \textbf{283}, 703–759 (2016).
\bibitem{JassoMuro} G.~Jasso, F.~Muro \emph{The Triangulated Auslander–Iyama Correspondence, with an appendix by B. Keller}, arXiv:2208.14413.
\bibitem{Jor0705} P.~J{\o}rgensen, \emph{Quotients of cluster categories}, Proceedings of the Royal Society of Edinburgh Section A: Mathematics, 140(1), 65--81.
\bibitem{JY22} P.~J{\o}rgensen, M.\ Yakimov, \emph{Green groupoids of 2-Calabi–Yau categories, derived Picard actions, and hyperplane arrangements}, Trans.\ Am.\ Math.\ Soc.\ \textbf{375} (2022), 7981-8031.
\bibitem{Kalck15} M.~Kalck, \emph{Singularity categories of gentle algebras}, \\ 
 Bulletin London Mathematical Society \textbf{47}
 (2015) 65--74, \href{https://arxiv.org/pdf/1207.6941.pdf}{(pdf)}.
\bibitem{K21b} M.~Kalck, \emph {Classifying dg-categories of matrix factorizations}, arXiv:2108.03292.
\bibitem{KalckThesis} M.~Kalck, \emph{Relative singularity categories}, PhD Thesis, Bonn 2013,  \href{https://arxiv.org/pdf/1709.04753.pdf}{(pdf)}.
\bibitem{KalckLeclerc} M.~Kalck, \emph{A Remark on Leclerc's Frobenius categories}, Research Perspectives CRM Barcelona, Spring 2015, vol. \textbf{5}, in Trends in Mathematics Springer-Birkh{\"a}user, Basel, \href{https://arxiv.org/pdf/1709.04785.pdf}{(pdf)}.
 \bibitem{KIWY} M.~Kalck, O.~Iyama, M.~Wemyss, D.~Yang, \emph{Frobenius categories, Gorenstein algebras and rational surface singularities},   
 Compositio Mathematica, vol. \textbf{151}, issue 03, 502--534 (2015).
 \bibitem{KKP} M.~Kalck, Y.~Kawamata, N.~Pavic, \emph{Categorical absorptions of cyclic quotient singularities}, in preparation.
\bibitem{KPS19} M.~Kalck, N.~Pavic, E.~Shinder, \emph{Obstructions to semiorthogonal decompositions for singular threefolds I: K-theory}, Moscow Mathematical Journal, 
vol. \textbf{21}, issue 3, 567--592 (2021), \href{https://arxiv.org/pdf/1910.09531.pdf}{(pdf)}. 
\bibitem{KalckYang16}
M.~Kalck, D.~Yang,
\emph{Relative singularity categories I: Auslander resolutions},
Advances in Mathematics, \textbf{301}, 973--1021, 2016.
\bibitem{KKS}
J.~Karmazyn, A.~Kuznetsov, E.~Shinder,
\newblock \emph{Derived categories of singular surfaces}, Journal of the European Mathematical Society~(JEMS), published online (14.06.2021),
\newblock arXiv:1809.10628.
\bibitem{Kaw19}
Y.~Kawamata, {\it Semi-orthogonal decomposition of a derived category of a 3-fold with an ordinary double point}, arXiv:1903.00801.
\bibitem{Kaw21} \bysame, {\it On the derived category of a weighted projective threefold}, Bollettino dell'Unione Matematica Italiana (Catanese's 70th birthday issue), arXiv:2012.14158.
\bibitem{KellerAmiot} B.~Keller, \emph{A remark on a theorem by Claire Amiot}, Comptes Rendus Mathematique, \textbf{356}, 10, 984--986, 2018.
\bibitem{KellerReiten07} B.~Keller, I.~Reiten, \emph{Cluster-tilted algebras are Gorenstein and stably Calabi–Yau}, Advances in Mathematics, \textbf{211}(2007), no.\ (1), 123--151.
\bibitem{KellerReiten08} \bysame, \emph{Acyclic Calabi--Yau categories}, Compos. Math. \textbf{144}(2008), no. 5, 1332--1348.
\bibitem{Knoerrer}
H.~Kn{\"o}rrer, \emph{{Cohen-Macaulay modules on hypersurface singularities.
  I.}}, Invent. Math. \textbf{88} (1987), 153--164.
\bibitem{Ko22} A.~S.~Kortegaard, \emph{Derived equivalences of self-injective 2-Calabi–Yau tilted algebras}, arXiv:2205.11309.
\bibitem{Kra15} H.~Krause, \emph{Krull–Schmidt categories and projective covers}, Expositiones Mathematicae \textbf{33}(2015),
no.\ \textbf{4}, 535-549.
\bibitem{KS98} H.~Krause, M.~Saorín, \emph{On minimal approximations of modules}, In: E.L.~Green, B.~Huisgen-Zimmermann (eds), \emph{Trends in the representation theory of finite dimensional algebras}, Contemp. Math. \textbf{229} (1998), 227–-236.
\bibitem{KS22a} A.~Kuznetsov, E.~Shinder, \emph{Categorical absorptions of singularities and degenerations}, Épijournal de Géométrie Algébrique, \textbf{Special volume in honour of Claire Voisin} (2024).
\bibitem{KS22} A.~Kuznetsov, E.~Shinder, \emph{Homologically finite-dimensional objects in triangulated categories}, arXiv:2211.09418v1.
\bibitem{Lenzing} H. Lenzing, \emph{Nilpotente Elemente in Ringen von endlicher globaler Dimension}, Math. Z. \textbf{108} (1969) 313 --324.
%
%

\bibitem{MT} M.~Marcolli, G.~Tabuada,  \emph{From exceptional collections to motivic decompositions via noncommutative motives}, Journal für die reine und angewandte Mathematik, 2015(701), 153--167.

\bibitem{MV23} L.~Marquand, S.~Viktorova,  \emph{The defect of a cubic threefold}, arXiv:2312.05118.

\bibitem{orl6} D. O. Orlov, \emph{Triangulated categories of singularities, and equivalences between Landau–Ginzburg models} Mat. Sb., 197(12):117–132, 2006.
\bibitem{Orlov11}
\bysame, \emph{Formal completions and idempotent completions of triangulated categories
of singularities},  Adv. Math. \textbf{226} (2011), no. \textbf{1}, 206--217.
\bibitem{ps}  N. Pavic, E. Shinder, \emph{Derived categories of nodal del Pezzo threefolds},
arXiv:2108.04499, 2021.
\bibitem{pavicshinder}  \bysame, \emph{K-theory and the singularity category of quotient singularities}, Ann. K-Theory 6 (2021), no. 3, 381--424.
\bibitem{Polishchuk} A.~Polishchuk, \emph{Classical Yang–Baxter equation and the $A_\infty$-constraint}, Advances in Mathematics, 168(1) (2002), 56--95.

\bibitem{Reid83} M.~Reid, \emph{Minimal models of canonical 3-folds}, Algebraic varieties and analytic varieties (Tokyo,
1981), 131--180, Adv. Stud. Pure Math., 1, North-Holland, Amsterdam, 1983.
\bibitem{ReitenVandenBergh} I.~Reiten, M.~Van den Bergh, \emph{Noetherian hereditary abelian
  categories satisfying {S}erre duality}, J. Amer. Math. Soc. \textbf{15}
  (2002), no.~2, 295--366.
  \bibitem{Riedtmann}
  Riedtmann, C. \emph{Algebren, Darstellungsköcher, Ueberlagerungen und zurück.} Commentarii Mathematici Helvetici 55, 199–224 (1980)
  \bibitem{RingelBook}
  C.~M.~Ringel, \emph{Tame Algebras and Integral Quadratic Forms}, Springer Berlin Heidelberg, 1984. ISBN 3-540-13905-2.
\bibitem{shah}
A.~Shah, \emph{Auslander-Reiten theory in quasi-abelian and Krull-Schmidt categories}, Journal of Pure and Applied Algebra, \textbf{224}(1), 98--124, 2020.
  \bibitem{Spence}
  D.~Spence, \emph{A note on semiorthogonal indecomposability for some Cohen-Macaulay varieties}, Journal of Pure and Applied Algebra, 226 (2022), pp. Paper No. 107076.
\bibitem{stacks-project} {The {Stacks project authors}}, \emph{The Stacks project}, {\url{https://stacks.math.columbia.edu}},
(2024).  
\bibitem{Takahashi} R.~Takahashi, \emph{Classifying thick subcategories of the stable category of Cohen--Macaulay modules}, Advances in Mathematics 225.4 (2010): 2076--2116. 
  \bibitem{VandenBergh04}
M.~Van den Bergh, \emph{
Three-dimensional flops and noncommutative rings},  Duke Math. J. \textbf{122} (2004), no. \textbf{3}, 423--455.
\bibitem{NCCR}
\bysame,
    \emph{Non-commutative crepant resolutions},
    The legacy of {N}iels {H}enrik {A}bel,
    Springer,
    2004, pp.~749--770.
  \bibitem{WemyssFlops}
M.~Wemyss, \emph{Flops and clusters in the homological minimal model program},
  Invent. Math. 211 (2018), 435--521.
\bibitem{WemyssLecture}
\bysame, \emph{Lectures on Noncommutative Resolutions}, Noncommutative algebraic geometry, 
Math. Sci. Res. Inst. Publ., \textbf{64} (2016), 239--306.
 \bibitem{Wiedemann} 
A.~Wiedemann, \emph{
Brauer-Thrall I for orders and its application to orders with loops in their Auslander-Reiten graph}, In: Representations of algebras, Proc. 3rd int. Conf., Puebla/Mex. 1980, Lect. Notes Math. 903, 350--357 (1981).  

\bibitem{Wiedemann2} 
A.~Wiedemann, \emph{
Orders with loops in their Auslander-Reiten graph},
Commun. Algebra 9, 641--656 (1981). 

  
\bibitem{xieNdp} F.\,Xie, Nodal quintic del Pezzo threefolds and their derived categories, arXiv:2108.03186.
  
  \bibitem{YoshinoBook} Y. Yoshino, \emph{Cohen-Macaulay modules over Cohen-Macaulay rings}. London Mathematical Society Lecture Note Series, 146. Cambridge University Press, Cambridge, 1990.





  



\end{thebibliography}
\end{document}